\documentclass[11pt, letterpaper]{amsart}
\usepackage{amssymb}
\usepackage{enumitem}
\usepackage[colorlinks = true,
            linkcolor = blue,
            urlcolor  = blue,
            citecolor = blue,
            anchorcolor = blue]{hyperref}
\usepackage{caption}
\newtheorem{theorem}{Theorem}[section]
\newtheorem{proposition}{Proposition}[section]
\newtheorem{lemma}[theorem]{Lemma}
\newtheorem{corollary}[theorem]{Corollary}

\theoremstyle{definition}
\newtheorem{condition}[theorem]{Condition}
\newtheorem{step}{Step}
\newtheorem{definition}[theorem]{Definition}
\newtheorem{notation}[theorem]{Notation}

\usepackage{colortbl}
\usepackage{todonotes}

\theoremstyle{remark}
\newtheorem{remark}[theorem]{Remark}
\DeclareMathOperator{\re}{Re}
\DeclareMathOperator{\im}{Im}
\DeclareMathOperator{\sgn}{sgn}
\DeclareMathOperator{\sech}{sech}
\DeclareMathOperator{\Var}{Var}
\DeclareMathOperator{\Cov}{Cov}

\DeclareMathOperator{\erf}{erf}

\DeclareMathOperator{\supp}{supp}

\numberwithin{equation}{section}
\usepackage{graphicx}
\graphicspath{ {./images/} }
\usepackage{tikz}
\usepackage{tkz-tab}
\usepackage{amsrefs}
\def\MR#1{\href{https://mathscinet.ams.org/mathscinet-getitem?mr=#1}{MR#1}}

\begin{document}

\title{Real roots of non-centered random polynomials}      
   \author{Yen Q. Do}
\address{Department of Mathematics, University of Virginia,  Charlottesville, Virginia 22904, USA}
\email{yendo@virginia.edu}

\author{Nhan D. V. Nguyen}
\address{Department of Mathematics,
University of Colorado Boulder,
Campus Box 395,
Boulder, CO 80309-0395}
\email{Nhan.Nguyen-1@colorado.edu, nguyenduvinhan@qnu.edu.vn}

\author{Sean O'Rourke}
\address{Department of Mathematics,
University of Colorado Boulder,
Campus Box 395,
Boulder, CO 80309-0395}
\email{sean.d.orourke@colorado.edu}
\thanks{S. O'Rourke has been supported in part by NSF CAREER grant DMS-2143142.}

\keywords{Random polynomials, real roots, variance, universality, central limit theorem}
\subjclass[2020]{60G50, 60F05, 41A60}

\date{\today}
\begin{abstract}
We study the fluctuations of the number of real roots of random polynomials with independent, nonzero-mean coefficients. Such non-centered ensembles arise naturally in signal-plus-noise models and in random perturbations of deterministic polynomials. While Ibragimov and Maslova (1971) established the leading asymptotics of the expected number of real roots for non-centered polynomials with i.i.d.\ coefficients, the corresponding variance asymptotics and central limit theorem have remained open for more than fifty years. This stands in sharp contrast to the centered case, where the fluctuation theory is now well understood across a wide range of ensembles.

We resolve this gap by developing novel comparison principles that reduce the fluctuation theory of a non-centered ensemble to that of its centered counterpart. These principles yield sharp variance asymptotics and central limit theorems for broad classes of ensembles, including Kac and hyperbolic polynomials, their derivatives, and related extensions. In particular, for both Kac and hyperbolic polynomials, the leading variance constant equals exactly one-half of that in the centered case, reflecting asymmetric suppression of fluctuations across the two regions where roots concentrate. Our results provide the first comprehensive fluctuation theory for the number of real roots of non-centered random polynomials.
\end{abstract} 
\maketitle
\tableofcontents
\section{Introduction} 
We consider random polynomials with independent real-valued coefficients,
\begin{equation} \label{e.pn}
    P_n(x)=\sum_{j=0}^n \omega_j x^j,
\end{equation}
and study the asymptotic distribution of the number of real roots $N_n$ as $n\to \infty$.

The behavior of $N_n$ has been a central subject in probability theory for nearly a century. Early work of Bloch and P\'olya \cite{BP32} showed that when the coefficients are uniformly distributed on $\{-1,0,1\}$, one has $N_n=O(\sqrt{n})$ with high probability. Littlewood and Offord \cites{LO43, LO45, LO48} subsequently refined this bound to polylogarithmic order. A decisive breakthrough came with Kac \cites{Kac43, Kac49}, who derived an explicit integral formula for the density of real roots and used it to show that for i.i.d.\ centered Gaussian or uniform coefficients\footnote{Throughout the paper, asymptotic notation is understood in the limit as the degree $n$ tends to infinity.  A precise description of the notation is given in Notation \ref{not:asymptotic}.},
\[
\mathbb{E}[N_n] =\left(\frac{2}{\pi}+o(1)\right) \log n.
\]  
Random polynomials with i.i.d.\ coefficients are now called \emph{Kac polynomials} in his honor. Kac's density formula is the prototype of the Kac--Rice method \cites{EK95, TV15, AW09}, which has since become one of the principal tools in the study of real zeros of Gaussian processes \cites{BD97, BS86, Das69, Das72, Dun68, Dun70, Dun72, F86, F98, FK20CCM, FK20JTP, LPX16, LPX18, Qua70, Sam79, SGF98, St69, W88}.

The logarithmic growth of $\mathbb{E}[N_n]$ is remarkably robust. Erd\H{o}s and Offord \cite{EO56} established this phenomenon for Rademacher coefficients, while Ibragimov and Maslova \cites{IM68, IM71, IM71I} extended it to coefficients whose distributions belong to the domain of attraction of the normal law. More recently, universality methods have further generalized these results to random polynomials with independent, non-identically distributed coefficients under mild moment assumptions \cites{DHNV15, KLN22, NNV16, NV22A, TV15}.

The study of \emph{second-order fluctuations} of $N_n$ is substantially more delicate. In a pair of seminal papers, Maslova \cites{M74V, M74D} pioneered this direction by establishing the variance asymptotics
\[
    \Var[N_n] =\left[\frac{4}{\pi}\left(1-\frac{2}{\pi}\right)+o(1)\right]\log n,
\]
and the asymptotic normality of $N_n$ (CLT)
\[
\frac{N_n -\mathbb{E}[N_n]}{\sqrt{\Var[N_n]}} \xrightarrow{d} \mathcal{N}(0,1),
\]  
for Kac polynomials with centered coefficients. Maslova's approach, based on approximating $N_n$ by a sum of weakly dependent local contributions over carefully chosen intervals, has become a foundational tool in the subject.

These results extend naturally to \emph{generalized Kac ensembles}, which include derivatives of classical Kac and hyperbolic polynomials and are closely connected to the study of critical points. Variance asymptotics in this setting were recently obtained in \cite{DN25}, while Nguyen and Vu \cite{NV22D} proved a CLT under the condition $\Var[N_n]\ge \epsilon\log n$ for some constant $\epsilon>0$, combining universality arguments with Maslova’s method.

More broadly, Maslova’s work has stimulated extensive efforts to understand fluctuations across the classical random polynomial ensembles, drawing on tools such as Kac--Rice formulas, Wiener chaos expansions, Edgeworth theory, and moment methods. By now, variance asymptotics and CLTs have been established for a wide range of \emph{centered models}. For \emph{Gaussian elliptic polynomials}, Bleher and Di \cite{BD97} and Dalmao \cite{Dal15} obtained the first results, with subsequent refinements by Ancona and Letendre \cite{AL21}, Gass \cite{Gas23}, and Nguyen \cite{N24}; extensions to non-Gaussian settings remain open. For \emph{trigonometric polynomials}, Do, Nguyen, and Nguyen \cite{DNN22} and Bally, Caramellino, and Poly \cite{BCP19} extended earlier Gaussian results of  Granville and Wigman \cite{GW11}, Aza\"is and Le\'on \cite{AL13}, and Aza\"is, Dalmao, and Le\'on \cite{ADL16} to non-Gaussian coefficients. For \emph{Weyl ensembles}, Schehr and Majumdar \cite{SM08} and Do and Vu \cite{DV20} established variance asymptotics and CLTs in the Gaussian case, while Aguirre, Nguyen, and Wang \cite{ANW25} recently extended the variance results to the sub-Gaussian regime, leaving the CLT open. For \emph{Gaussian orthogonal ensembles}, Do, Nguyen, Nguyen, and Pritsker \cite{DNNP21} proved a CLT that complements the variance asymptotics of Lubinsky and Pritsker \cites{LP21, LP22}, with the non-Gaussian case still unresolved. Altogether, these works provide a nearly complete picture of second-order behavior for centered models.

In sharp contrast, the \emph{non-centered} regime, in which a non-negligible proportion of the coefficients carry nonzero means, has resisted a complete second-order treatment. Such ensembles arise naturally in signal-plus-noise models and random perturbations of deterministic polynomials. Ibragimov and Maslova \cite{IM71II} determined the leading asymptotics of $\mathbb{E}[N_n]$ for non-centered Kac polynomials as early as 1971, showing that the logarithmic growth persists but with the leading constant reduced by a factor of two. However, their method is limited to first-order asymptotics and provides no information on fluctuations. For over fifty years, no variance asymptotics or CLT for $N_n$ were known for any non-centered ensemble, whether Kac, elliptic, Weyl, trigonometric, or orthogonal.

The present paper resolves these problems for the Kac polynomials and their generalizations, the hyperbolic polynomials. More precisely, we establish sharp variance asymptotics and CLTs for $N_n$ for Kac and hyperbolic polynomials with non-centered coefficients, their derivatives, and related generalizations, in both Gaussian and non-Gaussian settings. The key technical contribution is a set of novel comparison principles relating the real root count of a non-centered polynomial to that of its centered counterpart. As a byproduct, we derive an explicit formula for the two-point correlation function of real zeros of non-centered Gaussian processes, which may be of independent interest.

\subsection{Main results}
As a primary application of our framework, we present results on the variance and limiting distribution of the number of real roots of random polynomials with non-centered coefficients, directly resolving the open problems stemming from the foundational works of Ibragimov and Maslova \cites{IM71, IM71I, IM71II, M74V, M74D}. Although their methods yield the leading asymptotics of $\mathbb{E}[N_n]$ in the non-centered case \cite{IM71II}, the precise variance asymptotics and the asymptotic normality of $N_n$ are established here for the first time.

\begin{theorem}[Non-centered Kac polynomials] \label{t.kac} 
Let $P_n(x)=\sum_{j=0}^n\xi_j x^j$, where $(\xi_j)_{j=0}^n$ are real-valued independent random variables, with 
\[\mathbb E[\xi_j]=\mu,\quad \Var[\xi_j]=1,\quad \mbox{and }\quad \mathbb E[|\xi_j|^{2+\varepsilon_0}]<C_0\]
for all $j$ and some constants $\mu\ne 0$, $\varepsilon_0>0$, and $C_0>0$. Then, as $n\to \infty$, 
    \[
    \Var[N_{n}]=\left[\frac{2}{\pi}\left(1-\frac{2}{\pi}\right) + o(1) \right]\log n.
    \]
Furthermore, $N_{n}$ satisfies the CLT; that is, as $n\to \infty$, 
    \[
    \frac{N_{n}-\mathbb E[N_{n}]}{\sqrt{\Var[N_{n}]}} \xrightarrow{d} \mathcal N(0,1).
    \]
\end{theorem}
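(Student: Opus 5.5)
We decompose the real line into the four standard regions where roots of Kac polynomials concentrate, namely neighborhoods of $x=1$ and $x=-1$. By the symmetries $x\mapsto -x$ and $x\mapsto 1/x$ these reduce to the intervals $[0,1]$ and $[-1,0]$ for the polynomials $P_n(x)$ and $x^nP_n(1/x)$.

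The heuristic is as follows. Write $P_n = \mu\sum_j x^j + \tilde P_n$, where $\tilde P_n$ has centered, unit-variance coefficients.
\begin{itemize}[leftmargin=*]
\item For $x\in[1-n^{-\delta}, 1)$, the deterministic part $\mu\sum_{j\le n}x^j\asymp \mu/(1-x)$ dominates the standard deviation $(1-x^2)^{-1/2}$ of $\tilde P_n(x)$. Hence $P_n$ has, with high probability, only $o(\log n)$ roots (in fact $O(1)$ in expectation after rescaling) near $+1$. The same holds near $+1$ from the outside for the reciprocal polynomial, whose deterministic part is also $\mu\sum_j x^j$.
\item For $x$ near $-1$, the mean $\mu(1-x^{n+1})/(1-x)$ stays bounded by $O(1)$, while the noise has size $(1-x^2)^{-1/2}\to\infty$. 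There the non-centered root count should agree with the centered one up to $o(\sqrt{\log n})$ errors in $L^2$.
\end{itemize}
So $N_n = N_n^{-} + E_n$, where $N_n^-$ counts roots in the negative regions and $E_n$ has small second moment. For centered Kac polynomials the positive and negative regions contribute asymptotically independent pieces, each with variance $\frac{2}{\pi}(1-\frac2\pi)\log n$. This explains the factor one-half.

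Concretely, the plan has four steps.
\begin{enumerate}[label=(\roman*),leftmargin=*]
\item Fix $\delta\in(0,1)$ small. Show $\mathbb E[N_n(\pm[0,1-n^{-\delta}]\cup\dots)]$-type bounds: roots in $|x|\le 1-(\log n)^{-C}$-scale bulk regions and in $[1-1/n,1+1/n]$ contribute $O((\log\log n)^2)$ or $o(\log n)$ to the variance. These follow from the known centered estimates (Maslova-type and universality arguments, e.g.\ \cite{NV22D}, \cite{DN25}) after noting that adding a deterministic smooth function does not worsen Jensen-type root counting bounds.
\item In the positive region $[1-n^{-\delta'},1-1/n]$ near $1$ (and its reciprocal image), bound $\mathbb E[N^2]=o(\log n)$. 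Use a Jensen/Rolle argument on disks centered at $x$ of radius comparable to $1-x$. The number of zeros in such a disk is controlled by $\log\sup|P_n|/|P_n(x)|$. The anti-concentration $\mathbb P(|P_n(x)|\le \epsilon\mu/(1-x))$ is tiny by a Chebyshev bound on $\tilde P_n(x)$, since the noise is of smaller order $(1-x)^{-1/2}$. Summing over dyadic scales gives $O(\log\log n)$-type total counts, and in $L^2$ as well.
\item In the negative region $[-1+1/n,-1+n^{-\delta'}]$, compare $N$ for $P_n$ and $\tilde P_n$. I would first work in the Gaussian case via the Kac--Rice formula. The one- and two-point correlation functions of a Gaussian process with mean $m(x)$ differ from the centered ones by factors like $\exp(-m^2/2\sigma^2)$ and terms in $m'/\sigma'$. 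Since $m/\sigma$ and $m'(1-x)/\sigma$ are $O((1-|x|)^{1/2})$, after the change of variables $x=-1+e^{-t}$ the correlations differ by $O(e^{-ct})$-summable errors. Integrating gives a variance difference $o(\log n)$ and an $L^2$ comparison of the counts. For the CLT, Maslova's blocking over intervals $[-1+e^{-t_{k}},-1+e^{-t_{k+1}}]$ applies verbatim to the negative region, since local counts there are centered counts plus small errors. Summing asymptotically independent blocks gives normality with variance $\frac2\pi(1-\frac2\pi)\log n$, coming from the two negative subregions ($x\to-1^\pm$). Then transfer to general coefficients with $2+\varepsilon_0$ moments via the local universality results of \cite{DHNV15}, \cite{NV22D}. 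These compare local root statistics for coefficient vectors with matching first two moments, and the mean shift is common to both.
\item Combine: $\Var[N_n]=\Var[N_n^-]+o(\log n)$, and Slutsky gives the CLT.
\end{enumerate}

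The main obstacle is the negative region comparison in step (iii) at the level of variance, not just expectation. Correlations between nearby blocks must be controlled uniformly, and near $x=-1+1/n$ the mean term $\mu x^{n+1}/(1-x)$ is of order $n$, comparable to the noise $\sqrt n$. This transitional window of width $O(1/n)$ must be handled separately, using an $O(1)$ bound on the number of roots and their second moment there, which costs only $O(1)$ in the variance.
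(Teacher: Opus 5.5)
Your architecture is the paper's. On the positive side the mean $\mu(1-x^{n+1})/(1-x)\asymp(1-x)^{-1}$ swamps the noise $\asymp(1-x)^{-1/2}$, so that side contributes $o(\log n)$ variance. On the negative side $m(x)\asymp(1-|x|)^{1/2}$, and the non-centered Gaussian one- and two-point functions differ from the centered ones by integrable errors (Lemmas~\ref{l.cmean} and~\ref{l.cvar}). Universality then passes to general coefficients, and Cauchy--Schwarz plus Slutsky combine the two sides.

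Your positive-side argument (Chebyshev plus a Jensen/Rouch\'e count on disks $B(1-\delta,c\delta)$) is a legitimate and more elementary substitute for the paper's Gaussian Kac--Rice treatment, and it needs only second moments. For $\delta\ge B/n$, $M_n$ is zero-free on the disk with $|M_n|\gg|\mu|/\delta$, while $\mathbb E\sup_B|R_n|^2\ll\delta^{-1}$ by subharmonicity, so $\mathbb P(N_{P_n}(B)\ge1)\ll\delta$. Make sure the good event yields \emph{no} roots, via Rouch\'e or via Jensen with a radius ratio exceeding $\sup|P_n|/|P_n(x)|$. A bound of $O(1)$ roots per dyadic scale would sum to $O(\log n)$.

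The genuine gap is the CLT on the negative side. Comparing $\rho_1,\rho_2$ of $P_n$ with those of $R_n$ controls only $\mathbb E N$ and $\Var N$. It says nothing about $N_{P_n}-N_{R_n}$ on a common probability space, so your ``$L^2$ comparison of the counts'' does not follow, and two moments do not determine the limit law. Your claim that Maslova's blocking applies verbatim ``since local counts there are centered counts plus small errors'' presupposes exactly this missing pathwise comparison. Without it you would have to redo the Maslova/Nguyen--Vu dependence and moment estimates for the shifted process.

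The missing idea is the coupling argument of Lemma~\ref{l.cCLT}, with $P_n=M_n+R_n$:
\begin{itemize}
\item On a cell containing at most one root, the count equals the sign-change indicator at the endpoints.
\item $\sgn P_n(a)\ne\sgn R_n(a)$ forces $|R_n(a)|\le|M_n(a)|$, which has probability $\ll|m(a)|\ll(1-|a|)^{1/2}$.
\item Summing over cells gives $\mathbb E|N_{P_n}-N_{R_n}|\to0$ on the core region, so the known CLT for $N_{R_n}$ transfers by Slutsky.
\end{itemize}
Passing to non-Gaussian coefficients then needs universality for smooth functions of the $O(\log n)$ local counts (Theorem~\ref{t.Psi-u}), not merely of local correlation functions.

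If you adopt this, the cells must be short on the logarithmic scale, e.g.\ length $\delta^{1+\varepsilon}$ at distance $\delta$ from $-1$ with $\varepsilon<1/2$, so that Lemma~\ref{l.nearR} makes two roots in a cell unlikely. On a full dyadic cell, as used in Lemma~\ref{l.root-sign}, the probability of two roots stays bounded below because the rescaled process is asymptotically stationary. With $\delta^{-\varepsilon}$ cells per scale, the errors are $|\log\delta|^{C}\delta^{(\alpha-1)\varepsilon}$ (with $\alpha\in(1,2)$) and $\delta^{1/2-\varepsilon}$, both summable.

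Two smaller points.
\begin{itemize}
\item Near $x=-1+1/n$ the mean is $O(1)$ (indeed $|\mu x^{n+1}/(1-x)|\approx|\mu|/(2e)$), not of order $n$. So $m\asymp n^{-1/2}$ there, and the ``transitional window'' you single out as the main obstacle does not arise; the $O(1/n)$ windows need only the moment bounds of Theorem~\ref{t.outside-In}.
\item Your regions leave $[-1+n^{-\delta'},-1+(\log n)^{-C}]$, which carries variance $\asymp\delta'\log n$, either uncovered or in a ``bulk'' where it is not $o(\log n)$. Use a single cut-off $a_n$ with $\log(1/a_n)=o(\log n)$, so the outer region is negligible, and $a_n\ll(\log n)^{-A}$ for large $A$, so that the core error $\iint\frac{\phi^2(1-|x|)+\phi^2(1-|y|)}{(1-|x|)(1-|y|)}\,dx\,dy\asymp a_n\log n$ and the universality errors are $o(1)$. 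The paper takes $a_n=\exp(-\log^{d/4}n)$.
\end{itemize}
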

The variance constant $\frac{2}{\pi}\left(1-\frac{2}{\pi}\right)$ is exactly one-half of that obtained by Maslova \cites{M74V} for centered Kac polynomials. This reduction reflects a genuine asymptotic effect of the nonzero mean. In the centered case, real roots accumulate symmetrically near both $\pm 1$, and both regions contribute to the leading-order variance. In the non-centered setting, the deterministic component suppresses roots on one side, leaving a single dominant concentration region that governs the leading-order variance.


Figures \ref{fig1}, \ref{fig2}, and \ref{fig3} present numerical simulations supporting Theorem \ref{t.kac}. For each degree $2\le n\le 2000$, we generate $1500$ independent realizations with i.i.d.\ coefficients of mean $2$ and variance $1$.

\begin{figure}[htbp]
\centering
\includegraphics{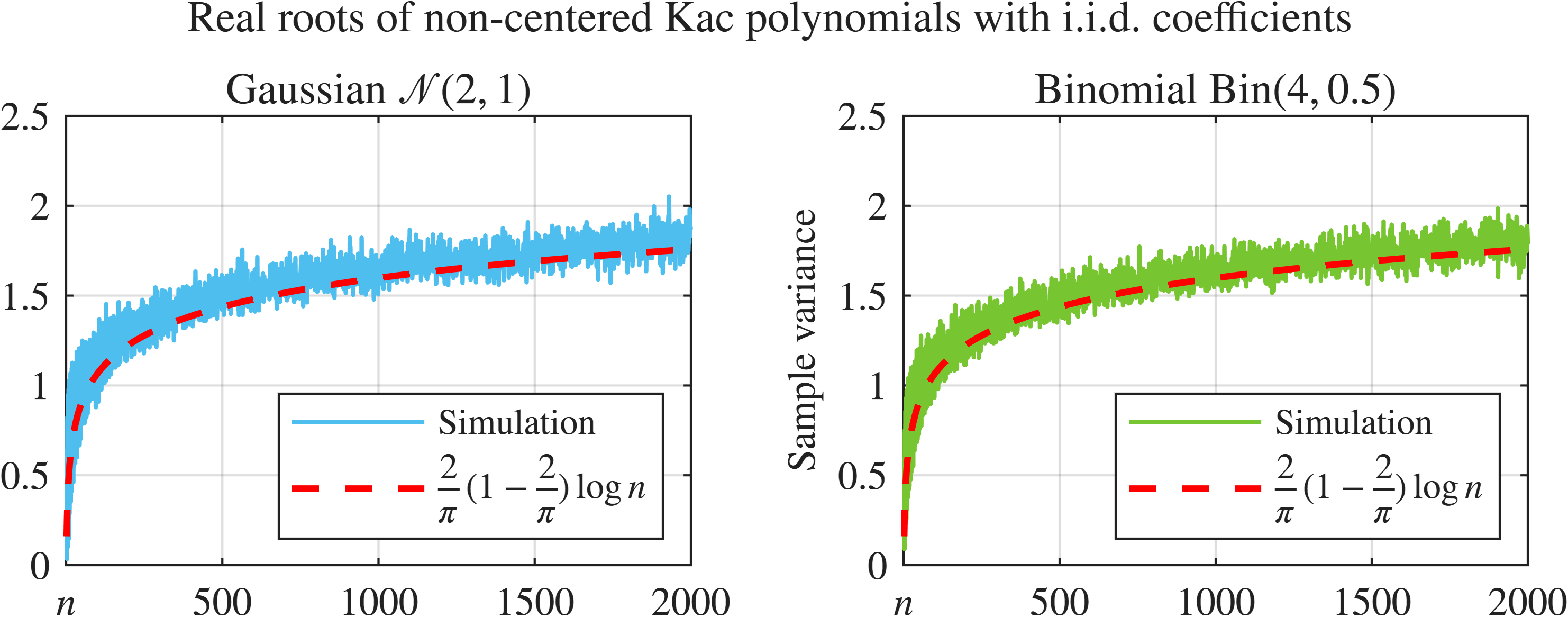}
\caption{Sample variance of $N_n$ as a function of $n$.}
\label{fig1}
\end{figure}

\begin{figure}[htbp]
\centering
\includegraphics{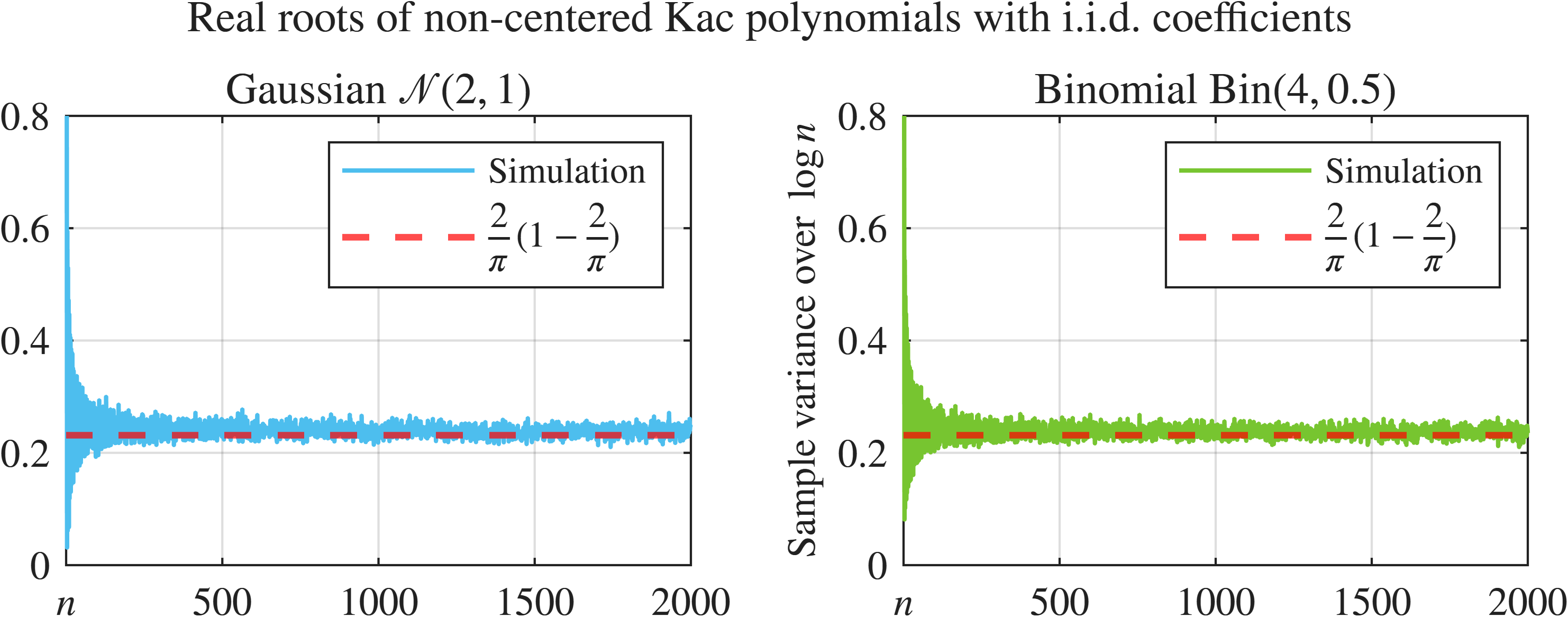}
\caption{Sample variance of $N_n$, normalized by $\log n$, converging to the theoretical limit $\frac{2}{\pi}\left(1-\frac{2}{\pi}\right)$.}
\label{fig2}
\end{figure}

\begin{figure}[htbp]
\centering
\includegraphics{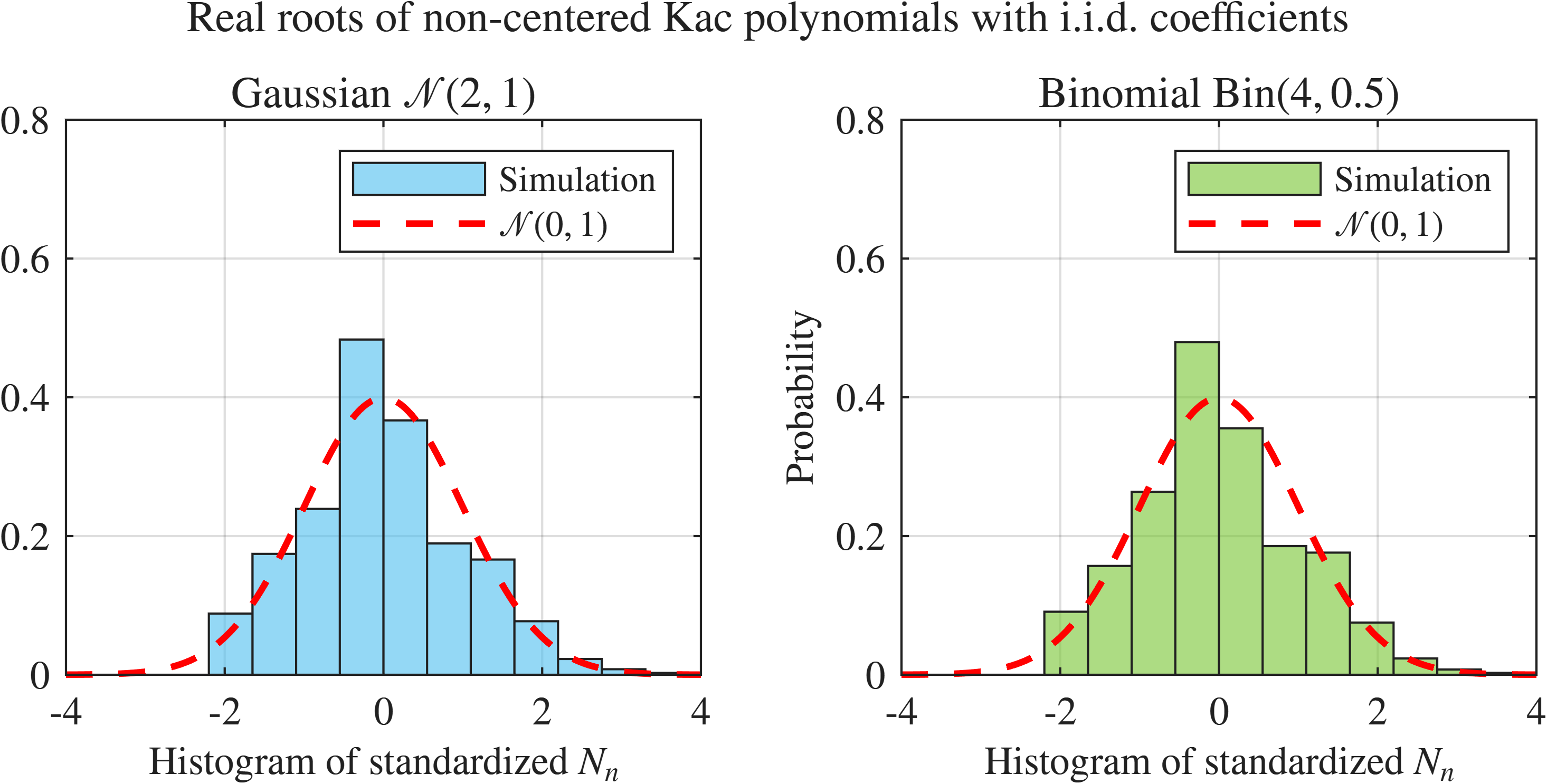}
\caption{Empirical distributions of the standardized $N_n$ compared with the standard normal density.}
\label{fig3}
\end{figure}

Theorem \ref{t.kac} extends naturally to derivatives of non-centered Kac polynomials. To state this precisely, we adopt the notation of \cite{DN25}. For any real number $\tau>-1/2$, define
    \[
    f_{\tau}(u):=\left(\sqrt{1-\Delta_{\tau}^2(u)}+\Delta_{\tau}(u) \arcsin \Delta_{\tau}(u)\right)\Sigma_{\tau}(u)-1,
    \]
where 
\begin{align*}
    \Delta_{\tau}(u)&=u^ {\tau+1/2}\frac{u\left(1-u^{2\tau+1}\right)-(2\tau+1)\left(1-u\right)}{1-u^{2\tau+1}-(2\tau+1)u^{2\tau+1}\left(1-u\right)},\\
    \Sigma_{\tau}(u)&=\frac{1-u^{2\tau+1}-(2\tau+1)\left(1-u\right)u^{2\tau+1}}{\left(1-u^{2\tau+1}\right)^{3/2}},
\end{align*}
and let
\begin{equation}
    \label{kappa}
    \kappa_\tau:=\frac{1}{\pi}\left(\frac{2\tau+1}{\pi}\int_{0}^{\infty}f_{\tau}\left(\sech^2v\right)dv+\frac{\sqrt{2\tau+1}}{2}\right).
\end{equation}
A direct computation (see \cite{DN25}) yields $\kappa_0=\frac 1{\pi}\left(1-\frac 2 {\pi}\right)$, so that the variance constant in Theorem \ref{t.kac} equals $2\kappa_0$, which is consistent with the following theorem.

\begin{theorem}[Derivatives of non-centered Kac polynomials]\label{t.kac-der} 
Let $P_n(x)=\sum_{j=0}^n\xi_j x^j$, where $\left(\xi_j\right)_{j=0}^n$ are real-valued independent random variables, with 
\[\mathbb E[\xi_j]=\mu,\quad \Var[\xi_j]=1,\quad \mbox{and }\quad \mathbb E[|\xi_j|^{2+\varepsilon_0}]<C_0\]
for all $j$ and some constants $\mu\ne 0$, $\varepsilon_0>0$, and $C_0>0$. For any integer $\ell\ge 0$, let $N_{n,\ell}$ denote the number of real roots of the $\ell$th derivative $P_{n}^{(\ell)}$\footnote{The case $\ell=0$ corresponds to $P_n$.}. Then as $n\to \infty$, 
    \[
    \Var[N_{n,\ell}]=\left[\kappa_\ell+\frac 1\pi \left(1-\frac 2\pi\right)+o(1)\right]\log n.   
    \]
Moreover, $N_{n,\ell}$ satisfies the CLT. 
\end{theorem}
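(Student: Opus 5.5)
We would reduce Theorem \ref{t.kac-der} to the centered theory of \cite{DN25} and \cite{NV22D} by splitting the real line into the four regions where the real roots of $P_n^{(\ell)}$ concentrate, namely neighborhoods of $+1$ and $-1$ (inside and outside the unit circle), and comparing with the centered polynomial $\tilde P_n=P_n-\mu\sum_j x^j$ region by region. By the reflection $x\mapsto 1/x$ and $x\mapsto -x$, the roots in $|x|>1$ correspond to the roots of the reciprocal polynomial. For $P_n^{(\ell)}$ this is a generalized Kac ensemble with polynomially growing weights of order $j^{\ell}$, and it is exactly the ensemble for which \cite{DN25} computes the variance constant $\kappa_\ell$ (split equally between the two sides, so that each of $\pm1$ contributes $\kappa_\ell/2$ from the appropriate pieces). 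The target constant $\kappa_\ell+\kappa_0$ then suggests the following bookkeeping. The side $x\approx -1$ should behave as in the centered case and contribute $\kappa_\ell$. On the side $x\approx +1$, the region $0<x<1$ should be completely suppressed. The region $x>1$ should contribute like an undifferentiated Kac polynomial, i.e.\ $\kappa_0$. This works because on $x>1$ the reversed polynomial $x^{n-\ell}P^{(\ell)}_n(1/x)$ has its relevant coefficients, those near $j=n$, with weights essentially constant $n^{\ell}$, hence Kac-type.

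\textbf{Step 1 (negative axis).} Here the deterministic part $\mu\frac{d^\ell}{dx^\ell}\frac{1-x^{n+1}}{1-x}$ is $O(1)$ near $-1$ on scales $1-|x|\gg 1/n$. The random part, by contrast, has standard deviation of order $(1-|x|)^{-\ell-1/2}$, which is large. A comparison principle would show that the number of roots in $[-1+\delta_n^{-1},-1+1/\log n]$ differs from that of $\tilde P_n^{(\ell)}$ by $o(\sqrt{\log n})$ in $L^2$. The planned argument is a Kac--Rice/Taylor perturbation: shifting a Gaussian field by a function of relative size $(1-|x|)^{\ell+1/2}$ changes local root counts on dyadic intervals by a small amount in probability, and we control the second moments via \cite{NV22D}-type universality. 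An analogous argument near $-1$ from outside uses the reciprocal form. \textbf{Step 2 (positive axis, inside).} On $(0,1)$ the mean $\mu\,\ell!(1-x)^{-\ell-1}$ dominates the noise, whose size is $(1-x)^{-\ell-1/2}$, by a factor $(1-x)^{-1/2}$. A large-deviation/anti-concentration bound (small ball for sums with $2+\varepsilon_0$ moments) then shows that the expected number of roots in $[1/2,1-1/\log^{C}n]$ is $o(1)$. The remaining pieces contribute $O(\log\log n)$ by Jensen-type root counting. \textbf{Step 3 (positive axis, outside).} After reversal, the mean term $\mu\sum_j x^{n-j}\cdot(\text{falling factorial of }j)$ is comparable to the noise only in the window near the circle. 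Away from it, we would show that the count matches that of a centered Kac-type polynomial whose coefficients carry weights $n^\ell(1+O(k/n))$; this yields variance $\kappa_0\log n/2$ per… side. Here we must take care: the half of $\kappa_0$ claimed by Maslova per region must be reconciled. Centered Kac has four regions contributing $\kappa_0/2$ each, total $2\kappa_0$. So the inside region at $-1$ and the outside region at $-1$ give $\kappa_\ell$ in total, and the outside region at $+1$ gives $\kappa_0/2$? That conflicts with the target, so the mean must \emph{not} suppress the outside of $+1$. Instead, there the mean acts only on low-order reversed coefficients, and the count equals the centered derivative count $\kappa_\ell/2$. The suppressed region is $(0,1)$, with corresponding loss $\kappa_\ell/2$. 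The sum $\kappa_\ell+\kappa_\ell/2$ must then match $\kappa_\ell+\kappa_0$, which would be checked using the formula \eqref{kappa}. I would settle this bookkeeping first by computing the one-region constants from \cite{DN25} before committing.

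\textbf{Step 4 (assembly and CLT).} The region counts are asymptotically independent, since they depend on disjoint coefficient blocks up to negligible error, as in Maslova's method. This lets us add the variances, and the CLT follows from the centered CLT of \cite{NV22D} applied on each surviving region. The \textbf{main obstacle} is the comparison principle in Steps 1 and 3: we need $L^2$ control, not just control in probability, of the difference of root counts under a deterministic shift, for non-Gaussian coefficients. I would first prove it for Gaussian coefficients via a two-point Kac--Rice correlation formula for non-centered fields, and then transfer it by local universality.
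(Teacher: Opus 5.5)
\textbf{There is a genuine gap: Step 3 assigns the surviving regions incorrectly, and the version you finally settle on is false.} In Step 2 you correctly find that on $(0,1)$ the normalized mean is $m(x)\asymp(1-x+1/n)^{-1/2}$. You never do the same computation on $(1,\infty)$. There the reversed polynomial is $y^{n-\ell}P_n^{(\ell)}(1/y)=\sum_{k=0}^{n-\ell}c_k\xi_{n-k}y^k$, with $c_k=(n-k)(n-k-1)\cdots(n-k-\ell+1)\approx n^{\ell}$ for $k\ll n$. For $y\in[1/2,1]$ its mean $\mu\sum_k c_ky^k$ has positive terms and is $\asymp n^{\ell}(1-y+1/n)^{-1}$. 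The noise has standard deviation $\asymp n^{\ell}(1-y+1/n)^{-1/2}$. Hence $m^*(y)\asymp(1-y+1/n)^{-1/2}$, exactly the same domination as on $(0,1)$.

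Two consequences follow. First, the mean is \emph{most} dominant near the circle ($m^*\asymp\sqrt n$), not ``comparable to the noise'' there. Second, the \emph{entire} positive half-line is suppressed, so your conclusion that ``the mean must not suppress the outside of $+1$'' is wrong. On the negative half-line the coefficients of the mean alternate in sign, so both $m$ and $m^*$ are of order $(1-|x|+1/n)^{1/2}$ and that half-line behaves as in the centered case.

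Your ``conflict'' comes from a wrong split of the centered constant. By \cite{DN25} (Proposition~\ref{p.DN}), the centered variance on \emph{each} half-line is $(\kappa_\ell+\kappa_0)\log n$. The inner region carries $\kappa_\ell$ (weights $j^\ell$). The outer region carries $\kappa_0=\frac1\pi(1-\frac2\pi)$, because the reversed weights are $\approx n^\ell$ and hence Kac-type. That is your own observation, and it applies on $x<-1$ just as on $x>1$. The constant is not ``$\kappa_\ell$ split equally between the sides''. Likewise, Maslova's constant is $4\kappa_0$, so each Kac region carries $\kappa_0$, not $\kappa_0/2$. The identity $\kappa_\ell/2=\kappa_0$ you propose to check from \eqref{kappa} is false (already at $\ell=0$), so the plan cannot close as written. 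The correct accounting is: $(-1,0)$ contributes $\kappa_\ell$, $(-\infty,-1)$ contributes $\kappa_0$, and $(0,\infty)$ contributes $o(\log n)$.

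With the correct accounting, Step 4 changes and simplifies. You do not need asymptotic independence of region counts. Since $\Var[N_{n,\ell}((0,\infty))]=o(\log n)$, the cross-covariance is negligible by Cauchy--Schwarz. The negative half-line must then be compared with the centered $R_n^{(\ell)}$ \emph{as a whole}: the two-point comparison has to include the cross term between $(-1,0)$ and $(-\infty,-1)$. After that, the centered CLT of \cite{NV22D} applies to that half-line, whose variance is $\gg\log n$. A CLT ``on each surviving region'' would not by itself give the CLT for the sum.

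The scales in Step 2 are also inverted. On $[1/2,1-1/\log^C n]$ the expected count is $\Theta(1)$, not $o(1)$, because $m=O(1)$ near $x=1/2$. The $\Theta(\log n)$ centered roots live in $[1-1/\log^C n,1-B/n]$. There Jensen-type local counting gives only polylogarithmic bounds (order $\log^2 n$), not $O(\log\log n)$. This core region is exactly where you need the Kac--Rice suppression factor $e^{-m^2/2}$ with $m\gtrsim\log^{C/2}n$, plus universality, to get variance $o(1)$.

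Your intended tools are the right ones: a non-centered Gaussian two-point Kac--Rice formula transferred by local universality. They just have to be applied with the correct dominated/dominating dichotomy: domination on all of $(0,\infty)$, and negligible mean on all of $(-\infty,0)$.
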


Theorems \ref{t.kac} and \ref{t.kac-der} are special cases of a more general result for non-centered hyperbolic polynomials. For $L > 0$, define
\begin{equation}\label{e.hyperbolic}
    P_{n,L}(x):= \xi_0 + \sqrt L \xi_1 x + \dots + \sqrt{\frac{L\left(L+1\right)\cdots \left(L+n-1\right)}{n!}}\xi_n x^n,
\end{equation}
where $(\xi_j)_{j=0}^n$ are independent random variables of comparable size. While the coefficients are often assumed to be i.i.d., our results allow for more general assumptions. For a historical overview of this polynomial, see \cite{HKPV09}. When $L=1$, the polynomial $P_{n, L}$ simplifies to the standard Kac polynomial.

\begin{theorem}[Non-centered hyperbolic polynomials] \label{t.hyperkac} 
Fix $L>0$ and let $P_{n,L}$ be the random hyperbolic polynomial defined in \eqref{e.hyperbolic}, where $(\xi_j)_{j=0}^n$ are independent real-valued random variables satisfying
\[\mathbb E[\xi_j]=\mu,\quad \Var[\xi_j]=1,\quad \mbox{and }\quad \mathbb E[|\xi_j|^{2+\varepsilon_0}]<C_0\]
for all $j$, with constants $\mu\ne 0$, $\varepsilon_0>0$, and $C_0>0$. For any integer $\ell\ge 0$, let $N_{n,\ell}$ represent the number of real roots of the $\ell$th derivative $P_{n,L}^{(\ell)}$. Then, with $\tau=\ell+\frac{L-1}{2}$ and $\kappa_\tau$ defined in \eqref{kappa}, we have
\begin{equation} \label{e.varhyper}
    \Var[N_{n,\ell}]=\left[\kappa_\tau+\frac 1\pi \left(1-\frac 2\pi\right)+o(1)\right]\log n.   
\end{equation}
Moreover, $N_{n,\ell}$ satisfies the CLT. 
\end{theorem}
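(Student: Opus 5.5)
Write $P=P_{n,L}^{(\ell)}$ and split $P=\mu D_n+Q_n$, where $D_n(x)=\sum_j c_j x^j$ is the deterministic part, with $c_j=\sqrt{L(L+1)\cdots(L+j-1)/j!}$ differentiated $\ell$ times. The centered part $Q_n$ is a hyperbolic derivative with centered, unit-variance coefficients. For $Q_n$ the variance asymptotics are known from \cite{DN25}, and the CLT from \cite{NV22D}. The centered variance is $[2\kappa_\tau+o(1)]\log n$, split equally between the two root-concentration regions near $+1$ and $-1$. The root counts of $P(x)$ and $x^nP(1/x)$ are related by inversion, and the positive and negative half-lines are related by $x\mapsto -x$. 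So it suffices to treat four regions: $|x|\le 1$ and $|x|\ge 1$, each with $x>0$ and $x<0$.

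\textbf{Step 1: localize.} Following Maslova, partition $\mathbb{R}$ into $[-1,0]$, $[0,1]$ and their reciprocals. Use standard estimates to show that roots away from $\pm1$, namely in $|1-|x||\ge 1/\log^{C}n$-type shells and in the regions $|x|\le 1-o(1)$ with bounded counts, contribute $o(\log n)$ to the variance and are negligible for the CLT.

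\textbf{Step 2: the side near $+1$ (suppression).} For $\mu>0$ and $x\in[1-\delta,1)$, we have $D_n(x)\asymp (1-x)^{-(L/2+\ell)-\cdots}$. This is much larger than the standard deviation of $Q_n(x)$, which is $\asymp (1-x)^{-(L+2\ell)/2}$ up to a strict gain factor $(1-x)^{-1/2}$ after the usual normalization. Hence on $[x_0,1)$, with $1-x_0=\log^{-C}n$, we get $P>0$ with overwhelming probability except in a window of width where $(1-x)\gtrsim$ constant. The expected number of roots there is $O(1)$ and its variance is $O(1)$. I would check this carefully via a small-ball / anti-concentration bound for $Q_n/\sqrt{\Var Q_n}$ and a union bound over a net, together with a Jensen-type root count in small disks.

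\textbf{Step 3: the side near $-1$ (comparison).} For $x\in(-1,-1+\delta]$, the alternating sum $D_n(x)$ stays $O((1-|x|)^{-(\ell+L/2)+1/2}\cdot\text{small})$. More precisely, its ratio to $\sqrt{\Var Q_n(x)}$ tends to $0$ like $(1+x)^{1/2}$, uniformly after the Maslova rescaling, and similarly for derivatives. I would prove the comparison principle that the root counts of $P$ and $Q_n$ on each Maslova interval $I_k=[-1+2^{-k-1},-1+2^{-k}]$ agree in all moments up to $o(1)$ errors summable over $k\le\log n$. To do this, compare the local root counts via a Kac–Rice/universality argument for a Gaussian process with a vanishing drift, using the two-point correlation formula for non-centered Gaussian processes. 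Then apply the Lindeberg swapping of \cite{NV22D} for non-Gaussian coefficients. This yields the contribution $\tfrac12\cdot 2\kappa_\tau\log n=\kappa_\tau\log n$ from $|x|<1$ near $-1$.

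\textbf{Step 4: the reciprocal region and assembly.} For $|x|>1$, the reversed polynomial has coefficients $c_{n-j}$, which behave like Kac coefficients. Its top coefficients have mean $\mu c_n$ but no hyperbolic growth, so the relevant $\tau$ is that of the Kac case ($\tau=0$). The drift is again suppressed on one side, and the other side gives $\kappa_0\log n=\frac1\pi(1-\frac2\pi)\log n$ by the same comparison. The four regional counts are asymptotically independent: roots in well-separated Maslova intervals decorrelate, as in \cite{DN25}. Summing the contributions gives \eqref{e.varhyper}. The CLT follows because $N$ is a sum of weakly dependent local counts, $\Var\gtrsim\log n$, and the argument of \cite{NV22D} applies.

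The main obstacle is Step 3. One must show that the vanishing drift changes neither the variance to leading order nor the CLT, uniformly across the $\log n$ scales near $-1$. The drift's effect decays only polynomially in $1+x$, so the errors must be summable. The delicate part is controlling the two-point correlation under drift near the diagonal, as well as the non-Gaussian universality with a mean term.
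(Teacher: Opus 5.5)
Your outline has the same architecture as the paper's proof. You split off the mean. On the $+1$ side the normalized mean is $\asymp(1-x+1/n)^{-1/2}$, which beats $C|\log(1-x+1/n)|^{1/2}$, so that side contributes $o(\log n)$. On the $-1$ side the mean is dominated up to order $2$, so that side is compared with the centered ensemble using the non-centered two-point Kac--Rice formula and Lindeberg swapping. The reciprocal is Kac-like, and the constant is $\kappa_\tau+\kappa_0$.

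You depart from the paper in three places.
\begin{enumerate}
\item \emph{Dominating side.} You argue directly, with tail bounds at net points plus Jensen. The paper instead reduces to Gaussian coefficients and uses the factors $e^{-m^2(x)/2}$ in $\rho_1$ and $E(x,y)\le e^{-(m^2(x)+m^2(y))/4}$ in $\rho_2$ (Theorem~\ref{t.Cvar}). Your route avoids universality there. With only $2+\varepsilon_0$ moments, though, the failure probability is only polynomially small in $1-x$, not overwhelming; that still suffices after summing over dyadic scales.
\item \emph{CLT.} You propose re-running the weak-dependence argument of \cite{NV22D} with a drift. The paper avoids this with a coupling. For Gaussian coefficients, $P_n=M_n+R_n$ and $R_n$ live on one probability space, and root counts on the dyadic pieces are replaced by endpoint sign changes. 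Since $\mathbb P(\sgn P_n(a)\ne\sgn R_n(a))\ll|m(a)|$, this gives $\mathbb E|N_{P_n}-N_{R_n}|\ll a_n^c$ on the core (Lemma~\ref{l.cCLT}). The centered CLT then transfers by Slutsky, universality handles general coefficients, and Chebyshev disposes of the $+$ side. This is much cheaper than redoing Maslova's method.
\item \emph{Assembly.} You invoke asymptotic independence of four regions. The paper needs only Cauchy--Schwarz across the two half-lines, since one of the two variances is $O(\sqrt{\log n})$. It treats the regions just inside and just outside $-1$ jointly, via Proposition~\ref{p.DN} on the whole negative half-line and the mixed bounds of Lemmas~\ref{l.cor-in-out} and~\ref{l.cvar-mix}. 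Those two regions are adjacent at $-1$, not well separated. They decorrelate, with $|r(x,y)|\le e^{-B/2}$, only after a window of width $O(1/n)$ around $-1$ is excised.
\end{enumerate}

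Several points you would still have to supply or correct.
\begin{enumerate}
\item \emph{The $O(1/n)$ window.} Your Step~1 does not treat it. Theorem~\ref{t.localcount} only bounds its moments by $(\log n)^{C_2k}$, which is too weak for an $o(\log n)$ variance. The paper uses Theorem~\ref{t.multi.roots} (Rolle plus Gaussian anti-concentration, exploiting the domination) and Lemma~\ref{l.mean.Gauss} to get $O(\log^k(1/a_n))$ moments there.
\item \emph{Cross-covariances.} Agreement of moments on each $I_k$ separately does not control the roughly $\log^2 n$ cross-covariances between different $I_k$. What works is integrating the bound $|\rho_2-\widetilde\rho_2|\ll(\phi^2(1-|x|)+\phi^2(1-|y|))/((1-|x|)(1-|y|))$ over the square of the core. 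The total is $\ll\log n\cdot\sup_{\text{core}}\phi^2$, which is $o(\log n)$ precisely because the core starts at $1-|x|=o(1)$.
\item \emph{Centered variance.} The centered variance on $\mathbb R$ is $[2\kappa_\tau+2\kappa_0+o(1)]\log n$; your $2\kappa_\tau$ is only the $(-1,1)$ part.
\item \emph{Decay rate near $-1$.} For $\ell=0$ and $L<1$, the normalized mean just inside $-1$ is only $O((1-|x|+1/n)^{L/2})$, not $O((1+x)^{1/2})$. The paper uses $\theta=\min\{L,1\}/2$; any positive exponent suffices.
\end{enumerate}
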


Theorem \ref{t.hyperkac} is a direct consequence of the broad comparison principles developed in the next subsection. These principles form the technical core of this paper, providing a systematic framework for studying the fluctuations of real roots in non-centered random polynomials.

 \subsection{Comparison principles}
Our comparison method relies on decomposing the random polynomial $P_n$, defined in \eqref{e.pn}, into its deterministic and stochastic components. Specifically, we isolate the deterministic part
 \[M_n(x):=\mathbb E [P_n(x)]\]
and define the centered fluctuation by
 \[R_n(x):=P_n(x)-M_n(x).\]
Building on earlier work by the first author \cite{Do21}, we develop novel comparison principles for random polynomials whose coefficients exhibit polynomial growth, commonly referred to as \emph{generalized Kac polynomials} \cite{DONV18}. A central object in our analysis is the normalized deterministic profile 
 \[
 m(x):=\frac{M_n(x)}{\sqrt{\Var[R_n(x)]}},
 \]
which establishes the natural scale for comparing the relative influence of the deterministic and random components of $P_n$.

Before stating the comparison principles, we introduce some notation and conventions that will be used throughout.

\begin{condition}[Polynomial growth] \label{c.A} 
We say that the coefficients $(\omega_j)_{j=0}^n$ of the polynomial $P_n$ in \eqref{e.pn} exhibit polynomial growth of order $\tau$ if, 
\[\omega_j=m_j+v_j\xi_j,\quad j\ge 0,\]
where $(m_j)_{j=0}^n$ and $(v_j)_{j=0}^n$ are deterministic, and $(\xi_j)_{j=0}^n$ are independent real-valued random variables with zero mean and unit variance, such that the following conditions hold for some constants $\varepsilon_0>0$, $C_0>0$, and $N_0>0$:
\begin{enumerate}[label={\rm(A\arabic*)}]
    \item \label{A1}  $\mathbb E[|\xi_j|^{2+\varepsilon_0}]<C_0$ for all $0\le j\le n$;
    \item \label{A2} $|m_j|, |v_j| \le C_0 (1+j)^{\tau}$ for all $0\le j\le n$; and 
    \item \label{A3} $|v_j| \ge\frac{1}{C_0} (1+j)^{\tau}$ for $N_0\le j\le n$.
\end{enumerate} 
\end{condition}
These assumptions ensure that for $j$ sufficiently large, the magnitude of $\omega_j$ is, with high probability, comparable to $j^{\tau}$. Under this decomposition, we may write
\[
M_n(x)=\sum_{j=0}^n m_j x^j,\quad R_n(x)=\sum_{j=0}^n v_j \xi_j x^j,
\]
yielding the representation
\[
P_n(x)=M_n(x)+R_n(x).
\]
This places $P_n$ in a natural signal-plus-noise framework, which is central to our analysis. Such models arise, for example, when a deterministic polynomial is perturbed by measurement or sampling noise, as in polynomial regression \cite{DS98}. In particular, if $M_n$ is a true deterministic polynomial regression function and the errors are centered Gaussian, then the least-squares estimator $P_n$ is a random polynomial satisfying $\mathbb{E}[P_n(x)]=M_n(x)$, and hence fits within the present framework.

\begin{notation} \label{not:asymptotic}
Unless otherwise specified, all asymptotic statements are understood in the limit $n \to \infty$. For real quantities $X$ and $Y$, we write $X=O(Y)$, $X\ll Y$, or equivalently  $Y\gg X$, if there exists a constant $C>0$, independent of $Y$, such that $|X| \le CY$. When needed, subscripts (e.g., $\ll_t$) indicate the dependence of $C$ on specified parameters. If both $X\ll Y$ and $X\gg Y$ hold, we write $X\asymp Y$ or $X=\Theta(Y)$. For real sequences $(\alpha_n)$ and $(\beta_n)$, we write $\alpha_n \sim  \beta_n$ if $\alpha_n / \beta_n \to 1$ as $n\to \infty$, and $\alpha_n = o(\beta_n)$ if  $\alpha_n / \beta_n \to 0$.

For a nonempty set $I\subset\mathbb R$, we adopt the notation 
\[-I:=\{-x: x\in I\}\quad\text{and}\quad I^{-1}:=\{x^{-1}: x\in I\}.\]

For a nonempty set $S\subset \mathbb C$ and a function $P: \mathbb C\to \mathbb C$, we denote by $N_P(S)$ the number of zeros of $P$ in $S$, counted with multiplicity.

Finally, let $P^*_n(x):=x^n P_n\left(1/x\right)$ be the reciprocal polynomial associated with $P_n(x)$. We define $M_n^*(x)=\mathbb E[P_n^*(x)]$, $R_n^*(x)=P_n^*(x)-M_n^*(x)$, and the normalized deterministic profile
\[
m^*(x)=\frac{M_n^*(x)}{\sqrt{\Var[R_n^*(x)]}}.
\]
\end{notation}

\begin{definition}[Enlargement of an interval]
Let $I \subset \mathbb{R}$ be an interval (open, closed, or half-open) with endpoints $a<b$. An interval $J\supset I$ is called an \emph{enlargement} of $I$ if it is obtained by extending $I$ beyond $a$ by $\Theta(|1-|a||+n^{-1})$ and beyond $b$ by $\Theta(|1-|b||+n^{-1})$.
\end{definition}

\begin{remark}
    Since our results impose conditions on $J$ and draw conclusions about $I$, the enlargement should be chosen as small as possible. In certain cases, the enlargement can be relaxed. Specifically, no left extension is required if $|1-|a||$ is bounded below by a positive absolute constant. An analogous statement holds for the right endpoint $b$. In particular, for $I\in \{(-\infty,0), (0,\infty)\}$, one may take $J=I$.
\end{remark}

\begin{definition}[Domination]
    Let $k\ge 0$ be an integer. We say that $M_n$ \emph{dominates} $R_n$ on a set $J\subset \mathbb R$ up to order $k$ with factor function $\phi$ if, for all $i=0,1,\dots, k$,
    \[
    |M_n^{(i)}(x)| \ge  \phi(1-|x|+1/n) \sqrt{\Var[R_n^{(i)}(x)]},\quad x\in J\cap [-1, 1],
    \]
    and 
    \[
    |{M_n^*}^{(i)}(1/x)| \ge  \phi(1-|1/x|+1/n) \sqrt{\Var[{R_n^*}^{(i)}(1/x)]},\quad x\in J\backslash [-1, 1].
    \]
If the above inequalities are reversed, we say that $M_n$ is \emph{dominated by} $R_n$ on $J$ up to order $k$ with factor function $\phi$. 
\end{definition}

For brevity, when $k=0$ we omit the phrase ``up to order $k$''. Roughly speaking, when $M_n$ dominates $R_n$, the deterministic polynomial is large compared to the size of the random fluctuations, and the same comparison holds for their derivatives up to the prescribed order.

\begin{remark}
The domination conditions admit an equivalent formulation in terms of the normalized profile $m$. Indeed, under Assumptions~\ref{A2} and~\ref{A3}, Lemma~\ref{l.estvar} shows that for any integer $i\ge 0$, there exist constants $C_i$ and $C_i'$ such that, uniformly for $x\in [-1,1]$,
    \[
    \Var[R_n^{(i)}(x)]\asymp \frac{C_i\Var[R_n(x)]}{(1-|x|+1/n)^{2i}}\quad \text{and}\quad 
    \frac{d^i}{dx^i}\Var[R_n(x)] \asymp\frac{C_i'\Var[R_n(x)]}{(1-|x|+1/n)^{i}}.
    \]
Consequently, the condition
    \[
    |M_n^{(i)}(x)| \gg  \phi(1-|x|+1/n) \sqrt{\Var[R_n^{(i)}(x)]},\quad x\in [-1, 1],
    \]
is equivalent to
    \[
    |m^{(i)}(x)| \gg  \frac{\phi(1-|x|+1/n)}{(1-|x|+1/n)^i},\quad x\in [-1, 1].
    \]
An analogous equivalence holds for $m^*$. Although the formulation in terms of $m$ is often less convenient to verify in applications, it yields a more concise statement.

\end{remark}

We now state the comparison principles, which are summarized in the following theorems.

\begin{theorem}[Variance comparison]\label{t.varcomp} Assume that the coefficients of $P_n$ in \eqref{e.pn} have polynomial growth of order $\tau>-1/2$. Let $I\subset \mathbb R$ be an interval (possibly depending on $n$), and let $J$ be an enlargement of $I$. Fix $d\in(0,1)$.
\begin{enumerate}
    \item There exists a constant $C>0$ such that, if $M_n$ dominates $R_n$ on $J$ with factor function $C|\log x|^{1/2}$, then as $n\to \infty$,
\begin{equation*} 
   \Var[N_{P_n}(I)]=O(\log^dn).
\end{equation*}
\item  Let $C,\theta>0$ be constants. If $M_n$ is dominated by $R_n$ on $J$ up to order 2 with factor function $C|x|^{\theta}$, then as $n\to \infty$,
    \[
   \Var[N_{P_n}(I)]=\Var[N_{R_n}(I)]+O(\log^d n).
   \]
\end{enumerate}
\end{theorem}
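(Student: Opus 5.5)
We reduce both parts to the four model regions $[-1,1]$, $-[-1,1]$, and their reciprocals. Passing to the reciprocal polynomial $P_n^*$ maps $J\setminus[-1,1]$ into $[-1,1]$ and preserves the hypotheses, since coefficients of order $\tau$ become reversed coefficients that are still controlled near the edge by Lemma~\ref{l.estvar}. So it suffices to treat $I\subset[0,1]$ (the case $x\mapsto -x$ is symmetric). We then split $I$ dyadically according to the scale $\delta(x)=1-x+1/n$: $I_k=\{x\in I:\delta(x)\in[2^{-k-1},2^{-k})\}$, $k=0,\dots,\log_2 n$. On each $I_k$ we rescale by $x=1-2^{-k}t$. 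The rescaled normalized process $R_n(x)/\sqrt{\Var R_n(x)}$ then has covariance approximately that of the limiting stationary-type Gaussian kernel, so real zeros on $I_k$ behave like zeros on an $O(1)$ window. The standard universality/Maslova decomposition for generalized Kac polynomials (as in \cite{Do21}, \cite{NV22D}, \cite{DN25}) gives correlations between windows $I_k,I_{k'}$ that decay like $e^{-c|k-k'|}$. This yields $\Var[N(I)]\ll\sum_{k,k'}e^{-c|k-k'|}\,(\Var N(I_k)\Var N(I_{k'}))^{1/2}$, plus small error terms coming from events where the local approximation fails.

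\textbf{Part (1).} Domination with factor $C|\log\delta|^{1/2}$ means that on $I_k$ we have $|m(x)|\ge C\sqrt{k}$. We show that $N(I_k)$ is zero except on an event of probability $\ll e^{-c C^2 k}$. A root in $I_k$ forces $|R_n(x)|/\sqrt{\Var R_n(x)}\ge C\sqrt k$ somewhere on an $O(1)$ rescaled window. By a chaining/Bernstein bound on the derivative, using Lemma~\ref{l.estvar} for the derivative variance, together with a Gaussian-type tail via the $2+\varepsilon_0$ moment and a small-ball/Lindeberg replacement, this probability is $\ll k^{-A}$ for $C$ large. The Gaussian tail is only available up to polynomial losses from the moment condition, so we only claim $k^{-A}$. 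Combined with the Jensen-type bound $\mathbb E[N(I_k)^{p}]\ll 1$ for the root count in a disc of comparable radius, Hölder gives $\mathbb E[N(I_k)^2]\ll k^{-A'}$, and hence $\sum_k \mathbb E[N(I_k)^2]^{1/2}$ is bounded. The windows with $k\le \log^d n$ are trivially at most $O(\log^d n)$ in total. Summing with Cauchy–Schwarz gives $\Var N(I)\le \mathbb E[N(I)^2]=O(\log^d n)$. Choosing $C$ depending on $d$ is where the constant enters.

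\textbf{Part (2).} Here $|m^{(i)}|\le C\delta^{\theta-i}$ for $i\le 2$, so on $I_k$ the normalized signal and its first two rescaled derivatives are $O(2^{-k\theta})$. We compare $N_{P_n}(I_k)$ with $N_{R_n}(I_k)$ window by window. For $k\ge K:=\log^{d}n$ we show $\mathbb E|N_{P_n}(I_k)-N_{R_n}(I_k)|^2\ll 2^{-c\theta k}$ plus a polynomially small error in $n$. The route is to approximate the count by a smooth count $\int\varphi_\epsilon(F)|F'|$ on the rescaled process, or to use the Rouché/Hurwitz stability argument: roots move by at most $O(2^{-k\theta}/|F'|)$, and the bad event where $|F|$ and $|F'|$ are simultaneously small, or where a root lies near the boundary of $I_k$, has small probability by the anti-concentration (small-ball) estimates for generalized Kac polynomials from \cite{DONV18}. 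The second-derivative bound controls the double-root scenario. The windows $k<K$ contribute $O(K)$ to both counts in $L^2$. Writing $N_{P_n}(I)=N_{R_n}(I)+E$ with $\|E\|_2\ll K^{1/2}+\sum_k 2^{-c\theta k/2}=O(\log^{d/2}n)$, we obtain $\Var N_{P_n}=\Var N_{R_n}+O(\|E\|_2\sqrt{\Var N_{R_n}}+\|E\|_2^2)$. Since $\Var N_{R_n}(I)=O(\log n)$, this error is $O(\log^{(1+d)/2}n)$, which is $O(\log^{d'}n)$ for any $d'>(1+d)/2$. Because $d$ is arbitrary in $(0,1)$, we simply apply the argument with a smaller $d$.

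The main obstacle is the local comparison step in Part (2): bounding the second moment of the root-count difference on each window in the non-Gaussian setting with only $2+\varepsilon_0$ moments. This needs uniform small-ball bounds for $(F,F')$ jointly on windows near $\pm1$, together with control of roots near window boundaries. The enlargement $J$ is used exactly here, so that the perturbation bounds hold in a neighborhood of $I$.
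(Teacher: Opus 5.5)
Your route is genuinely different from the paper's. You couple $P_n$ and $R_n$ through the same $\xi_j$ and compare them window by window. The paper instead swaps to Gaussian coefficients on the core region $\mathcal I_n$ (Theorem~\ref{t.general-u}, Corollary~\ref{c.kth-moment-u}) and bounds the complement crudely (Theorem~\ref{t.outside-In}, the source of the $\log^d n$). In the Gaussian case it then compares $\rho_1,\rho_2$ with their centered analogues via the explicit formula of Theorem~\ref{t.rho2}. Under the hypotheses on $m,m',m''$ this gives $|\rho_2-\widetilde{\rho}_2|\ll(\phi^2(1-|x|)+\phi^2(1-|y|))/((1-|x|)(1-|y|))$ (Lemma~\ref{l.cvar}), hence an $o(1)$ variance discrepancy on the core with no coupling of root counts.

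Your Part (1) is a legitimate and more direct alternative: a root forces $|R_n/\sigma|=|m|\ge C\sqrt k$ on $I_k$. Done right, it even gives $O(1)$. This requires the tail of $\sup_{I_k}|R_n/\sigma|$ to come from a Gaussian comparison at the points of a $\delta^{1+\epsilon}$-net, plus a crude derivative bound. Since no coefficient dominates $R_n(x)/\sigma(x)$, the per-point error there is a power of $\delta=2^{-k}$. A $(2+\varepsilon_0)$-moment/Markov argument gives only $k^{-1-\varepsilon_0/2}$. That is not summable after H\"older, because the available local moment bounds (Theorem~\ref{t.localcount}) carry $|\log\delta|^{C_2p}$ losses; the $O(1)$ bound you quote is not available.

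The genuine gap is the last step of Part (2).
\begin{itemize}
\item \emph{The Cauchy--Schwarz cap.} Writing $N_{P_n}(I)=N_{R_n}(I)+E$ and bounding $\Cov(N_{R_n}(I),E)$ by Cauchy--Schwarz costs $\sqrt{\Var N_{R_n}(I)}\,\|E-\mathbb E E\|_2$. In general $\|E-\mathbb E E\|_2$ does not tend to zero. Away from $\pm1$ the hypothesis only gives $|m|\le C$, and for $-I=(-\infty,0)$ in Section~\ref{s.proof.main} one has $m(0)=\mu\neq0$. There the counts of $P_n$ and $R_n$ generically differ by a non-degenerate order-one amount. So your scheme is capped at $O(\log^{1/2}n)$. ``Apply the argument with a smaller $d$'' cannot help, since $(1+d)/2>1/2$ for every $d$; the statement is not reached for $d<1/2$.
\item \emph{The cutoff $K$.} It is beside the point. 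A per-window bound summable from $k=1$ already gives $\|E\|_2=O(1)$, while $\|E_{<K}\|_2\ll K^{1/2}$ would itself need decorrelation.
\item \emph{What is needed for small $d$.} Going below $\log^{1/2}n$ requires a real estimate of the cross covariance, for example the cross-scale decorrelation you invoke at the outset, applied to $\Cov(N_{R_n}(I_k),E_{k'})$. That estimate is not available off the shelf for non-centered ensembles. Note that the paper's gluing of $\mathcal I_n$ to its complement in Section~\ref{s.reduce.Gaussian} involves the same kind of cross covariance, where Cauchy--Schwarz gives only $\log^{(1+d)/2}n$. Section~\ref{s.proof.main} needs just $o(\log n)$.
\item \emph{The per-window comparison.} The per-window $L^2$ comparison, which you rightly call the main obstacle, is only sketched. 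Along $R_n+sM_n$, $s\in[0,1]$, the count on a window can change only if some $x$ has $|R_n/\sigma|\le|m|$ and $|(R_n/\sigma)'|\le|m'|$, or an endpoint has $|R_n/\sigma|\le|m|$. So what you must actually prove is a joint small-ball bound for $(R_n/\sigma,\delta(R_n/\sigma)')$, uniformly on windows, for non-Gaussian coefficients. The paper avoids this by passing to Gaussian coefficients first and then working with correlation functions.
\end{itemize}
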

\begin{remark}
    To apply the comparison principles in the proofs of the main theorems, we will show in Section~\ref{s.proof.main} that $M_n$ dominates $R_n$ on $(0,\infty)$ with factor function $C|x|^{-1/2}$, whereas $M_n$ is dominated by $R_n$ on $(-\infty,0)$ with factor function $C|x|^{\theta}$ for some constants $C,\theta>0$. 
\end{remark}
\begin{theorem}[CLT comparison] \label{t.cltcomp} Assume that the coefficients of $P_n$ in \eqref{e.pn} have polynomial growth of order $\tau>-1/2$. Let $I\subset \mathbb R$ be an interval (possibly depending on $n$), and let $J$ be an enlargement of $I$. Fix constants $C, \theta,\epsilon>0$. Suppose that $M_n$ is dominated by $R_n$ on $J$ up to order $2$ with factor function $C|x|^{\theta}$, and that
\[
\Var[N_{R_n}(I)] \ge \epsilon \log n.
\]
Then $N_{P_n}(I)$ satisfies the CLT; namely, as $n\to\infty$,
    \[
    \frac{N_{P_n}(I)-\mathbb E[N_{P_n}(I)]}{\sqrt{\Var[N_{P_n}(I)]}} \xrightarrow{d} \mathcal N(0,1).
    \] 
\end{theorem}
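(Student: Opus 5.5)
The strategy is to reduce Theorem~\ref{t.cltcomp} to a CLT for the centered count $N_{R_n}(I)$, using an $L^2$-coupling between $N_{P_n}(I)$ and $N_{R_n}(I)$. Concretely, we will show that
\[
\mathbb E\big[(N_{P_n}(I)-N_{R_n}(I))^2\big]=O(\log^d n)
\]
for some $d<1$. Combined with the assumption $\Var[N_{R_n}(I)]\ge\epsilon\log n$, this gives $\Var[N_{P_n}(I)]\sim\Var[N_{R_n}(I)]$. The same bound also shows that the normalized difference
\[
\frac{(N_{P_n}-N_{R_n})-\mathbb E[N_{P_n}-N_{R_n}]}{\sqrt{\Var N_{R_n}(I)}}
\]
tends to $0$ in $L^2$. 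By Slutsky's lemma, it then suffices to prove the CLT for $N_{R_n}(I)$ alone.

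For the centered polynomial $R_n$, with coefficients $v_j\xi_j$ of polynomial growth of order $\tau>-1/2$, we invoke the known CLT for generalized Kac ensembles under a variance lower bound of order $\log n$. This is Nguyen--Vu \cite{NV22D}, which combines Maslova's method with universality. Applying it to the restricted count on $I$ needs a small check: Maslova's decomposition into weakly dependent blocks at dyadic scales $1-|x|\in[2^{-k-1},2^{-k}]$ localizes to any interval, and the boundary blocks contribute $O(1)$ to the variance.

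The main work is the coupling estimate, which is the mechanism behind part (2) of Theorem~\ref{t.varcomp}. We would prove it as follows. Split $I$ into Whitney-type pieces $I_k$ at distance about $2^{-k}$ from $\pm1$, for $0\le k\le\log_2 n$, together with a piece of width $1/n$ near $\pm1$ and the reciprocal pieces outside $[-1,1]$ (handled via $P_n^*$). On each $I_k$, the domination hypothesis up to order 2 with factor $C|x|^\theta$ says that $M_n$, $M_n'$ and $M_n''$ are at most $C2^{-k\theta}$ times the standard deviations of the corresponding derivatives of $R_n$. Rescaling $I_k$ to unit size, $P_n=R_n+M_n$ becomes a Gaussian-like random function with a $C^2$-small deterministic perturbation of size $\delta_k=2^{-k\theta}$.

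We then bound $\mathbb E|N_{P_n}(I_k)-N_{R_n}(I_k)|^2$ by the probability that the perturbation changes the root count, times a moment bound. The root count can change only if $R_n$ has a near-double root, or a root close to an endpoint of $I_k$, at scale $\delta_k$. Using small-ball or anti-concentration estimates for $(R_n,R_n')$ from \cite{DONV18, Do21} (via Lindeberg-type universality with the $2+\varepsilon_0$ moment), this probability is $O(\delta_k^{c})$. Jensen's formula gives the moment bounds $\mathbb E N^4(I_k)=O(1)$ on the enlargement $J$; this is where the enlargement is needed. Hölder's inequality then yields
\[
\|N_{P_n}(I_k)-N_{R_n}(I_k)\|_2\ll 2^{-ck\theta}.
\]
Summing over $k$ by Minkowski's inequality gives $\|N_{P_n}(I)-N_{R_n}(I)\|_2=O(1)$, which is even better than $\log^{d/2}n$.

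The main obstacle is the per-block anti-concentration estimate in the non-Gaussian case. Turning $C^2$-smallness into a bound on the probability that the root count changes requires controlling near-degenerate configurations of $(R_n,R_n')$ uniformly in the scale. The plan is to first prove the estimate for Gaussian coefficients, using the Kac--Rice density for double roots. We would then transfer it by the local universality of \cite{DONV18}, applied to smoothed counting functionals. The polynomially small error $2^{-ck}$ from universality is summable over the $O(\log n)$ blocks, so the transfer preserves the bound.
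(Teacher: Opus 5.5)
Your argument is correct in outline, but it takes a different route from the paper.

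The paper never couples $P_n$ with $R_n$ for the original coefficients. It proceeds in three stages:
\begin{itemize}
\item It passes to Gaussian coefficients on the core region $\mathcal I_n$ by universality (Theorem~\ref{t.general-u}, Corollary~\ref{c.kth-moment-u}).
\item It discards $I\setminus\mathcal I_n$ with the moment bound of Theorem~\ref{t.outside-In}.
\item In the Gaussian core, it compares variances through the explicit non-centered Kac--Rice densities (Lemmas~\ref{l.cmean} and~\ref{l.cvar}). It obtains only an $L^1$ coupling, via sign changes at endpoints of dyadic blocks and the bound $\mathbb P(X_1Y_1\le 0)\ll |m(a_{ij})|$.
\end{itemize}
Both arguments then quote \cite{NV22D} for the centered CLT.

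Your single $L^2$ coupling rests on the homotopy $R_n+tM_n$. The count on a block can change only if $|R_n|\le |M_n|$ at an endpoint, or if $|R_n|\le|M_n|$ and $|R_n'|\le|M_n'|$ hold at a common point. This buys several things:
\begin{itemize}
\item It gives the variance comparison needed for Slutsky at no extra cost.
\item It needs no Gaussian reduction, no core/outside split and no two-point formula.
\item It uses the hypothesis only up to order one on $\overline I$.
\item It is the more robust mechanism. Replacing $N_{ij}$ by a sign change requires two roots in a dyadic block $[1-2\delta_j,1-\delta_j)$ to be rare. But such a block has fixed hyperbolic length and contains two roots with probability bounded below; Lemma~\ref{l.nearR} only concerns disks of radius $\delta_j^{1+\varepsilon}$. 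Your near-double-root event at normalized scale $\delta_k\to 0$, by contrast, really is rare.
\end{itemize}
In exchange, the paper's route produces the explicit two-point function and an $o(1)$ variance comparison on the Gaussian core. An $O(1)$ $L^2$ coupling yields only $\Var[N_{P_n}(I)]-\Var[N_{R_n}(I)]=O(\sqrt{\log n})$ via Cauchy--Schwarz, which is enough for the CLT.

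Two adjustments to your sketch. First, Theorem~\ref{t.localcount} gives $\mathbb E N^4(I_k)\ll k^{4C_2}$ rather than $O(1)$; this is harmless against the $2^{-ck\theta}$ change probability. Second, the per-block estimate needs neither Kac--Rice nor correlation universality. The latter would not apply directly anyway, since your event is not a root-counting functional of a single polynomial. A direct argument suffices:
\begin{itemize}
\item Replace $\theta$ by a small $\theta'$.
\item Take a union bound over a grid of hyperbolic mesh $\delta_k^{4/3}$.
\item Control $\mathbb E\sup|R_n^{(i)}|^2\ll \Var[R_n^{(i)}]$ on the block by subharmonicity on a slightly larger disk.
\item At each grid point, apply a bivariate Berry--Esseen bound to the normalized pair $(R_n,R_n')$; its error is a fixed power of $2^{-k}$.
\end{itemize}
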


\begin{remark}
Generalized Kac polynomials have also received considerable attention in mathematical physics, primarily in the centered setting. Schehr and Majumdar \cites{SM07, SM08} related persistence exponents for the diffusion equation with random initial data to the probability that a centered generalized Kac polynomial has no real zeros in a prescribed interval. Later, Dembo and Mukherjee \cite{DM15} established general continuity criteria for persistence exponents of centered Gaussian processes, with applications to random polynomials and zero-crossing problems for the heat equation; these results were subsequently extended to non-Gaussian settings by Ghosal and Mukherjee \cite{GM24}. More recently, \cites{KLN22, Lu23} connected the emergence of limit cycles in random vector fields to positive real zeros of centered random polynomials.

By contrast, the present work focuses exclusively on the non-centered regime. Our results suggest that analogous phenomena may persist beyond the centered framework, opening the door to corresponding applications. Non-centered generalized Kac polynomials also arise naturally in several applied settings, including biology \cite{Ham56} and communications and signal processing \cites{SG01, SG02}.
\end{remark}

\subsection{Outline of the paper} In Section \ref{s.local.estimate}, we recall key results from \cite{Do21} regarding the correlation functions of real roots. In Section \ref{s.multiple-root}, we establish probabilistic bounds for the event that the random polynomials have multiple real roots within small intervals. In Sections \ref{s.reduce.Gaussian}, \ref{s.universality}, and \ref{s.moment}, we reduce the proof of the comparison principles (Theorems \ref{t.varcomp} and \ref{t.cltcomp}) to the Gaussian case. The primary ingredients for this reduction are a universality estimate for the number of real roots (Section~\ref{s.universality}) and estimates for their moments (Section~\ref{s.moment}). In Section \ref{s.Kac-Rice}, we derive, via Kac--Rice formulas, expressions for the two-point correlation functions of real roots of non-centered Gaussian processes, which may be of independent interest. Section \ref{s.asym.est} establishes asymptotic estimates for the terms appearing in these expressions. Section \ref{s.Gaussian.case} applies these results to prove the Gaussian case of the comparison principles. Finally, in Section \ref{s.proof.main}, we combine the comparison principles to prove Theorems \ref{t.kac}, \ref{t.kac-der}, and \ref{t.hyperkac}.

\section{Local estimates for  real roots} \label{s.local.estimate}
In this section, we recall the relevant estimates from \cite{Do21}. 

We first recall a standard result in the area, which indicates that the majority of real roots of $P_n$ tend to concentrate around $\pm 1$.  
\begin{lemma} \label{l.im}  Fix $x_0\in (0,1)$ and let $A_{x_0}=\{z\in \mathbb R: ||z|-1| \ge x_0\}$. Then, for every integer $k\ge 1$, we have
    \[
    \mathbb E[N_{P_n}^k\left(A_{x_0}\right)] =O_k(1).
    \]
\end{lemma}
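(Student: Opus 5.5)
By the symmetries $x\mapsto -x$ and $x\mapsto 1/x$ (the latter replaces $P_n$ by the reciprocal polynomial $P_n^*$), it suffices to bound $\mathbb E[N_{P_n}^k(I)]$ for the two pieces $I=[-1+x_0,1-x_0]$ and, for $P_n^*$, the same interval. The reciprocal polynomial has coefficients $\omega_{n-j}$. These no longer satisfy Condition \ref{c.A} in the same form, since they decay rather than grow. I would therefore write the argument for general coefficients with $|m_j|,|v_j|\ll (1+j)^{\tau}$ upper bounds together with a lower variance bound coming from one large coefficient. Zeros at $\pm\infty$ do not occur, and $x=0$ and $x=\pm1$ contribute boundedly in expectation.

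The main tool is Jensen's formula. Cover $[-1+x_0,1-x_0]$ by $O(1)$ discs $D(c,r)$ centered at points $c$ with $|c|\le 1-x_0$ and radius $r$ small, such that the concentric disc $D(c,2r)$ is still contained in $\{|z|<1-x_0/2\}$. Jensen's formula gives
\[
N_{P_n}(D(c,r))\le \frac{1}{\log 2}\left(\log \max_{|z-c|=2r}|P_n(z)| - \log|P_n(c)|\right).
\]
Hence $\mathbb E[N^k]\ll \mathbb E[(\log^+\max|P_n|)^k] + \mathbb E[(\log^-|P_n(c)|)^k]$, and I bound the two terms separately.

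For the upper term, on $|z|\le 1-x_0/2$ we have $|P_n(z)|\le \sum_j C_0(1+j)^\tau(|m_j|/v\text{-scale}+|\xi_j|)(1-x_0/2)^j$. The expectation of $\sum_j (1+j)^\tau|\xi_j|\rho^j$ is $O(1)$ by the moment assumption \ref{A1}. Since $\log^+$ grows slowly, $\mathbb E[(\log^+ X)^k]\ll_k 1+\mathbb E X$, so this term is $O_k(1)$.

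The lower term, a small-ball estimate, is the main obstacle. I need $\mathbb P(|P_n(c)|\le t)\ll t^{\alpha}+o(\cdot)$ uniformly in $n$ for some $\alpha>0$, so that $\mathbb E[(\log^-|P_n(c)|)^k]=O_k(1)$. The deterministic part $M_n(c)$ is just a shift, so an anti-concentration bound uniform over shifts suffices. I would condition on all coefficients except a block $\xi_{j}$, $N_0\le j\le N_0+K$. The sum $\sum_{j} v_j\xi_j c^j$ over that block has variance bounded below by a constant (if $c\neq 0$; for $c=0$ use $\xi_{N_0}$ directly, or shift the disc centers away from $0$). However, a general $\xi$ with only $2+\varepsilon_0$ moments may have atoms, so a polynomial small-ball bound can fail at tiny $t$.

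To handle this I would replace the fixed center $c$ by a random center: average Jensen's formula over $c$ in a small interval, or equivalently use $\int \log|P_n(c+u)|\,du$, which is finite and controlled by a Remez/Cartan type inequality. Concretely, $\log|P_n|$ averaged over a segment of length $\asymp 1$ is bounded below by $\log \max_{D}|P_n| - O(1)$ (by Cartan's lemma applied to $P_n$ normalized on a disc) up to a bounded number of zeros. This reduces the lower bound to showing $\mathbb P(\max_{|z-c|\le r}|P_n(z)|\le t)$ is small. That follows from Cauchy's estimate: the maximum controls the Taylor coefficients at $c$, hence a nondegenerate linear combination. By Kolmogorov–Rogozin applied to the many independent terms $v_j\xi_j c^j$, $N_0\le j\le K$, this gives probability $\ll t+e^{-cK}$ for $t\ge e^{-K}$. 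Choosing $K\asymp \log(1/t)$ (the coefficients decay only geometrically) gives a tail $\mathbb P(\cdot\le t)\ll 1/\log^{A}(1/t)$. This is summable against $k$th powers of $\log$ once we use a block of length proportional to $\log(1/t)$ where $\rho^{K}\ge t$. Integrating the tail then yields the $O_k(1)$ bound, completing the proof.
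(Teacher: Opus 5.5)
Your skeleton is sound. Jensen's formula on a disc, the bound $\mathbb E[(\log^+\max|P_n|)^k]\ll_k 1+\mathbb E\max|P_n|=O(1)$, the reduction of the denominator to a small-ball estimate, and the treatment of $P_n^*$ after normalizing by $n^{\tau}$ all go through. The gap is in the small-ball step, which you rightly call the crux.

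Kolmogorov--Rogozin applied to a single linear form $S=\sum_j v_j\xi_j c^j$ only gives
\[
\sup_a\mathbb P(|S-a|\le t)\ll \big(\#\{j:\ |v_jc^j|\gtrsim t\}\big)^{-1/2}\asymp (\log(1/t))^{-1/2}.
\]
Only about $\log(1/t)$ summands are nondegenerate at scale $t$, and $K$ nondegenerate summands buy a factor $K^{-1/2}$ in that inequality, never $e^{-cK}$. So neither ``$t+e^{-cK}$'' nor ``$(\log 1/t)^{-A}$ with $A>k$'' follows; the two claims are also inconsistent with each other. With a tail of order $s^{-1/2}$ at $s=\log(1/t)$, the integral $\mathbb E[(\log^- X)^k]=\int_0^\infty k s^{k-1}\mathbb P(X\le e^{-s})\,ds$ diverges already for $k=1$. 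Separately, the Cartan step as phrased presupposes a bound on the number of nearby zeros, which is what you are trying to prove. The clean way to replace $|P_n(c)|$ by a maximum is to recenter Jensen's formula at a maximizer, as below.

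What you need is exponential decay, and it comes from many \emph{independent} constraints rather than from one linear form. Center at $0$; one disc $\{|z|\le 1-x_0\}$ inside $\{|z|\le 1-x_0/2\}$ suffices, and it avoids the dependence among Taylor coefficients at centers $c\ne 0$.

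Fix $\rho<x_0/4$ and let $X=\max_{|z|\le\rho}|P_n(z)|$. Apply Jensen's formula centered at a point $z^*$ with $|z^*|\le\rho$ and $|P_n(z^*)|=X$. Since $\{|z|\le 1-x_0\}\subset D(z^*,1-x_0+\rho)$ and $D(z^*,1-x_0/2-\rho)\subset\{|z|\le 1-x_0/2\}$, you get
\[
N_{P_n}(\{|z|\le 1-x_0\})\ll \log^+\max_{|z|\le 1-x_0/2}|P_n(z)|+\log^- X .
\]

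Cauchy's estimate gives $|\omega_j|\le X\rho^{-j}$ for \emph{all} $j$ simultaneously. Unit variance together with $\mathbb E|\xi_j|^{2+\varepsilon_0}\le C_0$ implies $\sup_a\mathbb P(|\xi_j-a|\le c_1)\le 1-c_2$ uniformly in $j$. Let $\nu$ be the first index $\nu\ge N_0$ with $|\omega_\nu|\ge c_1|v_\nu|$. By independence, $\mathbb P(\nu>m)\le(1-c_2)^{m-N_0}$, and on $\{\nu\le n\}$ one has $\log^- X\le \nu\log(1/\rho)+O(1+\log(1+\nu))$. Hence $\mathbb E[(\log^-X)^k\mathbf{1}_{\{\nu\le n\}}]=O_k(1)$. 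On $\{\nu>n\}$, the trivial bound $N_{P_n}\le n$ contributes $O(n^k(1-c_2)^{n-N_0})=o(1)$.

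The same argument applies to $n^{-\tau}P_n^*$, using indices $j\le n/2$, where $|v_{n-j}|\asymp n^{\tau}$. This first-large-coefficient device is the standard Ibragimov--Maslova mechanism. The paper does not reprove the lemma; it cites \cite{Do21}*{Lemma 2.2}.
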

A proof of this estimate, using an argument of Ibragimov--Maslova, can be found in \cite{Do21}*{Lemma 2.2}.

The next result concerns mixed real-complex correlation measures for the roots of $P_n$, defined as follows. Let $k \ge 1$ and $k'\ge 0$ be integers, $Z$ be the multi-set of roots of $P_n$, and let $d\sigma$ be a measure on $\mathbb R^k \times \mathbb C_+^{k'}$, where $\mathbb C_+:=\mathbb C\backslash \mathbb R$. We say that $d\sigma$ is the $\left(k,k'\right)$-point correlation measure for $Z$ if the following two conditions hold: 
\begin{enumerate}
    \item The measure $d\sigma$ is symmetric under complex conjugations; that is, for any measurable set $A\subset \mathbb R^k \times \mathbb C_+^{k'}$, we have $d\sigma\left(A\right)=d\sigma\left(A'\right)$, where $A'$ is one of the sets $k'$ obtained from $A$ by taking the conjugate in one fixed coordinate.
    \item For any compactly supported continuous function $f:\mathbb R^k \times \mathbb C^{k'} \to \mathbb C$, we have
    \[
    \mathbb E [\sum_{ \alpha_i \in Z\cap \mathbb R} \sum_{\beta_j\in Z\cap \mathbb C_+} f\left(\alpha_1,\dots,\alpha_k,\beta_1,\dots,\beta_{k'}\right)] = \int_{\mathbb R^k \times \mathbb C_+^{k'}} f\left(w,z\right)d\sigma\left(w,z\right),
    \]
    where the summations on the left-hand side are taken over ordered tuples of different elements of $Z$. 
\end{enumerate}
If $d\sigma$ has a density with respect to the Lebesgue measure, such a density is classically called the $\left(k, k'\right)$-point correlation function (see \cite{TV15}), which will then be invariant under taking complex conjugation of any variable. 

In what follows, let $d\sigma$ and $d\sigma^*$ represent the $\left(k,k'\right)$-point correlation measures of the roots of $P_n$ and $P^*_n$, respectively. We write $d\sigma_G$ and $d\sigma^*_G$ for the corresponding measures associated with their Gaussian analogs, i.e., the random polynomials obtained by replacing the random coefficients with standard Gaussian variables. Given $\delta>0$, possibly dependent on $n$, define
    \[
    I(\delta) = \begin{cases}\{z\in \mathbb C: 1-2\delta \le |z| < 1-\delta\}&\text{if}\quad \delta \ge \frac 1 {10n},\\
    \{z\in \mathbb C: 1-\frac 1{2n}\le |z|< 1+\frac 1 {2n}\} &\text{if}\quad \delta <\frac 1 {10n}.
    \end{cases}
    \]
Let $I_{\mathbb R}(\delta) = I(\delta) \cap \mathbb R$ and $I_{\mathbb C_+}(\delta)=I(\delta)\cap \mathbb C_+$. Let $B(z, \delta):=\{w\in \mathbb C: |w-z|<\delta\}$ denote the open disk of radius $\delta$ centered at $z\in \mathbb C$.

\begin{theorem}[\cite{Do21}*{Theorem 3.1}] \label{t.corr} 
Let $k\ge1$ and $k'\ge0$ be integers, and let $0<c<\widetilde c<1$. Then there exist constants $C_1,\alpha_1>0$ such that the following holds for any $\frac1n\ll\delta\le C_1^{-1}$ and $(x,z)=\left(z_1,\dots,z_k,z_{k+1},\dots,z_{k+k'}\right) \in I_{\mathbb R}(\delta)^{k} \times I_{\mathbb C_+}(\delta)^{k'}$. Let $\phi_\delta$ be supported on $\left(-c\delta,c\delta\right)^k \times B\left(0,c\delta\right)^{k'}$ and suppose $\phi_\delta\in C^{3k'+2}(\mathbb R^{k+2k'})$ satisfies $\|\partial^\beta \phi_\delta\|_\infty \le \delta^{-|\beta|}$ up to order $|\beta|\le 3k'+2$. Let $J \subset I_{\mathbb R}(\delta)+\left(-\widetilde c \delta, \widetilde c\delta\right)$ be such that, for each $1\le j\le k+k'$, if $|\im z_j|\le \widetilde c\delta$, then $(\sgn(\re z_j)|z_j|-\widetilde c \delta,\sgn(\re z_j)|z_j|+\widetilde c \delta)\subset J$.
\begin{enumerate}
    \item Assume that  
    \[
    |M_n(x)|>C_1|\log\left(1- |x|+1/n\right)|^{1/2} \sqrt{\Var[R_n(x)]},\quad  x\in J,
    \]
    or 
    \[
    |M''_n(x)|\ll  \sqrt{\Var[R''_n(x)]}  \text{ uniformly on $J$}.
    \]
    Then
    \[
    \int_{\mathbb R^k\times \mathbb C^{k'}_+} \phi_\delta\left(y-x, w-z\right) [d\sigma (y,w)-d\sigma_G(y,w)] = O\left(\delta^{\alpha_1}\right).
    \]
\item Assume that  
    \[
    |M^*_n(x)|>C_1|\log\left(1- |x|+1/n\right)|^{1/2} \sqrt{\Var[R^*_n(x)]},\quad x\in J,
    \]
    or 
    \[
    |{M^*_n}''(x)|\ll  \sqrt{\Var[{R^*_n}''(x)]} \text{ uniformly on $J$}.
    \]
    Then
    \[
    \int_{\mathbb R^k\times \mathbb C^{k'}_+} \phi_\delta\left(y-x, w-z\right) [d\sigma^* (y,w)-d\sigma^*_G(y,w)] = O\left(\delta^{\alpha_1}\right).
    \]
\end{enumerate}
\end{theorem}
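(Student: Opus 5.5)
The plan is to follow the local universality scheme of Tao--Vu, adapted to non-centered polynomials. Since both $P_n$ and its Gaussian analog share the same deterministic part $M_n$, the comparison reduces to a Lindeberg replacement argument applied to the random part $R_n$. I treat part (1) and then deduce part (2) by applying it to $P_n^*$, whose roots are the reciprocals of those of $P_n$. On $I(\delta)$ the scale is $\delta$, so the first step is to rescale: for each $x\in I_{\mathbb R}(\delta)$, consider $P_n$ on a disk of radius $\asymp\delta$ around $x$, normalized by $\sqrt{\Var[R_n(x)]}$.

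Next I express the test statistic through $\log|P_n|$. The linear statistic $\sum\phi_\delta(\alpha-x,\beta-z)$ over ordered tuples of roots can be written, via the Green/Jensen formula, as a smooth function of integrals $\int \Delta\psi \,\log|P_n|$ over the relevant disks. The complex coordinates are handled through the Laplacian. The real coordinates are handled by the standard Tao--Vu device, which uses the real/complex splitting and requires $C^{3k'+2}$ regularity of $\phi_\delta$. Once this representation is in place, it suffices to show that $\mathbb E\, F(\log|P_n(w_1)|,\dots,\log|P_n(w_m)|)$ changes by $O(\delta^{\alpha_1})$ when each $\xi_j$ is replaced by a Gaussian. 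Here the $w_i$ are boundedly many sample points in the $\delta$-neighbourhood, and $F$ is a smooth test function.

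The swap itself is carried out coefficient by coefficient. The random part $R_n(w)/\sqrt{\Var R_n(x)}$ has the property that only $\asymp 1/\delta$ coefficients $j$ contribute a non-negligible share, each of size $\delta^{1/2}$ in normalized units. The $(2+\varepsilon_0)$-moment Taylor expansion of $F$ therefore gives a total error of order $\delta^{\varepsilon_0/2}$. This bound requires $F\circ\log|\cdot|$ to be smooth, which fails near zeros of $P_n$. I therefore need anti-concentration: with probability $1-O(\delta^{c})$, $|P_n(w)|\ge \delta^{C}\sqrt{\Var R_n(x)}$, and the same holds along a path between swaps.

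The two alternative hypotheses enter exactly at this anti-concentration step, and I expect this to be the main obstacle. Under the first hypothesis, $|M_n|\ge C_1|\log(1-|x|+1/n)|^{1/2}\sqrt{\Var R_n}$ on $J$. A Gaussian-tail-type bound (Littlewood--Offord combined with the $2+\varepsilon_0$ moments) then shows that $P_n$ has no zeros near the real segment with probability $1-O(\delta^{c})$, once $C_1$ is large. Both the real correlation measure $d\sigma$ and its Gaussian analog are then $O(\delta^{\alpha_1})$, and the complex part is handled by the log-integrability argument above. Under the second hypothesis, $|M_n''|\ll\sqrt{\Var R_n''}$, the mean is essentially affine on the $\delta$-scale. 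Subtracting this affine part leaves a perturbation that does not destroy the small-ball estimates for $R_n$ from Tao--Vu and Do--Hoi--Nguyen--Vu. The difficulty is making the small-ball bound uniform in a deterministic shift: I would use a Hal\'asz-type inequality for $\mathbb P(|R_n(w)+a|\le t)$ that holds for every $a$.

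Finally, the error terms are combined with the bounded number of disks, and the $O_k(1)$ moment bounds (Lemma~\ref{l.im}) control tails. This gives $O(\delta^{\alpha_1})$ for part (1), and the same argument applied to $P_n^*$ gives part (2).
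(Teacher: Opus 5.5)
The genuine gap is that you attach the hypotheses on $M_n$ to a step that does not need them, and the step that does need them is missing.

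Pointwise anti-concentration of $P_n$ and local integrability of $\log|P_n|$ require no assumption on the mean. Small-ball bounds control the concentration function $\sup_a\mathbb P(|R_n(w)-a|\le t)$, so they are automatically uniform in the shift. This is why Lemma~\ref{l.C43} and Theorem~\ref{t.locallogint} carry no condition on $M_n$. Your treatment of the second alternative ends by invoking exactly such a shift-uniform bound, so it never uses $|M_n''|\ll\sqrt{\Var[R_n'']}$. If your argument were complete, it would prove the statement for arbitrary means.

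What the Green-formula/Lindeberg scheme delivers is universality for smooth statistics of \emph{all} zeros. Such statistics cannot tell a real zero from a non-real one within $\eta=\delta^{1+\varepsilon}$ of the axis. To reach the real coordinates of $d\sigma$ (here $k\ge1$), you must use cutoffs $\chi(\im w/\eta)$. You must then show, for $P_n$ and for $P_{n,G}$, that a non-real zero in the $\eta$-strip above the relevant real intervals has probability $O(\delta^{c})$, and pass to expectations via Theorem~\ref{t.localcount}. This is what is done for $Z_j$ in the proof of Lemma~\ref{l.estimate-Nj}. Non-real zeros come in conjugate pairs, and the strip needs about $\delta^{-\varepsilon}$ balls of radius $\eta$. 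So what you need is the repulsion estimate $\mathbb P(N_{P_n}(B(x,\eta))\ge2)\ll\delta^{\alpha\varepsilon}$ with $\alpha>1$, i.e.\ Lemma~\ref{l.nearR}, whose hypotheses are precisely the two alternatives of the statement. The real/complex splitting is not a hypothesis-free black box in the non-centered setting.

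This is where the assumptions act. If $P_n$ has two zeros in $B(x,\eta)$, then $|P_n(x)|\le\tfrac12\eta^2\sup_{B(x,\eta)}|P_n''|$. A pointwise small-ball bound at that scale gives something of order $\delta^{(2-o(1))\varepsilon}$ only when $\sup|P_n''|$ is of noise size $\delta^{-2-o(1)}\sqrt{\Var[R_n(x)]}$. That is what $|M_n''|\ll\sqrt{\Var[R_n'']}$ buys. Your observation that the mean is then essentially affine on the $\delta$-scale is the right one, but it is needed for this Taylor remainder, not for small-ball bounds. Without it, $M_n$ may have a near-double zero at scale $\delta$, which the noise splits into two zeros at distance $\ll\eta$ with non-negligible probability.

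Under the first alternative you instead need that $P_n$ has no zeros near the point with high probability. That requires a tail bound for $R_n$ uniformly over a neighbourhood (e.g.\ by comparison with the Gaussian case). Littlewood--Offord-type inequalities do not provide it, since they only bound small-ball probabilities from above.

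Two smaller points:
\begin{enumerate}
\item Part (2) is not part (1) applied to $P_n^*$, since $P_n^*$ does not satisfy Condition~\ref{c.A}: its low-degree coefficients are of size $\asymp n^{\tau}$. The argument must be rerun for $(n+1)^{-\tau}P_n^*$.
\item Lemma~\ref{l.im} concerns only zeros with $||z|-1|\ge x_0$, so it cannot control tails at scale $\delta$. The local bound Theorem~\ref{t.localcount} is what is needed there.
\end{enumerate}
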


We also recall an estimate concerning the local integrability of $\log|P_n|$.

\begin{theorem}[\cite{Do21}*{Theorem 5.7}] \label{t.locallogint} Let $c',\widetilde c\in[0,1)$ satisfy $c'+\widetilde c<1$, and let $C_1>0$ be sufficiently large depending on $c'$ and $\widetilde c$. Then, for any $\varepsilon\in(0,\tfrac12)$, $\frac1n\ll\delta\le C_1^{-1}$, and $z \in I(\delta) + \left(-c'\delta, c'\delta\right)$, there exists an event $\mathcal T$ with probability $O(\delta^\varepsilon)$ such that the following estimate holds uniformly for $1\le p<\infty$:
    \[
    \pmb{1}_{\mathcal{T}^c} \int_{B\left(z, \widetilde{c}\delta\right)} |\log |P_n(w)||^p dw \le \left(Cp\right)^p \delta^2 |\log \delta|^{2p}.
    \]
An analogous estimate holds for $Q_n = \left(n + 1\right)^{-\tau} P^*_n$.
\end{theorem}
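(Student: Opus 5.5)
The plan is to bound $\log|P_n|$ from above and from below on a slightly larger disk $B':=B(z,c''\delta)$, with $\widetilde c<c''<1-c'$. After that, the $L^p$ bound will follow from the Poisson--Jensen representation. Throughout, write $\sigma(w):=\sqrt{\Var[R_n(w)]}$. By Condition~\ref{c.A}, for $w\in B'$ we have $\sigma(w)\asymp \delta^{-\tau-1/2}$. When $\delta<1/(10n)$ the natural scale is $1/n$, and we run the same argument at that scale. The statement for $Q_n=(n+1)^{-\tau}P_n^*$ will be identical after inversion $w\mapsto 1/w$, since the coefficients of $P_n^*$ reversed and normalized still satisfy \ref{A1}--\ref{A3} near the circle.

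\emph{Step 1 (upper bound).} Choose $B''\supset B'$, still at distance $\gg\delta$ from the unit circle when $\delta\ge 1/(10n)$. Then
\[
\sup_{B''}|P_n|\le \sum_j \bigl(|m_j|+|v_j||\xi_j|\bigr)(1-c\delta)^j,
\]
whose expectation is $O(\delta^{-\tau-1})$ by \ref{A2}. By Markov's inequality, outside an event of probability $O(\delta^{\varepsilon})$ we get $\sup_{B''}\log|P_n|\le C|\log\delta|$.

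\emph{Step 2 (lower bound at one point).} We find a point $w_0\in B(z,\widetilde c\delta/10)$ with $|P_n(w_0)|\ge \delta^{C}$ outside an event of probability $O(\delta^\varepsilon)$. It suffices to bound $|\re P_n(w_0)|$ from below. The quantity $\re P_n(w_0)=\re M_n(w_0)+\sum_j v_j\xi_j\re(w_0^j)$ is a deterministic shift plus a sum of independent terms. We restrict to the block of indices $j\asymp 1/\delta$, on which $|v_j\re(w_0^j)|\asymp\delta^{-\tau}$ for a positive proportion of $j$ (choosing $w_0$ suitably if needed). We condition on the remaining coefficients, which only adds a further shift. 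Berry--Esseen with the $2+\varepsilon_0$ moment then gives a L\'evy concentration bound
\[
\sup_a\mathbb P\bigl(|\re P_n(w_0)-a|\le t\,\delta^{-\tau-1/2}\bigr)\ll t+\delta^{c\varepsilon_0}.
\]
The crucial point is that anti-concentration is translation invariant, so the mean $M_n$ plays no role. Taking $t=\delta^{\varepsilon}$ gives $\log|P_n(w_0)|\ge -C|\log\delta|$.

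\emph{Step 3 (Jensen and Poisson--Jensen).} Off the union of the two exceptional events, Jensen's formula on a disk around $w_0$ shows that the number $K$ of zeros of $P_n$ in $B'$ satisfies $K=O(|\log\delta|)$. On $B'$ we write
\[
\log|P_n(w)|=h(w)+\sum_{\zeta\in B'}\log\frac{|w-\zeta|}{c''\delta},
\]
where $h$ is harmonic. The maximum principle gives an upper bound on $h$ from Step 1, and Harnack's inequality, anchored at $w_0$ where $h(w_0)\ge\log|P_n(w_0)|\ge -C|\log\delta|$, gives a lower bound. Hence $|h|\ll|\log\delta|$ on $B(z,\widetilde c\delta)$. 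For each zero,
\[
\int_{B(z,\widetilde c\delta)}\Bigl|\log\frac{|w-\zeta|}{c''\delta}\Bigr|^p dw\le (Cp)^p\delta^2
\]
by a Gamma-function computation. Minkowski's inequality then gives an $L^p$ norm at most
\[
\bigl(C|\log\delta|+K\cdot Cp\bigr)\delta^{2/p}\ll p\,|\log\delta|\,\delta^{2/p},
\]
which is within the claimed $(Cp)^p\delta^2|\log\delta|^{2p}$. Since neither exceptional event depends on $p$, the bound is uniform in $p$.

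\emph{Main obstacle.} I expect Step~2 to be the hardest. One must secure anti-concentration of $P_n(w_0)$ at the scale $\sigma(w_0)$ uniformly in $z$ and in the mean profile, using only $2+\varepsilon_0$ moments and with $v_j$ bounded below only for $j\ge N_0$. The difficulty is worst when $w_0$ is complex and the phases of $w_0^j$ could cancel in the real projection. My first attempt would use the block $j\asymp1/\delta$ and choose either $\re$ or $\im$, whichever projection has variance $\gg\sigma(w_0)^2$. That such a projection exists follows from a short computation with $\sum|w_0|^{2j}$ versus $\bigl|\sum w_0^{2j}\bigr|$.
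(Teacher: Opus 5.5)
\textbf{Verdict: genuine gap in Step~2.} Your architecture is the standard one, and it matches the two ingredients from \cite{Do21} that the paper quotes in the proof of Lemma~\ref{l.sublevelN} (Lemmas~\ref{l.L58} and~\ref{l.C43}). That architecture is: a Markov bound for $\sup\log|P_n|$ on a larger disk, a lower bound for $|P_n|$ at one point, Jensen to get $K=O(|\log\delta|)$ zeros, and a harmonic-plus-logarithms splitting closed by Harnack and Minkowski. The problem is the exponent your anti-concentration step delivers.

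\textbf{Why Berry--Esseen is not enough.} With only $2+\varepsilon_0$ moments, Berry--Esseen leaves a Lyapunov error of order $N^{-\varepsilon_0/2}\asymp\delta^{\varepsilon_0/2}$ for a block of $N\asymp 1/\delta$ terms. This error does not improve as you shrink $t$. So your exceptional event has probability $\ll\delta^{\varepsilon}+\delta^{\varepsilon_0/2}$, which is $O(\delta^{\varepsilon})$ only when $\varepsilon\le\varepsilon_0/2$. The theorem asserts the bound for every $\varepsilon\in(0,\tfrac12)$, with $\varepsilon_0>0$ an arbitrary fixed constant from \ref{A1}. For weak moment assumptions you therefore prove a strictly weaker statement. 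That weaker form would suffice for the use in Appendix~\ref{a.l5.3}, where $\varepsilon$ is small, but it is not what is claimed.

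\textbf{The missing idea.} You need only a small-ball bound, not Gaussian approximation, and the Kolmogorov--Rogozin (Littlewood--Offord type) inequality gives one with the right exponent.
\begin{enumerate}
\item Unit variance plus the uniform $(2+\varepsilon_0)$-moment bound give constants $\eta_0,c_*>0$ with $\sup_a\mathbb P(|\xi_j-a|\le\eta_0)\le 1-c_*$ for all $j$.
\item Your choice of the better of $\re$/$\im$, combined with pigeonhole, gives $\gg N$ block indices with $|v_j\re(w_0^j)|\ge c_3\delta^{-\tau}$.
\item Kolmogorov--Rogozin, applied to the block sum conditioned on the other coefficients, then gives $\sup_a\mathbb P(|\re P_n(w_0)-a|\le\lambda)\ll N^{-1/2}\asymp\delta^{1/2}$ for every $\lambda\le \eta_0 c_3\delta^{-\tau}$, in particular for $\lambda=\delta^{C}$.
\end{enumerate}
Translation invariance disposes of $M_n$ exactly as you say. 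The failure probability is then $O(\delta^{1/2})$ independently of $\varepsilon_0$, and the rest of your argument goes through. This also explains the cap $\varepsilon<\tfrac12$ in the statement: atoms, e.g.\ Rademacher coefficients, prevent anything better than $N^{-1/2}$. It matches the exponent $\alpha_2<\tfrac12$ in Lemma~\ref{l.C43}.

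\textbf{Two minor repairs in Step~3.}
\begin{enumerate}
\item With your normalization $\log\frac{|w-\zeta|}{c''\delta}$, apply the maximum principle on $B''$ to $\log|g|$, where $g:=P_n/\prod_{\zeta\in B'}\frac{w-\zeta}{c''\delta}$. On $\partial B''$ the correction terms are $O(K)=O(|\log\delta|)$. On $\partial B'$, by contrast, zeros close to $\partial B'$ make $h$ arbitrarily large. Alternatively, use the Blaschke-normalized Poisson--Jensen formula.
\item Take $|w_0-z|\le\eta'\delta$ with $\eta'$ small in terms of $1-c'-\widetilde c$. With $|w_0-z|\le\widetilde c\delta/10$ there may be no room for $B'\subset B(w_0,r)\subset B(w_0,R)\subset B''$ when $c'+\widetilde c$ is close to $1$.
\end{enumerate}
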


The next set of results provides local estimates for the number of real roots of $P_n$.

\begin{lemma}[\cite{Do21}*{Lemmas 9.1 and 9.2}] \label{l.nearR}  Let $\varepsilon>0$ be sufficiently small and $c\in[0,1)$. Then, for sufficiently large constants $C,C'>0$, the following holds for any $\frac1 n \ll  \delta \le \frac 1 C$, $\eta:=\delta^{1+\varepsilon}$, $\alpha<2$, and any $x\in  I_{\mathbb R}(\delta)+ (-c\delta, c\delta)$.  
\begin{enumerate}
    \item If either $|M_n| > C' |\log \delta|^{1/2}\sqrt{\Var[R_n]}$ or  $|M''_n| \ll  \sqrt{\Var[R''_n]}$ uniformly on $B\left(x,2\eta\right)$, then 
    \[
    \mathbb P\left(N_{P_n}\left(B\left(x, \eta\right)\right)\ge 2\right) \ll _{\varepsilon, \alpha}  \delta^{\alpha \varepsilon}.
    \]
    \item If either $|M_n^*|>C' |\log \delta|^{1/2}\sqrt{\Var[R^*_n]}$ or $|{M^*}''_n|\ll  \sqrt{\Var[{R_n^*}'']}$ uniformly on $B\left(x,2\eta\right)$, then 
    \[
    \mathbb P\left(N_{P^*_n}\left(B\left(x, \eta\right)\right)\ge 2\right) \ll _{\varepsilon, \alpha}   \delta^{\alpha \varepsilon}.
    \]
\end{enumerate}
\end{lemma}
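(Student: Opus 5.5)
The last statement is Lemma \ref{l.nearR}, quoted from \cite{Do21}*{Lemmas 9.1 and 9.2}. I would prove part (1) and obtain part (2) by applying the same argument to $P_n^*$, or to its normalization $Q_n=(n+1)^{-\tau}P_n^*$. Since $P_n^*(y)=y^nP_n(1/y)$ with $y\neq0$, the two polynomials have the same root structure near the unit circle, and every estimate used below (Theorem \ref{t.corr}(2), Theorem \ref{t.locallogint} for $Q_n$) has a starred version.

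For part (1), the approach is to compare with the Gaussian case and then bound the Gaussian quantity directly. Multiplicity is allowed, so the key inequality is
\[
\mathbf 1_{\{N\ge2\}}\le \tfrac12 N(N-1),
\]
where $N=N_{P_n}(B(x,\eta))$. Also, $N(N-1)$ integrated against a smooth cutoff is exactly the sum, over ordered pairs of distinct roots, of $\phi$ evaluated at the pair.

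I would split according to how many of the two roots are real. Let $\phi$ be a smooth bump equal to $1$ on $B(x,\eta)$ and supported on $B(x,2\eta)$, with derivative bounds $\eta^{-|\beta|}$. This gives
\[
\mathbb P(N\ge2)\le \tfrac12\sum_{k+k'=2}\int \phi^{\otimes 2}\,d\sigma_{(k,k')}.
\]
Here $d\sigma_{(k,k')}$ is the $(k,k')$-point correlation measure, and for the mixed real--complex terms one sums over the admissible ways of placing the two roots.

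The difficulty is that Theorem \ref{t.corr} controls test functions at scale $\delta$, not $\eta=\delta^{1+\varepsilon}$. So I would not apply the comparison to $\phi$ directly. Instead I would invoke Theorem \ref{t.corr} at a scale where the error $O(\delta^{\alpha_1})$ is acceptable, and get the small-scale gain from a separate argument. Concretely, I would first run a Lindeberg/replacement argument on the smooth functional $\int\phi\,d\sigma$. Using Jensen's formula, $N(B)$ is controlled by averages of $\log|P_n|$ over circles. Theorem \ref{t.locallogint} gives uniform $L^p$ bounds on $\log|P_n|$ outside an event of probability $O(\delta^{\varepsilon'})$, and these bounds justify the replacement. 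The result is that
\[
\mathbb E\Bigl[\sum\phi\Bigr]-\mathbb E_G\Bigl[\sum\phi\Bigr]=O(\delta^{c}),
\]
with $c$ arbitrarily close to the needed exponent after optimizing; this is how the $\alpha<2$ loss enters. The hypotheses of the lemma are exactly those of Theorem \ref{t.corr}(1): either $|M_n|$ dominates by a factor $C'|\log\delta|^{1/2}$, or $M''_n$ is negligible against $R''_n$.

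It remains to bound the Gaussian pair-correlation mass of $B(x,2\eta)^2$, and I would treat the two hypotheses separately.

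\textbf{Mean dominates.} If $|M_n|>C'|\log\delta|^{1/2}\sqrt{\Var R_n}$ on $B(x,2\eta)$, then a Gaussian tail bound gives
\[
\mathbb P(P_n\text{ vanishes near }B)\le \exp\bigl(-C'^2|\log\delta|/2\bigr)=\delta^{C'^2/2}.
\]
This beats $\delta^{\alpha\varepsilon}$ once $C'$ is large, and a net argument together with derivative bounds upgrades the pointwise tail to the whole disk.

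\textbf{Mean negligible.} If $|M''_n|\ll\sqrt{\Var R''_n}$, the process is nondegenerate at scale $\delta$, with correlation length of order $\delta$ (Lemma \ref{l.estvar}). The standard Gaussian two-point bound then applies: the correlation function $\rho_2(y_1,y_2)$ is $O\bigl(|y_1-y_2|^{\gamma}\delta^{-2-\gamma}\bigr)$, with $\gamma=1$ for real pairs and more repulsion for complex pairs. Integrating over $B(x,2\eta)^2$ gives $O\bigl((\eta/\delta)^{2+\gamma}\bigr)$, which is at most $\delta^{2\varepsilon}$ and hence at most $\delta^{\alpha\varepsilon}$.

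The main obstacle is the scale mismatch, $\eta\ll\delta$, in the comparison step. I expect to handle it by conditioning on the event $\mathcal T^c$ from Theorem \ref{t.locallogint}, performing the Lindeberg swap with smoothed Jensen counts at scale $\eta$, and accepting the loss $\delta^{(2-\alpha)\varepsilon}$.
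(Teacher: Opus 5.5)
The paper does not prove this lemma; it imports it from \cite{Do21}. So I am judging your argument on its own terms, and its reduction step does not go through.

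You assert that the lemma's hypotheses are exactly those of Theorem \ref{t.corr}(1). They are not. Theorem \ref{t.corr} needs the domination or $M_n''$-negligibility condition on a real set $J\supset(x-\widetilde c\delta,x+\widetilde c\delta)$, with $\widetilde c$ a fixed constant. The lemma grants it only on $B(x,2\eta)$, which is smaller by a factor of order $\delta^{\varepsilon}$. So the comparison with the Gaussian case is not available under the stated assumptions.

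Conversely, the obstacle you single out is harmless. A test function at scale $\eta$ can be fed into Theorem \ref{t.corr} after multiplying it by $\delta^{(3k'+2)\varepsilon}$. This costs an error $O(\delta^{\alpha_1-O(\varepsilon)})$, which is negligible for small $\varepsilon$; Appendix \ref{a.l5.2} does exactly this. The Lindeberg swap on smoothed Jensen counts that you propose instead is never specified. Nothing in it produces the error bound you state or explains the restriction $\alpha<2$.

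The Gaussian estimates also have holes.
\begin{enumerate}
\item In the mean-dominated case you bound $\mathbb P(N\ge 1)$, but your comparison produced an expected pair count. Converting one into the other needs a moment bound such as Theorem \ref{t.localcount} applied on $\{N\ge1\}$.
\item In the other case the process is not centered and only $M_n''$ is controlled. You must check from Theorem \ref{t.rho2} that $E(x,y)$ and the conditional means $\mu_1,\mu_2$ do not destroy the linear repulsion.
\item More seriously, $P_n$ has real coefficients and $x$ is real, so a single non-real zero in $B(x,\eta)$ already gives $N\ge2$. These conjugate pairs form a singular part of the $(0,2)$ correlation measure, carried by $\{(w,\bar w)\}$. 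No bound on a pair density over $B(x,2\eta)^2$ controls them. What you need is that the intensity of non-real zeros vanishes linearly at the real axis, and your sketch does not address this.
\end{enumerate}

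A route that uses only the hypotheses on $B(x,2\eta)$, and treats real pairs, conjugate pairs and double zeros at once, is the divided-difference bound. If $z_1,z_2\in B(x,\eta)$ are zeros, then $P_n(x)=(x-z_1)(x-z_2)P_n[z_1,z_2,x]$. The Hermite--Genocchi formula then gives $|P_n(x)|\le\tfrac12\eta^2\sup_{B(x,\eta)}|P_n''|$.

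Now suppose $|M_n''|\ll\sqrt{\Var[R_n'']}$. Combine Cauchy estimates, a second-moment bound for $\sup|R_n|$ on a disk of radius $\asymp\delta$, and a Berry--Esseen bound for $R_n''(x)$. For small $\varepsilon$ this gives $\sup_{B(x,\eta)}|P_n''|\ll|\log\delta|\,\delta^{-2}\sqrt{\Var[R_n(x)]}$ off an event of probability $O(\delta^{2\varepsilon})$. A small-ball estimate for $P_n(x)$ then yields $O(\delta^{2\varepsilon}|\log\delta|)$. In this route the logarithmic loss is what limits you to $\alpha<2$.

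In the mean-dominated case, the same bounds show that $|R_n|<|M_n|$ throughout $B(x,\eta)$ off an event of that probability. So $P_n$ has no zero there at all.
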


\begin{theorem}[\cite{Do21}*{Theorem 6.1}] \label{t.localcount} 
Let $0\le c,c'<1$ satisfy $c+c'<1$. Then there exist constants $C_1,C_2,C_3>0$ such that, for any $\frac1n\ll \delta\le C_1^{-1}$, $|z|\in I(\delta)+(-c\delta,c\delta)$, $k\ge1$, $M>0$, and any event $E$,
    \[
    \mathbb E [N_{P_n}^k\left(B\left(z, c'\delta\right)\right)\pmb 1_{E}] \ll _{k,M} \delta^M + |\log \delta|^{C_2 k} \mathbb P\left(E\right).
    \]
Moreover, if $\delta\ge C_3\log n/n$, one may take $C_2=1$. The same estimate holds for $N_{P_n^*}$.
\end{theorem}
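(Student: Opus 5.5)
The plan is to combine Jensen's formula with the local log-integrability estimate of Theorem~\ref{t.locallogint}. Fix radii $c'\delta<r_1<r_2<(1-c)\delta$, which is possible since $c+c'<1$. For any $w_0\in B(z,r_1-c'\delta)$, Jensen's formula on $B(w_0,r_2-r_1)\supset B(z,c'\delta)$ gives
\[
N_{P_n}(B(z,c'\delta))\ll \log\max_{B(z,r_2)}|P_n|-\log|P_n(w_0)|.
\]
Averaging over $w_0$ in a disk of radius $\asymp\delta$ then yields
\[
N_{P_n}(B(z,c'\delta))\ll \log_+\max_{B(z,r_2)}|P_n| + \delta^{-2}\int_{B(z,r_1)}|\log|P_n(w)||\,dw .
\]
Raising this to the $k$th power and multiplying by $\pmb 1_E$ reduces the problem to bounding the $k$th moments of these two terms on $E$. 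The same argument applies verbatim to $P_n^*$, or to $Q_n=(n+1)^{-\tau}P_n^*$, whose zeros are the same as those of $P_n^*$.

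\textbf{The first term.} By Condition~\ref{c.A} and Lemma~\ref{l.estvar}, both $|M_n|$ and $\sqrt{\Var[R_n]}$ are at most $\delta^{-O(1)}$ on $B(z,r_2)$; more generally they are polynomially bounded in $n$. By Cauchy estimates on a slightly larger disk and Markov's inequality for $\mathbb E|R_n|^2$, we get
\[
\mathbb P\bigl(\max_{B(z,r_2)}|P_n|>\delta^{-A}\bigr)\ll\delta^{A-O(1)}
\]
for every $A$. Off this event, the first term is $O(|\log\delta|)$.

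\textbf{The second term.} Off the exceptional event $\mathcal T$ of Theorem~\ref{t.locallogint}, taken with $p=1$ or with a fixed large $p$ combined with H\"older, the averaged integral is $O(|\log\delta|^2)$. On $E\cap\mathcal T^c\cap\{\max|P_n|\le\delta^{-A}\}$ we therefore get
\[
N^k\ll (C|\log\delta|)^{2k}.
\]
This produces the term $|\log\delta|^{C_2k}\,\mathbb P(E)$ with $C_2=2$. When $\delta\ge C_3\log n/n$, I would try to reach $C_2=1$ by sharpening the log-integral bound: in that regime the Gaussian-like anti-concentration from the many effective coefficients should give $\delta^{-2}\int|\log|P_n||\ll|\log\delta|$ off a small event. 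I would not insist on this refinement at first.

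\textbf{The exceptional event (main obstacle).} The bad event $\mathcal T$ has probability only $O(\delta^\varepsilon)$, while the crude deterministic bound is $N\le n\ll\delta^{-1}$. This is far from the $\delta^M$ remainder that is required. I would handle it by bootstrapping. On $\mathcal T$, H\"older gives
\[
\mathbb E[N^k\pmb 1_{\mathcal T}]\le \mathbb E[N^{2k}]^{1/2}\,\mathbb P(\mathcal T)^{1/2},
\]
so it suffices to improve the probability of the event $\{N\ge|\log\delta|^{C}\}$ to $O(\delta^M)$.

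To get this, I would exploit that Jensen forces $|P_n|\le e^{-N}\max|P_n|$ on a sub-disk. So $N\ge L$ implies $|P_n(w_i)|\le\delta^{-A}e^{-cL}$ simultaneously at $\asymp L$ well-separated points $w_i$. Conditioning on all coefficients except disjoint blocks of size $\asymp\delta^{-1}$, each carrying independent randomness, the small-ball probability at each point is $O(e^{-c'L})+O(\delta^{\varepsilon})$ by a Berry--Esseen/Lindeberg bound using the $(2+\varepsilon_0)$ moments. Multiplying over independent blocks should then give an $O(\delta^M)$ bound once $L\asymp|\log\delta|$.

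Making this product-over-blocks anti-concentration argument rigorous is where I expect the real difficulty. In particular, one must ensure that the points $w_i$ are effectively driven by different coefficient blocks. This should follow from $|w_i|\in I(\delta)$, where the coefficients of index $\asymp\delta^{-1}$ dominate $P_n$.
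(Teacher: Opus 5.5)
\textbf{Verdict: there is a genuine gap.} The proposal does not obtain the $\delta^M$ remainder, and its reduction rests on a bound on $n$ that runs the wrong way.

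\textbf{What works.} Your Jensen-plus-log-integrability step is sound and gives the main term. On $E\cap\mathcal T^c\cap\{\max_{B(z,r_2)}|P_n|\le\delta^{-A}\}$ you get $N_{P_n}(B(z,c'\delta))\ll|\log\delta|^2$, which is the bound with $C_2=2$.

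\textbf{The $\delta^M$ remainder.} This remainder, for arbitrary $M$, is the real content of the statement. The paper itself only quotes the result, but its Lemma~\ref{l.sublevelN} uses exactly your ingredients: Jensen, the sup tail of Lemma~\ref{l.L58}, and the one-point small-ball bound of Lemma~\ref{l.C43}. Those ingredients only reach $\delta^{c_0}$.

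Your product-over-blocks step fails for a structural reason.
\begin{itemize}
\item Every $w_i\in B(z,c'\delta)$ is at distance $\asymp\delta$ from the unit circle, so every $P_n(w_i)$ is driven by the \emph{same} coefficients, those of index up to $\asymp\delta^{-1}|\log\delta|$. The fact you cite at the end is precisely why no block is private to any $w_i$.
\item Each event $\{|P_n(w_i)|\le\delta^{-A}e^{-cL}\}$ depends on every block. A small-ball bound conditional on all but one block therefore controls one event at a time but cannot be multiplied over $i$. Sequential conditioning would need the other events to be measurable with respect to what you conditioned on.
\item Values at many points of a disk of bounded pseudo-hyperbolic radius are nearly linearly dependent. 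Their small-ball probabilities do not multiply even for Gaussian coefficients.
\item Each one-point bound carries an additive Berry--Esseen term $O(\delta^{\varepsilon})$ that does not improve. You would need $\asymp M/\varepsilon$ genuinely independent trials.
\end{itemize}

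\textbf{A possible repair.} One way to obtain independent trials is to move the Jensen center radially instead of within the disk. Take centers $w^{(i)}$ on the ray through $z$ at distance $\asymp 2^i\delta$ from the circle, $0\le i\le K\asymp M/\varepsilon_0$.
\begin{itemize}
\item Each still admits a Jensen disk containing $B(z,c'\delta)$, at the cost of a factor $2^i$.
\item After discarding a negligible tail, $P_n(w^{(i)})$ only involves indices $\ll 2^{-i}\delta^{-1}|\log\delta|$.
\item Revealing coefficients in increasing order of index leaves each scale a fresh block whose variance still dominates the tiny small-ball radius, so the conditional bounds do multiply.
\end{itemize}

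\textbf{The bound on $n$.} Your reduction relies on $N\le n\ll\delta^{-1}$, which is backwards. The hypothesis $\frac1n\ll\delta$ means $n\gg\delta^{-1}$, and $n$ may be arbitrarily large compared with $\delta^{-1}$ (e.g.\ $\delta$ a fixed small constant and $n\to\infty$). Consequences:
\begin{itemize}
\item Even $\mathbb P(N\ge|\log\delta|^C)=O(\delta^M)$ would not control $\mathbb E[N^k\pmb 1_{\{N\ge|\log\delta|^C\}}]$.
\item Your H\"older step $\mathbb E[N^k\pmb 1_{\mathcal T}]\le\mathbb E[N^{2k}]^{1/2}\mathbb P(\mathcal T)^{1/2}$ is circular.
\item You need bounds on $\mathbb P(N\ge\ell)$ that decay in $\ell$, uniformly in $n$, fast enough to be summed against $\ell^{k-1}$.
\end{itemize}
Jensen plus the $e^{-2s}$ tail of Lemma~\ref{l.L58} handles the $\log\max|P_n|$ part. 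The small-ball part must also improve as $\ell$ grows. Theorem~\ref{t.locallogint} gives no information on $\mathcal T$, and finitely many one-point bounds are stuck at the additive $\delta^{\varepsilon}$ term, so neither provides this.

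\textbf{The $C_2=1$ refinement.} The case $\delta\ge C_3\log n/n$ is part of the statement and you leave it open. The log-integral route intrinsically loses $|\log\delta|^2$. By contrast, Jensen would give $N\ll|\log\delta|$ if you had an admissible center $w_0$ with $\log|P_n(w_0)|\ge -O(|\log\delta|)$ off an event of probability $O(\delta^M)$. So the refinement hinges on the same missing ingredient, in quantitative form.
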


Below, we prove a variant of \cite{Do21}*{Lemma 6.3}.

\begin{lemma}\label{l.sublevelN} Let $0\le c,c'<1$ satisfy $c+c'<1$, and let $c_0\in (0,\tfrac12)$. Then, for sufficiently large $C>0$, the following holds. If $\frac 1 n\ll \delta <\frac 1 2$ and  $|z|\in I(\delta)+(-c\delta,c\delta)$, then
\begin{align*}
    \mathbb P(N_{P_n}(B(z,c'\delta))>C|\log \delta|) &\ll \delta^{c_0}.
\end{align*}
\end{lemma}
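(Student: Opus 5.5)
The natural route is Jensen's formula, which turns a count of zeros in a disk into an average of $\log|P_n|$ over a slightly larger circle. We then control that average from above with a crude moment bound and from below with a small-ball (anti-concentration) estimate for $P_n$ at a single point. All constants are allowed to depend on $c,c',c_0$.

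\textbf{Geometric setup.} Choose radii $r_1=c'\delta<r_2<r_3$ with $r_3\le c''\delta$, where $c+c''<1$. Then the disk $B(z,r_3)$ stays inside the region where $|w|\asymp 1-\delta$ scaling holds. Jensen's formula gives
\[
N_{P_n}(B(z,r_1))\log\frac{r_2}{r_1}\le \frac1{2\pi}\int_0^{2\pi}\log|P_n(z+r_2e^{i\theta})|\,d\theta-\log|P_n(z)|,
\]
and $\log(r_2/r_1)\asymp 1$. It therefore suffices to show that, outside an event of probability $O(\delta^{c_0})$, two things hold:
\[
\max_{|w-z|\le r_2}\log|P_n(w)|\le \log S+O(|\log\delta|)
\qquad\text{and}\qquad
\log|P_n(z)|\ge \log S-O(|\log\delta|),
\]
where $S:=\sqrt{\Var[R_n(|z|)]}+|M_n(z)|$ or a comparable normalizing scale. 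We would normalize instead by $\sqrt{\Var R_n(|z|)}$, which under \ref{A2} is $\asymp \delta^{-\tau-1/2}$. If $|z|>1$ we pass to $Q_n=(n+1)^{-\tau}P_n^*$, as allowed by the last clause of Theorem~\ref{t.locallogint}.

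\textbf{Upper bound.} Since the coefficients satisfy \ref{A2}, we have $\mathbb E|P_n(w)|^2\ll \sum_j (1+j)^{2\tau}(1+\mu^2\text{-type terms})|w|^{2j}$. On $|w|\le 1-\delta/2$ (or $\le 1+1/n$ in the second regime of $I(\delta)$) this is at most $\delta^{-C}$. Combining a union bound over a $\delta^{3}$-net of the circle with the trivial derivative bound gives $\max\log|P_n|\le C|\log\delta|$ outside an event of probability $\delta^{10}$, by Markov's inequality.

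\textbf{Lower bound (the main obstacle).} We need $\mathbb P(|P_n(z)|\le \delta^{C})\ll\delta^{c_0}$ for a point $z$ that may be complex. The cleanest route avoids anti-concentration theory entirely and instead uses Theorem~\ref{t.locallogint} with $p=1$:
\begin{itemize}
\item Off an event $\mathcal T$ of probability $O(\delta^{\varepsilon})$, $\varepsilon$ close to $1/2$, we have $\int_{B(z,\widetilde c\delta)}|\log|P_n||\le C\delta^2|\log\delta|^2$.
\item Hence there is a point $w_0$ in the annulus $r_2<|w-z|<r_3$ of area $\asymp\delta^2$ with $|\log|P_n(w_0)||\le C'|\log\delta|^2$. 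That bound is too weak, so we apply Jensen's formula in the averaged form instead.
\item Average the Jensen inequality over centers $z'\in B(z,(r_2-r_1)/2)$. Each such disk $B(z',(r_1+r_2)/2)$ contains $B(z,r_1)$ after adjusting radii, which gives
\[
N\ll \frac{1}{\delta^2}\int_{B(z,r_3)}\bigl|\log|P_n|\bigr|\;+\;\max_{B(z,r_3)}\log|P_n|.
\]
Here $N\log(\cdot)$ is bounded by a circle average minus a center value, and both are controlled after area-averaging by $\delta^{-2}\int|\log|P_n||$.
\end{itemize}
This yields $N\ll|\log\delta|^2$, which is still off by a log. To reach $C|\log\delta|$ we would subtract the normalization first: apply the bound to $\log|P_n/S|$, for which Theorem~\ref{t.locallogint}'s proof (via the normalized polynomial) gives $\int|\log|P_n/S||\ll\delta^2|\log\delta|$ off $\mathcal T$. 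The quantity $\delta^2|\log\delta|^{2p}$ with $p=1$ becomes $|\log\delta|$ after taking $p$-th roots of the $p$-th moment version: the estimate
\[
\int|\log|P_n||^p\le (Cp)^p\delta^2|\log\delta|^{2p}
\]
with $p$ large, combined with Hölder's inequality, controls only the $L^1$ average by $|\log\delta|^2$.

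So the honest plan is to use the normalized version, where the area integral of $|\log|P_n/S||$ is $O(\delta^2|\log\delta|)$. We obtain it from the union/Markov bound for the upper side and from the small-ball bound implicit in the construction of $\mathcal T$ for the lower side. The choice $\varepsilon\ge c_0$, which is possible since $c_0<1/2$, then delivers the claimed probability $\delta^{c_0}$. Verifying that the log-integrability in Theorem~\ref{t.locallogint} indeed holds with a single power of $|\log\delta|$ after normalization is the step I expect to be hardest.
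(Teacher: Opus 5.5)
The reduction you start with is the same as the paper's. You apply Jensen's formula at the center $z$, bound $\sup\log|P_n|$ from above on a slightly larger disk, and bound $\log|P_n(z)|$ from below at the single point $z$. Your Markov-plus-net argument for the upper side is fine; it plays the role of Lemma~\ref{l.L58}.

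\textbf{The gap is the lower side.} You correctly identify the needed input, $\mathbb P(|P_n(z)|\le\delta^{C})\ll\delta^{c_0}$, but then try to bypass it with Theorem~\ref{t.locallogint}, and that cannot work. The theorem only gives $\int_B|\log|P_n||^p\le(Cp)^p\delta^2|\log\delta|^{2p}$. So the normalized $L^1$ average of $|\log|P_n||$ is controlled only by $|\log\delta|^2$: a function identically equal to $|\log\delta|^2$ satisfies all these bounds. Your averaged Jensen argument therefore yields only $N\ll|\log\delta|^2$.

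Your proposed repair, $\int_B|\log|P_n/S||\ll\delta^2|\log\delta|$ off an event of probability $O(\delta^{c_0})$, is asserted without proof. It is not a consequence of Theorem~\ref{t.locallogint}. Moreover, by your own averaged Jensen inequality this claim by itself already implies the lemma, and its lower-tail half is a small-ball estimate for $|P_n|$ on the disk. Deferring it to ``the small-ball bound implicit in the construction of $\mathcal T$'' relocates the missing step rather than supplying it. So the proposal neither avoids anti-concentration nor proves the statement.

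\textbf{What is missing} is the pointwise small-ball bound that the paper takes from \cite{Do21}*{Corollary 4.3} (Lemma~\ref{l.C43}): for each $\alpha_2<\tfrac12$ there is $C_2$ with $\mathbb P(\log|P_n(z)|\le -C_2|\log\delta|)\ll\delta^{\alpha_2}$. The paper uses the hypothesis $c_0<\tfrac12$ by choosing $\alpha_2=c_0$, not through the $\varepsilon$ of Theorem~\ref{t.locallogint}. Combine this with the upper-tail bound at a level $s\asymp C|\log\delta|$ and a union bound. Plain Jensen at $z$ then gives $N_{P_n}(B(z,c'\delta))\le C|\log\delta|$ off an event of probability $\ll\delta^{c_0}$ for $C$ large, with no averaging over centers.

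If you would rather prove such a bound yourself, the following route should work; I have only sketched it.
\begin{enumerate}
\item Condition on all coefficients except a block with $j\asymp\min(1/\delta,n)$.
\item Project $P_n(z)$ onto whichever of the real or imaginary axes carries at least half of $\sum_j v_j^2|z|^{2j}\asymp\delta^{-2\tau-1}$ over that block.
\item Apply the Kolmogorov--Rogozin inequality at scale $\asymp\delta^{-\tau}$, the size of a single term; each $\xi_j$ is nondegenerate at a fixed scale by unit variance and \ref{A1}. This gives $\mathbb P(|P_n(z)|\le\delta^{-\tau})\ll\delta^{1/2}$.
\end{enumerate}
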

\begin{proof}
We will essentially follow the same argument as in the proof of \cite{Do21}*{Lemma 6.3}. Without loss of generality, we may assume $\delta<1/C_1$ for some large $C_1>1$. Using Jensen's formula, we have
\begin{equation}
    \label{eq.croots}
    N_{P_n}\left(B\left(z,c'\delta\right)\right) \ll \sup_{w\in B\left(z,c'\delta\right)}\log |P_n(w)|+\log \frac {1}{|P_n(z)|}.
\end{equation}
We now recall some results from \cite{Do21}.
\begin{lemma}[\cite{Do21}*{Lemma 5.8}]\label{l.L58} Let $0 \le c<1$.  For $\frac 1 n \ll \delta < \frac 1 5$ it holds for any $\varepsilon>0$ and $s\in \mathbb R$ that
\[
    \mathbb P\bigg(\sup_{|w|\in I(\delta)+\left(-c\delta,c\delta\right)}\log |P_n(w)|>s\bigg) \ll_\varepsilon e^{-2s}\delta^{-2\left(\rho+1+\varepsilon\right)}.
\]
\end{lemma}
\begin{lemma}[\cite{Do21}*{Corollary 4.3}]\label{l.C43} Let $0 \le c<1$.  Then there exists a constant $C_1>0$ such that the following holds for any $\frac 1 n\ll \delta \le \frac 1{C_1}$ and any $|z|\in I(\delta)+\left(-c\delta,c\delta\right)$: for any $0<\alpha_2<\frac 1 2$ there is a constant $C_2$ such that
\[
\mathbb P\left(\log |P_n(z)|\le -C_2 |\log \delta|\right)\ll \delta^{\alpha_2}.
\]
\end{lemma}
Using the triangle inequality, the desired estimate follows from \eqref{eq.croots}, Lemma \ref{l.L58} (with $s=\frac 1 2 C|\log \delta|$), and Lemma \ref{l.C43}.
\end{proof}

\section{Probability of multiple roots}\label{s.multiple-root}
In this section, we estimate the probability that $P_n$ has many real roots in small neighborhoods of $\pm 1$. These estimates will be used in subsequent sections to reduce the proof to the Gaussian setting.

\begin{theorem}\label{t.multi.roots} 
Assume that the coefficients of $P_n$ in \eqref{e.pn} have polynomial growth of order $\tau>-1/2$ and that $(\xi_j)_{j=0}^n$ are independent standard normal random variables. Fix constants $C, \theta, B>0$, and set $B_n=[1-B/n,1)$. Then there exist positive constants $C_1$ and $\beta$ such that the following holds. For every integer $k\ge 2$, every constant $\delta\in (0,1/\beta)$, and every interval $I=[a,b)\subset B_n$ satisfying
\[
\log \frac{1-a}{1-b}=\delta,
\]
if $M_n$ is dominated by $R_n$ on an enlargement $J\supset I$ with factor function  $Cx^{\theta}$, then
\begin{equation}\label{e.sublevel}
\mathbb P (N_{P_n}(I)\ge k) \le C_1 (\beta\delta)^{2k/3}.
\end{equation}
Analogous results hold for $N_{P_n}(-I)$ and $N_{P_n^*}(\pm I)$.
\end{theorem}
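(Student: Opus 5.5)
We bound $\mathbb P(N_{P_n}(I)\ge k)$ by the $k$-th factorial moment, $\mathbb P(N\ge k)\le \mathbb E\binom{N}{k}$, and control the $k$-point correlation function $\rho_k$ of real zeros of the Gaussian process $P_n$ on $I^k$ through the Kac--Rice formula
\[
\rho_k(x_1,\dots,x_k)=\frac{\mathbb E\bigl[\prod_i|P_n'(x_i)|\ \big|\ P_n(x_i)=0\ \forall i\bigr]\, p_{(P_n(x_i))_i}(0)}{1}.
\]
The interval has length $b-a=(1-b)(e^\delta-1)\asymp\delta(1-b)$, and $1-b\ge 1-a\cdot$ is at least of order $1/n$ and at most $B/n$. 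So after the rescaling $x=1-t/n$ the interval becomes a window of length $\asymp\delta\cdot t$ with $t\asymp 1$ (up to constants depending on $B$), and $P_n$, normalized by $\sqrt{\Var R_n}$, converges to a smooth nondegenerate Gaussian process on a fixed compact set.

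The first step is to use domination by $R_n$ on the enlargement $J$ with factor $Cx^\theta$ at scale $1-|x|+1/n\asymp 1/n$. The normalized mean $m$ and its first two derivatives (in rescaled variables) are then $O(1)$ on $J$, via the Remark after the domination definition. Consequently the mean is a bounded smooth perturbation, and the Gaussian densities and conditional expectations stay uniformly comparable to those of the centered case.

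The second step is a Taylor or divided-difference argument. For $k$ points in a window of length $h\asymp\delta$ (rescaled), we change variables from $(P(x_1),\dots,P(x_k))$ to the divided differences $P[x_1],P[x_1,x_2],\dots,P[x_1,\dots,x_k]$. The Jacobian is the Vandermonde product $V=\prod_{i<j}|x_i-x_j|$, so the density at $0$ is $\asymp V^{-1}$, provided the vector of the first $k$ normalized derivatives of the rescaled process is nondegenerate uniformly. I expect this nondegeneracy to hold for the limiting process, and it is unaffected by the bounded mean.

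Given $P(x_i)=0$ for all $i$, we have $P'(x_i)=\prod_{j\ne i}(x_i-x_j)\cdot P[x_1,\dots,x_k,x_i]$-type expressions. Hence $\prod_i|P'(x_i)|\lesssim V^2\cdot(\text{moments of higher divided differences})$, and Gaussian moments bound the latter uniformly. This gives $\rho_k\lesssim C^k V$, and therefore
\[
\mathbb E\binom{N}{k}\le \frac{1}{k!}\int_{I^k}\rho_k \lesssim C^k h^{k}\,h^{k(k-1)/2}.
\]
This quantity decays like $\delta^{k(k+1)/2}$, which for $k\ge2$ is far smaller than $(\beta\delta)^{2k/3}$ once $\delta<1/\beta$. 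The weaker exponent $2k/3$ leaves room for a cruder argument. For instance, one can bound $\mathbb P(N\ge k)\le\mathbb P(N\ge 2)^{\cdot}$ via a Rolle-type argument: $k$ roots in $I$ force $P^{(j)}$ to have roots in $I$ for all $j<k$. One can also use Hölder across the three quantities $P,P',P''$, which seems to be where the exponent $2/3$ comes from. As a fallback, I would combine Rolle's theorem with bounds on $\mathbb P(|P^{(j)}|$ small on $I)$ via small-ball estimates.

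The main obstacle is uniformity in $k$. The constants in the nondegeneracy of the $k$ divided differences, and the moment bounds, must grow at most geometrically, since the claimed bound has $C_1$ independent of $k$ and $\beta^{2k/3}$. To handle this I would avoid the full $k$-point correlation, and instead use only $2$- and $3$-point information. Partition the $k$ roots into about $k/3$ disjoint consecutive triples, or equivalently use the fact that on $I$ the process is well approximated by its degree-2 Taylor polynomial plus an error of size $\delta^3$. Then estimate the probability that $P,P',P''$ are simultaneously of sizes $\delta^2,\delta,1$, and iterate using the Gaussian conditional structure to get a factor $(\beta\delta)^{2}$ per triple. The analogous statements for $-I$ and $P_n^*$ follow by the same argument, applying the domination hypothesis on the corresponding side.
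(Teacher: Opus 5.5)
Your proposal is not a proof: it stops exactly where the difficulty lies, and it is missing two ingredients.

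\textbf{The mean's higher derivatives are not controlled.} The hypothesis is domination of order $0$ only. It gives $|M_n|\ll n^{\tau+1/2-\theta}$ on $J$ and says nothing about derivatives. The Remark you cite turns order-$i$ domination into bounds on $m^{(i)}$; it does not turn order-$0$ domination into derivative bounds. Every route you sketch (conditional moments of order-$k$ divided differences, Taylor remainders) needs
$\sup_I|M_n^{(k)}|\le c^kk!\,(1-b+1/n)^{-k}\sqrt{\Var[R_n(b)]}$ for every $k$, not just $m,m',m''=O(1)$. The paper proves this in Lemma~\ref{l.derMn} by complex analysis. Hadamard's three-circle theorem on confocal ellipses around $J$ interpolates between the smallness of $M_n$ on $J$ and the crude bound $|M_n|\le n^{C_\tau}$ on an ellipse of size $O(1/n)$. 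This gives $|M_n|\le n^{\tau+1/2}$ on an ellipse at distance $\gg 1/n$ from $I$, and Cauchy's formula then bounds the derivatives.

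\textbf{Uniformity in $k$ is left open.} The $k$-point Kac--Rice route needs nondegeneracy of the first $k$ divided differences, and control of the conditional means, with constants tracked in $k$. The theorem needs $C_1,\beta$ independent of $k$, since Section~\ref{s.moment} sums the bound over all numbers of roots. You flag this as the main obstacle and do not resolve it. The relevant Gram matrices are Hilbert-type with superexponentially small determinants, so this is real bookkeeping, not a formality. The triple-partition fallback does not help: ``iterating the Gaussian conditional structure'' across triples again needs multi-point nondegeneracy. Also, the exponent $2/3$ does not come from H\"older across $P,P',P''$. A minor point: $1-b$ need not be $\asymp 1/n$ and can be arbitrarily small, though shorter windows only help.

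\textbf{What the paper does instead.} One-point information suffices. Put $\epsilon=(\beta\delta)^{2k/3}$ and $V(b)=\Var[R_n(b)]$, and split on whether $|P_n(b)|\le\epsilon\sqrt{V(b)}$.
\begin{itemize}
\item That event has probability $\ll\epsilon$ by anticoncentration of the single Gaussian $P_n(b)$, whatever its mean.
\item On $\{N_{P_n}(I)\ge k\}$, repeated Rolle plus integration gives $|P_n(b)|\le Q$. Here $Q$ is the integral of $|P_n^{(k)}(y_k)|$ over the simplex $\{a\le y_k\le\cdots\le y_1\le b\}$, whose volume is $(b-a)^k/k!$.
\item Cauchy--Schwarz, Lemma~\ref{l.estvar} (random part) and Lemma~\ref{l.derMn} (mean part) give $\mathbb E[Q^2]\ll \bigl((b-a)^k/k!\bigr)^2(c^kk!)^2(1-b+1/n)^{-2k}V(b)\le(\beta\delta)^{2k}V(b)$, using $(b-a)/(1-b)=e^\delta-1\le 2\delta$. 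The two factorials cancel, which is what makes the constants geometric in $k$.
\item Markov's inequality then gives $(\beta\delta)^{2k}/\epsilon^2=\epsilon$.
\end{itemize}
So the exponent $2k/3$ is simply the balance between the anticoncentration term and the Markov term. Your Rolle fallback points in this direction. What is actually needed is anticoncentration at one point together with a second-moment bound on $P_n^{(k)}$, not small-ball estimates for the $P^{(j)}$ on $I$.
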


\subsection{Auxiliary results} 
To prove Theorem~\ref{t.multi.roots}, we first establish some preliminary results. 

\begin{lemma}\label{l.estvar}
If Condition~\ref{c.A} holds, then for $x_0\in(0,1)$ and uniformly over $x\in[x_0,1]$,
\begin{equation}
    \label{e.varRn}
    \Var[R_n(x)]\asymp \frac{1}{(1-x+1/n)^{2\tau+1}}
\end{equation}
and for $c=(2\tau+3)/x_0$ and any integer $k\ge 1$,
\begin{equation}
    \label{e.kder-var}
    \Var[R_n^{(k)}(x)]\le \frac{(c^k k!)^2}{(1-x+1/n)^{2k}}\Var[R_n(x)].
\end{equation}
\end{lemma}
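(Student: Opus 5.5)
We write $\Var[R_n(x)]=\sum_{j=0}^n v_j^2x^{2j}$. Condition~\ref{c.A} gives $v_j^2\asymp (1+j)^{2\tau}$ for $j\ge N_0$ and $v_j^2\ll(1+j)^{2\tau}$ for all $j$. The plan for \eqref{e.varRn} is to compare this sum with $S(x):=\sum_{j=0}^n (1+j)^{2\tau}x^{2j}$. The upper bound $\Var[R_n(x)]\ll S(x)$ is immediate. For the lower bound we drop the first $N_0$ terms. Since $x\ge x_0$, the discarded terms are $O_{N_0}(1)$, while $S(x)\gg 1$ uniformly on $[x_0,1]$ (the $j=0$ term already equals $1$). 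This handles $x$ bounded away from $1$, where both sides of \eqref{e.varRn} are $\asymp 1$. For $x$ near $1$ we will see that $S(x)\to\infty$ uniformly in the relevant regime, so the discarded terms are negligible.

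The core step is therefore $S(x)\asymp (1-x+1/n)^{-(2\tau+1)}$. We set $h:=1-x+1/n$ and split into two cases.

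\emph{Case $1-x\le 1/n$.} Then $x^{2j}\ge (1-1/n)^{2n}\gg 1$ for all $j\le n$. Hence $S(x)\asymp\sum_{j\le n}(1+j)^{2\tau}\asymp n^{2\tau+1}$, using $2\tau>-1$. This matches $h^{-(2\tau+1)}\asymp n^{2\tau+1}$.

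\emph{Case $1-x>1/n$.} Then $h\asymp 1-x$ and $x^{2j}\le e^{-2j(1-x)}$. For the upper bound we compare with $\sum_j (1+j)^{2\tau}e^{-2j(1-x)}\asymp (1-x)^{-(2\tau+1)}$, a Riemann sum for $\int_0^\infty t^{2\tau}e^{-2t}\,dt$ scaled by $(1-x)^{-1}$. This uses $2\tau>-1$ for integrability at $0$; for $\tau<0$ the monotonicity of $(1+j)^{2\tau}$ makes the comparison routine. For the lower bound we restrict to $j\le 1/(1-x)\le n$. Since $x\ge x_0$, we have $x^{2j}\ge e^{-C j(1-x)}\gg 1$ on this range. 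Summing $(1+j)^{2\tau}$ over $j\le 1/(1-x)$ gives $\gg (1-x)^{-(2\tau+1)}$.

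For \eqref{e.kder-var} we write $\Var[R_n^{(k)}(x)]=\sum_j v_j^2\,(j)_k^2\,x^{2(j-k)}$ with the falling factorial $(j)_k=j(j-1)\cdots(j-k+1)$. The key pointwise estimate is
\[
(j)_k\, x^{j-k} \le \frac{c^k k!}{h^k}\, x^{j}\cdot\bigl(\text{something summable}\bigr).
\]
A cleaner route avoids matching term by term. First, using $x\ge x_0$ gives $x^{-2k}\le x_0^{-2k}$. Second, $(j)_k\le j^k$ and $j^k y^{j}\le (k/e)^k(\log(1/y))^{-k}\cdot$ (a geometric tail) handles the polynomial weight. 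More concretely, we compare with the generating function: $\sum_j (1+j)^{2\tau}(j)_k^2x^{2j}\ll (k!)^2 C^{2k}\sum_j (1+j)^{2\tau+2k}x^{2j}$. Applying the computation above with $\tau$ replaced by $\tau+k$ gives $\asymp h^{-(2\tau+2k+1)}$. However, the implied constant then depends on $k$, which is not sharp enough.

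The expected main obstacle is obtaining the explicit constant $c^kk!$ with $c=(2\tau+3)/x_0$ rather than merely an $O_k$ bound. To address it, we will use the term-by-term Cauchy–Schwarz-free estimate $(j)_k\le k!\binom{j}{k}$ together with $\sum_{j\ge k}\binom{j}{k}y^{j-k}=(1-y)^{-k-1}$. Concretely, we bound $\sum_j v_j^2 (j)_k^2 x^{2(j-k)}$ by splitting $x^{2(j-k)}=x^{j-k}\cdot x^{j-k}$, using $\binom{j}{k}x^{j-k}\le (1-x)^{-k}\cdot$ a factor absorbed via $h$. The factor $2\tau+3$ then arises from comparing $(1+j)^{2\tau}$-weighted sums at shifted exponents, and $x_0^{-k}$ arises from $x^{-k}$. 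If this becomes messy, a fallback is induction on $k$ through Cauchy's integral formula: $R_n^{(k)}(x)=\frac{k!}{2\pi i}\oint_{|w-x|=r}R_n(w)(w-x)^{-k-1}\,dw$ with $r\asymp h$. This gives $\Var[R_n^{(k)}(x)]\le (k!)^2r^{-2k}\sup_{|w-x|=r}\Var[R_n(w)]$, and the supremum is $\ll \Var[R_n(|w|)]\asymp\Var[R_n(x)]$ once $r$ is a small multiple of $h$. This already yields a constant of the form $(c^kk!)^2$, with $c$ determined by how small $r/h$ must be, and that ratio depends only on $\tau$ and $x_0$.
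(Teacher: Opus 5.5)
Your proof of \eqref{e.varRn} follows the paper's case split ($1-x\le 1/n$ versus $1-x>1/n$). The paper cites \cite{DN25} for the second case, and your self-contained Riemann-sum comparison is fine. One slip needs repair. For $x$ away from $1$, the lower bound does not follow from ``the discarded terms are $O(1)$ and $S(x)\gg1$'', since both are $\asymp 1$ there. Instead, bound the kept tail below by its $j=N_0$ term, $C_0^{-2}(1+N_0)^{2\tau}x_0^{2N_0}$.

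For \eqref{e.kder-var} your route genuinely differs from the paper's. The paper's argument runs as follows.
\begin{enumerate}
\item It bounds $(j)_k^2x^{2j-2k}\le x_0^{-2k}(1+j)^{2k}x^{2j}$.
\item It uses $\sum_j(1+j)^{2\tau+2k}x^{2j}\approx\Gamma(2\tau+2k+1)(1-x+1/n)^{-(2\tau+2k+1)}$.
\item It proves $\prod_{i=0}^{2k-1}(2\tau+1+i)\le(2\tau+3)^{2k}(k!)^2$ by induction.
\end{enumerate}
This Gamma bookkeeping is exactly what your ``cleaner route'' lacks, and it is where $c=(2\tau+3)/x_0$ comes from.

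Your binomial sketch is not a proof. Note that $(j)_k=k!\binom jk$ is an identity, and the pointwise bound $\binom jk x^{j-k}\le(1-x)^{-k-1}$ costs an extra power of $h$. Drop it and commit to the Cauchy fallback, which is correct. Take $r=\epsilon h$ with any fixed $\epsilon\in(0,1)$; no smallness is needed. Apply Cauchy--Schwarz on the contour, and use $\mathbb E|R_n(w)|^2=\sum_j v_j^2|w|^{2j}\le\sum_jv_j^2(x+r)^{2j}$. Then apply the upper half of \eqref{e.varRn} at $x+r$, noting $1-(x+r)+1/n=(1-\epsilon)h$. When $x+r\in(1,1+\epsilon/n]$, use $(1+\epsilon/n)^{2n}\le e^{2\epsilon}$ instead. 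Finally apply the lower half of \eqref{e.varRn} at $x$. This gives $\Var[R_n^{(k)}(x)]\le C_\epsilon(\epsilon^{-k}k!)^2h^{-2k}\Var[R_n(x)]$ with $C_\epsilon$ independent of $k$.

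This route has several advantages. It reduces everything to the case $k=0$, so no asymptotics for $\sum_j(1+j)^{2\tau+2k}x^{2j}$ are needed. It makes uniformity in $k$ transparent, which Theorem~\ref{t.multi.roots} needs and which the paper's ``$\sim$'' step leaves implicit. It also parallels the paper's own treatment of $M_n^{(k)}$ in Lemma~\ref{l.derMn}.

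What it does not give is the specific $c=(2\tau+3)/x_0$ without a prefactor. That is not a real loss, because no argument can avoid a prefactor depending on $C_0$ and $N_0$. For example, take $\tau=0$, $v_j=0$ for $j<N_0$ and $v_j=1$ otherwise. Then $\Var[R_n'(x)]\ge N_0^2x^{-2}\Var[R_n(x)]$, which exceeds $c^2h^{-2}$ at $x=x_0$ once $N_0>3/(1-x_0)$. The paper's own argument also silently loses such a constant when it compares $\Gamma(2\tau+1)h^{-(2\tau+1)}$ with $\Var[R_n(x)]$. Downstream, any $k$-independent $c$ and prefactor are absorbed by choosing $\beta$ large.
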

\begin{proof}
    We first prove \eqref{e.varRn}. By Assumptions \ref{A2} and \ref{A3}, 
    \[
     \Var[R_n(x)]=\sum_{j=0}^n v_j^2x^{2j} \asymp\sum_{j=0}^n (1+j)^{2\tau}x^{2j}.
    \]
    
    Assume first that $1-1/n\le x\le 1$. Clearly, 
    \[
    \sum_{j=0}^n (1+j)^{2\tau}x^{2j} \le \sum_{j=0}^n(1+j)^{2\tau}\ll n^{2\tau+1} \asymp \frac{1}{(1-x+1/n)^{2\tau+1}}.
    \]
    To prove the lower bound, note that $x^{2j}\ge \left(1-1/n\right)^{2j}\gg 1$, so
    \[
    \sum_{j=0}^n (1+j)^{2\tau}x^{2j} \gg  \sum_{j=0}^n (1+j)^{2\tau}\gg  n^{2\tau+1} \asymp \frac{1}{(1-x+1/n)^{2\tau+1}}.
    \]
    
    Assume now that $x_0\le x\le 1-1/n$. By \cite{DN25}*{Lemma 3.1},
    \[ 
    \sum_{j=0}^n(1+j)^{2\tau}x^{2j}\sim  \frac{\Gamma(2\tau+1)}{(1-x^2)^{2\tau+1}}\asymp \frac{1}{(1-x+1/n)^{2\tau+1}},
    \]
    from which \eqref{e.varRn} follows.

   Similarly, for each $k=1,2, \ldots,$ it holds uniformly for $x\in [x_0,1]$ that 
    \[
    \Var[R_n^{(k)}(x)] \le \frac{C_0}{x_0^{2k}} \sum_{j=0}^n (1+j)^{2\tau+2k}x^{2j} \sim  \frac{C_0}{x_0^{2k}}\frac{\Gamma(2\tau+1+2k)}{(1-x+1/n)^{2\tau+1+2k}}.
    \]
    To prove \eqref{e.kder-var}, it suffices to show that 
    \[
    \Gamma(2\tau+1+2k) \le (x_0c)^{2k}(k!)^2 \Gamma(2\tau+1),
    \]
    or equivalently, 
    \[
    \prod_{j=0}^{2k-1}(2\tau+1+j)\le (x_0c)^{2k}(k!)^2.
    \]
    With $x_0c=2\tau+3$, this follows easily by induction on $k$.
\end{proof}
\begin{lemma}\label{l.derMn} Under the assumptions of Theorem \ref{t.multi.roots}, there exists a constant $c>0$ such that for all integer $k\ge 0$,
\begin{align*}
    \sup_{u\in [a,b]}|M_n^{(k)}(u)| 
    \le  \frac{c^kk!}{(1-b+1/n)^k} \sqrt{\Var[R_n(b)]}.
\end{align*}
\end{lemma}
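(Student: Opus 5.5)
The constant $c^k k!\,(1-b+1/n)^{-k}$ is exactly what Cauchy's estimate on a disk of radius $\asymp 1-b+1/n$ produces. So I would reduce the lemma to a uniform bound for $|M_n|$ on a complex neighbourhood of $[a,b]$ at the natural scale $r:=c_1(1-b+1/n)$, for a small constant $c_1>0$. Since $I\subset B_n=[1-B/n,1)$, we have $1-b+1/n\asymp 1-a+1/n\asymp 1/n$, with constants depending only on $B$. Lemma~\ref{l.estvar} then gives
\[
\Var[R_n(x)]\asymp n^{2\tau+1}\asymp \Var[R_n(b)]
\]
uniformly for $x$ within $O(1/n)$ of $[a,b]$, and also for the reciprocal side after the normalisation $(n+1)^{-\tau}P_n^*$. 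Thus all variances in the domination hypothesis on $J$ may be replaced by $\Var[R_n(b)]$ at the cost of constants.

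\textbf{Step 1 (real bound on $J$).} The domination hypothesis with factor $C x^{\theta}$ gives
\[
|M_n(x)|\le C(1-|x|+1/n)^{\theta}\sqrt{\Var[R_n(x)]}\ll \sqrt{\Var[R_n(b)]}
\]
for $x\in J\cap[-1,1]$. Here $(1-|x|+1/n)^{\theta}\le 2^{\theta}$, so we may simply discard the small factor. For $x\in J\setminus[-1,1]$ the same hypothesis is stated for $M_n^*(1/x)$. Using $M_n(x)=x^nM_n^*(1/x)$ and $|x|^n=O(1)$ (because $|x|-1=O(1/n)$ on $J$), this transfers to $|M_n(x)|\ll\sqrt{\Var[R_n(b)]}$ on all of $J$. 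Since $J$ extends past both endpoints of $I$ by $\Theta(1/n)$, $J$ contains $[a-2r,b+2r]$ once $c_1$ is small.

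\textbf{Step 2 (complex bound).} This is the main obstacle. I want
\[
\sup_{\operatorname{dist}(w,[a,b])\le r}|M_n(w)|\ll\sqrt{\Var[R_n(b)]}.
\]
The trivial coefficient bound from \ref{A2} gives, for $|w|\le 1+O(1/n)$,
\[
|M_n(w)|\le \sum_j C_0(1+j)^{\tau}|w|^j\ll n^{\tau+1}=n^{1/2}\sqrt{\Var[R_n(b)]},
\]
which loses a factor $\sqrt n$. To remove this loss, I would rescale: set $h(t):=M_n(b+t/n)/\sqrt{\Var[R_n(b)]}$. Then $h$ satisfies $|h(t)|\ll n^{1/2}e^{|t|}$ on $\mathbb C$, and $|h|\ll 1$ on a real segment of length $\asymp 1$ containing the rescaled $[a-2r,b+2r]$.

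The first attempt is a two-constants (harmonic measure) argument for $\log|h|$ on a rectangle hugging the segment. This yields $|h|\ll n^{\omega/2}$, where $\omega$ is the harmonic measure of the far boundary. To make $\omega\log n=O(1)$ I would shrink the rectangle's height, which is permitted only if $r$ may be taken $\asymp 1/(n\log n)$. That would introduce an unwanted $(\log n)^{k}$.

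So the plan I would actually push is to exploit more structure. The real hypothesis holds on the whole enlargement, and $h$ is real on $\mathbb R$, so I would apply the bound at the complexified point through the Taylor expansion
\[
h(t+is)=\sum_{k\ge0} h^{(k)}(t)\,\frac{(is)^k}{k!}
\]
with real derivatives controlled iteratively. If this stalls, the fallback is to use the domination hypothesis up to order $2$, which is available wherever the lemma is applied in Theorem~\ref{t.varcomp}(2) and Theorem~\ref{t.cltcomp}. Combined with Lemma~\ref{l.estvar}, it controls $M_n,M_n',M_n''$ on $J$, and a Taylor remainder argument then bounds the higher derivatives.

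\textbf{Step 3 (Cauchy).} Given Step 2, for $u\in[a,b]$ Cauchy's estimate on $B(u,r)$ gives
\[
|M_n^{(k)}(u)|\le k!\,r^{-k}\sup_{B(u,r)}|M_n|\le \frac{(c_1^{-1})^k\,k!}{(1-b+1/n)^{k}}\,O\bigl(\sqrt{\Var[R_n(b)]}\bigr).
\]
Absorbing the $O(1)$ into $c^k$ (the case $k=0$ being Step 1 directly) gives the lemma with $c=\max(c_1^{-1},\,O(1))$.
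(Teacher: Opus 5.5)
There is a genuine gap, and you create it in Step 1 when you discard the factor $(1-|x|+1/n)^{\theta}$. On the enlargement $J\subset[1-O(1/n),1+O(1/n)]$ this factor is $\asymp n^{-\theta}$. The hypothesis therefore gives a power saving, $|M_n(x)|\ll n^{-\theta}\sqrt{\Var[R_n(b)]}$ on $J$, and Step 2 needs exactly this saving.

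Once you throw it away, you correctly find that the two-constants argument gives only $|h|\ll n^{\omega/2}$, hence a $(\log n)^k$ loss. Neither remedy you propose closes this.
\begin{itemize}
\item The Taylor expansion $h(t+is)=\sum_k h^{(k)}(t)(is)^k/k!$ needs bounds of the form $c^kk!$ on the real derivatives. That is the statement you are trying to prove, so the argument is circular. Nor can such bounds be bootstrapped from $|h|\ll 1$ on a real segment: Markov's inequality for a degree-$n$ polynomial on a segment of length $\ell\asymp 1/n$ loses $n^2/\ell\asymp n^3$ per derivative.
\item The fallback via domination up to order $2$ changes the hypotheses. The lemma is stated under the order-$0$ domination of Theorem~\ref{t.multi.roots}, which is also all that Theorem~\ref{t.outside-In} assumes. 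It fails even with order-$2$ domination: bounds on $M_n,M_n',M_n''$ on a real interval say nothing about $M_n^{(k)}$ for $k\ge 3$, because a Taylor remainder estimate needs the higher derivative itself. The lemma is needed for every $k\ge 2$.
\end{itemize}

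The repair is to keep the saving. In your rescaled variable:
\begin{itemize}
\item $|h|\ll n^{-\theta}$ on a real segment of length $\asymp 1$. This segment contains the rescaled $[a,b]$ with margin $\asymp 1$ on both sides, by the definition of enlargement.
\item $|h|\ll n^{1/2}$ on a fixed-size neighbourhood of that segment.
\end{itemize}
The two-constants theorem then gives
\[
\log|h(w)|\le (1-\omega(w))(-\theta\log n)+\omega(w)\tfrac12\log n+O(1).
\]
This is $O(1)$ whenever $\omega(w)\le 2\theta/(1+2\theta)$. That holds at all points within a fixed distance $\rho=\rho(\theta,B)>0$ of the rescaled $[a,b]$, since these points stay a distance $\asymp 1$ from the endpoints of the segment. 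So you obtain disks $B(u,r)$ with $r\asymp 1/n$ in the original variable, and your Step 3 finishes the proof.

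This is precisely the paper's argument. It maps $J$ to the unit circle by a Joukowski map and applies Hadamard's three-circle theorem between the ellipses $\mathcal E_{1+\epsilon}$ and $\mathcal E_{3/2}$. It combines $\sup_J\log|M_n|\le(\tau+\tfrac12-\theta)\log n$ with the crude bound $\sup_{\mathcal E_{3/2}}\log|M_n|\le C_\tau\log n$. It then chooses $\epsilon$ small in terms of $\theta$ and $C_\tau$, so the outer growth is absorbed by the saving $n^{-\theta}$, and applies Cauchy's formula on $\mathcal E_{1+\epsilon}$.
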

\begin{proof}
We may assume $k\ge 1$, since the case $k=0$ is trivial. We follow the approach of Borwein and Erd\'elyi \cite{BE97}. 

Let $I=[a,b)\subset B_n$ be as in Theorem \ref{t.multi.roots} and let $J\supset I$ be an enlargement. Denote by $\ell$ the length of $J$ and by $w$ its midpoint. Consider the conformal map $f: \mathbb C\backslash \{0\}\to \mathbb C$ defined by
\[
f(z)=\frac{\ell}{4}\left(z+\frac 1z\right)+w.
\]
The map $f$ sends the unit circle $\{|z|=1\}$ onto the interval $J$. For $\eta>1$, let $\mathcal E_\eta$ denote the ellipse centered at $w$ with semiaxes
\[
\frac{\ell}{4}\left(\eta+\frac{1}{\eta}\right) \quad\text{and}\quad \frac{\ell}{4}\left(\eta-\frac{1}{\eta}\right).
\]
Then $f$ maps the circle $\{|z|=\eta\}$ onto $\mathcal E_\eta$.

Let $\eta_1=1+\epsilon$ and $\eta_2=3/2$, where $\epsilon>0$ is a sufficiently small constant. By Hadamard's three-circle theorem (see, e.g., \cite{BE97}),
\begin{align*}
    \sup_{z\in \mathcal E_{\eta_1}}\log|M_n(z)| &\le \frac{\log(\eta_2/\eta_1)}{\log \eta_2}\sup_{u\in J} \log|M_n(u)|+\frac{\log\eta_1}{\log \eta_2} \sup_{z\in \mathcal E_{\eta_2}}\log|M_n(z)|\\
    &\le \sup_{u\in J} \log|M_n(u)|+\frac{\epsilon}{\log(3/2)} \sup_{z\in \mathcal E_{\eta_2}}\log|M_n(z)|.
\end{align*}

Since $M_n$ is dominated by $R_n$ on $J$ with factor function $Cx^{\theta}$ and 
\[
\Var[R_n(u)]\asymp \frac{1}{(1-u+1/n)^{2\tau+1}} \asymp n^{2\tau+1},\quad u\in J,
\]
we have
\[
\sup_{u\in J} \log|M_n(u)| \le \left(\tau+1/2-\theta\right)\log n.
\]

By the definition of an enlargement, 
\[
\ell = \Theta(1-a+n^{-1}) + |I| + \Theta(1-b+n^{-1}). 
\]
In particular, there exist constants $c_1, c_2>0$, independent of $I$ and $\epsilon$, such that 
\[
c_1 n^{-1}+|I| \le \ell \le c_2 n^{-1}+|I|.
\]
Consequently, there exist constants $C_1, C_2>0$, also independent of $I$ and $\epsilon$, for which
\[
|z-u|\ge C_1 n^{-1},\quad \text{for all } z\in \mathcal E_{\eta_1},\; u\in [a,b],
\]
and 
\[
|z|\le 1+C_2n^{-1}, \quad \text{for all } z\in \mathcal E_{\eta_2}.
\]
By Condition~\ref{A2}, this implies that for some constant $C_\tau>0$ (independent of $\epsilon$),
\begin{align*}
    \sup_{z\in \mathcal E_{\eta_2}}\log|M_n(z)| &\le \log\bigg(C_0\sum_{j=0}^n (1+j)^\tau (1+C_2 /n)^j \bigg)\le C_\tau \log n.
\end{align*}

Choosing $\epsilon>0$ sufficiently small (depending on $C_\tau$ and $\theta$), we obtain
\begin{align*}
    \sup_{z\in \mathcal E_{\eta_1}}\log|M_n(z)| \le (\tau+1/2)\log n,
\end{align*}
and hence
\[
\sup_{z\in \mathcal E_{\eta_1}} |M_n(z)| \le n^{\tau+\frac 12} \asymp \sqrt{\Var[R_n(b)]}.
\]

Finally, by Cauchy's integral formula, for every $u\in [a,b]$,
\begin{align*}
    |M_n^{(k)}(u)| &\le \frac{k!}{2\pi}\bigg|\oint_{\mathcal E_{\eta_1}}\frac{M_n(z)}{(z-u)^{k+1}}dz\bigg|\\
    & \le \frac{k! }{2\pi} \frac{n^{k}}{C_1^k} n^{\tau+\frac 12}\\
    &\le \frac{c^kk!}{(1-b+1/n)^k} \sqrt{\Var[R_n(b)]},
\end{align*}
for some constant $c>0$. This completes the proof.
\end{proof}

\begin{lemma} 
    If $P_n$ has at least $k$ real roots in an interval $I=[a,b)$, then
\begin{equation}\label{e.rolle}
|P_n(b)| \le   \int_{x}^y \int_x^{y_1}\dots \int_{x}^{y_{k-1}}|P^{(k)}_n(y_k)|dy_k\dots dy_1=: Q_{n,k}^I.
\end{equation}
\end{lemma}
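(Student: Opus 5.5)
The plan is to prove, by induction on $k$, a slightly stronger statement that is tailored to the nested structure of $Q^I_{n,k}$. Here I read the limits $x,y$ in \eqref{e.rolle} as $a,b$.

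\emph{Claim $(\mathcal C_k)$:} let $F$ be a polynomial with at least $k$ real roots, counted with multiplicity, in $[a,b)$, and let $r$ be the largest of them. Then for every $y\in[r,b]$,
\[
|F(y)|\le \int_a^{y}\int_a^{y_1}\cdots\int_a^{y_{k-1}}|F^{(k)}(y_k)|\,dy_k\cdots dy_1 .
\]
Taking $F=P_n$ and $y=b$ gives \eqref{e.rolle}.

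\textbf{Base case $k=1$.} Since $F(r)=0$, for $y\ge r$ we have $|F(y)|=\bigl|\int_r^y F'\bigr|\le\int_r^y|F'|\le\int_a^y|F'|$.

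\textbf{Inductive step.} Suppose $F$ has roots $x_1\le\dots\le x_k=r$ in $[a,b)$, with multiplicity.
\begin{enumerate}
\item By Rolle's theorem, applied between distinct consecutive roots, together with the fact that a root of multiplicity $m$ of $F$ is a root of multiplicity $m-1$ of $F'$, the derivative $F'$ has at least $k-1$ roots, counted with multiplicity, in $[x_1,x_k]\subset[a,b)$.
\item In particular, the largest of these roots of $F'$, call it $r'$, satisfies $r'\le r$.
\item For $y\in[r,b]$ we have, as in the base case, $|F(y)|\le\int_r^y|F'(y_1)|\,dy_1$.
\item Every $y_1\in[r,y]$ satisfies $y_1\ge r\ge r'$. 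We may therefore apply $(\mathcal C_{k-1})$ to $F'$ at the point $y_1$, which bounds $|F'(y_1)|$ by the $(k-1)$-fold nested integral of $|F^{(k)}|$ with lower limits $a$ and outer upper limit $y_1$.
\item Substituting this into the previous bound and enlarging the outer range from $[r,y]$ to $[a,y]$ (the integrand is nonnegative) yields $(\mathcal C_k)$.
\end{enumerate}

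\textbf{Main point of care.} The only delicate issue is the ordering of roots, which is handled in step 4. A naive Rolle argument, which merely picks some root $z_j$ of each $F^{(j)}$ inside $I$, does not give the nested structure: for a point $y<z_j$ one would only obtain a bound of the form $\int_a^b$. The fix is to evaluate only at points to the right of the largest root, and to observe that the largest root does not increase under differentiation. Multiplicities must also be tracked carefully, so that $F'$ genuinely has $k-1$ roots in $[x_1,x_k]$, including the case where some of the $x_i$ coincide.

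As an optional check, the nested integral equals $\int_a^b\frac{(b-t)^{k-1}}{(k-1)!}|P_n^{(k)}(t)|\,dt$ by Cauchy's repeated integration formula, which is the form in which \eqref{e.rolle} is likely to be used later.
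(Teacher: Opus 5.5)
Your proof is correct and follows the same route as the paper: repeated Rolle to produce vanishing points of successive derivatives, then the iterated fundamental theorem of calculus. Choosing the rightmost Rolle points, so that $r'\le r$ and each inner integral runs in the right direction, supplies a detail the paper's one-line sketch leaves implicit. One wording fix: in $(\mathcal C_k)$, $r$ should be the largest of a chosen collection of $k$ roots, as your steps 1--2 actually use, not the largest root of $F$ in $[a,b)$, since $F'$ may have further roots in $(r,b)$.
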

\begin{proof}
    Since $P_n$ has at least $k$ zeros in $I$, repeated applications of Rolle’s theorem yield $k$ points at which successive derivatives vanish. Iterating the fundamental theorem of calculus then gives the representation for $P_n(b)$, from which the bound follows.
\end{proof}
\subsection{Proof of Theorem~\ref{t.multi.roots}} We prove the result for $N_{P_n}(I)$, as the remaining cases follow by analogous arguments. 

For notational convenience, set $\epsilon:=(\delta \beta)^{2k/3}$ and 
\[
V(b):=\Var[P_n(b)]=\Var[R_n(b)].
\]
By the union bound,
\begin{align*}
    \mathbb P(N_{P_n}(I)\ge k) &\le \mathbb P(|P_n(b)| \le \epsilon  V(b)^{1/2})\\
    &\quad +\mathbb P(|P_n(b)| > \epsilon V(b)^{1/2},  N_{P_n}(I)\ge k).
\end{align*}
Since $P_n(b)$ is Gaussian with mean $M_n(b)$ and variance $V(b)$, a standard anti-concentration bound yields
\[
\mathbb P(|P_n(b)| \le \epsilon V(b)^{1/2}) \ll  \epsilon.
\]
Thus, to establish \eqref{e.sublevel}, it suffices to prove that
\[
\mathbb P(|P_n(b)| > \epsilon V(b)^{1/2}, N_{P_n}(I)\ge k) \ll \epsilon.
\]
By \eqref{e.rolle}, on the event $\{N_{P_n}(I)\ge k\}$ we have $|P_n(b)|\le Q_{k,n}^I$, and hence
\begin{align*}
\mathbb P(|P_n(b)| > \epsilon V(b)^{1/2}, N_{P_n}(I)\ge k) \le \mathbb P(Q_{k,n}^I \ge \epsilon V(b)^{1/2}).
\end{align*}
Using \eqref{e.rolle}, H\"older's inequality, and Fubini-Tonelli's theorem,  we have
\begin{align*}
    \mathbb E [|Q_{k,n}^I|^2] &\ll \frac{(b-a)^{2k}}{(k!)^2} \sup_{u\in [a,b]}\Big(|M^{(k)}_n(u)|^{2}+\mathbb E |R_n^{(k)}(u)|^{2}\Big).
\end{align*}
Applying Lemma~\ref{l.derMn}, \eqref{e.kder-var}, and Lemma~\ref{l.estvar}, there exists a constant $c>0$ such that 
\begin{align*}
    \sup_{u\in [a,b]}\Big(|M^{(k)}_n(u)|^{2}+\mathbb E |R_n^{(k)}(u)|^{2}\Big)
    \ll \frac{(c^k k!)^2}{(1-b+1/n)^{2k}} V(b).
\end{align*}
Since 
\[
\frac{b-a}{1-b+1/n}\le \frac{b-a}{1-b}=e^{\delta}-1 \le 2\delta,
\]
by choosing $\beta$ very large compared to $c$, we obtain
\begin{align*}
    \mathbb E [|Q_{k,n}^I|^2] &\ll  \frac{(b-a)^{2k}}{(k!)^2}\frac{(c^k k!)^2}{\left(1-b+1/n\right)^{2k}}V(b)\\
    &=\bigg(c\frac{b-a}{1-b+1/n}\bigg)^{2k}V(b)\\
    &\le  (\beta\delta)^{2k}V(b).
\end{align*}
Using Markov's inequality, we derive
\[
\mathbb P(Q_{k,n}^I \ge \epsilon V(b)^{1/2})\ll \frac{(\beta\delta)^{2k}}{\epsilon^2} = \epsilon.
\]
This completes the proof of \eqref{e.sublevel}.

\section{Comparison principles: Reduction to the Gaussian case} \label{s.reduce.Gaussian}
In this section, we reduce the comparison principles in Theorems \ref{t.varcomp} and \ref{t.cltcomp} to the Gaussian setting, where one can subsequently invoke the Kac--Rice formula together with the structural properties of Gaussian processes. The key input is a universality principle asserting that the asymptotic behavior of real roots is largely insensitive to the precise distribution of the coefficients.

We state the required universality results here and use them to reduce the problem to the Gaussian case, deferring their proofs to the next two sections. Since the statements are inherently local, we partition the real line into a core region $\mathcal{I}_n$, which captures the bulk of the real roots, and a complementary region whose contribution is negligible.

We now define $\mathcal{I}_n$. Let $0\le b_n<a_n<1$ be such that for any $A>0$, 
\begin{align*}
    a_n \ll_A \log^{-A}n.
\end{align*}
A convenient choice is $a_n = \exp(-\log^{d/4} n)$ for some fixed $d \in (0,1)$, though we retain flexibility for later applications. Set $I_n=[1-a_n, 1-b_n)$ and define
\begin{align*}
U_n =I_n\cup \left(-I_n\right),\quad 
U_n^* = I_n^{-1}\cup \left(-I_n^{-1}\right),\quad \text{and}\quad
\mathcal I_n=U_n\cup U_n^*. 
\end{align*}
We assume that the coefficients of $P_n$ exhibit polynomial growth of order $\tau > -1/2$ (see Condition~\ref{c.A}), and define the Gaussian analogue
\[
P_{n, G}(x)=\sum_{j=0}^n (m_j+v_j \widetilde{\xi}_j)x^j,
\]
where $(\widetilde{\xi}_j)_{j=0}^n$ are i.i.d.\ standard Gaussian random variables. 

The reduction proceeds by comparing the statistics of the number of real roots of $P_n$ and $P_{n,G}$ on the core region $\mathcal{I}_n$ via universality, and then showing that the contribution from $\mathbb{R}\backslash \mathcal{I}_n$ is negligible.
\subsection{Main ingredients for the reduction}
The first ingredient is a universality result that controls the contribution from the core region.

\begin{theorem}[Universality on the core region]
\label{t.general-u}  Let $I$ be an interval and let $J$ be an enlargement of $I$. Assume that either $M_n$ dominates $R_n$ on $J$ with factor function $C|\log x|^{1/2}$ for a sufficiently large constant $C$, or that $M_n$ is dominated by $R_n$ on $J$ up to order 2 with an arbitrary factor constant $C>0$. Then there exist positive constants $C_1$ and $c$ such that, for any function $\varphi: \mathbb R\to \mathbb R$ whose derivatives up to order 3 are bounded by $1$, and for all sufficiently large $n$, 
\begin{equation}
    \label{e.general-u}
    |\mathbb E[\varphi(N_{P_n}(I\cap \mathcal I_n))]-\mathbb E[\varphi(N_{P_{n, G}}(I\cap \mathcal I_n))]| \le C_1(a_n^{c}+n^{-c}).
\end{equation}
\end{theorem}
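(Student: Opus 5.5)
We will prove \eqref{e.general-u} with the standard dyadic decomposition plus smoothing argument of \cite{Do21}, in the spirit of Tao--Vu. By symmetry (replacing $x$ by $-x$ and $P_n$ by $P_n^*$, which is covered by part (2) of Theorem~\ref{t.corr} and the $P_n^*$ versions of the local estimates), it suffices to treat $I\cap I_n\subset[1-a_n,1-b_n)$.

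\textbf{Step 1: decomposition.} Partition $I\cap I_n$ into dyadic pieces $I_j=I\cap[1-2\delta_j,1-\delta_j)$ with $\delta_j=2^{-j}$. Take $j$ from $\log_2(1/a_n)$ up to $\log_2 n-O(1)$, and lump everything within $O(1/n)$ of $1$ into one last piece. This gives $O(\log n)$ pieces. By the domination hypothesis, on each piece (after a slight enlargement contained in $J$) we have either $|M_n|>C_1|\log\delta|^{1/2}\sqrt{\Var R_n}$ or $|M_n''|\ll\sqrt{\Var R_n''}$. These are exactly the hypotheses of Theorem~\ref{t.corr} and Lemma~\ref{l.nearR}.

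\textbf{Step 2: smoothing.} Split each $I_j$ into $\delta_j^{-\varepsilon}$ subintervals of length $\eta_j=\delta_j^{1+\varepsilon}$. Replace each indicator by a smooth bump $\psi$ that equals $1$ on the subinterval and is supported in a $\delta_j^{1+2\varepsilon}$-neighborhood. Let $\tilde N=\sum\psi(\alpha)$, the sum over real roots $\alpha$. The error $N-\tilde N$ is nonzero only when some root lies in one of the boundary layers. Its expectation is then bounded by $O(\delta_j^{\varepsilon})$ per piece, using the one-point correlation (first intensity) together with Theorem~\ref{t.corr} to pass to the Gaussian case. Higher moments of the error are controlled by Theorem~\ref{t.localcount} and Lemma~\ref{l.sublevelN}. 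Summing over $j$ gives a geometric series dominated by the largest $\delta_j\approx a_n$, so the total is $O(a_n^{c})$. The short piece near $1$ contributes $O(n^{-c})$ through the same local bounds.

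\textbf{Step 3: moment comparison.} Taylor-expand $\varphi(\tilde N)$ to third order around $\mathbb E$-level quantities. Equivalently, use that $\varphi$ has bounded third derivative, so it suffices to compare the mixed moments
\[
\mathbb E\Big[\prod_{i\le k}\tilde N_{j_i}\Big],\qquad k\le 3.
\]
Expanding these products gives sums of integrals of smooth test functions against $(k,0)$-point correlation measures, plus diagonal terms. Localize each such integral to a single scale $\delta$ and apply Theorem~\ref{t.corr}, which gives an error $O(\delta^{\alpha_1})$ per localized test function.

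The \textbf{main obstacle} is that products of counts in \emph{different} dyadic pieces are not local at a single scale $\delta$, so Theorem~\ref{t.corr} does not apply to them directly. To handle this, first truncate each count at $C|\log\delta_j|$ using Lemma~\ref{l.sublevelN}; the discarded event has probability $\ll\delta_j^{c_0}$. Then use a Lindeberg-type replacement: swap the coefficients in groups and compare the smooth functional $\varphi(\sum_j\tilde N_j)$ term by term. The per-scale errors are $\delta_j^{\alpha_1}$ times at most $\log^{O(1)}$ factors. Their sum is $\ll a_n^{c}$ because $a_n\ll_A\log^{-A}n$ absorbs the polylogarithmic losses. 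Choosing $c$ smaller than $\min(\alpha_1,\varepsilon,c_0)/2$ then closes the argument.
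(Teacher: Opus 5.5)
There is a genuine gap, and it sits in Step 3.

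\textbf{The moment reduction is false.} You write that since $\varphi$ has bounded third derivative, ``it suffices to compare the mixed moments $\mathbb E[\prod_{i\le k}\tilde N_{j_i}]$, $k\le 3$.'' A third-order Taylor expansion of $\varphi(\tilde N)$ about a deterministic point leaves a remainder of size $\mathbb E|\tilde N-a|^3$. Here that is of order $(\log n)^{3/2}$, because $N_{P_n}(I\cap\mathcal I_n)$ fluctuates on scale $\sqrt{\log n}$. More generally, finitely many moments of an integer-valued variable do not control $\mathbb E\varphi$ uniformly over $C^3$-bounded $\varphi$. Smoothed indicators of $[0,k]$ are a counterexample, and they are exactly how the theorem is used in Section~\ref{s.reduce.Gaussian} to transfer the CLT. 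The order-$3$ hypothesis is meant for a Lindeberg swap, i.e.\ a Taylor expansion in one coefficient $\xi_k$ at a time, not in $N$.

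\textbf{The Lindeberg fallback is not available for your statistic.} You apply it to $\varphi(\sum_j\tilde N_j)$ with $\tilde N_j=\sum_{\alpha\in\mathbb R}\psi(\alpha)$, a sum over \emph{real} roots only. This is not a $C^3$ function of $(\xi_0,\dots,\xi_n)$, nor even continuous: it jumps by $2\psi(\alpha)$ whenever two real roots collide and leave the axis. So Lemma~\ref{l.lindeberg} cannot be applied to it. Theorem~\ref{t.corr} cannot fill this in either. It is a single-scale comparison of correlation measures, so it gives neither the smoothness a swap needs nor any control of cross-scale products. The per-step error $\delta_j^{\alpha_1}$ you attach to the swaps is therefore unjustified.

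\textbf{The missing idea} is to turn the localized count into a genuinely smooth function of the coefficients. The paper does this in Theorem~\ref{t.Psi-u} via Lemmas~\ref{l.estimate-Nj} and \ref{l.estimate-varphi}.
\begin{enumerate}
\item Use a bump on $\mathbb C$, $\varphi_j(x+iy)=\psi_j(x)\varphi(y/\delta_j^{1+\varepsilon})$, summed over \emph{all} zeros. Its discrepancy with $N_j$ comes from two sources. Real zeros in the boundary layers are handled by Theorem~\ref{t.corr} plus Kac--Rice, as in your Step 2. Non-real zeros within $\delta_j^{1+\varepsilon}$ of the axis come in conjugate pairs, so they force two zeros in a small disk. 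Lemma~\ref{l.nearR} and a union bound make this an event of probability $O(\delta_j^{\varepsilon/2})$, and Theorem~\ref{t.localcount} controls the count on that event.
\item Green's identity gives $\sum_\ell\varphi_j(\zeta_\ell)=\frac{1}{2\pi}\int\log|P_n|\,\Delta\varphi_j$.
\item Monte Carlo sampling, with the local $L^2$ bound on $\log|P_n|$ from Theorem~\ref{t.locallogint}, replaces the integral by finitely many values $\log|P_n(w_s)|$.
\item Truncate $\log|P_n(w_s)/\sigma(w_s)|$ from below. The truncated part is handled by swapping a dominating function and using Gaussian anti-concentration. The result is a function of $(P_n(w_s)/\sigma(w_s))_s$ with the derivative bounds Lindeberg requires, and each per-coefficient error is controlled by $|v_kw_s^k|^2/\sigma^2(w_s)\ll\delta_j^{c}$.
\end{enumerate}
Because this is done for a general $F$ on $\mathbb R^T$ applied to all dyadic pieces simultaneously, the cross-scale obstacle you identified never arises. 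No correlation comparison across scales is needed.
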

The second result provides a moment bound for the number of real roots outside the core region $\mathcal I_n$.
\begin{theorem}[Moment bound outside the core region]
\label{t.outside-In} 
Let $I$ be an interval and let $J$ be an enlargement of $I$. Assume that either $M_n$ dominates $R_n$ on $J$ with factor function $C|\log x|^{1/2}$ for a sufficiently large constant $C$, or that $M_n$ is dominated by $R_n$ on $J$ with factor function $C|x|^{\theta}$ for arbitrary constants $C, \theta>0$. Assume that $a_n = \exp(-\log^{d/4} n)$ for some fixed $d \in (0,1)$ and $b_n=B/n$ for some constant $B>0$. Then, for every integer $k\ge2$, there exists a constant $C_k>0$ such that 
    \[ 
    \mathbb E[N_{P_n}^k\left(I\backslash  \mathcal I_n\right)]\le C_k |\log a_n|^{2k}.
    \]
\end{theorem}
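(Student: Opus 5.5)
The complement $I\setminus\mathcal I_n$ splits into three kinds of pieces, and I will bound the $k$th moment of each separately. By Minkowski's inequality for $\|\cdot\|_{L^k}$, it suffices to show that each piece has $k$th moment $O_k(|\log a_n|^{2k})$. The pieces are:
\begin{enumerate}
\item the far region $A_{a_n}$, i.e.\ points at distance at least $a_n$ from $\pm1$;
\item the thin windows $\pm[1-B/n,1+B/n)$ at scale $1/n$, together with their reciprocal counterparts;
\item the gaps between the region $|1-|x||\ge a_n$ and $I_n$, and between $I_n^{-1}$ and the far region.
\end{enumerate}
With $b_n=B/n$ and $I_n=[1-a_n,1-b_n)$, the only gap inside $(0,1)$ is $[1-B/n,1)$, and symmetrically $(1,(1-B/n)^{-1}]$ outside. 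The reciprocal pieces are handled through $P_n^*$, using the $*$-versions of Lemma~\ref{l.sublevelN} and Theorem~\ref{t.localcount}.

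\textbf{Far region.} Here $||x|-1|\ge a_n$, but $a_n\to0$, so Lemma~\ref{l.im} (which needs a fixed $x_0$) does not apply directly. Fix $x_0=1/2$. Lemma~\ref{l.im} gives $O_k(1)$ on $A_{1/2}$. The annulus $a_n\le 1-|x|\le 1/2$ I cover dyadically: write $\delta_j=2^{-j}$ for $1\le j\le \log_2(1/a_n)$, and on each scale cover $I_{\mathbb R}(\delta_j)$ by $O(1)$ disks $B(z,c'\delta_j)$. This gives $O(|\log a_n|)$ disks in total. For each disk, Theorem~\ref{t.localcount} with $E$ the sure event gives $\mathbb E[N^k]\ll_k |\log\delta_j|^{C_2k}+1$.

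This is exactly where the exponent needs care. The bound $(\sum_j \|N_j\|_k)^k\ll (|\log a_n|\cdot|\log a_n|^{C_2})^k$ is too crude unless $C_2=1$. Theorem~\ref{t.localcount} allows $C_2=1$ once $\delta\ge C_3\log n/n$, which certainly holds for $\delta\ge a_n$ since $a_n=\exp(-\log^{d/4}n)\gg \log n/n$. With $C_2=1$, each disk contributes $\|N_j\|_k\ll_k |\log\delta_j|\le |\log a_n|$. Summing over $O(|\log a_n|)$ disks gives $\|N\|_k\ll |\log a_n|^2$, i.e.\ $\mathbb E N^k\ll_k|\log a_n|^{2k}$, which is the claimed bound. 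The same argument applies on $-I$ and, via $P_n^*$, on the reciprocal side. The domination hypotheses are not even needed for this piece.

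\textbf{Windows of width $B/n$.} For the windows $[1-B/n,1)$ and their reflections and reciprocals, take $\delta<1/(10n)$, so that $I(\delta)$ is the annulus $1-\frac1{2n}\le|z|<1+\frac1{2n}$. The window is covered by $O(B)$ disks of radius $\asymp 1/n$, and Theorem~\ref{t.localcount} at the scale $\delta\asymp 1/n$ gives $\mathbb E N^k\ll_k (\log n)^{C_2k}$.

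This is the main obstacle. $(\log n)^{C_2 k}$ is not bounded by $|\log a_n|^{2k}=\log^{dk/2}n$, so a cruder tool will not do, and here I will use the domination hypotheses.
\begin{itemize}
\item In the dominated case (factor function $C|x|^\theta$), apply Theorem~\ref{t.multi.roots} to the Gaussian analogue. Cut $[1-B/n,1)$ into $O(1)$ intervals with $\log\frac{1-a}{1-b}$ small, treating the final endpoint via the reciprocal side. This gives $\mathbb P(N\ge m)\le C_1(\beta\delta)^{2m/3}$, a geometric tail, hence $\mathbb E N^k=O_k(1)$. To transfer this to general coefficients, I would instead apply Theorem~\ref{t.localcount} with the event $E=\{N\ge m\}$ and sum over $m$ via Lemma~\ref{l.sublevelN}. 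Failing that, I would use the universality of correlation functions from Theorem~\ref{t.corr}, applied to the factorial moments $\mathbb E\binom{N}{j}$, which are integrals of $j$-point correlation functions against bump functions at scale $1/n$. These are $O(1)$ in the Gaussian case by the tail bound, so they are $O(1)+O(n^{-\alpha_1})$ in general.
\item In the dominating case ($|M_n|\ge C|\log|^{1/2}\sqrt{\Var}$), $P_n$ is bounded away from $0$ with high probability near $\pm1$ at scale $1/n$. I would combine Lemma~\ref{l.nearR} (a second root is unlikely) with Theorem~\ref{t.localcount} on the complementary event to get $O_k(1)$ again.
\end{itemize}

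Combining the three pieces by Minkowski gives $\mathbb E[N_{P_n}^k(I\setminus\mathcal I_n)]\le C_k|\log a_n|^{2k}$.
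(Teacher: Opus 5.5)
Your treatment of the far region matches the paper's handling of $S_1=[0,1-x_0]$ and $S_2=(1-x_0,1-a_n)$: Lemma~\ref{l.im}, a dyadic cover, and Theorem~\ref{t.localcount} with $C_2=1$. You also correctly identify the windows of width $B/n$ as the crux. But the window argument has a genuine gap, beginning with the covering.

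\textbf{The covering near $x=1$.} An interval $[a,b)\subset[1-B/n,1)$ with $\log\frac{1-a}{1-b}=\delta$ has $1-b=e^{-\delta}(1-a)>0$. So $O(1)$ such intervals only reach $1-c/n$ for some constant $c$, and no finite family reaches $1$. Passing to $P_n^*$ does not help, because $x\mapsto 1/x$ fixes $1$ and the reciprocal window has the same bad endpoint.

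Nor can a leftover $[1-c/n,1]$ be disposed of cheaply. Its expected count is $\asymp c$, which cannot absorb the $(\log n)^{O(k)}$ coming from Theorem~\ref{t.localcount}.

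The missing idea in the paper is to cut at $1-a_n/n$ instead, working with the Gaussian analogue.
\begin{itemize}
\item On $S_4=[1-a_n/n,1]$, Lemma~\ref{l.mean.Gauss} gives $\mathbb E N_{P_{n,G}}(S_4)=O(a_n)$. Then $\mathbb E N^k\le(\mathbb E N)^{1/2}(\mathbb E N^{2k-1})^{1/2}\ll a_n^{1/2}(\log n)^{O(k)}=O(1)$, precisely because $a_n$ beats every power of $\log n$.
\item On $S_3=[1-B/n,1-a_n/n)$ one then needs about $\delta^{-1}\log(B/a_n)$ intervals. Theorem~\ref{t.multi.roots} gives $\mathbb E N^k_{P_{n,G}}(S_3)\ll_k\log^{k-1}(1/a_n)\,\mathbb E N_{P_{n,G}}(S_3)+\log^k(1/a_n)$, with $\mathbb E N_{P_{n,G}}(S_3)=O(1)$.
\end{itemize}
So the window costs $\log^k(1/a_n)$ rather than your $O_k(1)$, which is still within the target. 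Alternatively, the proof of Theorem~\ref{t.multi.roots} only uses $(b-a)/(1-b+1/n)\le 2\delta$, so it extends to intervals of length $\delta/n$ abutting $1$. That is a modification you would have to justify, not the stated theorem.

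\textbf{Transfer to general coefficients.} Your first route is circular. Theorem~\ref{t.localcount} with $E=\{N\ge m\}$ needs $\mathbb P(N\ge m)$ for $P_n$ itself, and Lemma~\ref{l.sublevelN} only covers $m\gtrsim\log n$. That leaves $\mathbb E[N^k\pmb 1_{\{N\le C\log n\}}]\le (C\log n)^k$, which exceeds $|\log a_n|^{2k}=\log^{dk/2}n$.

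The paper instead applies Corollary~\ref{c.kth-moment-u} directly to $S_3$ and $S_4$. This compares the $k$th moments for $P_n$ and $P_{n,G}$ with error $O(n^{-c})$. Your Theorem~\ref{t.corr} route could substitute. To reach $[1-a_n/n,1]$, however, it needs bump functions below scale $1/n$ (with the $\delta^{-3\varepsilon}$ loss of Appendix~\ref{a.l5.2}) and localization to a single scale $\delta$.

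\textbf{The dominating case.} Lemma~\ref{l.nearR} controls two roots in a disk of radius $\delta^{1+\varepsilon}$, which for $\delta\asymp 1/n$ is far smaller than the window. So it does not show the window contains at most one root. The right mechanism is the one used on $S_4$: the first moment is small by Lemma~\ref{l.mean.Gauss}(1), followed by the same Cauchy--Schwarz step with Theorem~\ref{t.localcount}.
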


The proofs of these results will be given later. For the reduction, we will in fact use the following corollary of Theorem~\ref{t.general-u}, whose derivation will also be provided.

\begin{corollary}\label{c.kth-moment-u}
Let $k\ge 1$ be an integer. Under the assumptions of Theorem \ref{t.general-u}, there exist constants $C_1,c>0$ such that for all sufficiently large $n$, 
    \begin{equation}
    \label{e.kth-moment-u}
    |\mathbb E[N_{P_n}^k(I\cap \mathcal I_n)]-\mathbb E[N_{P_{n, G}}^k(I\cap \mathcal I_n)]| \le C_1(a_n^{c}+n^{-c}).
    \end{equation}
Consequently, for $k=2$,
    \begin{equation}
    \label{e.var-u}
    |\Var[N_{{P}_n}(I\cap \mathcal I_n)]-\Var[N_{P_{n,G}}(I\cap \mathcal I_n)]|\le C_1(a_n^{c}+n^{-c}).
    \end{equation}
\end{corollary}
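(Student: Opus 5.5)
We deduce the corollary from Theorem~\ref{t.general-u} by a truncation argument. The test function $x\mapsto x^k$ does not have bounded derivatives, so we replace it by a smooth cutoff and control the tails using the moment bounds of Section~\ref{s.local.estimate}.

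Write $N=N_{P_n}(I\cap\mathcal I_n)$ and $N_G=N_{P_{n,G}}(I\cap\mathcal I_n)$. Fix a level $T\ge1$, to be chosen later as a small power of $\min(a_n^{-1},n)$. Let $\varphi_T(x)=x^k\chi(x/T)$, where $\chi$ is a smooth bump equal to $1$ on $[-1,1]$ and supported on $[-2,2]$.

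\textbf{Step 1: the truncated moments.} The derivatives of $\varphi_T$ up to order 3 are bounded by $C_kT^k$. Applying Theorem~\ref{t.general-u} to $\varphi_T/(C_kT^k)$ gives
\[
|\mathbb E\varphi_T(N)-\mathbb E\varphi_T(N_G)|\le C_kT^k\,C_1(a_n^{c}+n^{-c}).
\]

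\textbf{Step 2: the tails.} We have $|\mathbb E N^k-\mathbb E\varphi_T(N)|\le \mathbb E[N^k\mathbf 1_{N>T}]\le T^{-k}\,\mathbb E[N^{2k}]$, and the same bound holds for $N_G$. It therefore suffices to show that $\mathbb E N^{2k}$ and $\mathbb E N_G^{2k}$ are at most polylogarithmic in $n$, say $O(\log^{C}n)$.

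To prove this, we cover $I\cap U_n$ (and, via the reciprocal polynomial, $I\cap U_n^*$) by dyadic shells $I(\delta)$ with $\delta=2^{-j}$, $b_n\lesssim\delta\le a_n$. There are $O(\log n)$ such shells, and each shell's real part is covered by $O(1)$ balls $B(z,c'\delta)$. On each ball we apply Theorem~\ref{t.localcount} with $E$ the full event, which bounds the $2k$-th moment by $|\log\delta|^{C_2 2k}\ll \log^{C}n$. Minkowski's inequality over the $O(\log n)$ pieces then gives $\mathbb E N^{2k}\ll\log^{C'}n$. Near $|x|=1$ the shell $I(\delta)$ with $\delta<1/(10n)$ is a single annulus, so the same argument applies there. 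The Gaussian polynomial $P_{n,G}$ also satisfies Condition~\ref{c.A}, so the same bound holds for $N_G$.

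\textbf{Step 3: choosing $T$.} Combining the two steps,
\[
|\mathbb E N^k-\mathbb E N_G^k|\ll T^k(a_n^{c}+n^{-c})+T^{-k}\log^{C'}n.
\]
Choose $T=(a_n^{c}+n^{-c})^{-1/(2k)}$. Since $a_n\ll_A\log^{-A}n$ for every $A$, the factor $\log^{C'}n$ is absorbed by taking a slightly smaller exponent $c$. This proves \eqref{e.kth-moment-u}.

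For the variance bound \eqref{e.var-u}, write $\Var N=\mathbb EN^2-(\mathbb EN)^2$. The case $k=2$ handles the first term. For the second,
\[
|(\mathbb EN)^2-(\mathbb EN_G)^2|\le |\mathbb EN-\mathbb EN_G|\,(\mathbb EN+\mathbb EN_G).
\]
The first factor is controlled by the case $k=1$, and the second is polylogarithmic by Step 2; this is again absorbed by shrinking $c$.

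The main obstacle is Step 2: obtaining uniform polylogarithmic moment bounds on the whole core region. This requires checking that Theorem~\ref{t.localcount} applies across all scales from $b_n$ up to $a_n$, including the regime where $\delta<1/(10n)$, and on the reciprocal side. If a general-$E$ bound is unavailable, a fallback is Lemma~\ref{l.sublevelN} combined with the trivial bound $N\le n$. This gives $\mathbb P(N_{\text{ball}}>C|\log\delta|)\ll\delta^{c_0}$, and the resulting contribution of order $n^{2k}\delta^{c_0}$ is still manageable only if one works with $\delta\ge a_n$-type scales. For that reason the route through Theorem~\ref{t.localcount} is preferable.
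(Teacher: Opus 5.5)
Your argument is correct, but it controls the tail of the truncation differently from the paper.

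The paper truncates at the fixed polylogarithmic level $\lfloor \log^4 n\rfloor$, so its test function has derivatives of size only $O(k^3\log^{4k}n)$. It then bounds $\mathbb E[N^k\mathbf 1_{\mathcal D^c}]$ in two stages. First, Lemma \ref{l.sublevelN} on each dyadic piece gives $\mathbb P(\mathcal D^c)\ll a_n^c+n^{-c}$. Second, Theorem \ref{t.localcount} is applied with the event $E=\mathcal D^c$, combined with H\"older over the $O(\log n)$ pieces; this is where the event-dependent form of that theorem is genuinely needed.

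You instead truncate at a level that is a negative power of $a_n^c+n^{-c}$. You then dispose of the tail with $N^k\mathbf 1_{N>T}\le T^{-k}N^{2k}$. This needs only the unconditional polylogarithmic bound on $\mathbb E N^{2k}$, namely Theorem \ref{t.localcount} with $E$ the whole space, summed by Minkowski, exactly as the paper does for $S_2$ in Section \ref{s.moment}.

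What each route buys:
\begin{itemize}
\item Your route avoids Lemma \ref{l.sublevelN} entirely. The price is that the universality error is multiplied by $T^k$, so you retain only about half of the exponent $c$. That is immaterial here, since $c$ is unspecified.
\item The paper's route loses only polylogarithmic factors.
\end{itemize}

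Your derivation of \eqref{e.var-u} is also more careful than the paper's one-line remark. The $(\mathbb E N)^2$ term genuinely requires the case $k=1$ together with the polylogarithmic bound on $\mathbb E N+\mathbb E N_G$, which you supply.

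The closing ``fallback'' discussion is unnecessary and can be dropped.
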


\subsection{Reduction to the Gaussian case}
In this subsection, assuming that Theorem \ref{t.outside-In} and Corollary \ref{c.kth-moment-u} hold, we will reduce the proof of Theorems \ref{t.varcomp} and \ref{t.cltcomp} to the Gaussian setting.

Let $a_n=\exp(-\log^{d/4}n)$ for fixed $d\in (0,1)$ and let $b_n=B/n$ for some sufficiently large constant $B$. Applying Theorem \ref{t.outside-In} and Corollary \ref{c.kth-moment-u} with $k=2$, we obtain 
\begin{align*}
    \Var[N_{P_n}(I)] = \Var[N_{P_{n, G}}(I)] +O(\log^d n).
\end{align*}
Combined with the universality result for $\Var[N_{R_n}(I)]$ from \cite{NV22D}*{Corollary 2.2}, this reduces Theorem \ref{t.varcomp} to the Gaussian case.

For the reduction of Theorem \ref{t.cltcomp}, assume that $\Var[N_{R_n}(I)]\ge \epsilon \log n$. Then
\[
\Var[N_{P_{n,G}}(I\cap\mathcal I_n)]\gg \log n.
\]
This assumption, combined with estimate \eqref{e.var-u}, implies
\[
\Var[N_{P_n}(I\cap\mathcal I_n)]= \Var[N_{P_{n,G}}(I\cap\mathcal I_n)] (1+o(1)).
\]
Write 
\begin{align*}
    \frac{N_{P_n}(I)-\mathbb E[N_{P_n}(I)]}{\sqrt{\Var[N_{P_n}(I)]}} &=\frac{N_{P_n}(I\backslash \mathcal I_n)-\mathbb E[N_{P_n}(I\backslash \mathcal I_n)]}{\sqrt{\Var[N_{P_n}(I)]}}\\
    &+\frac{\sqrt{\Var[N_{P_n}(I\cap\mathcal I_n)]}}{\sqrt{\Var[N_{P_n}(I)]}}\frac{N_{P_n}(I\cap \mathcal I_n)-\mathbb E[N_{P_n}(I\cap \mathcal I_n)]}{\Var[N_{P_n}(I\cap\mathcal I_n)]}.
\end{align*}
Applying Theorem \ref{t.outside-In}, we deduce that 
\[
\Var[N_{P_n}\left(I\backslash \mathcal I_n\right)] = o(\log n)\quad \text{and}\quad \Var[N_{P_n}(I)]\gg \log n.
\]
This leads to
\[
\frac{N_{P_n}(I\backslash \mathcal I_n)-\mathbb E[N_{P_n}(I\backslash \mathcal I_n)]}{\sqrt{\Var[N_{P_n}(I)]}} \xrightarrow{d} 0 \quad \text{and}\quad \frac{\sqrt{\Var[N_{P_n}(I\cap\mathcal I_n)]}}{\sqrt{\Var[N_{P_n}(I)]}}=1+o(1).
\]
By Slutsky's theorem, $N_{P_n}(I)$ satisfies the CLT if and only if $N_{P_n}(I\cap\mathcal I_n)$ does. 

Fix $x\in \mathbb R$. If $x<0$, then 
    \[
    \mathbb P(N_{{P}_n}(I\cap \mathcal I_n)\le x)=0=\mathbb P(N_{P_{n,G}}(I\cap \mathcal I_n)\le x).
    \]
Now, suppose that $x\ge 0$. Since $N_{P_n}\left(I\cap \mathcal I_n\right)$ is always a non-negative integer, it follows that 
    \[
    \mathbb P(N_{P_n}\left(I\cap \mathcal I_n\right) \le x)=\mathbb P(N_{P_n}\left(I\cap \mathcal I_n\right) \le \lfloor x \rfloor) =\mathbb E[\varphi(N_{P_n}(I\cap \mathcal I_n))],
    \]
where $\varphi$ is any smooth function that takes values in $[0,1]$ and satisfies $\pmb 1_{[0,\lfloor x \rfloor]}\le \varphi \le \pmb 1_{[0,\lfloor x \rfloor+1/2]}$. By applying \eqref{e.general-u}, we can assert that
    \[
   |\mathbb E[\varphi(N_{P_n}(I\cap \mathcal I_n))]-\mathbb E[\varphi(N_{P_{n, G}}\left(I\cap \mathcal I_n\right))]|
   \le Ca_n^{c}, 
    \]
and so
    \[
   |\mathbb P(N_{{P}_n}(I\cap \mathcal I_n)\le x)-\mathbb P(N_{P_{n,G}}(I\cap \mathcal I_n)\le x)|\le Ca_n^{c}.
    \]
Together with Corollary \ref{c.kth-moment-u}, we deduce that 
    \begin{align*}
        &\frac{N_{P_{n}}(I\cap \mathcal I_n)-\mathbb E[N_{P_{n}}(I\cap \mathcal I_n)]}{\sqrt{\Var[N_{P_{n}}(I\cap \mathcal I_n)]}}\\
        &=\frac{N_{P_{n}}(I\cap \mathcal I_n)-\mathbb E[N_{P_{n,G}}(I\cap \mathcal I_n)]}{\sqrt{\Var[N_{P_{n,G}}(I\cap \mathcal I_n)]}}(1+o(1))+o(1)
    \end{align*}
and 
\begin{align*}
    &\mathbb P\bigg(\frac{N_{P_{n}}(I\cap \mathcal I_n)-\mathbb E[N_{P_{n,G}}(I\cap \mathcal I_n)]}{\sqrt{\Var[N_{P_{n,G}}(I\cap \mathcal I_n)]}}\le x\bigg)\\
    &=\mathbb P\bigg(\frac{N_{P_{n,G}}(I\cap \mathcal I_n)-\mathbb E[N_{P_{n,G}}(I\cap \mathcal I_n)]}{\sqrt{\Var[N_{P_{n,G}}(I\cap \mathcal I_n)]}}\le x\bigg)+o(1).
\end{align*}
Therefore, by Slutsky's theorem, if $N_{P_{n,G}}(I\cap \mathcal I_n)$ satisfies the CLT, then so does $N_{{P}_n}(I\cap \mathcal I_n)$.

\section{Universality for real roots, revisit}\label{s.universality}
In this section, we establish the universality results stated in Theorem~\ref{t.general-u} and Corollary~\ref{c.kth-moment-u}. Our approach relies on a replacement principle introduced by Tao and Vu~\cite{TV15} and further refined by Do, Nguyen, and Vu \cite{DONV18} and Nguyen and Vu \cites{NV22D, NV22A}. This principle allows one to compare local statistics, such as root densities and correlation functions, of zeros of random functions whose logarithmic magnitudes are close in distribution and satisfy suitable anti-concentration bounds.

As the method is inherently local and most effective when the expected number of real roots is $O(1)$, an additional argument is required to lift local estimates to global conclusions. For simplicity of exposition, we restrict to the case $I\subset[0,1]$; the general case follows by a straightforward modification.

To implement the local comparison, we partition $I_n=[1-a_n,1-b_n)$ into dyadic subintervals. Let $T\ll \log n$   be the smallest positive integer such that 
\[
\frac{a_n}{2^T} \le \max \left\{\frac 1n, b_n \right\}.
\]
Define
\begin{align*}
    \delta_j&=
    \begin{cases}
    2^{-j}a_n &\text{if}\quad 0\le j\le T-1,\\
    \max\left\{\frac 1n, b_n\right\} &\text{if}\quad j=T,
    \end{cases}\\
    J_{j} &=
    \begin{cases}
    [1-\delta_{j-1}, 1-\delta_j) &\text{if}\quad 1\le j\le T-1,\\
    [1-\delta_{j-1}, 1-b_n)&\text{if}\quad j=T. 
    \end{cases}
\end{align*}
Although $\delta_0>\dots>\delta_{T-1}$, it may happen that $\delta_N >\delta_{T-1}$ (e.g., if $a_n<1/n$, then $T=1$ and $\delta_1=\frac 1n >\delta_0=a_n$). Nevertheless, for all $1\le j\le T$,
\[
\delta_j \gg \frac{1}{n}\quad \text{and}\quad \delta_j \ge \frac{\delta_{j-1}}{2}.
\]
Thus, $\mathcal{P}:=\{J_j\}_{j=1}^T$ forms a partition of $I_n$ into intervals on which the expected number of real roots is uniformly bounded. For $j=1,...,T$, let
\[
N_{j}=N_{P_n}\left(I\cap J_{j}\right),
\]
and let $G_{j}$ denote the corresponding quantity for the Gaussian polynomial.
\subsection{Proof of Theorem~\ref{t.general-u}}

It suffices to prove the following more general theorem.

\begin{theorem}[General universality on the core region]\label{t.Psi-u}
There exist positive constants $C_1$ and $c$ such that for every function $F: \mathbb R^{T}\to \mathbb R$ with all partial derivatives up to order $3$ bounded by $1$,
    \[
    |\mathbb E[F(N_{1}, \dots, N_{T})]-\mathbb E[F(G_{1}, \dots, G_{T})]| \le C_1(a_n^{c}+n^{-c}).
    \]
\end{theorem}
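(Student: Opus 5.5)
The plan is the standard smoothing-plus-replacement argument of Tao--Vu \cite{TV15}, in the multi-interval form used in \cite{DONV18} and \cite{NV22D}. It has three parts: smooth the counts, apply the local correlation universality of Theorem~\ref{t.corr} scale by scale, and then undo the smoothing using bounds on multiple roots and Theorem~\ref{t.localcount}.

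\textbf{Step 1: smoothing.} For each $j$, take a smooth version $\widetilde N_j=\sum_{\alpha\in Z\cap\mathbb R}\psi_j(\alpha)$ of $N_j$. Here $\psi_j\in[0,1]$ equals $\pmb 1_{I\cap J_j}$ except on a boundary layer of width $\eta_j=\delta_j^{1+\varepsilon}$ at each endpoint, and satisfies $\|\psi_j^{(r)}\|_\infty\ll \eta_j^{-r}$. Any root counted by $N_j-\widetilde N_j$ lies within $\eta_j$ of an endpoint of $I\cap J_j$.
\begin{itemize}
\item Lemma~\ref{l.nearR} shows that two or more roots in $B(x,\eta_j)$ has probability $\ll\delta_j^{\alpha\varepsilon}$.
\item For a single root, a Kac--Rice / first-intensity bound (Theorem~\ref{t.corr} with $k=1$, plus the Gaussian density estimate) gives probability $\ll \eta_j/\delta_j+\delta_j^{\alpha_1}=\delta_j^{\varepsilon}+\delta_j^{\alpha_1}$.
\end{itemize}
Let $\Delta_j=N_j-\widetilde N_j$. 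Using $|F(N)-F(\widetilde N)|\le\sum_j|\Delta_j|$, Cauchy--Schwarz, and the moment bound of Theorem~\ref{t.localcount} (which controls $\mathbb E[|\Delta_j|\pmb 1_E]$ by $|\log\delta_j|^{C_2}\mathbb P(E)^{1/2}+\delta_j^M$), the smoothing error is $\ll\sum_j \delta_j^{c}|\log\delta_j|^{C}$. Since $\delta_j=2^{-j}a_n$ is geometric, this sum is $\ll a_n^{c'}+n^{-c'}$. The same bound holds for the Gaussian polynomial.

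\textbf{Step 2: replacing $F(\widetilde N)$.} I would not attempt a full Lindeberg swap on $F(\widetilde N_1,\dots,\widetilde N_T)$. Instead, Taylor expand $F$ to third order around the Gaussian means $\mathbb E[G_j]$, as in the proofs of \cite{NV22D}*{Theorem 2.1}. This reduces the comparison to mixed correlation sums
\[
\mathbb E\Big[\prod_{i\le 2}\widetilde N_{j_i}\Big]\quad\text{and}\quad \mathbb E\Big[\prod_{i\le 3}\widetilde N_{j_i}\Big],
\]
each of which is an integral of $\psi_{j_1}\otimes\cdots$ against the $k$-point correlation measure with $k\le3$.
\begin{itemize}
\item Partition each $\psi_{j}$ by a smooth partition of unity at scale $c\delta_j$ into $O(\delta_j^{-\varepsilon})$ bumps; each bump satisfies the derivative bounds required in Theorem~\ref{t.corr}.
\item The domination hypotheses in Theorem~\ref{t.general-u} are exactly what Theorem~\ref{t.corr} needs on $J$ (in the dominated case, the order-2 condition controls $M_n''$).
\item When all the $\delta_{j_i}$ are comparable, each bump contributes error $O(\delta^{\alpha_1})$. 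When the scales differ, I would use that the theorem is stated with a single $\delta$ but extends to products of bumps at different scales. Following \cite{DONV18}, this is done by applying it at the smallest scale and bounding the remaining factors by Theorem~\ref{t.localcount}.
\end{itemize}
The third-order Taylor remainder is bounded by third moments of $\widetilde N-\mathbb E G$. These are $O(\mathrm{polylog})$ by Theorem~\ref{t.localcount}, but that alone does not make the remainder small.

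\textbf{Main obstacle: summing over $T\asymp\log n$ scales without losing the $a_n^{c}$ gain.} Naive Taylor expansion produces $T^3$ cross terms and an unbounded remainder. My first attempt would be a telescoping hybrid: replace one scale at a time, via the random vector whose first $j$ coordinates come from $P_n$ and the rest from $P_{n,G}$. This is not directly available, because the two polynomials are not coupled coordinatewise. The version I would actually try is to write $F$ through its Fourier transform (or as a Lipschitz function of a bounded number of marginals), so that only $k$-point functions with $k\le 3$ are ever needed. Each scale $j$ then contributes an error $O(\delta_j^{c})\,\mathrm{polylog}$, and the geometric decay of $\delta_j$ makes the sum converge to $O(a_n^{c}+n^{-c})$.

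If this route fails, the fallback is to use directly the log-magnitude comparison underlying Theorem~\ref{t.corr}: the Lindeberg swap for smooth functionals of $(\log|P_n(z_i)|)$, as in \cite{TV15}. Combined with Theorem~\ref{t.locallogint}, this would express $\widetilde N_j$ via Green's formula as an integral of $\log|P_n|$ against $\Delta\psi_j$, so that $F(\widetilde N)$ becomes a smooth functional of log-magnitudes over all scales simultaneously.
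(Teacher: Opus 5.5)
\textbf{The gap is in your Step~2, which is where all the content of the theorem lies.} A third-order Taylor expansion of $F$ around $(\mathbb E G_j)_j$ cannot work, even in principle. Each $N_j$ is an $O(1)$-valued count with non-degenerate fluctuations. So the cubic remainder is only bounded by $\mathbb E\bigl(\sum_j|\widetilde N_j-\mathbb E G_j|\bigr)^3$, which is not small and in fact grows with $T$. Agreement of the mixed moments of order at most $3$ then says nothing about $\mathbb E F$ for a general $F$ with bounded derivatives. Already for $T=1$, two integer-valued laws with the same first three moments can give values of $\mathbb E F$ differing by order one for a smoothed indicator $F$.

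Your proposed repairs do not close this. $\mathbb E\, e^{i\xi\cdot \widetilde N}$ is a multiplicative statistic of the zeros, so it involves the correlation measures of \emph{every} order, not only $k\le 3$. And a general $F$ on $\mathbb R^T$ with $T\asymp\log n$ is not a Lipschitz function of boundedly many marginals. You are right that a scale-by-scale hybrid is unavailable, because every coefficient influences every scale. The way around this is to swap \emph{coefficients} inside one functional of all scales at once. That is your fallback, and it is exactly the route the paper takes, but you have only named it.

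Carrying it out requires several steps your proposal does not contain.
\begin{enumerate}
\item Step~1 must be redone. Green's identity needs a test function on $\mathbb C$ and sums over all zeros, so one uses $\varphi_j(x+iy)=\psi_j(x)\varphi(y/\delta_j^{1+\varepsilon})$. Comparing $N_j$ with $\sum_\ell\varphi_j(\zeta_\ell)$ then also requires controlling the non-real zeros in the strip of height $\delta_j^{1+\varepsilon}$. These come in conjugate pairs, so Lemma~\ref{l.nearR} applies to $O(\delta_j^{-\varepsilon})$ balls of radius $\delta_j^{1+\varepsilon}$. Together with Theorem~\ref{t.localcount}, this bounds their expected number by $\delta_j^{\varepsilon/4}$.
\item The quantity $\frac{1}{2\pi}\int\log|P_n|\,\Delta\varphi_j$ must be replaced by a Monte Carlo average of $q_j\asymp\delta_j^{-8\varepsilon}$ values $\log|P_n(w_{j,\ell})|$. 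Theorem~\ref{t.locallogint} controls the sampling error off an event of probability $O(\delta_j^{\varepsilon})$.
\item The logarithmic singularity must be cut off where $|P_n(w_s)|/\sigma(w_s)<\delta_j^{10\varepsilon}$. The cut-off piece is handled by Gaussian anti-concentration after the swap.
\item Only then can Lemma~\ref{l.lindeberg} be applied to a single smooth function of all the normalized values $P_n(w_s)/\sigma(w_s)$. The smallness of the individual influences, $|v_kw_s^k|^2/\sigma^2(w_s)\ll\delta_j^{c}$ for $s\in D_j$, is what makes the total error $\ll a_n^{c'}+n^{-c'}$ despite summing over $T\asymp\log n$ scales.
\end{enumerate}
These steps are the proof; without them the proposal does not establish the statement.
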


\begin{proof}
Let $\varepsilon>0$  be sufficiently small. Let $\left(\zeta_\ell\right)_{\ell=1}^n$ be the (complex) roots of $P_n$. We will approximate $N_{j}$ by
    \[
    \varphi_{j}:=\sum_{\ell=1}^n\varphi_j\left(\zeta_\ell\right),
    \]
where, for each $1\le j\le T$, $\varphi_j$ is a suitable bump function supported on a small neighborhood of $J_j$ in the complex plane: 
    \begin{equation}\label{e.varphi-j}         \varphi_j(x+iy):=\psi_j(x)\varphi\bigg(\frac{y}{\delta_j^{1+\varepsilon}}\bigg),
    \end{equation}
\begin{itemize}
    \item $\varphi$ is a smooth function supported on $[-1,1]$ that equals $1$ at $0$, and 
    \item $\psi_j$ is smooth and supported on the neighborhood $J_j+[-\delta_j^{1+\varepsilon},\delta_j^{1+\varepsilon}]$ of $J_j$ on the real line,
    and equals to $1$ on $J_j$.
\end{itemize}
By a standard construction, we can ensure that
\[
\|\partial_x^\ell \partial_y^k\varphi_j(x+iy)\|_{\sup} = O\left(\delta_j^{-(k+\ell)(1+\varepsilon)}\right)
\]
for any $k,\ell\ge 0$ such that $k+\ell\le 3$.

We will prove the following lemmas.
\begin{lemma}\label{l.estimate-Nj}
We have 
\begin{align*}
    \mathbb E[F\left(N_{1}, \dots, N_{T}\right)]-\mathbb E[F\left(\varphi_{1}, \dots, \varphi_{T}\right)] \ll  a_n^{\varepsilon/4}.    
\end{align*}
\end{lemma}
\begin{lemma}\label{l.estimate-varphi} Let $\widetilde \varphi_j$ be the Gaussian analogue of $\varphi_j$.  It holds that
\begin{align*}
    \mathbb E[F\left(\varphi_{1}, \dots, \varphi_{T}\right)]-\mathbb E[F\left(\widetilde\varphi_{1}, \dots, \widetilde\varphi_{T}\right)]\ll  a_n^{\varepsilon}.
\end{align*}
\end{lemma}
Theorem~\ref{t.Psi-u} follows directly from Lemmas \ref{l.estimate-Nj} and \ref{l.estimate-varphi} via the triangle inequality.
\end{proof}
The proofs of Lemmas~\ref{l.estimate-Nj} and~\ref{l.estimate-varphi} are obtained by adapting the arguments of Nguyen and Vu~\cite{NV22D} to the non‑centered setting. For completeness, the details are deferred to Appendix \ref{a.l5.2} and Appendix \ref{a.l5.3}.

\subsection{Proof of Corollary \ref{c.kth-moment-u}}  

Fix $k\ge 1$ and recall that $T\asymp \log n$. Let $\varphi$ be a smooth function supported on the interval $[-\frac 1 2, \lfloor\log^4 n\rfloor+\frac 1 2]$, with $\varphi(x)=x^k$ for all $x\in [0, \lfloor\log^4 n\rfloor]$. This function $\varphi$ can be constructed so that all of its derivatives up to order 3 are bounded by $A=O(k^3 \log^{4k}n)$. 

Let $\mathcal D:=\{N_{P_n}(I\cap \mathcal I_n) \le \lfloor\log^4 n\rfloor\}$ and $\mathcal D_G:=\{N_{P_{n, G}}(I\cap \mathcal I_n) \le \lfloor\log^4 n\rfloor\}$.
Since $N_{P_n}\left(I\cap \mathcal I_n\right)$ is always an integer, we must have
    \[
    N_{P_n}\left(I\cap \mathcal I_n\right)^k\pmb 1_{\mathcal D} =\varphi\left(N_{P_n}\left(I\cap \mathcal I_n\right)\right).
    \]
A similar conclusion holds for $N_{P_{n,G}}\left(I\cap \mathcal I_n\right)^k\pmb 1_{\mathcal D_G}$. Therefore, by applying Theorem \ref{t.general-u} to the rescaled function $\left(1/A\right)\varphi$, for some $c>0$ it holds that
\begin{align*} 
    &|\mathbb E[N_{P_n}\left(I\cap \mathcal I_n\right)^k]-\mathbb E[N_{P_{n, G}}\left(I\cap \mathcal I_n\right)^k]|\\
    &\ll \mathbb E[N_{P_n}(I\cap \mathcal I_n)^k\pmb 1_{\mathcal D^c}] + \mathbb E[N_{P_{n, G}}\left(I\cap \mathcal I_n\right)^k\pmb 1_{\mathcal D_G^c}]  +  a_n^{c/2}+n^{-c/2} .
\end{align*}

Let $\mathcal D_{j}=\{N_{j}\le \log^2(1/{\delta_j})\}$.  Clearly,  $\cap_{j=1}^T \mathcal D_{ij}\subset \mathcal D$. By Lemma~\ref{l.sublevelN}, for some $c>0$,
    \[
    \mathbb P\left(\mathcal D^c\right) \le \sum_{j=1}^T\mathbb P\left(\mathcal D_{j}^c\right)\ll  \sum_{j=1}^T\delta_j^{c} \ll  a_n^{c}+n^{-c}.
    \]
Combining this with Theorem \ref{t.localcount} and H\"older's inequality,  
\begin{align*}
    \mathbb E[N_{P_n}\left(I\cap \mathcal I_n\right)^k\pmb 1_{\mathcal D^c}] &\ll   T^{k-1}\sum_{j=1}^T\mathbb E[N_{j}^k\pmb 1_{\mathcal D^c}]\\
    &\ll   T^{k-1}\sum_{j=1}^T(\delta_j^{c}+|\log(\delta_j)|^{C_2k} \mathbb P(\mathcal D^c))\\
    &\ll a_n^{c/2}+n^{-c/2}.
\end{align*}
Since the analogous estimate holds for $N_{P_{n,G}}$, \eqref{e.kth-moment-u} is verified.

The estimate \eqref{e.var-u} follows by the triangle inequality and the case $k=2$ of \eqref{e.kth-moment-u}.

 \section{Moment bounds for the number of real roots}\label{s.moment}
 In this section, we will discuss the proof of  Theorem~\ref{t.outside-In}.

By the triangle inequality, we may assume without loss of generality that $I\subset [0,1]$. We then cover $I\backslash \mathcal I_n$ by the intervals
\begin{align*}
    S_1 &=[0,1-x_0],\\
    S_2 &=(1-x_0, 1-a_n),\\
    S_3 &=[1-\frac{B}{n}, 1-\frac {a_n}n),\\
    S_4 &=[1-\frac {a_n}n, 1],
\end{align*}
where $x_0\in (0,1)$ and $B>0$ are constants. Thus, it suffices to prove the desired estimate for the real roots inside each $S_j$, for $j=1,\dots, 4$. 

For $S_1$, by Lemma~\ref{l.im}, we have
\begin{align*}
    \mathbb E[N_{P_n}^k\left(S_1\right)] =O_k\left(1\right).
\end{align*}

For $S_2=(1-x_0, 1-a_n)$, we cover $S_2$ using dyadic intervals 
\[I_j=(1- x_02^{1-j}, 1 - x_02^{-j}],\quad 1\le j \le 1+\log_2(x_0/a_n).
\]
By the triangle inequality and Theorem~\ref{t.localcount}, 
\begin{align*}
    (\mathbb E[N_{P_n}^k \left(S_2\right)] )^{1/k} &\le \sum_{1\le j\le 1+\log_2(x_0/a_n)}  [\mathbb E N_{P_n}^k\left(I_j\right)]^{1/k}\\
    &\ll \sum_{1\le j\le 1+\log_2(x_0/a_n)} |\log(2^j/x_0)|\\
    &\ll |\log a_n|^{2},
\end{align*}
which implies
\[
\mathbb E[N_{P_n}^k \left(S_2\right)] \ll_k |\log a_n|^{2k}.
\]

For $S_3$ and $S_4$, we first use Corollary~\ref{c.kth-moment-u} to reduce the estimates to the Gaussian setting. Applying Corollary~\ref{c.kth-moment-u}  for the intervals $[1-\frac{B}{n}, 1-\frac {a_n}{n})$ and $[1-\frac {a_n}n,1]$, we have
\begin{align*}
|\mathbb E[N^k_{P_n}(S_3) ] - \mathbb E [ N^k_{P_{n,G}}(S_3)] | &\ll  \left(\frac{a_n}n\right)^c+n^{-c}  \ll 1, \\
|\mathbb E[ N^k_{P_n}(S_4) ] - \mathbb E [ N^k_{P_{n,G}}(S_4)]| &\ll   b_n^c+n^{-c} \ll 1.
\end{align*}
Hence, it suffices to work in the Gaussian setting.

For $S_4$, the Kac-Rice formula (see Lemma \ref{l.mean.Gauss}) yields
\begin{equation*}
    \mathbb E[N_{P_{n,G}}(S_4)] = O(a_n).
\end{equation*}
Combining this with Theorem~\ref{t.localcount} and applying H\"older's inequality, we get
\begin{align*}
    \mathbb E[N_{P_{n,G}}^k(S_4)] &\le (\mathbb E [N_{P_{n,G}}(S_4)])^{1/2}(\mathbb E [N_{P_{n,G}}^{2k-1}(S_4)])^{1/2}\\
    &\ll_k a_n (\log n)^{k-\frac 12}  \\
    &\ll_k 1.
\end{align*}

We next consider $S_3$, distinguishing two cases. 

If $M_n$ dominates $R_n$ on $J$ with factor function $C|\log x|^{1/2}$ for a sufficiently large constant $C$, then the Kac-Rice formula (see Lemma \ref{l.mean.Gauss}) yields
\[
    \mathbb E[N_{P_{n,G}}(S_3)] = O(a_n).
\]
Arguing as above, it follows that
\[
\mathbb E[N_{P_{n,G}}^k(S_3)]=O_k(1).
\]

Assume instead that $M_n$ is dominated by $R_n$ on $J$ with factor function $C|x|^{\theta}$ for some constants $C,\theta>0$. We then apply Theorem~\ref{t.multi.roots}. To this end, cover $S_3$ by intervals $I=[a,b)$ satisfying
\[
\log\left(\frac {1-a}{1-b}\right)=\delta,
\]
where $\delta>0$ is chosen sufficiently small so that Theorem~\ref{t.multi.roots} applies. By that theorem, 
\begin{align*}
\mathbb E [N^k_{P_{n,G}}(I)]  &= \sum_{j\ge 1}j^k \mathbb P(N_{P_{n,G}}(I)=j) \\
&\le \mathbb P(N_{P_{n,G}}(I)=1) + C_1\sum_{j\ge 2} j^k (\beta\delta)^{2j/3}  \\
&\le \mathbb E \left[N_{P_{n,G}}(I)\right]  + O_k(1).
\end{align*}
Note that the number of covering intervals is at most $\frac{1}{\delta}\log\frac{B}{a_n}$, and since these intervals have finite overlap, it follows that
\begin{align*}
\mathbb E[N^k_{P_{n,G}}(S_3)]    &\ll_k  \log^{k-1}(1/a_n) \mathbb E [N_{P_{n,G}}(S_3)]  + O(\log^{k}(1/a_n)).
\end{align*}
Then, we use Lemma~\ref{l.mean.Gauss} and obtain
\begin{align*}
\mathbb E [N^k_{P_{n,G}}(S_3)]    &\ll_k \log^k(1/a_n).
\end{align*}
This completes the proof of Theorem~\ref{t.outside-In}.

\section{Kac--Rice formulas for non-centered Gaussian processes} \label{s.Kac-Rice}
In this section, we derive the Kac--Rice formulas for the one-point and two-point correlation functions of real roots of non-centered Gaussian processes, which may be of independent interest. 
\begin{condition}\label{c.GP} Let $\mathcal G=\{G(x)\}_{x\in I}$ be a Gaussian process on an interval $I\subset\mathbb R$, with mean 
    $m(x) := \mathbb E[G(x)]$
and covariance kernel
    \begin{equation}\label{e.rxy}
    r(x,y) := \Cov[G(x), G(y)]. 
    \end{equation}  
Assume that $\mathcal G$ is normalized, namely $r(x,x)=\Var[G(x)]=1$ for every $x\in I$. We further impose the following assumptions:
\begin{enumerate}[label={\rm(G\arabic*)}]
\item \label{G1} \textit{Regularity:} $\mathcal G$ has $C^1$-paths, with $m\in C^1(I)$ and $r\in C^2\left(I^2\right)$. 
\item \label{G2} \textit{Nondegeneracy}: For all distinct $x, y \in I$, the random vector 
\[(G(x), G(y), G'(x),G'(y))\]
is nondegenerate.
\item \label{G3} \textit{Gaussian structure}: For $x\in I$, $(G(x),G'(x))$ is a nondegenerate Gaussian vector.
\end{enumerate} 
\end{condition}

Here and subsequently, for nonnegative integers $j$ and $\ell$, and any function $f(x,y)$, we write $f_{j\ell}$ instead of $\frac{\partial^{j+\ell} f}{\partial x^j\partial y^\ell}$ whenever it is defined.

The key computation is summarized in the following result.
\begin{theorem}[Two-point correlation function]\label{t.rho2} 

Let Condition \ref{c.GP} hold. Define 
\begin{align}
    \label{e.mu1-s3}    \mu_1 &:= m'(x)+\frac{r(x,y)r_{10}(x,y)}{1-r^2(x,y)} m(x)-\frac{r_{10}(x,y)}{1-r^2(x,y)} m(y),\\
    \label{e.sigma1}    \sigma_1 &:=  \sqrt{r_{11}(x,x)-\frac{r_{10}^2(x,y)}{1-r^2(x,y)}},\\
    \label{e.mu2-s3}    \mu_2 &:= m'(y)+\frac{r(x,y)r_{01}(x,y)}{1-r^2(x,y)} m(y)-\frac{r_{01}(x,y)}{1-r^2(x,y)} m(x),\\
    \label{e.sigma2}    \sigma_2 &:=  \sqrt{r_{11}(y,y)-\frac{r_{01}^2(x,y)}{1-r^2(x,y)}},\\
    \label{e.delta}      \delta &:= \frac{1}{\sigma_1\sigma_2}\bigg(r_{11}(x,y)+\frac{r(x,y)r_{10}(x,y)r_{01}(x,y)}{1-r^2(x,y)}\bigg),
\end{align}
and let $\nu_1=\mu_1/\sigma_1$ and $\nu_2=\mu_2/\sigma_2$. Then, we obtain the following formula for the two-point correlation function $\rho_2(x,y)$ of the real roots of $G$ when $x\ne y$:
\begin{equation} \label{e.rho2}
    \rho_2(x,y)=\frac{E(x,y)}{\pi^2\sqrt{1-r^2(x,y)}}\left(\sum_{i=1}^5 \rho_{2,i}(x,y)\right), 
\end{equation}
where 
\begin{align*}
    E(x,y)&=\exp\left(-\frac{m^2(x)-2r(x,y) m(x)m(y)+m^2(y) }{2(1-r^2(x,y))}\right),\\
    \rho_{2,1}&= \sigma_1\sigma_2 (\sqrt{1-\delta^2}+\delta\arcsin\delta),\\
    \rho_{2,2}&= \mu_1\mu_2 \arcsin \delta,\\
    \rho_{2,3}&= \sigma_1\sigma_2\sqrt{1-\delta^2}\sum_{j=1}^2\int_0^{1}\frac{(1-t)\nu_j^2}{\exp(\nu_j^2t^2/(2-2\delta^2))} dt \\
    &\quad +\sqrt{\frac{\pi}{2}}\sigma_1\sigma_2|\delta| \sum_{j=1}^2\nu_j^2\int_0^{1}\frac{(1-t)|\nu_jt|}{\exp(\nu_j^2t^2/2)}\erf\bigg(\frac{|\delta \nu_j t|}{\sqrt{2(1-\delta^2)}}\bigg)dt,\\
    \rho_{2,4}&= \sqrt{\frac{\pi}{2}}\mu_1\mu_2\sum_{j=1}^2\int_0^{1}\frac{\left(t-1\right)\nu_j}{\exp(\nu_j^2t^2/2)}\erf\bigg(\frac{\delta \nu_jt}{\sqrt{2(1-\delta^2)}}\bigg)dt,
\end{align*} 
and 
\begin{align*}
    \rho_{2,5}&=\frac{|\mu_1\mu_2\nu_1\nu_2|}{\sqrt{1-\delta^2}}\\
    &\quad \times \int_0^{1}\int_0^{1}(1-t)(1-s)\exp\left(-\frac{\nu_1^2t^2-2\delta \nu_1\nu_2 ts+\nu_2^2s^2}{2(1-\delta^2)}\right) dtds.
\end{align*}
\end{theorem}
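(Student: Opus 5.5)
The plan is to start from the Kac--Rice formula for the second factorial moment. Under \ref{G1}--\ref{G2}, for $x\neq y$,
\[
\rho_2(x,y)=\mathbb E\big[|G'(x)G'(y)|\,\big|\,G(x)=G(y)=0\big]\,p_{(G(x),G(y))}(0,0).
\]
The work then splits into three steps: computing the density factor, computing the conditional law of $(G'(x),G'(y))$, and evaluating a non-centered bivariate absolute-moment.

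\emph{Density factor.} Since $r(x,x)=r(y,y)=1$, the vector $(G(x),G(y))$ has covariance matrix $\begin{pmatrix}1&r\\ r&1\end{pmatrix}$ and mean $(m(x),m(y))$. Hence
\[
p(0,0)=\frac{E(x,y)}{2\pi\sqrt{1-r^2}},
\]
which produces the factor $E(x,y)/\sqrt{1-r^2}$ up to a $2\pi$.

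\emph{Conditional law.} Normalization gives $r_{10}(x,x)=0$, so $\Cov[G(x),G'(x)]=0$. Also $\Cov[G(y),G'(x)]=r_{10}(x,y)$, $\Cov[G(x),G'(y)]=r_{01}(x,y)$, and similarly for the $y$-derivative. Gaussian regression then shows that, conditionally on $G(x)=G(y)=0$, the pair $(G'(x),G'(y))$ is Gaussian with means $\mu_1,\mu_2$ exactly as in \eqref{e.mu1-s3}, \eqref{e.mu2-s3}. Indeed, the regression coefficients on $(G(x),G(y))$ are $\frac{1}{1-r^2}(-r\,r_{10},\,r_{10})$, evaluated at the observed values $0-m$. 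The same computation gives variances $\sigma_1^2,\sigma_2^2$ and correlation $\delta$ as in \eqref{e.sigma1}--\eqref{e.delta}. This is routine; I would only record that \ref{G2} guarantees $\sigma_i>0$ and $|\delta|<1$.

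\emph{The main computation.} It remains to evaluate $\mathbb E|X_1X_2|$, where $X_i=\sigma_i(\nu_i+Z_i)$ and $(Z_1,Z_2)$ is a standard bivariate normal pair with correlation $\delta$. So I need
\[
F(\nu_1,\nu_2):=\mathbb E|(\nu_1+Z_1)(\nu_2+Z_2)|,
\]
and I expect this to be the main obstacle: the five-term structure must come out of it. I would use Taylor's formula with integral remainder in the means, along the path $t\mapsto t\nu$, separately in each variable:
\[
g(\nu)=g(0)+\nu g'(0)+\nu^2\int_0^1(1-t)g''(t\nu)\,dt.
\]
\begin{itemize}
\item The centered term is the classical $\frac{2}{\pi}(\sqrt{1-\delta^2}+\delta\arcsin\delta)$, which gives $\rho_{2,1}$.
\item The mixed first-order term is $\nu_1\nu_2\,\mathbb E[\sgn Z_1\sgn Z_2]=\nu_1\nu_2\frac{2}{\pi}\arcsin\delta$, which gives $\rho_{2,2}$. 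The single first-order terms vanish by symmetry.
\item The second derivatives of $|\cdot|$ in the distributional sense are $2\delta_0$. Hence $\partial_{\nu_1}^2 F$ becomes $2\,\mathbb E[|\nu_2+Z_2|\,;\,Z_1=-\nu_1]\,\phi(\nu_1)$, a one-dimensional Gaussian absolute-moment with conditional mean $\nu_2-\delta\nu_1$ and variance $1-\delta^2$. That is expressible through $\exp$ and $\erf$, and it yields $\rho_{2,3}$ and $\rho_{2,4}$ after combining with the first-order Taylor pieces in the other variable.
\item The mixed fourth-order term $\partial^2_{\nu_1}\partial^2_{\nu_2}$ gives $4\,p_{Z}(-t\nu_1,-s\nu_2)$, which is $\rho_{2,5}$ with its double integral.
\end{itemize}

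The delicate part is bookkeeping: expanding jointly in $(\nu_1,\nu_2)$ without double counting, and tracking the signs that produce $|\delta|$, $|\nu_j t|$ and the absolute value $|\mu_1\mu_2\nu_1\nu_2|$. I would check the final identity against the special cases $\delta=0$ and $\nu_2=0$, where $F$ factorizes or reduces to one-dimensional folded-normal means. Multiplying by $\sigma_1\sigma_2$ and the density factor, and collecting constants, should give \eqref{e.rho2}.
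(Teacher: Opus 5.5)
Your proposal is correct. Its first two steps coincide with the paper's: the Kac--Rice formula, the density of $(G(x),G(y))$ at $(0,0)$, and Gaussian regression for the conditional law of $(G'(x),G'(y))$.

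\emph{Where you differ.} The difference is in evaluating $F(\nu_1,\nu_2)$, which equals the paper's $I_\delta(\nu_1,\nu_2)$ divided by $2\pi\sqrt{1-\delta^2}$.
\begin{itemize}
\item The paper expands $\exp(\delta ts/(1-\delta^2))$ in powers of $\delta$. This factorizes the double integral into products of one-dimensional integrals $\int_{\mathbb R}|t+\nu|\,t^j e^{-t^2/(2(1-\delta^2))}\,dt$. It evaluates these explicitly and then resums, using the series for $\arcsin\delta/\sqrt{1-\delta^2}$ and for $e^{s^2}\erf(s)$.
\item You apply the Taylor formula with integral remainder, tensorized in $\nu_1$ and $\nu_2$, directly to the two-dimensional Gaussian expectation. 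You use that the second distributional derivative of $|u|$ is twice the Dirac mass at $0$. This is legitimate: $F$ is the convolution of $|u||v|$ with a nondegenerate Gaussian density, so it is smooth.
\end{itemize}

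\emph{How the two match.} The computations are closer than they look. The paper's closed form for the one-dimensional integral is exactly the one-variable Taylor formula for $g(\nu)=\int|t+\nu|w(t)\,dt$, with $g''(\nu)=2w(-\nu)$. After resummation the terms correspond block by block:
\begin{itemize}
\item $V_1$ and $V_2$ are your $(0,0)$ and $(1,1)$ blocks;
\item $V_3$ is $(R,0)+(0,R)$;
\item $V_4$ is $(R,1)+(1,R)$;
\item $V_5$ is $(R,R)$.
\end{itemize}
The paper's parity restrictions ($j$ even for the value term, $j$ odd for the linear term) are what kill your $(0,1)$ and $(1,0)$ blocks.

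\emph{What each approach buys.} Your ordering effectively resums in $\delta$ first. This avoids interchanging series and integrals and needs no resummation identities. It also makes the origin of each of the five terms transparent, including the $|\delta|$, $|\nu_j t|$ and $\erf$ factors. These come from the folded-normal mean and the conditional mean of $\sgn Z_2$, where $Z_2$ given $Z_1=-a$ is $\mathcal N(-\delta a,1-\delta^2)$. The price is that you import two classical inputs: Sheppard's formula $\mathbb E[\sgn Z_1\sgn Z_2]=\frac{2}{\pi}\arcsin\delta$, and the centered value of $\mathbb E|Z_1Z_2|$. The paper's series computation produces both by itself.

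\emph{Two bookkeeping points.} First, to land on the stated form you must also Taylor expand $\partial_{\nu_1}^2F(t\nu_1,\nu_2)$ in $\nu_2$. Using the closed-form folded-normal mean at general $\nu_2$ instead gives an equivalent but differently arranged formula. Second, the normalization is $\sum_{i=1}^5\rho_{2,i}=\frac{\pi}{2}\sigma_1\sigma_2F(\nu_1,\nu_2)$.
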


The analogous formula for the one-point correlation function $\rho_1$ is classical; see Leadbetter and Cryer \cite{LC65} (see also \cite{MBFMA97} for the complex setting). It will also emerge as a byproduct of the proof of Theorem~\ref{t.rho2}.
\begin{lemma}[One-point correlation function]  \label{l.rho1}
Let 
\begin{equation}
    \label{e.rho1}
    \rho_1(x)=\rho_{1,1}(x)+\rho_{1,2}(x),
\end{equation}
where  
\begin{equation}
    \label{e.rho11}
    \rho_{1,1}(x)= \frac{1}{\pi}\sqrt{r_{11}(x,x)}\exp\bigg(-\frac 12\bigg[m^2(x)+\frac{\left(m'(x)\right)^2}{r_{11}(x,x)}\bigg]\bigg),
\end{equation}
    and 
\begin{equation}
    \label{e.rho12}
    \rho_{1,2}(x)=\frac{1}{\sqrt{2\pi}} m'(x)\exp\bigg(-\frac 12 m^2(x)\bigg)\erf\bigg(\frac{m'(x)}{\sqrt{2r_{11}(x,x)}} \bigg).
\end{equation}
Then $\rho_1(x)$ is the density of real roots of $G(x)$, so that 
\begin{equation*}
    \mathbb E[N_G(I)]=\int_I \rho_1(x)dx.
\end{equation*}
\end{lemma}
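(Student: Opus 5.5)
The plan is to apply the classical Kac--Rice formula,
\[
\rho_1(x)=\int_{\mathbb R}|v|\,p_{(G(x),G'(x))}(0,v)\,dv ,
\]
and then evaluate this Gaussian integral explicitly.

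\textbf{Validity of the formula.} Under \ref{G1} the paths are $C^1$. Under \ref{G3} the pair $(G(x),G'(x))$ has a bounded continuous joint density, locally uniformly in $x$. Together these are the standard hypotheses (e.g.\ Aza\"is--Wschebor) under which the expected number of zeros on $I$ equals $\int_I\rho_1$. Nondegeneracy also rules out tangential zeros almost surely, by Bulinskaya's lemma.

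\textbf{Setting up the integral.} First I record the second-order structure of the pair:
\begin{itemize}
\item Normalization $r(x,x)=1$ gives $\Var[G(x)]=1$.
\item Differentiating $r(x,x)=1$ gives $r_{10}(x,x)=0$, so $\Cov[G(x),G'(x)]=0$.
\item $\Var[G'(x)]=r_{11}(x,x)$, and the means are $m(x)$ and $m'(x)$.
\end{itemize}
So $G(x)$ and $G'(x)$ are independent Gaussians. The density therefore factorizes:
\[
p(0,v)=\frac{e^{-m^2/2}}{\sqrt{2\pi}}\cdot\frac{1}{\sqrt{2\pi r_{11}}}\exp\left(-\frac{(v-m')^2}{2r_{11}}\right).
\]
This gives
\[
\rho_1(x)=\frac{e^{-m^2/2}}{\sqrt{2\pi}}\,\mathbb E|Z|,\qquad Z\sim\mathcal N(m',r_{11}).
\]

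\textbf{Folded-normal mean.} The remaining step is the standard computation of $\mathbb E|Z|$ for $Z\sim\mathcal N(\mu,\sigma^2)$:
\[
\mathbb E|Z|=\sigma\sqrt{2/\pi}\,e^{-\mu^2/(2\sigma^2)}+\mu\,\erf\!\left(\mu/(\sqrt2\sigma)\right).
\]
To derive it, split the integral at $0$, substitute $v=\mu+\sigma w$, and integrate $w\,e^{-w^2/2}$ explicitly. The constant term then produces the error function.

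\textbf{Matching the stated terms.} Multiply by $e^{-m^2/2}/\sqrt{2\pi}$.
\begin{itemize}
\item The first term becomes $\frac1\pi\sqrt{r_{11}}\exp\!\left(-\frac12[m^2+(m')^2/r_{11}]\right)=\rho_{1,1}$.
\item The second term becomes $\frac{1}{\sqrt{2\pi}}m'e^{-m^2/2}\erf\!\left(m'/\sqrt{2r_{11}}\right)=\rho_{1,2}$.
\end{itemize}

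The computation is routine. The only point needing care is the justification of the Kac--Rice formula from \ref{G1} and \ref{G3}, which I would cite rather than reprove. Alternatively, as the paper notes, the lemma can be read off as the one-variable marginal step inside the proof of Theorem~\ref{t.rho2}.
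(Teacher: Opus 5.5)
Your proposal is correct and follows the paper's proof essentially step for step. Both arguments obtain the independence of $G(x)$ and $G'(x)$ from the normalization $r(x,x)=1$ (the paper phrases this via $\lambda(x,x)=1+m^2(x)$, you via $r_{10}(x,x)=0$), apply the classical Kac--Rice formula with the density of $G(x)$ at $0$ factored out, and evaluate the folded-normal mean of $G'(x)\sim\mathcal N(m'(x),r_{11}(x,x))$.
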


The variance of the number of real roots of $G$ can then be calculated via the following standard formula (see, e.g., \cite{HKPV09}).

\begin{lemma} \label{l.var}
Let $\rho_1$ and $\rho_2$ be the one-point and two-point correlation functions for the real roots of $G$.  Then
\begin{equation*}
    \Var[N_G(I)]= \iint_{I^2}[\rho_2(x,y)-\rho_1(x)\rho_1(y)]dxdy +\int_I \rho_1(x)dx.
\end{equation*}
\end{lemma}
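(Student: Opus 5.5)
The statement is the standard variance identity for a simple point process, and the proof is a direct expansion of $N_G(I)^2$. Under Condition~\ref{c.GP}, the nondegeneracy assumptions \ref{G2}--\ref{G3} together with Bulinskaya's lemma give that, almost surely, $G$ has no multiple zeros in $I$. So $N_G(I)=\sum_{\zeta}\mathbf 1_I(\zeta)$, summing over the distinct real zeros $\zeta$. Squaring,
\[
N_G(I)^2=\sum_{\zeta}\mathbf 1_I(\zeta)+\sum_{\zeta\ne\zeta'}\mathbf 1_I(\zeta)\mathbf 1_I(\zeta'),
\]
where the second sum runs over ordered pairs of distinct zeros. This is the only place the simplicity of zeros enters: it guarantees that the diagonal contributes exactly $N_G(I)$.

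Next we take expectations. By Lemma~\ref{l.rho1}, the diagonal term has expectation $\int_I\rho_1$. For the off-diagonal term, the defining property of the two-point correlation function is
\[
\mathbb E\Big[\sum_{\zeta\ne\zeta'}f(\zeta,\zeta')\Big]=\iint f\,\rho_2.
\]
This is the second-order Kac--Rice formula, valid under \ref{G1}--\ref{G2} and already used to obtain \eqref{e.rho2}. To pass from compactly supported continuous $f$ to the indicator of $I\times I$, we approximate the indicator monotonically from below and apply monotone convergence; this is legitimate since $\rho_2\ge0$. The result is $\mathbb E[N_G(I)(N_G(I)-1)]=\iint_{I^2}\rho_2$.

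Finally, $\mathbb E[N_G(I)]^2=\iint_{I^2}\rho_1(x)\rho_1(y)\,dx\,dy$. Subtracting this from $\mathbb E[N_G(I)^2]$ gives the claimed formula.

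The one point needing care is finiteness: the subtraction is meaningful only if $\mathbb E[N_G(I)^2]<\infty$. If instead the second moment is infinite, both sides of the identity are $+\infty$, with the same convention. For the bounded intervals used later in the paper, finiteness follows from the moment bounds of Theorem~\ref{t.localcount}.
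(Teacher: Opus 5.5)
Your proposal is correct and is the standard argument behind this formula, which the paper simply cites: expand $N_G(I)^2=N_G(I)(N_G(I)-1)+N_G(I)$, then combine Lemma~\ref{l.rho1} with the second factorial moment identity $\mathbb E[N_G(I)(N_G(I)-1)]=\iint_{I^2}\rho_2(x,y)\,dx\,dy$. That identity is the Rice formula already invoked in the proof of Theorem~\ref{t.rho2}; it applies directly to the interval $I$, so your monotone approximation step is harmless but unnecessary, and for the polynomial processes to which the lemma is applied, finiteness follows at once from $N_{P_n}(I)\le n$ almost surely, without Theorem~\ref{t.localcount}.
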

\subsection{One-point correlation: Proof of Lemma~\ref{l.rho1}}
 
First, it follows from hypothesis \ref{G1} that $G(x)$ has a quadratic mean derivative $G'(x)$, which is continuous on $I$ (see, for example, \cite{AW09}*{\S 1.4}). Therefore, letting \[\lambda(x,y)=\mathbb E[G(x)G(y)]=r(x,y)+m(x)m(y),\]
we see that $\lambda \in C^2\left(I^2\right)$ and $\lambda(x,x)=1+m^2(x)$. This gives
\begin{align*}
    \mathbb E[G(x)G'(x)] &=m(x)m'(x),\\
    \mathbb E[G'(x)G(y)] &=\lambda_{10}(x,y) = r_{10}(x,y)+m'(x)m(y),\\
    \mathbb E[G'(x)G'(y)]&=\lambda_{11}(x,y) = r_{11}(x,y)+m'(x)m'(y).
\end{align*}
From the above computation and hypothesis \ref{G3}, we deduce that for each $x\in I$, $G(x)$ and $G'(x)$ are independent.

Now, we recall the classical Kac--Rice formula \cite{Kac43},
    \[
    \mathbb E[N_G(I)]=\int_I \mathbb E[|G'(x)|\mid G(x)=0]p_{G(x)}\left(0\right)dx,
    \]
where $p_{G(x)}$ is the density of $G(x)$. Since $G(x)\sim \mathcal N\left(m(x),1\right)$, it follows that 
    \[
    p_{G(x)}\left(0\right)=\frac{1}{\sqrt{2\pi}}\exp\left(-\frac 12 m^2(x)\right).
    \]
Because $G'(x)$ and $G(x)$ are independent and $G'(x)\sim \mathcal N\left(m'(x),r_{11}(x,x)\right)$, 
we  have 
\begin{align*}
    &\mathbb E[|G'(x)|\mid G(x)=0] =\mathbb E[|G'(x)|]\\
    &=\frac{1}{\sqrt{2\pi r_{11}(x,x)}}\int_{\mathbb R}|t|\exp\bigg(-\frac{\left(t-m'(x)\right)^2}{2r_{11}(x,x)} \bigg)dt\\
    &=\frac{\sqrt{2r_{11}(x,x)}}{\sqrt{\pi}}\int_{\mathbb R}|t+\frac{m'(x)}{\sqrt{2r_{11}(x,x)}}|e^{-t^2}dt\\
    &=\frac{\sqrt{2r_{11}(x,x)}}{\sqrt{\pi}}\bigg[\exp\bigg(-\frac{\left(m'(x)\right)^2}{2r_{11}(x,x)}\bigg) +\frac{\sqrt{\pi}m'(x)}{\sqrt{2r_{11}(x,x)}}\erf\bigg(\frac{m'(x)}{\sqrt{2r_{11}(x,x)}} \bigg)\bigg],
\end{align*}
where $\erf$ is the error function defined as 
    \[
    \erf(x)=\frac{2}{\sqrt{\pi}}\int_0^x e^{-t^2}dt.
    \]
In particular, we recover the formula \eqref{e.rho1} for the one-point correlation stated in   Lemma~\ref{l.rho1}.  

\subsection{Two-point correlation: Proof of Theorem~\ref{t.rho2}} 
Let $\mathcal C:=\{G(x)=0, G(y)=0\}$. Given that $\mathcal{G}$ satisfies hypotheses \ref{G1} and \ref{G2}, the Rice formula \cite{AW09}*{Theorem 3.2} asserts that
\begin{align*}
    \mathbb E[N_G(I)(N_G(I)-1)]
    =\iint_{I^2}\mathbb E[|G'(x)G'(y)|\big|\mathcal C]p_{(G(x),G(y))}(0,0)dxdy,
\end{align*}
where $p_{(G(x),G(y))}$ is the joint density of $(G(x),G(y))$. Since the diagonal set $\{(x,x): x\in I\}$ has Lebesgue measure zero, in subsequent computations, we may safely assume that $(x, y)\in I^2$ with $x\ne y$. 

Thus, the two-point correlation function $\rho_2(x,y)$ of real roots of $G(x)$ is given by
    \[
    \rho_2(x,y) =\mathbb E[|G'(x)G'(y)|\big|\mathcal C]p_{(G(x),G(y))}(0,0).
    \]

Recall that $r(x,y)$ is defined by \eqref{e.rxy}, so 
\begin{align*}
    & p_{(G(x),G(y))}(0,0)\\
    &=\frac{1}{2\pi \sqrt{1-r^2(x,y)}}\exp \left(-\frac{m^2(x)-2r(x,y) m(x)m(y)+m^2(y)}{2(1-r^2(x,y))}\right).
\end{align*}
    
For $x\ne y$, under hypothesis \ref{G2}, it follows that $r(x,y)\ne 1$, ensuring the well-definedness of $p_{(G(x),G(y))}(0,0)$.

We will use regression to evaluate $\mathbb E[|G'(x)G'(y)|\big| \mathcal C]$. To perform a regression of $G'(x)$ on $\mathcal{C}$, we must carefully select $\theta_1=\theta_1(x,y)$ and $\eta_1=\eta_1(x,y)$ such that 
\[
\mathcal R_1:=G'(x)+\theta_1 G(x)+\eta_1 G(y)
\]
becomes independent of $G(x)$ and $G(y)$. This independence can be achieved by solving the following system of equations for $\left(\theta_1, \eta_1\right)$:
\begin{align*}
    \mathbb E[(G'(x)+\theta_1 G(x)+\eta_1 G(y))G(x)]&=\mathbb E[G'(x)+\theta_1 G(x)+\eta_1 G(y)]\mathbb E[G(x)],\\
    \mathbb E[(G'(x)+\theta_1 G(x)+\eta_1 G(y))G(y)]&=\mathbb E[G'(x)+\theta_1 G(x)+\eta_1 G(y)]\mathbb E[G(y)].
\end{align*}
These equations are equivalent to
\begin{align*}
    \theta_1+\eta_1r(x,y)&=0,\\
    r_{10}(x,y)+\theta_1r(x,y)+\eta_1&=0,
\end{align*}
which leads to 
 \begin{align*}
   \theta_1=\frac{r(x,y)r_{10}(x,y)}{1-r^2(x,y)}\quad\text{and}\quad \eta_1=-\frac{r_{10}(x,y)}{1-r^2(x,y)}.
 \end{align*}

Similarly, a regression  of $G'(y)$ on $\mathcal C$ is 
    \[
    \mathcal{R}_2=G'(y)+\theta_2 G(y)+\eta_2 G(x),
    \]
where 
 \begin{align*}
    \theta_2:=\frac{r(x,y)r_{01}(x,y)}{1-r^2(x,y)}\quad \text{and}\quad \eta_2:=-\frac{r_{01}(x,y)}{1-r^2(x,y)}.
 \end{align*}
But then 
    \[
    \mathbb E[|G'(x)G'(y)|\big| \mathcal C]=\mathbb E[|\mathcal{R}_1\mathcal{R}_2|].
    \]
Given $\mu_1$, $\sigma_1$, $\mu_2$, $\sigma_2$ from \eqref{e.mu1-s3}, \eqref{e.sigma1}, \eqref{e.mu2-s3}, \eqref{e.sigma2}, a computation shows that
 \begin{align*}
\mu_1 &= \mathbb E[\mathcal{R}_1],&
\sigma_1 &= \sqrt{\Var[\mathcal{R}_1]},\\
\mu_2 &= \mathbb E[\mathcal{R}_2],&
\sigma_2 &= \sqrt{\Var[\mathcal{R}_2]}.
 \end{align*}
Indeed, using $\Cov[G'(x),G(x)]=0$ and $\Var[G(x)]=\Var[G(y)]=1$, we get
\begin{align*}
    \Var[\mathcal{R}_1] &= \Var[G'(x)]+\theta_1^2  +\eta_1^2  + 2\theta_1\eta_1\Cov[G(x),G(y)]\\
    &\quad +2\eta_1 \Cov[G'(x),G(y)] \\
    &= r_{11}(x,x) - \frac{r_{10}^2(x,y)}{1-r^2(x,y)}\\
    &= \sigma_1^2. 
\end{align*}
The computation for $\sigma_2$ is entirely similar,  while the computations for $\mu_1$ and $\mu_2$ are even simpler.  

We obtain
\begin{align*}
    \mathbb E[|G'(x)G'(y)|\big| \mathcal C] =\mathbb E[|\mathcal{R}_1\mathcal{R}_2|]= \iint_{\mathbb R^2} |ts|p(t,s)dtds,
\end{align*}
where $p(t,s)$ is the joint density of the bi-variate Gaussian random variable $(\mathcal R_1,\mathcal R_2)$. Note that $p(t,s)$ depends on $x,y$ but for brevity, we omit this dependence in the notation. Below, we will calculate the covariance of $\mathcal R_1$ and $\mathcal R_2$.

Recalling 
\[
\Cov[G'(x),G(x)]=\Cov[G'(y),G(y)]=0 \text{ and }
\Var[G(x)]=\Var[G(y)]=1,
\] 
we have
\begin{align*}
    \Cov[\mathcal{R}_1,\mathcal{R}_2]
    &=\Cov[G'(x), G'(y)]+ \theta_2\Cov[G'(x), G(y)]\\
    &\quad + \theta_1 \Cov[G(x),G'(y)]+ \theta_1\theta_2 \Cov[G(x),G(y)]\\
    &\quad +\theta_1 \eta_2 + \eta_1 \theta_2 + \eta_1 \eta_2 \Cov[G(x),G(y)]\\
    &= r_{11}(x,y)+\theta_2r_{10}(x,y)+\theta_1r_{01}(x,y)+\theta_1\eta_2+\theta_2\eta_1\\
    &\quad +\left(\theta_1\theta_2+\eta_1\eta_2\right)r(x,y)\\
    &=r_{11}(x,y)+\frac{r(x,y)r_{10}(x,y)r_{01}(x,y)}{1-r^2(x,y)}.
\end{align*}

From \eqref{e.delta}, we see that
\begin{equation*}
    \delta=\frac{\Cov[\mathcal{R}_1,\mathcal{R}_2]}{\sigma_1\sigma_2}.
\end{equation*}
Let
    \[
    \mu :=\begin{pmatrix}
    \mu_1\\
    \mu_2
    \end{pmatrix},\quad \Sigma :=\begin{pmatrix}
    \sigma_1^2&\delta \sigma_1\sigma_2\\
    \delta \sigma_1\sigma_2&\sigma_2^2
    \end{pmatrix}.
    \]
Hence,
    \[
    \begin{pmatrix}
    \mathcal{R}_1\\
    \mathcal{R}_2 
    \end{pmatrix} \sim \mathcal N\left(\mu, \Sigma\right),
    \]
and the probability density function $p$ is given by
\begin{align*}
    p(t,s)=\frac{\exp \left(-\frac{1}{2(1-\delta^2)}[(\frac{t-\mu_1}{\sigma_1})^2-2\delta (\frac{t-\mu_1}{\sigma_1})(\frac{s-\mu_2}{\sigma_2})+(\frac{s-\mu_2}{\sigma_2})^2 ]\right)}{2\pi \sigma_1\sigma_2 \sqrt{1-\delta^2}}.
\end{align*}
The nondegeneracy hypothesis \ref{G2} implies that the joint distribution of $\mathcal R_1$ and $\mathcal R_2$ remains non-degenerate, thus ensuring the well-definedness of $p$ for all distinct $x, y\in I$.

We obtain
\begin{align*}
    \mathbb E[|\mathcal{R}_1\mathcal{R}_2|] 
    = \iint_{\mathbb R^2} |ts|p(t,s)dtds= \frac{\sigma_1\sigma_2}{2\pi \sqrt{1-\delta^2}} I_\delta\left(\nu_1,\nu_2\right),
 \end{align*}  
where $\nu_j:=\mu_j/\sigma_j$, and 
    \[
    I_\delta\left(\nu_1,\nu_2\right):=\iint_{\mathbb R^2} |\left(t+\nu_1\right)\left(s+\nu_2\right)|\exp \left(-\frac{t^2-2\delta ts+s^2}{2(1-\delta^2)}\right)dtds.
    \]
Write
    \[
    \exp \left(-\frac{t^2-2\delta ts+s^2}{2(1-\delta^2)}\right)=\exp \left(-\frac{t^2+s^2}{2(1-\delta^2)}\right)\sum_{j=0}^\infty \frac{1}{j!}\left(\frac{\delta ts}{1-\delta^2}\right)^j,
    \]
and hence 
\begin{align*}   
    I_\delta\left(\nu_1,\nu_2\right)
    &=\sum_{j=0}^\infty \frac{\delta^j}{j!(1-\delta^2)^j}\\
    &\quad \times \iint_{\mathbb R^2} |\left(t+\nu_1\right)\left(s+\nu_2\right)|\left(ts\right)^j\exp\left(-\frac{t^2+s^2}{2(1-\delta^2)}\right)dtds.
\end{align*}

For subsequent computations, we will utilize the following power series expansion:
\begin{equation*}
    \frac{\arcsin \delta}{\sqrt{1-\delta^2}} = \frac 1 2 \sum_{j=0}^{\infty} \frac {\left(j!\right)^2}{\left(2j+1\right)!} \left(2\delta\right)^{2j+1}.
\end{equation*}
This expansion can be readily obtained by noting that the left-hand side is an odd function that satisfies the functional equation $f'(\delta)= \frac {1+\delta f(\delta)}{1-\delta^2}$.
We will also employ the following series expansion for the error function:
    \[
    \erf\left(s\right) =\frac{2}{\sqrt \pi} \int_0^s e^{-t^2}dt = \frac 2{\sqrt \pi} e^{-s^2}  \sum_{j=1}^{\infty}  \frac {j!}{\left(2j\right)!} \left(2s\right)^{2j-1},  
    \]
which follows since $\erf\left(s\right)e^{s^2}$ is an odd function that satisfies the functional equation $f'\left(s\right)=\frac {2}{\sqrt{\pi}}+ 2sf\left(s\right)$.

Now, for each $j\ge 0$ and $\nu\in \mathbb R$, one has 
\begin{align*}
    &\int_{\mathbb R} |t+\nu|t^j\exp\left(\frac{-t^2}{2(1-\delta^2)}\right)dt\\
    &=\int_{-\nu}^\infty (t+\nu)t^j\exp\left(\frac{-t^2}{2(1-\delta^2)}\right)dx-\int_{-\infty}^{-\nu}(t+\nu)t^j\exp\left(\frac{-t^2}{2(1-\delta^2)}\right)dt \\
    &=  [2(1-\delta^2)]^{\frac{j}{2}+1} \Gamma\Big(\frac j2 +1\Big) \pmb 1_{j \text{ even}}+\nu [2(1-\delta^2)]^{\frac{j+1}{2}}\Gamma\Big(\frac{j+1}{2}\Big) \pmb 1_{j \text{ odd}}\\
    &\quad +2(-1)^j \nu^2 \int_0^{1}(1-t)(\nu t)^{j}\exp\left(\frac{-\nu^2 t^2}{2(1-\delta^2)}\right)dt,
\end{align*}
where $\Gamma$ is the gamma function defined as
    \[
    \Gamma(z)=\int_0^\infty t^{z-1}e^{-t}dt.
    \]
Hence,
\begin{align*}
   &\iint_{\mathbb R^2}|\left(t+\nu_1\right)\left(s+\nu_2\right)|\left(ts\right)^j\exp\left(-\frac{t^2+s^2}{2(1-\delta^2)}\right)dtds\\
   &= [2(1-\delta^2)]^{j+2}\Gamma^2\left(\frac j2 +1\right) \pmb 1_{j \text{ even}}+\nu_1\nu_2 [2(1-\delta^2)]^{j+1}\Gamma^2\Big(\frac{j+1}{2}\Big)\pmb 1_{j \text{ odd}}\\
   &\quad +2\bigg([2(1-\delta^2)]^{\frac{j}{2}+1}\Gamma\Big(\frac j2 +1 \Big)\sum_{i=1}^2  \int_0^{1}\frac{\nu_i^2(1-t)(\nu_i t)^{j}}{\exp(\nu_i^2 t^2/ (2-2\delta^2))}dt \bigg) \pmb 1_{j \text{ even}}\\
   &\quad +(-2)\bigg([2(1-\delta^2)]^{\frac{j+1}{2}}\Gamma\Big(\frac{j+1}{2}\Big) \sum_{i=1}^2\int_0^{1}\frac{\nu_i^2 \nu_{3-i}(1-t)\left(\nu_i t\right)^{j}}{\exp(\nu_i ^2 t^2/(2-2\delta^2))}dt\bigg)\pmb 1_{j \text{ odd}}\\
    &\quad +4 (\nu_1 \nu_2)^2\int_0^{1}\int_0^{1}(1-t)(1-s)(\nu_1\nu_2 ts)^{j}\exp\left(-\frac{\nu_1^2 t^2+\nu_2^2 s^2}{2(1-\delta^2)}\right)dsdt.
\end{align*}
This gives
    \[
    I_\delta\left(\nu_1,\nu_2\right)=\sum_{i=1}^5V_i,
    \]
where
\begin{align*}
    V_1 &=\sum_{j=0}^\infty \frac{\delta^{2j}}{\left(2j\right)!(1-\delta^2)^{2j}}  [2(1-\delta^2)]^{2j+2} \left(j!\right)^2 \\
     &= 4\sqrt{1-\delta^2}\Big(\sqrt{1-\delta^2}+\delta\arcsin\delta \Big),
\end{align*}
\begin{align*}
    V_2 &=\sum_{j=0}^\infty \frac{\delta^{2j+1}}{\left(2j+1\right)!(1-\delta^2)^{2j+1}}\nu_1\nu_2 [2(1-\delta^2)]^{2j+2}\left(j!\right)^2\\
    &=4\nu_1\nu_2\sqrt{1-\delta^2} \arcsin\delta,
\end{align*}
\begin{align*}
    V_3 &=\sum_{j=0}^\infty \frac{\delta^{2j}}{\left(2j\right)!(1-\delta^2)^{2j}}2[2(1-\delta^2)]^{j+1} j!  \sum_{i=1}^2 \int_0^{1}\frac{\nu_i^2(1-t)(\nu_i t)^{2j}}{\exp(\nu_i^2 t^2/(2-2\delta^2))}dt\\
    &=4(1-\delta^2)\sum_{i=1}^2 \int_0^{1} \frac{(1-t)\nu_j^2 }{\exp(\nu_j^2 t^2/(2-2\delta^2))}\sum_{j=0}^\infty \frac{j!}{(2j)!} \bigg(\frac{2\delta \nu_j t}{\sqrt{2(1-\delta^2)}}\bigg)^{2j}dt\\
    &=4(1-\delta^2)\sum_{i=1}^2\nu_i^2\int_0^{1}(1-t) \exp\left(\frac{-\nu_i^2t^2}{2(1-\delta^2)}\right) dt\\
    &\quad +2|\delta|\sqrt{2\pi(1-\delta^2)}\sum_{i=1}^2\nu_i^2\int_0^{1}\frac{(1-t)|\nu_it|}{\exp(\nu_i^2t^2/2)}\erf\bigg(\frac{|\delta \nu_i t|}{\sqrt{2(1-\delta^2)}}\bigg)dt,
\end{align*}
\begin{align*}
    V_4 &=\sum_{j=0}^\infty \frac{\delta^{2j+1}}{\left(2j+1\right)!(1-\delta^2)^{2j+1}}\\
    &\quad  \times  (-2)[2(1-\delta^2)]^{j+1}j!  \sum_{i=1}^2 \int_0^{1}\frac{ \nu_{3-i} \nu_i^2 (1-t)(\nu_i t)^{2j+1}}{\exp(\nu_i^2 t^2/(2-2\delta^2))}dt\\
    &= \sum_{i=1}^2 \int_0^{1}\frac{(-2) (1-t) \nu_{3-i} \nu_i^2}{\exp(\nu_i^2 t^2/(2-2\delta^2))} \sum_{j=0}^\infty \frac{j!}{(2j+1)!}\frac{2^{j+1}(\delta\nu_i t)^{2j+1}}{(1-\delta^2)^{j}}dt\\
    &=2\sqrt{2\pi(1-\delta^2)}\nu_1\nu_2 \sum_{i=1}^2 \int_0^{1}\frac{\left(t-1\right)\nu_i}{\exp(\nu_i^2t^2/2)}\erf\bigg(\frac{\delta \nu_it}{\sqrt{2(1-\delta^2)}}\bigg)dt,
\end{align*}
and 
\begin{align*}
    V_5 &= \sum_{j=0}^\infty \frac{\delta^j}{j!(1-\delta^2)^j}\\
    &\quad \times 4 \nu_1^2 \nu_2^2\int_0^{1}\int_0^{1}(1-t)(1-s)(\nu_1\nu_2 ts)^{j}\exp\left(-\frac{\nu_1^2 t^2+\nu_2^2 s^2}{2(1-\delta^2)}\right)dsdt\\
    &=4\nu_1^2\nu_2^2\int_0^{1}\int_0^{1}(1-t)(1-s)\exp\left(-\frac{\nu_1^2t^2-2\delta \nu_1\nu_2 ts+\nu_2^2s^2}{2(1-\delta^2)}\right) dtds.
\end{align*}
This completes the proof of Theorem~\ref{t.rho2}.
\begin{remark} The formula for $V_1$ can also be obtained by noting that $V_1 = I_\delta(0,0)$, corresponding to the zero-mean case, which can be computed via the geometry of bivariate normal distributions.  \end{remark}

\section{Preliminary analysis of the correlation functions} \label{s.asym.est}  The proofs of Theorems \ref{t.varcomp} and \ref{t.cltcomp} rely on Lemma \ref{l.var} together with asymptotic formulas for the correlation functions $\rho_1$ and $\rho_2$ in \eqref{e.rho1} and \eqref{e.rho2}. In this section, we analyze the terms appearing in these expressions. 

We first consider some notation. In what follows, let  $0\le b_n<a_n<1$, where 
\begin{align*}
    a_n \ll_A \log^{-A}n \quad \text{for any } A>0.
\end{align*}
Define $I_n =  [1 - a_n, 1 - b_n)$,  $U_n = I_n \cup \left(-I_n\right)$, $U_n^*=I_n^{-1}\cup \left(-I_n^{-1}\right)$, and let $S_n \in \{I_n, -I_n\}$. Throughout our analysis, we assume that $n$ is sufficiently large.

Assume that the coefficients of $P_n$ in \eqref{e.pn} have polynomial growth of order $\tau>-1/2$ and that $(\xi_j)_{j=0}^n$ are independent standard normal random variables. It is straightforward to verify that the normalized Gaussian process 
\[\mathcal G=\bigg\{\frac{P_n(x)}{\sqrt{\Var[P_n(x)]}}\bigg\}_{x\in \mathbb R}\]
satisfies Condition \ref{c.GP}. 

Define
    \[
    k(x):=\sum_{j=0}^n v_j^2x^j.
    \] 
Then
    \[
    \Var[P_n(x)]=k(x^2) \quad \text{and}\quad m(x)=\frac{M_n(x)}{\sqrt{k(x^2)}}.
    \]
Setting 
\[
\lambda(x,y):=\frac{\mathbb E[P_n(x)P_n(y)]}{\sqrt{\Var[P_n(x)]\Var[P_n(y)]}} 
     = m(x)m(y)+\frac{k(xy)}{\sqrt{k(x^2)k(y^2)}},
\]
we have 
\begin{align*}
    r(x,y)=\lambda(x,y)-m(x)m(y)=\frac{k(xy)}{\sqrt{k(x^2)k(y^2)}}.
\end{align*}
Let $\mu_1$, $\mu_2$, $\sigma_1$, $\sigma_2$, and $\delta$ be defined in \eqref{e.mu1-s3}, \eqref{e.mu2-s3}, \eqref{e.sigma1}, \eqref{e.sigma2}, and \eqref{e.delta} respectively. Furthermore, let $\nu_1=\mu_1/\sigma_1$ and $\nu_2=\mu_2/\sigma_2$. We stress that the dependence on $n$ has been suppressed from the notation for brevity.
\subsection{Estimates for the mean and variance functions} We begin with asymptotic estimates for $k$, $m$, and their derivatives. 

Applying the same argument as in the proof of Lemma \ref{l.estvar}, we find that for any constant $x_0\in (0,1)$ and $0\le i\le 4$, 
\begin{equation*}
    k^{(i)}\left(|x|\right) \asymp  \frac{1}{(1-|x|+1/n)^{2\tau+i+1}},\quad x_0\le |x|\le 1,
\end{equation*}
and
\begin{equation*}
     |M_n^{(i)}(x)|\ll  \frac{1}{(1-|x|+1/n)^{\tau+i+1}},\quad x_0\le |x|\le 1.
\end{equation*}
Thus, for each $i=0,1,2$, it holds that 
\begin{equation}
    \label{e.djm}
    |m^{(i)}(x)| \ll  \frac{1}{(1-|x|+1/n)^{i+\frac 1 2}},\quad x_0\le |x|\le 1.
\end{equation}
\subsection{The pseudo-hyperbolic distance on the unit disk} 
We now review the pseudo-hyperbolic metric on the unit disk, which plays a central role in our analysis.

Let $\varrho$ denote the pseudo-hyperbolic distance on $\mathbb D :=\{z\in \mathbb C: |z|<1\}$; that is,
\[\varrho(z,w)=\frac{|z-w|}{|1-\overline{w}z|},\quad (z,w)\in \mathbb D\times \mathbb D.\]
For each $0\le \varepsilon <\frac{1}{\sqrt 5}$, let $\mathbb U_\varepsilon =\{(x,y)\in U_n\times U_n: \varrho(x,y)\le \varepsilon\}$. For $(x,y)\in \mathbb R^2$ with $xy\ne 1$, we define
    \[
    \alpha:=\alpha(x,y):=\frac{(1-x^2)(1-y^2)}{(1-xy)^2}=1-\varrho^2(x,y).
    \]
We recall a property of the pseudo-hyperbolic distance $\varrho$ that will be convenient later.
\begin{lemma}[\cite{DN25}] \label{l.ez} Let $0\le \varepsilon<\frac 1{\sqrt 5}$ be a fixed constant. Suppose that for $x,y \in \left(-1,1\right)$ that have the same sign, we have $\varrho(x,y)  \le \varepsilon$. Then for every $z_1,z_2,z_3,z_4$ between $x$ and $y$ it holds that
    \[
    \frac{1}{1-z_1 z_2}=\frac{1+O(\varrho(x,y))}{1-z_3z_4}
    \]
and the implicit constant may depend on $\varepsilon$. Consequently,
    \[
    \varrho\left(z_1,z_2\right)\le \varrho(x,y)[1+O(\varrho(x,y))].
    \]
\end{lemma}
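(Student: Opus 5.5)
Since $\varrho(x,y)=\varrho(-x,-y)$ and every product $z_iz_j$ is unchanged when all points are negated, I would first reduce to the case $0\le x\le y<1$. The whole argument rests on the elementary identities
\[
\frac{1-x^2}{1-xy}=1+x\,\frac{y-x}{1-xy}=1+x\,\varrho(x,y),\qquad
\frac{1-y^2}{1-xy}=1-y\,\frac{y-x}{1-xy}=1-y\,\varrho(x,y),
\]
which follow from $1-x^2-(1-xy)=x(y-x)$ and $1-y^2-(1-xy)=-y(y-x)$. I do not expect to need the specific threshold $1/\sqrt5$; the only input will be $\varrho(x,y)\le\varepsilon<1$.

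\emph{Main estimate.} Let $z_1,z_2$ lie in $[x,y]\subset[0,1)$. Then $x^2\le z_1z_2\le y^2$, so $1-y^2\le 1-z_1z_2\le 1-x^2$. Dividing by $1-xy>0$ and applying the identities gives
\[
1-\varrho(x,y)\le 1-y\varrho(x,y)\le \frac{1-z_1z_2}{1-xy}\le 1+x\varrho(x,y)\le 1+\varrho(x,y),
\]
so $\frac{1-z_1z_2}{1-xy}=1+O(\varrho(x,y))$. The same bound holds for $z_3z_4$. Taking the quotient, the denominator satisfies $1-\varrho\ge 1-\varepsilon>0$, hence
\[
\frac{1-z_3z_4}{1-z_1z_2}\in\Big[\tfrac{1-\varrho}{1+\varrho},\,\tfrac{1+\varrho}{1-\varrho}\Big]=1+O_\varepsilon(\varrho(x,y)).
\]
This is the first claim, with the implicit constant depending only on $\varepsilon$, e.g.\ $2/(1-\varepsilon)$.

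\emph{Consequence.} For $z_1,z_2\in[x,y]$ we have $|z_1-z_2|\le y-x$. Therefore
\[
\varrho(z_1,z_2)=\frac{|z_1-z_2|}{1-z_1z_2}\le \frac{y-x}{1-xy}\cdot\frac{1-xy}{1-z_1z_2}=\varrho(x,y)\,[1+O(\varrho(x,y))],
\]
where the last step uses the first claim with $z_3=x$ and $z_4=y$.

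There is no real obstacle here. The one point that needs care is the sign reduction: the same-sign hypothesis is exactly what ensures that each product $z_iz_j$ lies between $x^2$ and $y^2$.
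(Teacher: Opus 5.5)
Your proof is correct and complete. The paper gives no proof of this lemma; it is quoted from \cite{DN25}, so there is no in-paper argument to compare against.

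Your route is fully elementary. You use the identities $\frac{1-x^2}{1-xy}=1+x\varrho(x,y)$ and $\frac{1-y^2}{1-xy}=1-y\varrho(x,y)$, together with $x^2\le z_iz_j\le y^2$, which is where the same-sign reduction enters. This gives an explicit implicit constant, namely $2/(1-\varepsilon)$. It also shows that, for this real same-sign statement, the threshold $1/\sqrt5$ can be relaxed to any fixed $\varepsilon<1$.

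For the consequence, you can avoid invoking the first claim with $z_3=x$, $z_4=y$. Your chain of inequalities already gives $\frac{1-xy}{1-z_1z_2}\le \frac{1}{1-\varrho(x,y)}$ directly.

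Every place the paper applies this lemma involves real points of the same sign lying in $\mathbb U_\varepsilon$: Lemma~\ref{l.1-dr}, Lemma~\ref{l.mu}, Lemma~\ref{l.cmu}, and Lemma~\ref{l.conE}. Your version therefore covers all of its uses here.
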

\subsection{Estimates via known results}
The following lemma was shown in \cite{DN25}*{Section 3.2}. 

\begin{lemma}[\cite{DN25}] \label{l.DN0} Let $b_n=1/\left(na_n\right)$. Suppose that $v_j=(1+j)^\tau$ for all $N_0\le j\le n$. It holds uniformly for $(x,y)\in S_n\times S_n$  that 
\begin{align*}
    r(x,y)&= \alpha^{\tau+1/2}(1+o(1)),\\
    1-r^2(x,y)&=\left(1-\alpha^{2\tau+1}\right)(1+o(1)),\\ 
    \sigma_1(x,y)&=\sqrt{r_{11}(x,x)}\sqrt{1-\frac{(2\tau+1)\left(1-\alpha\right)\alpha^{2\tau+1}}{1-\alpha^{2\tau+1}}}(1+o(1)),\\
    \sigma_2(x,y)&=\sqrt{r_{11}(y,y)}\sqrt{1-\frac{(2\tau+1)\left(1-\alpha\right)\alpha^{2\tau+1}}{1-\alpha^{2\tau+1}}}(1+o(1)),
\end{align*} 
and
    \[
    r_{11}(x,x)=\frac{2\tau+1}{(1-x^2)^2}(1+o(1)),\quad x\in U_n.
    \]
Moreover, if $(x,y)\in \left(-S_n\right)\times S_n$, then $r(x,y)=o(1)$, and hence
\begin{align*}
    \sigma_1(x,y)=\sqrt{r_{11}(x,x)}(1+o(1)) \quad\text{and}\quad
    \sigma_2(x,y)=\sqrt{r_{11}(y,y)}(1+o(1)).
\end{align*} 
\end{lemma}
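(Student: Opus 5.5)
The plan is to compare $k(t)=\sum_{j=0}^n v_j^2t^j$ with the exact model kernel
\[
k_0(t):=\Gamma(2\tau+1)(1-t)^{-(2\tau+1)} .
\]
For $k_0$ all the quantities in the lemma are computable in closed form. First, $r_0(x,y)=k_0(xy)/\sqrt{k_0(x^2)k_0(y^2)}=\alpha^{\tau+1/2}$ exactly. Second, differentiating $\log k_0$ gives $r_{0,11}(x,x)=(2\tau+1)/(1-x^2)^2$. Third, a direct computation of $r_{10}^2/(1-r^2)$ for the model produces the factor $1-\frac{(2\tau+1)(1-\alpha)\alpha^{2\tau+1}}{1-\alpha^{2\tau+1}}$ in $\sigma_1,\sigma_2$. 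So the lemma amounts to showing that the perturbation $k\to k_0$ changes every quantity only by a factor $1+o(1)$, uniformly on $S_n\times S_n$.

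\textbf{Step 1 (kernel asymptotics with derivatives).} Write $k(t)=k_0(t)(1+e(t))$. For $t\in[(1-a_n)^2,(1-b_n)^2]$ we have $n(1-t)\gg nb_n=1/a_n\to\infty$. Hence:
\begin{itemize}
\item the tail $\sum_{j>n}(1+j)^{2\tau}t^j$ is smaller than $k_0(t)$ by a factor $e^{-cn(1-t)}\mathrm{poly}(n(1-t))=o(1)$;
\item the finitely many indices $j<N_0$, where $v_j^2$ may differ from $(1+j)^{2\tau}$, contribute $O(1)=o(k_0(t))$ because $2\tau+1>0$;
\item the discrepancy between $\sum_j (1+j)^{2\tau}t^j$ and the Gamma-integral is $o(k_0)$ because $1-t\le 2a_n\to0$.
\end{itemize}
Applying the same argument to the differentiated series, I would get
\[
e^{(i)}(t)=o\big((1-t)^{-i}\big),\qquad 0\le i\le 4,
\]
uniformly in $t$. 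This is the standard estimate as in Lemma~\ref{l.estvar}, now tracking relative errors.

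\textbf{Step 2 (far from the diagonal).} When $\varrho(x,y)\ge\varepsilon$, so that $\alpha\le1-\varepsilon^2$, all the model quantities are bounded away from their degenerate values. For instance $1-\alpha^{2\tau+1}\gg1$, and the bracket in $\sigma_1$ is $\gg1$. The relative $o(1)$ errors from Step~1 therefore propagate directly through the explicit formulas for $r,r_{10},r_{11}$; these are rational expressions in $k,k',k''$ evaluated at $xy,x^2,y^2$.

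\textbf{Step 3 (near the diagonal, the main obstacle).} When $\varrho(x,y)\le\varepsilon$, both $1-r^2$ and the bracket in $\sigma_1$ vanish like $(1-\alpha)\asymp (x-y)^2/(1-x^2)^2$. A relative $o(1)$ error on $r$ is then useless, and cancellations must be exhibited. For $r$ I would use
\[
\log r=\log r_0+D,\qquad D:=E(xy)-\tfrac12E(x^2)-\tfrac12E(y^2),\quad E=\log(1+e).
\]
The quantity $D$ is a symmetric second difference, and it vanishes to second order at $x=y$. By Taylor's theorem together with Lemma~\ref{l.ez}, which makes all intermediate points comparable, and the bounds $E''=o((1-t)^{-2})$, I get $D=o(1)\cdot(x-y)^2/(1-x^2)^2=o(1-\alpha)$. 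Since $1-r_0^2\asymp 1-\alpha$, this yields $1-r^2=(1-r_0^2)(1+o(1))$.

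For $\sigma_1^2=r_{11}(x,x)-r_{10}^2/(1-r^2)$ the cancellation is of fourth order, since the bracket behaves like $(1-\alpha)$ times a constant. I would write $\log r$ as $\log r_0$ plus $D$ and expand $r_{10}$ and $r_{11}$ in terms of $\partial_x\log r$ and $\partial_x\partial_y\log r$. The contributions of $E$ are controlled by Taylor expansions to fourth order using $E^{(i)}=o((1-t)^{-i})$ for $i\le4$, which shows the perturbation is $o(1)$ times the model main term. This is the most delicate bookkeeping, and it is where I expect the real difficulty. If it fails, I would first try the alternative of interpolating $r$ via the integral representation of the conditional variance of $G'(x)$ given $G(x),G(y)$, which is manifestly a fourth-order divided difference. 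The formula for $r_{11}(x,x)$ itself is just the case $x=y$ of $\partial_x\partial_y\log k(xy)$ and follows from Step~1.

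\textbf{Step 4 (opposite signs).} For $(x,y)\in(-S_n)\times S_n$ we have $xy<0$ with $|xy|$ near $1$. Abel summation of the alternating series gives
\[
|k^{(i)}(xy)|\ll (1-|xy|+1/n)^{-2\tau-i}+1,
\]
which is smaller by a factor $(1-|xy|)$ than $\sqrt{k^{(i)}(x^2)k^{(i)}(y^2)}$. Hence $r=o(1)$, and also $r_{10}^2=o(r_{11}(x,x))$, since $r_{10}=r\,\partial_x\log(\cdot)$ plus the analogous small term. Substituting into \eqref{e.sigma1} and \eqref{e.sigma2} gives $\sigma_j=\sqrt{r_{11}}(1+o(1))$.
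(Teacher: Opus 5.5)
Your proposal is correct and follows essentially the route the paper attributes to \cite{DN25}. That route combines uniform asymptotics for $k^{(i)}$, $0\le i\le 4$ (your Step~1), with mean-value/Taylor expansions about the diagonal that expose the cancellations in $1-r^2$ and $\sigma_j$, and uses direct relative-error propagation away from the diagonal and for opposite signs. The bookkeeping you deferred does close: with $u=y-x$, the $u^2$ and $u^3$ coefficients of $r_{11}(x,x)(1-r^2)-r_{10}^2$ vanish identically for every kernel, so expanding $\log r$ to third order and $\partial_x\log r$ to second order with Lagrange remainders writes the deviation from the model as $u^4\cdot o((1-x^2)^{-6})$ using only derivatives of order at most $4$, while the model value is $\asymp u^4(1-x^2)^{-6}$ for $\varrho(x,y)\le\varepsilon$.
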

The proof of Lemma \ref{l.DN0} relies on the mean value theorem and the following uniform estimates:
\[
\frac{d^i}{dx^i}\bigg[\sum_{j=0}^n (1+j)^{2\tau}x^j\bigg] = \frac{\Gamma\left(2\tau+i+1\right)}{(1-x)^{2\tau+i+1}}(1+o(1)),\quad x\in (1-a_n, 1-\frac{1}{na_n}]
\]
for $i=0,\dots, 4$  (see \cite{DN25}*{Corollary 3.2}).

We now extend the above asymptotics to the case $b_n=B/n$, where $B>0$ is a sufficiently large constant. Using the asymptotic for generalized binomial coefficients (see, e.g., \cite{E15}), we find that 
\[
\binom{j+1+2\tau}{j+1}=\frac{(1+j)^{2\tau}}{\Gamma(2\tau+1)}\left(1+O\left(1/j\right)\right).
\]
It was shown in \cite{DONV18}*{Lemma 10.4} that for each $i=0,1, 2$, the bound 
\[
\frac{d^i}{dx^i}\bigg[\sum_{j=0}^\infty \binom{j+1+2\tau}{j+1} x^j - \frac{1}{(1-x)^{2\tau+1}}\bigg]= O\bigg(\frac{\left(1+[n(1-x)]^{2\tau+i}\right)x^{n+1}}{(1-x)^{2\tau+i+1}}\bigg)
\]
holds uniformly over $x\in [x_0,1)$, for some constant $x_0\in (0,1)$. The same proof extends to the cases $i=3$ and $i=4$.

Consequently, for $i=0,\dots, 4$ and $x\in I_n=(x_0,1-B/n]$, where $B>0$ is a sufficiently large constant, we have 
\[
\left(1+[n(1-x)]^{2\tau+i}\right)x^{n+1} \ll e^{-B/2}. 
\]
Therefore, it holds uniformly over $x\in I_n$ that 
\begin{equation}
    \label{e.dik}
    \frac{d^i}{dx^i}\bigg[\sum_{j=0}^n (1+j)^{2\tau}x^j\bigg] = \frac{\Gamma\left(2\tau+i+1\right)}{(1-x)^{2\tau+i+1}}\big(1+O(e^{-B/2})\big).
\end{equation}
Using these estimates, the mean value theorem, and the approximations
\[
1-\alpha^{2\tau+1} \asymp 1-\alpha =\varrho^2(x,y)
\]
and
\[
\sqrt{1-\frac{(2\tau+1)\left(1-\alpha\right)\alpha^{2\tau+1}}{1-\alpha^{2\tau+1}}} \asymp \sqrt{1-\alpha} =\varrho(x,y),
\]
we establish the following lemma.
\begin{lemma} \label{l.DN} Let $b_n=B/n$, where $B>0$ is a sufficiently large constant. Assume $v_j=(1+j)^\tau$ for $j\ge 0$. It holds uniformly for $(x,y)\in S_n\times S_n$  that 
\begin{align*}
    r(x,y)&= \alpha^{\tau+1/2}(1+O(e^{-B/2})),\\
    1-r^2(x,y)&\asymp \varrho^2(x,y),\\ 
    \sigma_1(x,y)&\asymp \sqrt{r_{11}(x,x)}\varrho(x,y),\\
    \sigma_2(x,y)&\asymp \sqrt{r_{11}(y,y)}\varrho(x,y),
\end{align*} 
and 
    \[
    r_{11}(x,x)=\frac{2\tau+1}{(1-x^2)^2}(1+O(e^{-B/2})),\quad x\in U_n.
    \]
Furthermore, if $(x,y)\in \left(-S_n\right)\times S_n$, then $r(x,y)=o(1)$.
\end{lemma}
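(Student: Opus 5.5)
The plan is to redo the argument behind Lemma~\ref{l.DN0}, replacing the input \cite{DN25}*{Corollary 3.2} by the sharper expansion \eqref{e.dik}, which now holds on all of $I_n=(x_0,1-B/n]$ with relative error $O(e^{-B/2})$. Since $v_j=(1+j)^\tau$ for all $j$, we have $k(x)=\sum_{j=0}^n(1+j)^{2\tau}x^j$. By \eqref{e.dik} with $i=0$, uniformly for $x,y\in S_n$ (so that $xy\in I_n$ up to symmetry, since $x,y$ have the same sign),
\[
k(xy)=\frac{\Gamma(2\tau+1)}{(1-xy)^{2\tau+1}}(1+O(e^{-B/2})),\qquad k(x^2)=\frac{\Gamma(2\tau+1)}{(1-x^2)^{2\tau+1}}(1+O(e^{-B/2})).
\]
Dividing gives $r(x,y)=\alpha^{\tau+1/2}(1+O(e^{-B/2}))$. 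For $r_{11}(x,x)$, I will write $r=k(xy)k(x^2)^{-1/2}k(y^2)^{-1/2}$, differentiate, and plug in \eqref{e.dik} for $i=0,1,2$. This reproduces the centered-Kac computation $r_{11}(x,x)=(2\tau+1)/(1-x^2)^2$, up to the factor $1+O(e^{-B/2})$.

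The main obstacle is the claims $1-r^2\asymp\varrho^2$ and $\sigma_j\asymp\sqrt{r_{11}}\,\varrho$. These are differences of nearly equal quantities when $\varrho$ is small, so a multiplicative error $O(e^{-B/2})$ on $r$ is useless once $\varrho^2\ll e^{-B/2}$. I would split into two regimes.

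\emph{Far regime ($\varrho(x,y)\ge\varepsilon$ for a small fixed $\varepsilon$).} Here $1-\alpha^{2\tau+1}\asymp 1$, and $1-\frac{(2\tau+1)(1-\alpha)\alpha^{2\tau+1}}{1-\alpha^{2\tau+1}}$ is bounded below, since $(1-u^{s})-s(1-u)u^{s}>0$ for $u\in(0,1)$. Choosing $B$ large makes the $O(e^{-B/2})$ errors negligible, so the Lemma~\ref{l.DN0} formulas apply up to constants. These formulas are $\asymp\varrho^2$ and $\asymp\varrho$ respectively.

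\emph{Near regime ($\varrho\le\varepsilon$).} Here I will use Taylor expansion and the mean value theorem rather than ratios. Write $1-r^2(x,y)$ as the Gram-determinant quantity $\Var[G(y)\mid G(x)]$. By Taylor's formula in $y$ about $x$, using $r(x,x)=1$ and $r_{01}(x,x)=0$, it equals $(y-x)^2$ times an average of second derivatives of $r$ at intermediate points. Those derivatives are controlled uniformly by \eqref{e.dik} for $i\le 2$, with relative error $O(e^{-B/2})$. Lemma~\ref{l.ez} then lets me freeze $1-z_1z_2$ at $1-x^2$. This yields
\[
1-r^2\asymp (y-x)^2 r_{11}(x,x)\asymp \frac{(y-x)^2}{(1-x^2)^2}\asymp \varrho^2 .
\]
Similarly, $\sigma_1^2=\Var[G'(x)\mid G(x),G(y)]$ is a ratio of Gram determinants of $(G(x),G(y),G'(x))$. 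Expanding to fourth order, which uses \eqref{e.dik} up to $i=4$ (the reason the extension to $i=3,4$ was recorded), gives numerator $\asymp (y-x)^4(1-x^2)^{-6}$ and denominator $\asymp\varrho^2$. Hence $\sigma_1^2\asymp r_{11}(x,x)\varrho^2$, and $\sigma_2$ follows by symmetry. The key point is that the leading coefficients of these expansions agree with the exact stationary model $\alpha^{\tau+1/2}$ up to relative error $O(e^{-B/2})$, and are strictly positive there. Positivity comes from the same inequality $(1-u^s)-s(1-u)u^s\asymp(1-u)^2$ used above.

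Finally, for $(x,y)\in(-S_n)\times S_n$ we have $xy<0$. Then $|k(xy)|\le k(|xy|)$, and the alternating sum gives $|k(-t)|\ll 1+n^{2\tau}$, while $k(x^2),k(y^2)\gg(1-|x|)^{-2\tau-1}\ge a_n^{-2\tau-1}$. In all cases this gives $r=o(1)$.
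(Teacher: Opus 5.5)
Your treatment of $r$, $r_{11}$, $1-r^2$ and $\sigma_j$ on $S_n\times S_n$ is sound. It follows the paper's route: rerun the mean-value-theorem argument behind Lemma~\ref{l.DN0} with \eqref{e.dik} as input. Your far/near split makes explicit the near-diagonal cancellation that the paper passes over. The gap is in the last claim, for $(x,y)\in(-S_n)\times S_n$. Your two bounds combine only to
\[
|r(x,y)|\ll (1+n^{2\tau})\,a_n^{2\tau+1},
\]
which does not tend to $0$ when $\tau>0$. Here $a_n$ is only assumed to decay faster than every power of $\log n$ (e.g.\ $a_n=\exp(-\log^{d/4}n)$), so $n^{2\tau}a_n^{2\tau+1}\to\infty$. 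The case $\tau>0$ is exactly the one arising for hyperbolic polynomials with $L>1$ and for derivatives. The loss occurs at $|x|,|y|\approx 1-a_n$, where $|k(xy)|$ is really of order $a_n^{-2\tau}$, not $n^{2\tau}$. The other bound you cite, $|k(xy)|\le k(|xy|)$, does not help: it only gives $|r(x,y)|\le r(|x|,|y|)$, which is close to $1$ when $|x|\approx|y|$.

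To repair this, quantify the alternating cancellation correctly. For $t\in(0,1)$ and $\tau>0$, the terms $c_j=(1+j)^{2\tau}t^j$ are unimodal in $j$, since $c_{j+1}/c_j$ is decreasing. Hence $|k(-t)|\le 2\max_j c_j\ll_\tau (1-t)^{-2\tau}$. Put $u=1-|x|$ and $v=1-|y|$, so that $1-|xy|=u+v-uv\ge (u+v)/2$. Using $uv\le (u+v)^2/4$, you get
\[
|r(x,y)|\ll\frac{(uv)^{\tau+1/2}}{(u+v)^{2\tau}}\le 4^{-\tau}\sqrt{uv}\le a_n .
\]
For $\tau\le 0$, your bound $|k(-t)|\le 1$ already gives $|r|\ll (uv)^{\tau+1/2}\le a_n^{2\tau+1}$.

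A minor point in the near-diagonal $\sigma_1$ step: $\partial_y^4(r_{10}^2)$ contains $r_{10}\,\partial_x\partial_y^4 r$. That involves $k^{(5)}$, not only $k^{(i)}$ with $i\le 4$. This is harmless, because the term carries the small factor $r_{10}(x,z)$. There the crude bound $|k^{(5)}(t)|\ll(1-t)^{-2\tau-6}$, immediate from the coefficients, suffices.
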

\subsection{Estimates for the covariance kernel and its partial derivatives} \label{s.8g} In the following, we derive asymptotic estimates for $r(x,y)$ and its partial derivatives in the absence of asymptotic structure for the deterministic coefficients $v_j$. From now on, we assume that $b_n=B/n$ for a sufficiently large constant $B$.

\begin{lemma}
It holds uniformly for $(x,y)\in U_n\times U_n$ that
\begin{equation}
    \label{e.1-rr}
    1-r^2(x,y)\asymp \varrho^2(x,y).
\end{equation}
\end{lemma}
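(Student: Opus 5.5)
The plan is to write $1-r^2(x,y)$ as a Gram determinant and compare it with the pure-power case $v_j=(1+j)^\tau$. Since $r(x,y)=k(xy)/\sqrt{k(x^2)k(y^2)}$ with $k(x)=\sum_j v_j^2x^j$, we have
\[
1-r^2(x,y)=\frac{k(x^2)k(y^2)-k(xy)^2}{k(x^2)k(y^2)}.
\]
By Cauchy--Binet (Lagrange's identity),
\[
k(x^2)k(y^2)-k(xy)^2=\tfrac12\sum_{i,j} v_i^2v_j^2\,(x^iy^j-x^jy^i)^2 .
\]
Every term in this sum is nonnegative. Hence by \ref{A2} and \ref{A3} the numerator is comparable, up to constants, to the same expression with $v_j^2$ replaced by $(1+j)^{2\tau}$. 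The only exception is the finitely many indices $j<N_0$, where \ref{A3} gives no lower bound.

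For those small indices I would only need an upper bound, and the lower bound follows by dropping them. Concretely, I would restrict the double sum to $i,j\ge N_0$. The resulting expression is comparable to the full sum for the pure-power weights, because the terms involving $i<N_0$ contribute at most $O(1)\cdot\sum_j (1+j)^{2\tau}(x^j-y^j)^2$-type quantities. These are of lower order near $\pm1$: they are bounded by $O(k(x^2)+k(y^2))$ times $|x-y|^2$-type factors, whereas the main term is of order $k(x^2)k(y^2)\varrho^2$ with $k\asymp(1-|x|+1/n)^{-2\tau-1}\to\infty$ on $U_n$. The denominator is handled by Lemma~\ref{l.estvar}: $k(x^2)\asymp(1-|x|+1/n)^{-2\tau-1}$ uniformly on $U_n$. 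Altogether, $1-r^2$ is comparable, with absolute constants, to $1-r_0^2$, where $r_0$ is the correlation for $v_j=(1+j)^\tau$.

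It then remains to show $1-r_0^2\asymp\varrho^2$ on $U_n\times U_n$. When $x,y$ lie in the same component $S_n$, this is exactly Lemma~\ref{l.DN}. For $(x,y)\in(-S_n)\times S_n$, Lemma~\ref{l.DN} gives $r_0=o(1)$, so $1-r_0^2\asymp1$. In that case $\varrho(x,y)=|x-y|/|1-xy|$ with $xy<0$ is bounded below by a constant, since $|x-y|=|x|+|y|\gtrsim 1$ and $|1-xy|\le 2$. Also $\varrho\le1$, so $\varrho^2\asymp1$ as well. For the comparison step to transfer, I would note that the Lagrange-identity comparison gives, for the numerators, $c\,D_0\le D\le C\,D_0$, and similarly for the denominators; hence $1-r^2\asymp 1-r_0^2$.

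The main obstacle is the lower bound when $x$ and $y$ are very close, i.e.\ $\varrho\to0$, where the terms with $i<N_0$ cannot be absorbed trivially. There I would use that $(x^iy^j-x^jy^i)^2\le (x-y)^2\cdot O((i+j)^2)$ for bounded $i$. This makes the discarded contribution $O\big((x-y)^2 k(x^2)\big)$. By contrast, the retained main term is $\asymp k(x^2)k(y^2)\varrho^2\asymp (x-y)^2 k(x^2)^2(1-|x|+1/n)^{-2}$, which dominates because $k(x^2)(1-|x|+1/n)^{-2}\to\infty$ uniformly on $U_n$. If this direct bound proves awkward, the fallback is to compare each coefficient $v_j$ with $j<N_0$ to an extra pure-power term with a constant shift. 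Such a shift changes $k$ and its first two derivatives only by $O(1)$ additive errors, which are negligible against the main terms.
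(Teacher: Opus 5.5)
Your proposal follows the paper's proof: Lagrange's identity, termwise comparison of numerator and denominator with the pure-power weights $(1+j)^{2\tau}$, then Lemma~\ref{l.DN} on $S_n\times S_n$, together with $r=o(1)$ and $\varrho\asymp 1$ on $(-S_n)\times S_n$. You also handle the indices $j<N_0$ separately, where \ref{A3} gives no lower bound; the paper passes over this point, so your treatment is a welcome addition.

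One slip in that step. You must keep both the factor $(j-i)^2$ and the geometric factor $\max(|x|,|y|)^{2(j-i-1)}$, and then the discarded terms are $O\bigl((x-y)^2(1-|x|+1/n)^{-2\tau-3}\bigr)$, not $O\bigl((x-y)^2k(x^2)\bigr)$. This is still smaller than the main term $\asymp (x-y)^2(1-|x|+1/n)^{-4\tau-4}$ by a factor $(1-|x|+1/n)^{2\tau+1}\to 0$ on $U_n$. When $\varrho$ is bounded below, the crude bound $O(k(x^2)+k(y^2))$ already suffices. So the argument stands.
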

\begin{proof} By elementary computation, for any $n\ge 0$ and any sequences $\left(x_j\right)_{j=0}^n$ and $\left(y_j\right)_{j=0}^n$, we have
\begin{equation}
    \label{e.L}
    \bigg(\sum_{j=0}^nx_j^2\bigg)\bigg(\sum_{j=0}^ny_j^2\bigg)-\bigg(\sum_{j=0}^nx_jy_j\bigg)^2=\frac 12\sum_{i,j=0}^n\left(x_iy_j-x_jy_i\right)^2.
\end{equation}
 Let 
 \begin{equation}
     \label{e.Axy}
     A(x,y):=k(x^2)k(y^2)-k^2(xy).
 \end{equation} 
 By applying \eqref{e.L} with $x_j:=v_jx^j$ and $y_j:=v_jy^j$, we find that
    \[
    A(x,y)=\frac 12\sum_{i,j=0}^nv_i^2v_j^2(x^iy^j-x^jy^i)^2.
    \]
This gives
\begin{align*}
    1-r^2(x,y)=\frac{A(x,y)}{k(x^2)k(y^2)}=\frac{\frac 12\sum_{i,j=0}^nv_i^2v_j^2(x^iy^j-x^jy^i)^2}{\sum_{i,j=0}^nv_i^2v_j^2x^{2i}y^{2j}}.
\end{align*}
Since all summands are non-negative,  it follows from assumptions \ref{A2} and \ref{A3} that
    \[
    1-r^2(x,y) \asymp \frac{\frac 12\sum_{i,j=0}^n(i+1)^{2\tau}(1+j)^{2\tau}(x^iy^j-x^jy^i)^2}{\sum_{i,j=0}^n(i+1)^{2\tau}(1+j)^{2\tau}x^{2i}y^{2j}},
    \]
which yields \eqref{e.1-rr} when combined with Lemma \ref{l.DN}. 

\end{proof}

\begin{lemma} It holds uniformly  that
\begin{equation}
    \label{e.r11}
    r_{11}(x,x) \asymp \frac{1}{(1-|x|)^2},\quad x\in U_n,
\end{equation}
and 
\begin{equation}
    \label{e.1r11}
    r_{11}(x,x) \ll \frac{1}{(1-|x|+1/n)^2},\quad 1-\frac{B}{n} \le |x| \le 1.
\end{equation}
\end{lemma}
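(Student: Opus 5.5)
We work with the explicit formula for $r_{11}(x,x)$ in terms of $k$. Since $r(x,y)=k(xy)/\sqrt{k(x^2)k(y^2)}$, differentiating in $x$ and then in $y$ and setting $y=x$ gives the standard identity
\[
r_{11}(x,x)=\frac{k(x^2)\,\partial_x\partial_y k(xy)\big|_{y=x}-\big(\partial_x k(xy)\big|_{y=x}\big)^2}{k(x^2)^2}
=\frac{\Var[R_n'(x)]\Var[R_n(x)]-\Cov[R_n(x),R_n'(x)]^2}{\Var[R_n(x)]^2}.
\]
This is the variance of $\frac{d}{dx}\big(R_n(x)/\sqrt{\Var R_n(x)}\big)$. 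The numerator can again be written through the Lagrange-type identity \eqref{e.L}, now with $x_j=v_jx^j$ and $y_j=jv_jx^{j-1}$. This gives
\[
r_{11}(x,x)=\frac{\frac12\sum_{i,j=0}^n v_i^2v_j^2 (j-i)^2 x^{2i+2j-2}}{\big(\sum_{j=0}^n v_j^2x^{2j}\big)^2}.
\]
All summands are nonnegative, exactly as in the proof of \eqref{e.1-rr}.

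\textbf{Proof of \eqref{e.r11}.} By \ref{A2} and \ref{A3}, the numerator and denominator are comparable to the same sums with $v_j^2$ replaced by $(1+j)^{2\tau}$, at least for indices $j\ge N_0$. For $j<N_0$, the terms with small indices have bounded weights. Their contribution is negligible next to the main sums, which blow up as $|x|\to 1$ on $U_n$ because $\tau>-1/2$. This is the one point needing a little care: the lower bound \ref{A3} fails for finitely many $j$. I would handle it by splitting off $i,j<N_0$ and bounding those terms by $O((1-|x|)^{-(2\tau+1)}+1)$ in the numerator, which is of lower order than the main numerator of size $(1-|x|)^{-(4\tau+4)}$.

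After this reduction the ratio is comparable to its value in the pure case $v_j=(1+j)^\tau$. For that case Lemma \ref{l.DN} gives $r_{11}(x,x)=\frac{2\tau+1}{(1-x^2)^2}(1+O(e^{-B/2}))\asymp (1-|x|)^{-2}$ on $U_n$. The sign of $x$ does not matter, since only even powers $x^{2i+2j-2}$ appear. This proves \eqref{e.r11}.

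\textbf{Proof of \eqref{e.1r11}.} For $1-B/n\le |x|\le 1$, I would bound the numerator from above using only \ref{A2}. This gives at most $\sum_{i,j}(1+i)^{2\tau}(1+j)^{2\tau}(i-j)^2\ll n^{4\tau+4}$, since $x^{2i+2j-2}\le 1$ up to the harmless factor coming from $i=j=0$, where the term vanishes anyway.

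For the denominator I would use the lower bound already established in \eqref{e.varRn} of Lemma \ref{l.estvar}: $\sum v_j^2x^{2j}\gg n^{2\tau+1}$ in this range. Hence
\[
r_{11}(x,x)\ll n^{4\tau+4}/n^{4\tau+2}=n^2\asymp(1-|x|+1/n)^{-2}.
\]
The argument on negative $x$ is identical by evenness.

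The main obstacle is the small-index issue, where \ref{A3} fails for $j<N_0$. The split described above should handle it; everything else is a direct sign-free comparison.
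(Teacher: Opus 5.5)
This is essentially the paper's proof of \eqref{e.r11}: the Lagrange identity \eqref{e.L} applied to $a(x)$ from \eqref{e.ax}, then replacing $v_j$ by $(1+j)^{\tau}$ via \ref{A2}--\ref{A3}, then Lemma~\ref{l.DN}.

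Your treatment of the indices $j<N_0$ (which the paper passes over) needs one correction. The pairs you must discard are those with $\min(i,j)<N_0$, not only those with both indices small. These contribute $O\bigl((1-|x|)^{-(2\tau+3)}\bigr)$, coming from $\sum_j(1+j)^{2\tau+2}x^{2j}$, not $O\bigl((1-|x|)^{-(2\tau+1)}+1\bigr)$. This is still $o\bigl((1-|x|)^{-(4\tau+4)}\bigr)$ on $U_n$, precisely because $2\tau+1>0$, so your argument stands.

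For \eqref{e.1r11}, the paper avoids your separate numerator and denominator bounds. After using $(j-i)^2\le n^2$, the remaining double sum is exactly $x^{-2}k(x^2)^2$, which gives $r_{11}(x,x)\le n^2/(2x^2)$ directly. No lower bound on $k(x^2)$ is needed, so neither \ref{A3} nor \eqref{e.varRn} enters.
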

\begin{proof} Write 
\begin{equation}
    \label{e.ax}a(x):=k(x^2)[k'(x^2)+x^2k''(x^2)]-(xk'(x^2))^2.
\end{equation} Using \eqref{e.L} with $x_j:=v_jx^j$ and $y_j:=jv_jx^{j-1}$, we obtain
    \[
    a(x)=\frac 12\sum_{i,j=0}^nv_i^2v_j^2x^{2(i+j-1)}(j-i)^2.
    \]
It follows from \ref{A2}, \ref{A3}, and Lemma \ref{l.DN} that
\begin{align*}
    r_{11}(x,x)=\frac{a(x)}{k^2(x^2)}
    &=\frac{\frac 12\sum_{i,j=0}^nv_i^2v_j^2x^{2(i+j-1)}(j-i)^2}{\sum_{i,j=0}^nv_i^2v_j^2x^{2(i+j)}}\\
    &\asymp \frac{\frac 12\sum_{i,j=0}^n(1+i)^{2\tau}(1+j)^{2\tau}x^{2(i+j-1)}(j-i)^2}{\sum_{i,j=0}^n(1+i)^{2\tau}(1+j)^{2\tau}x^{2(i+j)}}.
\end{align*}
Combining with Lemma \ref{l.DN}, we deduce \eqref{e.r11}. For $1-B/n \le |x| \le 1$, using $(j-i)^2\le n^2$, we obtain 
    \[
    r_{11}(x,x) \ll n^2\ll \frac{1}{(1-|x|+1/n)^2},
    \]
proving the lemma.
\end{proof}

\begin{lemma}\label{l.sig} It holds uniformly for $(x,y)\in U_n\times U_n$ that
\begin{equation}
    \label{e.sig1}
    \sigma_1\asymp \sqrt{r_{11}(x,x)} \varrho(x,y) \asymp \frac{\varrho(x,y)}{1-|x|},
\end{equation}
and 
\begin{equation}
    \label{e.sig2}
    \sigma_2\asymp \sqrt{r_{11}(y,y)} \varrho(x,y)\asymp \frac{\varrho(x,y)}{1-|y|}.
\end{equation}
\end{lemma}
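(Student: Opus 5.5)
The second equivalence in each display follows from \eqref{e.r11}, so the substance is $\sigma_1^2\asymp r_{11}(x,x)\varrho^2(x,y)$ on $U_n\times U_n$. The bound for $\sigma_2$ is then identical with the roles of $x$ and $y$ exchanged. I would follow the strategy of the two preceding lemmas: write $\sigma_1^2$ as a ratio of sums of nonnegative squares via the Lagrange identity \eqref{e.L}, compare termwise with the model weights $(1+j)^{2\tau}$ using \ref{A2}--\ref{A3}, and then import the model asymptotics from Lemma~\ref{l.DN}.

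\textbf{Gram-determinant representation.} By \eqref{e.sigma1}, $\sigma_1^2$ is the conditional variance of $G'(x)$ given $(G(x),G(y))$ for the normalized process. It is therefore the ratio of Gram determinants
\[
\sigma_1^2=\frac{\det \Gamma_3}{\det\Gamma_2}, \qquad \Gamma_2=\mathrm{Gram}\bigl(G(x),G(y)\bigr),\quad \Gamma_3=\mathrm{Gram}\bigl(G(x),G(y),G'(x)\bigr).
\]
The normalizing factor $1/\sqrt{k(x^2)}$ affects $G'(x)$ only by adding a multiple of $G(x)$. Such a shift does not change $\det\Gamma_3$ beyond the scalar factors $k(x^2)$, $k(y^2)$. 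So, up to these explicit factors, both determinants can be written for the unnormalized vectors $e_x=(v_jx^j)_j$, $e_y=(v_jy^j)_j$, $e_x'=(jv_jx^{j-1})_j$.

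\textbf{Cauchy--Binet step.} By the Cauchy--Binet formula, a $3\times3$ Gram determinant equals a sum over triples $i<j<l$ of $v_i^2v_j^2v_l^2$ times the square of a $3\times 3$ minor. This generalizes \eqref{e.L}, and every summand is nonnegative. Hence
\[
\sigma_1^2=\frac{\sum_{i<j<l} v_i^2v_j^2v_l^2\, D_{ijl}(x,y)^2}{k(x^2)\,\sum_{i<j}v_i^2v_j^2(x^iy^j-x^jy^i)^2},
\]
where $D_{ijl}$ is the minor built from the monomials $x^\cdot$, $y^\cdot$ and their $x$-derivative. Now \ref{A2}--\ref{A3} give two-sided comparison with the weights $(1+j)^{2\tau}$ for indices $j\ge N_0$. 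The finitely many indices $j<N_0$ contribute $O(1)$ to each sum, and this is negligible on $U_n$, where all sums blow up like powers of $(1-|x|)^{-1}$. So $\sigma_1^2$ is comparable to its value for $v_j=(1+j)^\tau$.

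\textbf{Model case and main obstacle.} For the model weights, Lemma~\ref{l.DN} yields $\sigma_1\asymp\sqrt{r_{11}(x,x)}\varrho(x,y)$ when $(x,y)\in S_n\times S_n$. The case $(x,y)\in(-S_n)\times S_n$ is handled by $r=o(1)$; I still need to check that $r_{10}^2/(1-r^2)$ is small relative to $r_{11}(x,x)$ there. For this I would use the derivative estimates \eqref{e.dik} at $xy<0$, where $|1-xy|\ge1$, and they give $\sigma_1\asymp\sqrt{r_{11}(x,x)}\asymp\sqrt{r_{11}(x,x)}\varrho$ since $\varrho\asymp1$.

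The main obstacle is making the comparison of the weight sums legitimate. The neglected low indices $j<N_0$ must be genuinely negligible uniformly; this uses $1-|x|\le a_n\to0$. I also need the dropped nonnegativity of the minor sums to be fine, which holds because both numerator and denominator are sums of squares with the same comparable weights. If the $3\times3$ Cauchy--Binet route proves unwieldy, a fallback is to use the identity from Lemma~\ref{l.DN} only as a lower bound for the model, since an upper bound $\sigma_1^2\le r_{11}(x,x)$ with $\varrho$ factor can be obtained more cheaply from the mean value theorem.
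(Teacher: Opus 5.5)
Your proposal is essentially the paper's proof. The paper writes $\sigma_1^2=\frac{A(x,y)a(x)-B^2(x,y)}{(1-r^2(x,y))k^3(x^2)k(y^2)}$ and applies the Lagrange identity \eqref{e.L} twice, which turns the numerator into a four-index sum of squares with weights $v_i^2v_j^2v_l^2v_q^2$; by Sylvester's identity this numerator is exactly $k(x^2)$ times your Cauchy--Binet triple sum $\det\Gamma_3$, and the paper then compares termwise with $(1+j)^{2\tau}$ and invokes Lemma~\ref{l.DN}, just as you do.

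One correction to your justification: the indices $j<N_0$ do not contribute $O(1)$ to the multi-index sums, since fixing one small index still leaves a full double sum. The clean way to discard them is to sandwich $C_0^{-2}(1+j)^{2\tau}\pmb 1_{j\ge N_0}\le v_j^2\le C_0^{2}(1+j)^{2\tau}$, bound numerator and denominator separately, and apply the model asymptotics to both weight sequences (these tolerate arbitrary coefficients for $j<N_0$, as in Lemma~\ref{l.DN0}); the paper passes over this point as well.
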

\begin{proof}
    Note that
\begin{align*}
    \sigma_1^2&= \frac{(1-r^2(x,y))r_{11}(x,x)- r_{10}^2(x,y)}{1-r^2(x,y)}=\frac{ A(x,y)a(x)-B^2(x,y)}{(1-r^2(x,y))k^3(x^2)k(y^2)},
\end{align*}
where $A(x,y)$ and $a(x)$ are defined    using \eqref{e.Axy} and \eqref{e.ax}, and 
    \[B(x,y):=yk'(xy)k(x^2)-xk'(x^2)k(xy).\]
Let 
    \[
    a_{i,j}:=v_iv_j(x^iy^j-x^jy^i) \quad \text{and}\quad b_{i,j}:=v_iv_jx^{i+j-1}(j-i).
    \]
We have shown that
    \[
    A(x,y)=\frac 12 \sum_{i,j=0}^na_{i,j}^2 \quad\text{and}\quad a(x)= \frac 12 \sum_{i,j=0}^nb_{i,j}^2.
    \]
Moreover, 
\begin{align*}
    B(x,y)&=\sum_{i,j=0}^n v_i^2v_j^2 jx^{i+j-1}(x^iy^j-x^jy^i)\\
    &=\frac 12 \sum_{i,j=0}^n v_i^2v_j^2 (j-i)x^{i+j-1}(x^iy^j-x^jy^i)\\
    &=\frac 12\sum_{i,j=0}^n a_{i,j}b_{i,j}.
\end{align*}
Therefore, using \eqref{e.L} again, we obtain
\begin{align*}
    A(x,y)a(x)-B^2(x,y)&=\frac 14 \bigg[\sum_{i,j=0}^na_{i,j}^2 \sum_{i,j=0}^nb_{i,j}^2 -\bigg(\sum_{i,j=0}^n a_{i,j}b_{i,j}\bigg)^2\bigg]\\
    &=\frac{1}{8}\sum_{i,j,l, q=0}^n(a_{i,j}b_{l,q}-a_{l,q}b_{i,j})^2\\
    &=\frac{1}{8}\sum_{i,j,l, q=0}^nv_i^2v_j^2v_l^2v_q^2x^{2\left(i+j+l+q-1\right)}e_{i,j,l,q}^2\left(\frac yx\right),
\end{align*}
where 
    \[
    e_{i,j,l,q}(t):=(t^j-t^i)(q-l)-(t^q-t^l)(j-i).
    \]
By employing assumptions \ref{A2} and \ref{A3}, and noting that each term in the final sum is non-negative, we can replace $v_j$ with $(1+j)^\tau$, resulting in an expression comparable to $A(x,y)a(x)-B^2(x,y)$. A similar conclusion can be drawn for $(1-r^2(x,y))k^3(x^2)k(y^2)$. Thus, we can apply Lemma \ref{l.DN} to obtain \eqref{e.sig1}. 

In the same manner, we also have \eqref{e.sig2}.
\end{proof}

Our next goal is to estimate $r_{10}$ and $r_{01}$.
\begin{lemma} \label{l.dr} It holds uniformly for $(x,y)\in U_n\times U_n$ that 
\begin{equation}
    \label{e.r10}
    |r_{10}(x,y)| \ll  \frac{\varrho(x,y)}{1-x^2},
\end{equation}
and 
\begin{equation}
    \label{e.r01}
    |r_{01}(x,y)| \ll  \frac{\varrho(x,y)}{1-y^2}.
\end{equation}
\end{lemma}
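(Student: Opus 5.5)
The plan is to reuse the strategy of Lemma~\ref{l.sig}: express $r_{10}$ algebraically through $k$, recognise a Lagrange-type sum of squares, and then control that expression by the cases already understood. Differentiating $r(x,y)=k(xy)/\sqrt{k(x^2)k(y^2)}$ in $x$ gives
\[
r_{10}(x,y)=\frac{yk'(xy)k(x^2)-xk'(x^2)k(xy)}{k(x^2)^{3/2}k(y^2)^{1/2}}=\frac{B(x,y)}{k(x^2)^{3/2}k(y^2)^{1/2}},
\]
where $B(x,y)=\frac12\sum_{i,j}a_{i,j}b_{i,j}$ is exactly the quantity from the proof of Lemma~\ref{l.sig}.

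The bound will come from Cauchy--Schwarz. Since $A(x,y)=\frac12\sum a_{i,j}^2$ and $a(x)=\frac12\sum b_{i,j}^2$, we get $B^2\le A(x,y)\,a(x)$. Hence
\[
r_{10}^2\le \frac{A(x,y)}{k(x^2)k(y^2)}\cdot\frac{a(x)}{k(x^2)^2}=(1-r^2(x,y))\,r_{11}(x,x).
\]
By \eqref{e.1-rr} and \eqref{e.r11}, the right-hand side is $\ll \varrho^2(x,y)/(1-|x|)^2$. On $U_n$ we have $1-|x|\asymp 1-x^2$, which gives \eqref{e.r10}.

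The bound \eqref{e.r01} follows by the symmetric argument with the roles of $x$ and $y$ exchanged, using $a(y)$ and \eqref{e.r11} at $y$.

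I do not see any real obstacle. The only point to check is the claimed exact identities for $A$, $a$ and $B$; these were already verified in the proofs of Lemma~\ref{l.sig} and \eqref{e.1-rr}. The argument also does not require the sign condition on $x,y$, so it covers pairs in $(-S_n)\times S_n$ as well.

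A side remark: Cauchy--Schwarz is equivalent to $\sigma_1^2\ge 0$, since $\sigma_1^2=r_{11}(x,x)-r_{10}^2/(1-r^2)$. So one could also simply note that $r_{10}^2=(1-r^2)(r_{11}(x,x)-\sigma_1^2)\le(1-r^2)r_{11}(x,x)$ and conclude the same way.
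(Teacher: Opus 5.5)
Your proof is correct and is essentially the paper's argument. The paper gets $r_{10}^2\le(1-r^2)\,r_{11}(x,x)$ directly from $\sigma_1^2\ge 0$ in \eqref{e.sigma1}, which is your side remark, and then concludes with \eqref{e.1-rr} and \eqref{e.r11} exactly as you do; your Cauchy--Schwarz step $B^2\le A\,a$ is simply an explicit algebraic verification of that same inequality (and it also covers the diagonal $x=y$ automatically).
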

\begin{proof} Using \eqref{e.sigma1}, \eqref{e.1-rr}, and \eqref{e.r11}, we obtain
\begin{align*}
    r_{10}^2(x,y)\le (1-r^2(x,y))r_{11}(x,x) 
    &\asymp \varrho^2(x,y)\frac{1}{(1-x^2)^2},
\end{align*}
which implies \eqref{e.r10}.
    
Similar arguments apply to \eqref{e.r01}. 
\end{proof}

\begin{lemma}\label{l.1-dr} Fix $0<\varepsilon < \frac{1}{\sqrt 5}$ and let $\mathbb U_\varepsilon=\{(x,y)\in U_n\times U_n: \varrho(x,y) \le \varepsilon\}$. It holds uniformly for $(x,y)\in \mathbb U_\varepsilon$ that 
\begin{equation}
    \label{e.1r10}
    1-(y-x)\frac{r_{10}(x,y)}{1-r^2(x,y)} \ll  \varrho(x,y)
\end{equation}
and 
\begin{equation}
    \label{e.1r01}
    1-(x-y)\frac{r_{01}(x,y)}{1-r^2(x,y)} \ll  \varrho(x,y).
\end{equation}
\end{lemma}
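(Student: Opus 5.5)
We would prove \eqref{e.1r10}; \eqref{e.1r01} follows by exchanging the roles of $x$ and $y$. The plan is to write the left-hand side as a ratio of two nonnegative double sums, as in the proofs of \eqref{e.1-rr} and Lemma~\ref{l.sig}, and to compare the numerator with the quantities $A(x,y)$ and $a(x)$ already controlled there.

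Recall $1-r^2=A(x,y)/(k(x^2)k(y^2))$ and $r_{10}(x,y)=B(x,y)/(k(x^2)^{3/2}k(y^2)^{1/2})$, where $B(x,y)=yk'(xy)k(x^2)-xk'(x^2)k(xy)$. Then
\[
1-(y-x)\frac{r_{10}}{1-r^2}=\frac{A(x,y)-(y-x)B(x,y)\,k(x^2)^{-1/2}k(y^2)^{1/2}}{A(x,y)}.
\]
This normalization is awkward. A cleaner route is to work with the unnormalized process $\widetilde r(x,y)=k(xy)$ and note that $1-(y-x)r_{10}/(1-r^2)$ is, up to $O(\varrho)$ corrections coming from $\partial_x\log k(x^2)^{-1/2}$, the quantity $\mathcal E:=\frac{A(x,y)-(y-x)\tfrac12\sum a_{ij}b_{ij}}{A(x,y)}$. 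The derivative of the normalizing factor contributes at most $(y-x)\cdot\frac{1}{1-|x|}\cdot\frac{|r|}{1-r^2}\cdot|\text{stuff}|$, and we would check that it is $O(\varrho)$ by comparing with the bounds $|y-x|\asymp\varrho(1-|x|)$ on $\mathbb U_\varepsilon$ (Lemma~\ref{l.ez}) together with \eqref{e.1-rr} and \eqref{e.r10}. Actually one must be careful here: $(y-x)r_{10}/(1-r^2)$ itself is of order $1$, so we need the cancellation exactly rather than via crude bounds.

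For the cancellation we use a Taylor expansion in $y$ about $x$. Set $h=y-x$. With $a_{ij}=v_iv_j(x^iy^j-x^jy^i)$, we have $a_{ij}=h\,b_{ij}+h^2c_{ij}+O(h^3 d_{ij})$, where $c_{ij}=\tfrac12 v_iv_j\big(j(j-1)x^{i+j-2}-i(i-1)x^{i+j-2}\big)$, and $d_{ij}$ is controlled by third derivatives. Then
\[
A-h\cdot\tfrac12\sum a_{ij}b_{ij}=\tfrac12\sum a_{ij}(a_{ij}-hb_{ij})=\tfrac12\sum a_{ij}\big(h^2c_{ij}+O(h^3d_{ij})\big).
\]
By Cauchy--Schwarz this is at most $\sqrt{A}\,\big(h^2\|c\|+h^3\|d\|\big)$. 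Replacing $v_j$ by $(1+j)^\tau$ via \ref{A2}, \ref{A3} (all sums of squares are comparable), and using \eqref{e.dik}, we get $\|c\|^2\asymp k(x^2)^2(1-|x|)^{-4}$, $\|d\|^2\ll k(x^2)^2(1-|x|)^{-6}$, and $A\asymp k(x^2)^2\varrho^2$. Since $|h|\asymp\varrho(1-|x|)$ on $\mathbb U_\varepsilon$, the numerator is $\ll k(x^2)^2\varrho\cdot\varrho^2$, and dividing by $A$ gives $O(\varrho)$.

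The main obstacle is the bookkeeping in the first step, namely tracking the normalization factors $k(x^2)^{-1/2}$ and $k(y^2)^{-1/2}$ inside $r_{10}$. These shift the leading term by an amount that must itself be $O(\varrho)$. We would handle this by expanding $k(y^2)^{1/2}/k(x^2)^{1/2}=1+O(h/(1-|x|))=1+O(\varrho)$ and by showing that the additional term $-\tfrac{x k'(x^2)}{k(x^2)}r\,(y-x)/(1-r^2)$ cancels against the corresponding piece of $B$. We would verify this by redoing the computation with $r$ replaced by $\widetilde r/\sqrt{\widetilde r(x,x)\widetilde r(y,y)}$ symbolically. If the cancellation does not come out cleanly, the fallback is to invoke the exact Kac-case asymptotics of Lemma~\ref{l.DN} for the comparison sums, where the identity can be checked directly from $r=\alpha^{\tau+1/2}$.
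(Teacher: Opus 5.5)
Your proof is correct, but it reaches the key cubic bound by a different mechanism than the paper. The paper stays with the normalized kernel throughout. It sets $\varphi(y)=1-r^2(x,y)-(y-x)r_{10}(x,y)$ and uses $r(x,x)\equiv 1$ to see that $\varphi$, $\varphi'$, $\varphi''$ vanish at $y=x$. It then bounds $\varphi'''$ through $r_{01}r_{02}$, $r_{03}$, $r_{12}$, $r_{13}$, which requires comparability of $k^{(i)}$ for $i\le 4$ along the segment (Lemma~\ref{l.ez}). This gives $\varphi\ll\varrho^3$, and one divides by $1-r^2\asymp\varrho^2$.

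You instead work with the unnormalized Lagrange sums. The identity $A-hB=\tfrac12\sum a_{ij}(a_{ij}-hb_{ij})$ exhibits the cancellation term by term, and Cauchy--Schwarz gives $|A-hB|\ll\sqrt{A}\,h^2k(x^2)(1-|x|)^{-2}\asymp k(x^2)^2\varrho^3$. This needs only upper bounds from \ref{A2} on sums $\sum_j(1+j)^{2\tau+4}\rho^{2j}$, together with the already established $A\asymp k(x^2)^2\varrho^2$. The split into $c_{ij}$ and $d_{ij}$ is unnecessary: the second-order Lagrange remainder $a_{ij}-hb_{ij}=\tfrac{h^2}{2}\partial_y^2a_{ij}(\xi_{ij})$ already has $\ell^2$-norm $\ll h^2k(x^2)(1-|x|)^{-2}$.

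As for what each buys: your route fits the sum-of-squares technique of \eqref{e.L} used elsewhere in that section, uses only second-order information, and avoids third derivatives of $r$. The paper's route is more generic, since it applies to any normalized smooth kernel with derivative bounds on the natural scale, and it never has to handle normalizing factors.

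The step you flagged as the main obstacle is harmless, and your first instinct about it was right. Write your exact expression as $\frac{A-hB}{A}-\frac{hB}{A}\bigl(\sqrt{k(y^2)/k(x^2)}-1\bigr)$.
\begin{itemize}
\item The factor $hB/A=(y-x)\frac{r_{10}}{1-r^2}\sqrt{k(x^2)/k(y^2)}$ is $O(1)$ by \eqref{e.r10}, \eqref{e.1-rr} and $|y-x|\asymp\varrho(1-|x|)$.
\item The factor $\sqrt{k(y^2)/k(x^2)}-1$ is $O(\varrho)$ by the mean value theorem.
\end{itemize}
So the correction is $O(\varrho)$ by crude bounds, and no further cancellation needs to be verified. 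The term of order $1/\varrho$ coming from differentiating $k(x^2)^{-1/2}$ only appears if you split $r_{10}$ into pieces; in your formula it is already absorbed into $B$.

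The fallback via Lemma~\ref{l.DN} would not have worked for general $v_j$ anyway. The quantity $A-hB$ is signed, and \ref{A2}--\ref{A3} only transfer comparisons of sums of nonnegative terms.
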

\begin{proof} By \eqref{e.1-rr}, inequality \eqref{e.1r10} will be proved once we prove that
\begin{equation}
    \label{e.3.7}
    1-r^2(x,y)-(y-x)r_{10}(x,y) \ll  \varrho^3(x,y), \quad (x,y)\in \mathbb U_\varepsilon.
\end{equation}
Fix $x\in U_n$ and let $\varphi(y)= 1-r^2(x,y)-(y-x)r_{10}(x,y)$, viewed as a function of $y\in U_n$. By the mean value theorem, there is some $z$ between $x$ and $y$,
    \[
    \varphi(y)=\varphi(x)+(y-x)\varphi'(x)+\frac{(y-x)^2}{2}\varphi''(x)+\frac{(y-x)^3}{6}\varphi'''(z).
    \]
Direct computation shows 
    \[
    \varphi(x)=0,\quad \varphi'(x)=0,\quad \varphi''(x)=0,
    \]
and 
    \[
    \varphi'''(z)= -6r_{01}(x,z)r_{02}(x,z)-2r(x,z)r_{03}(x,z)-3r_{12}(x,z)-(z-x)r_{13}(x,z).
    \]
Note that if $(x,y)\in \mathbb U_\varepsilon$, then $x$ and $y$ have the same sign. Thus, we can apply Lemma \ref{l.ez} to deduce that for each $0\le i\le 4$, 
    \[
    k^{(i)}(x^2) \asymp k^{(i)}\left(xz\right) \asymp k^{(i)}(z^2) \asymp k^{(i)}(y^2) \asymp k^{(i)}(xy)
    \]
for $(x,y)\in \mathbb U_\varepsilon$ and every $z$ between $x$ and $y$. But then 
\begin{align*}
    r_{01}(x,z)&\ll  \frac{1}{1-xy}, &r_{02}(x,z)&\ll  \frac{1}{(1-xy)^2},\\
    r_{03}(x,z)&\ll  \frac{1}{\left(1-xy\right)^3}, &r_{12}(x,z)&\ll  \frac{1}{\left(1-xy\right)^3},
\end{align*}
and 
    \[
    r_{13}(x,z) \ll  \frac{1}{\left(1-xy\right)^4}.
    \]
Therefore,
    \[
    \varphi'''(z) \ll  \frac{1}{\left(1-xy\right)^3}+\frac{|z-x|}{\left(1-xy\right)^4} \ll  \frac{1}{\left(1-xy\right)^3}.
    \]
Summarizing, we have 
    \[
    \varphi(y) =\frac{(y-x)^3}{6}\varphi'''(z) \ll  \frac{|y-x|^3}{\left(1-xy\right)^3} =\varrho^3(x,y),
    \]
proving \eqref{e.3.7}.

The inequality \eqref{e.1r01} can be proved in much the same way.
\end{proof}
\subsection{Estimates for the mean functions}
Recall that $\mu_1$ and $\mu_2$ are defined in \eqref{e.mu1-s3} and \eqref{e.mu2-s3}, respectively.
    
\begin{lemma}\label{l.mu}
It holds uniformly for $(x,y)\in U_n\times U_n$ that 
\begin{equation}
    \label{e.mu1}
    \mu_1 \ll  \frac{\varrho(x,y)}{1-|x|}\bigg(\frac{1}{\sqrt{1-|x|}}+\frac{1}{\sqrt{1-|y|}}\bigg)
\end{equation}
and 
\begin{equation}
    \label{e.mu2}
    \mu_2 \ll  \frac{\varrho(x,y)}{1-|y|}\bigg(\frac{1}{\sqrt{1-|x|}}+\frac{1}{\sqrt{1-|y|}}\bigg).
\end{equation}
\end{lemma}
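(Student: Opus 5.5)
We prove \eqref{e.mu1}; the bound \eqref{e.mu2} follows by interchanging the roles of $x$ and $y$. The plan is to split into a far regime $\varrho(x,y)>\varepsilon$ and a near regime $(x,y)\in\mathbb U_\varepsilon$ for a fixed small $\varepsilon\in(0,1/\sqrt5)$. Throughout we use \eqref{e.djm}, which gives $|m(x)|\ll (1-|x|)^{-1/2}$ and $|m'(x)|\ll(1-|x|)^{-3/2}$ on $U_n$, since $1-|x|\ge B/n$ there.

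\textbf{Far regime.} Here $\varrho\asymp 1$, and by \eqref{e.1-rr} we have $1-r^2\gg 1$. Using $|r|\le1$ and Lemma~\ref{l.dr}, which gives $|r_{10}|\ll \varrho/(1-x^2)\ll 1/(1-|x|)$, each term in the definition \eqref{e.mu1-s3} of $\mu_1$ is bounded termwise:
\[
|m'(x)|\ll (1-|x|)^{-3/2},\qquad \Big|\frac{r r_{10}}{1-r^2}m(x)\Big|\ll (1-|x|)^{-3/2},\qquad \Big|\frac{r_{10}}{1-r^2}m(y)\Big|\ll (1-|x|)^{-1}(1-|y|)^{-1/2}.
\]
Since $\varrho\gg1$, all three are dominated by the right-hand side of \eqref{e.mu1}.

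\textbf{Near regime.} Here $x,y$ have the same sign and the termwise bound is too weak, because $r_{10}/(1-r^2)\sim 1/(\varrho(1-|x|))$ blows up. We must exploit cancellation. Rewrite
\[
\mu_1=m'(x)-\frac{r_{10}}{1-r^2}\big(m(y)-m(x)\big)-\frac{r_{10}}{1+r}\,m(x),
\]
using $\frac{r r_{10}}{1-r^2}-\frac{r_{10}}{1-r^2}=-\frac{r_{10}}{1+r}$. The last term is harmless: $1+r\ge1$ because $r>0$ near the diagonal by Lemma~\ref{l.DN} and comparison. Together with Lemma~\ref{l.dr} this bounds it by $\varrho(1-|x|)^{-3/2}$. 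For the middle term, Taylor expand $m(y)-m(x)=(y-x)m'(x)+\tfrac12(y-x)^2m''(z)$ with $z$ between $x$ and $y$. This gives
\[
\mu_1=m'(x)\Big[1-(y-x)\frac{r_{10}}{1-r^2}\Big]-\frac{r_{10}}{1-r^2}\cdot\frac{(y-x)^2}{2}m''(z)+O\big(\varrho(1-|x|)^{-3/2}\big).
\]
By Lemma~\ref{l.1-dr}, the bracket is $O(\varrho)$, so the first term is $\ll\varrho(1-|x|)^{-3/2}$. For the second term, combine \eqref{e.1-rr} and Lemma~\ref{l.dr} to get $|r_{10}|/(1-r^2)\ll 1/(\varrho(1-x^2))$. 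Next, $|y-x|=\varrho|1-xy|\asymp\varrho(1-|x|)$ by Lemma~\ref{l.ez}. Finally, \eqref{e.djm} gives $|m''(z)|\ll(1-|z|)^{-5/2}\asymp(1-|x|)^{-5/2}$, again by Lemma~\ref{l.ez}. Altogether the second term is
\[
\ll \frac{1}{\varrho(1-|x|)}\cdot\varrho^2(1-|x|)^2\cdot(1-|x|)^{-5/2}=\varrho(1-|x|)^{-3/2},
\]
as required.

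The main obstacle is this cancellation in the near regime, specifically the correct grouping so that Lemma~\ref{l.1-dr} absorbs the leading singular part. A secondary point is ensuring that $1-|x|$, $1-|y|$ and $1-|z|$ are all comparable, which Lemma~\ref{l.ez} supplies since $\varrho\le\varepsilon$ and $x,y$ have the same sign. A remaining technicality is that \eqref{e.djm} must hold at the intermediate point $z$, which lies in the convex hull of $U_n$ and so is covered. One also needs $r\ge0$ near the diagonal; if this is doubtful, we can instead use $1+r\gg1$, which follows from $r=\alpha^{\tau+1/2}(1+o(1))$ after comparison, or from $r$ close to $1$ when $\varrho$ is small, by \eqref{e.1-rr}.
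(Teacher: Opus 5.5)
Your proposal is correct and is essentially the paper's proof: the same split into $\mathbb U_\varepsilon$ and its complement, the same regrouping of $\mu_1$ so that Lemma~\ref{l.1-dr} absorbs the singular part, the same treatment of the $r_{10}/(1+r)$ term and of the second-order Taylor remainder, and the same termwise bounds in the far regime. Two cosmetic points: $r>0$ in the near regime follows directly from $k$ having nonnegative coefficients and $xy>0$; and the intermediate point $z$ lies in $I_n$ or $-I_n$ itself, which is what makes \eqref{e.djm} applicable there (the convex hull of $U_n$ is not enough, since it contains points near $0$ where \eqref{e.djm} is not stated).
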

\begin{proof}  Assume first that $(x,y)\in \mathbb U_\varepsilon$. Write 
\begin{align*}
    \mu_1 &=m'(x)+\frac{r(x,y)r_{10}(x,y)}{1-r^2(x,y)}m(x)-\frac{r_{10}(x,y)}{1-r^2(x,y)}m(y)\\
    &=  \left[1-\frac{(y-x) r_{10}(x,y)}{1-r^2(x,y)}\right]m'(x)-\frac{r_{10}(x,y)}{1+r(x,y)}m(x)\\
    &\qquad -\frac{r_{10}(x,y)}{1-r^2(x,y)}[m(y)-m(x)-(y-x)m'(x)].
\end{align*} 
Using \eqref{e.1r10} and \eqref{e.djm}, we have 
    \[
    \bigg(1-\frac{(y-x) r_{10}(x,y)}{1-r^2(x,y)}\bigg)m'(x) \ll  \varrho(x,y)\frac{1}{(1-|x|)^{3/2}}.
    \]
By \eqref{e.r10} and \eqref{e.djm},
    \[
    \frac{r_{10}(x,y)}{1+r(x,y)}m(x) \ll  \frac{\varrho(x,y)}{1-|x|}\frac{1}{\sqrt{1-|x|}}.
    \]
By the mean value theorem, \eqref{e.djm}, and Lemma \ref{l.ez}, 
\begin{align*}
    m(y)-m(x)-(y-x)m'(x)&=\frac{(y-x)^2}{2}m''(z)\\
    &\ll  \frac{(y-x)^2}{2}\frac{1}{(1-|z|)^{5/2}}\\
    &\ll \varrho^2(x,y)\bigg(\frac{1}{\sqrt{1-|x|}}+\frac{1}{\sqrt{1-|y|}}\bigg)
\end{align*}
for some $z$ between $x$ and $y$. Combining \eqref{e.1-rr} and \eqref{e.r10} yields
\begin{align*}
    &\frac{r_{10}(x,y)}{1-r^2(x,y)}\left[m(y)-m(x)-(y-x)m'(x)\right]\\
    &\ll  \frac{\frac{\varrho(x,y)}{1-x^2}}{\varrho^2(x,y)} \varrho^2(x,y)\bigg(\frac{1}{\sqrt{1-|x|}}+\frac{1}{\sqrt{1-|y|}}\bigg)\\
    &\ll  \frac{\varrho(x,y)}{1-|x|}\bigg(\frac{1}{\sqrt{1-|x|}}+\frac{1}{\sqrt{1-|y|}}\bigg).
\end{align*}
Therefore, we obtain \eqref{e.mu1} for $(x,y)\in \mathbb U_\varepsilon$.

If $(x,y)\in U_n\times U_n \backslash  \mathbb U_\varepsilon$, then $\varrho(x,y)\asymp 1$ and hence
   \[
   \frac{1}{1-r^2(x,y)} \ll  \frac{1}{\varrho^2(x,y)} \ll  1.
   \]
Combining with \eqref{e.djm} and \eqref{e.r10}, we find that 
\begin{align*}
    \mu_1 & \ll  |m'(x)|+|\frac{r(x,y)r_{10}(x,y)}{1-r^2(x,y)}m(x)|+|\frac{r_{10}(x,y)}{1-r^2(x,y)}m(y)|\\
    &\ll  \frac{1}{(1-|x|)^{3/2}}+\frac{1}{1-x^2}\frac{1}{\sqrt{1-|x|}}+\frac{1}{1-x^2}\frac{1}{\sqrt{1-|y|}}\\
    &\ll  \frac{\varrho(x,y)}{1-|x|}\bigg(\frac{1}{\sqrt{1-|x|}}+\frac{1}{\sqrt{1-|y|}}\bigg),
\end{align*} 
which gives \eqref{e.mu1}. 
    
Similar arguments apply to \eqref{e.mu2}.
\end{proof}

The subsequent lemma demonstrates that refining the bounds for the normalized mean $m(x)$ and its derivatives leads to enhanced estimates for $\mu_1$ and $\mu_2$.

\begin{lemma} \label{l.cmu} Fix constants $C,\theta>0$, and let $\phi(x)=Cx^{\theta}$ for $x\in [0,1]$. Assume that uniformly for $x\in U_n$,
\begin{equation}
    \label{e.cdm}
    |m^{(i)}(x)| \ll  \frac{\phi(1-|x|)}{(1-|x|)^i},\quad i=0,1,2.
\end{equation}
Then it holds uniformly for $(x,y)\in U_n\times U_n$ that 
\begin{equation}
    \label{e.cmu1}
    \mu_1 \ll  \frac{\varrho(x,y)}{1-|x|}[\phi(1-|x|)+\phi(1-|y|)] 
\end{equation}
and 
\begin{equation}
    \label{e.cmu2}
    \mu_2 \ll  \frac{\varrho(x,y)}{1-|y|}[\phi(1-|x|)+\phi(1-|y|)].
    \end{equation}
\end{lemma}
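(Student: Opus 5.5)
The argument repeats the proof of Lemma~\ref{l.mu}, now using the sharper input \eqref{e.cdm} in place of \eqref{e.djm}. We prove \eqref{e.cmu1}; \eqref{e.cmu2} follows by exchanging the roles of $x$ and $y$. As before, we split into the near-diagonal region $\mathbb U_\varepsilon$ and its complement.

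\emph{Near-diagonal case $(x,y)\in\mathbb U_\varepsilon$.} Use the same decomposition as in Lemma~\ref{l.mu}:
\[
\mu_1=\Big[1-\tfrac{(y-x)r_{10}}{1-r^2}\Big]m'(x)-\tfrac{r_{10}}{1+r}m(x)-\tfrac{r_{10}}{1-r^2}\big[m(y)-m(x)-(y-x)m'(x)\big].
\]
The three terms are bounded as follows.
\begin{itemize}
\item By \eqref{e.1r10} and \eqref{e.cdm} with $i=1$, the first term is $\ll \varrho(x,y)\,\phi(1-|x|)/(1-|x|)$.
\item Since $r=\alpha^{\tau+1/2}(1+o(1))$ is bounded below when $x,y$ have the same sign, we have $1+r\gg 1$. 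Combined with \eqref{e.r10} and \eqref{e.cdm} with $i=0$, the second term is $\ll \varrho(x,y)\phi(1-|x|)/(1-x^2)$.
\item For the third term, Taylor's theorem gives $m(y)-m(x)-(y-x)m'(x)=\tfrac12 (y-x)^2 m''(z)$ for some $z$ between $x$ and $y$. By \eqref{e.cdm} with $i=2$ this is $\ll (y-x)^2\phi(1-|z|)/(1-|z|)^2$.
\end{itemize}
Lemma~\ref{l.ez} gives $1-|z|\asymp 1-|x|\asymp 1-|y|\asymp 1-xy$ on $\mathbb U_\varepsilon$. Hence $(y-x)^2/(1-|z|)^2\ll\varrho^2(x,y)$. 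Since $\phi(u)=Cu^\theta$ is a power, $\phi(1-|z|)\ll \phi(1-|x|)+\phi(1-|y|)$. Multiplying by $|r_{10}|/(1-r^2)\ll (\varrho/(1-x^2))/\varrho^2$, from \eqref{e.r10} and \eqref{e.1-rr}, gives the claimed bound $\varrho(x,y)[\phi(1-|x|)+\phi(1-|y|)]/(1-|x|)$.

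\emph{Far case $(x,y)\notin\mathbb U_\varepsilon$.} Here $\varrho\asymp1$, so $1/(1-r^2)\ll 1$ by \eqref{e.1-rr}. We bound each term in the definition \eqref{e.mu1-s3} directly: $|m'(x)|\ll\phi(1-|x|)/(1-|x|)$, $|r\,r_{10}\,m(x)|\ll \phi(1-|x|)/(1-|x|)$, and $|r_{10}\,m(y)|\ll \phi(1-|y|)/(1-|x|)$. All three are dominated by the right-hand side of \eqref{e.cmu1} because $\varrho\asymp1$.

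The only delicate point is the Taylor remainder in the near-diagonal case. The comparability $\phi(1-|z|)\ll\phi(1-|x|)+\phi(1-|y|)$ relies on $1-|z|$ being comparable to $1-|x|$ there, which Lemma~\ref{l.ez} provides for power-type $\phi$. The remaining steps are direct substitutions of the earlier estimates.
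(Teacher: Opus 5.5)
Your proposal is correct and follows the paper's proof essentially line by line. It uses the same three-term decomposition on $\mathbb U_\varepsilon$, controls the Taylor remainder through Lemma~\ref{l.ez} (the paper invokes monotonicity of $\phi$ where you use comparability; either works here), and bounds each term directly off $\mathbb U_\varepsilon$ exactly as the paper does. One small correction: you should not justify $1+r\gg 1$ by $r=\alpha^{\tau+1/2}(1+o(1))$, which was only established for $v_j=(1+j)^\tau$. It follows at once from $r(x,y)=k(xy)/\sqrt{k(x^2)k(y^2)}>0$, since $k$ has nonnegative coefficients and $xy>0$ on $\mathbb U_\varepsilon$.
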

\begin{proof} Assume first that $(x,y)\in \mathbb U_\varepsilon$. By Lemma \ref{l.1-dr} and \eqref{e.cdm}, 
\begin{align*}
    \bigg(1-\frac{(y-x) r_{10}(x,y)}{1-r^2(x,y)}\bigg)m'(x)  \ll  \varrho(x,y) \frac{\phi(1-|x|)}{1-|x|} 
\end{align*}
and 
    \[
    \frac{r_{10}(x,y)}{1+r(x,y)}m(x) \ll  \frac{\varrho(x,y)}{1-|x|} \phi(1-|x|).
    \]
Using the mean value theorem, \eqref{e.cdm}, Lemma~\ref{l.ez}, and the monotonicity of $\phi$, we obtain, for some $z$ between $x$ and $y$,
\begin{align*}
    m(y)-m(x)-(y-x)m'(x)&\ll  \frac{(y-x)^2}{2}\frac{\phi(1-|z|)}{(1-|z|)^2}\\
    &\ll  \frac{(y-x)^2}{(1-xy)^2}\left[\phi(1-|x|)+\phi(1-|y|)\right]\\
    &= \varrho^2(x,y)\left[\phi(1-|x|)+\phi(1-|y|)\right].
\end{align*}
Together with \eqref{e.r10} and \eqref{e.1-rr}, we get
\begin{align*}
    &\frac{r_{10}(x,y)}{1-r^2(x,y)}[m(y)-m(x)-(y-x)m'(x)]\\
    &\ll  \frac{\varrho(x,y)}{1-|x|}[\phi(1-|x|)+\phi(1-|y|)].
\end{align*}
Combining the above estimates yields
    \[
    \mu_1 \ll  \frac{\varrho(x,y)}{1-|x|}[\phi(1-|x|)+\phi(1-|y|)],\quad (x,y)\in \mathbb U_\varepsilon.
    \]
    
We now consider $(x,y)\in U_n\times U_n\backslash  \mathbb U_\varepsilon$. Then 
   \[
   \varrho(x,y)\asymp 1 \quad \text{and}\quad \frac{1}{1-r^2(x,y)} \ll  \frac{1}{\varrho^2(x,y)} \ll  1.
   \]
Together with Lemma \ref{l.dr} and \eqref{e.cdm}, we conclude that
\begin{align*}
    \mu_1 & \ll  |m'(x)|+|r_{10}(x,y)||m(x)|+|r_{10}(x,y)||m(y)|\\
    &\ll  \frac{\phi(1-|x|)}{1-|x|}+\frac{\varrho(x,y)}{1-|x|}\phi(1-|x|)+\frac{\varrho(x,y)}{1-|x|}\phi(1-|y|)\\
    &\ll  \frac{\varrho(x,y)}{1-|x|}[\phi(1-|x|)+\phi(1-|y|)],
\end{align*} 
and \eqref{e.cmu1} is proved.
    
Similar arguments apply to \eqref{e.cmu2}.
\end{proof}

\section{Comparison principles: Proof for the Gaussian case} \label{s.Gaussian.case}

In this section, we prove the Gaussian case of Theorems \ref{t.varcomp} and \ref{t.cltcomp}. The arguments rely on the Kac--Rice formulas from Section \ref{s.Kac-Rice} and the asymptotic estimates established in Section \ref{s.asym.est}.
 
Throughout, we assume that the coefficients of $P_n$ in \eqref{e.pn} have polynomial growth of order $\tau>-1/2$ and that $(\xi_j)_{j=0}^n$ are independent standard normal random variables. We further assume that $0\le b_n<a_n<1$, where $a_n \ll_A (\log n)^{-A}$ for any $A>0$, and $b_n=B/n$ for a sufficiently large constant $B$. Here, $B$ may depend on $\tau$ and $C_0$, the constants appearing in Condition~\ref{c.A}. Set $I_n=[1-a_n, 1-b_n)$ and define
\begin{align*}
U_n =I_n\cup \left(-I_n\right),\quad 
U_n^* = I_n^{-1}\cup \left(-I_n^{-1}\right),\quad \text{and}\quad
\mathcal I_n=U_n\cup U_n^*. 
\end{align*}
\subsection{Expected number of real roots}
We first establish the Gaussian results for the expected number of real roots, which form the foundation for the proofs of Theorem \ref{t.outside-In} and Lemma \ref{l.estimate-Nj} in Section \ref{s.moment} and Appendix \ref{a.l5.2}, respectively.
\begin{lemma} \label{l.mean.Gauss}
    Let $I \subset [1-a_n, 1)$.  
\begin{enumerate}
    \item There exists a constant $C>0$ such that, if $M_n$ dominates $R_n$ on $I$ with factor function $C|\log x|^{1/2}$, then 
        \[
        \mathbb E[N_{P_n}(I)] =O(a_n).
        \]
    \item Fix $C>0$. If $M_n$ is dominated by $R_n$ up to order 1 on $I$ with a constant factor $C$, then the one-point correlation function $\rho_1$ of the real roots of $P_n$ satisfies
    \begin{equation}
        \label{e.rho.est}
        \rho_1(x) \ll \frac{1}{1-x+1/n} \quad\text{uniformly on } I.
    \end{equation}
    In particular, for any constant $B>0$, 
    \begin{align*}
        \mathbb E[N_{P_n}\left( [1-a_n/n,1]\cap I\right)] = O(a_n),
    \end{align*}
    and
    \begin{align*}
       \mathbb E[N_{P_n} \left([1-B/n, 1-a_n/n)\cap I\right)]=O(1).
    \end{align*}
\end{enumerate}
    Analogous bounds hold on $(-1,-1+a_n]$ and for the reciprocal polynomial $P_n^*$.
\end{lemma}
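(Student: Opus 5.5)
The proof starts from the one-point formula of Lemma~\ref{l.rho1}, applied to the normalized process $G(x)=P_n(x)/\sqrt{k(x^2)}$, which satisfies Condition~\ref{c.GP}. Its mean is $m(x)=M_n(x)/\sqrt{k(x^2)}$, and by \eqref{e.r11}, \eqref{e.1r11} we have $r_{11}(x,x)\asymp(1-|x|+1/n)^{-2}$ on $I$. Since $\erf$ is bounded, Lemma~\ref{l.rho1} gives the crude bound
\[
\rho_1(x)\ll \Big(\sqrt{r_{11}(x,x)}+|m'(x)|\Big)e^{-m^2(x)/2}.
\]
Both parts reduce to estimating $m$ and $m'$ on $I$ under the respective domination hypotheses.

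\emph{Part (2).} Domination of $M_n$ by $R_n$ up to order $1$ with constant factor $C$ gives $|M_n^{(i)}(x)|\le C\sqrt{\Var[R_n^{(i)}(x)]}$ for $i=0,1$. By Lemma~\ref{l.estvar} and the remark following the definition of domination, this translates into $|m(x)|\ll 1$ and $|m'(x)|\ll (1-x+1/n)^{-1}$. To get the bound on $m'$, write
\[
m'=\frac{M_n'}{\sqrt{k}}-\frac{M_n\,(k(x^2))'}{2k^{3/2}}
\]
and use $(k(x^2))'\ll k(x^2)/(1-x+1/n)$. Inserting these into the crude bound yields \eqref{e.rho.est}. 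Integrating \eqref{e.rho.est} then gives the two consequences. On $[1-a_n/n,1]$ the density is $\ll n$ and the interval has length $a_n/n$, so the expected count is $O(a_n)$. On $[1-B/n,1-a_n/n)$ the density is $\ll n$ and the interval has length $\le B/n$, so the expected count is $O(1)$.

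\emph{Part (1).} Domination with factor $C|\log x|^{1/2}$ gives
\[
|m(x)|\ge C|\log(1-x+1/n)|^{1/2}.
\]
Hence $e^{-m^2/2}\le (1-x+1/n)^{C^2/2}$. For $m'$ we only have the a priori bound \eqref{e.djm}, which near $1$ must be supplemented by \eqref{e.1r11} and the analogue of Lemma~\ref{l.estvar}; together these give
\[
|m'(x)|\ll (1-x+1/n)^{-3/2}, \qquad \sqrt{r_{11}}\ll (1-x+1/n)^{-1}.
\]
Therefore $\rho_1(x)\ll (1-x+1/n)^{C^2/2-3/2}$. Choosing $C$ large, say $C^2\ge 7$, the exponent is $\ge 2$, so
\[
\int_I\rho_1 \ll \int_{1-a_n}^{1}(1-x+1/n)^{2}\,dx\ll a_n^{3}+n^{-2}.
\]
The only real danger is the $1/n$ term, and it is harmless: on the whole of $I\subset[1-a_n,1)$ the density is bounded by a positive power of $a_n$ once the exponent is nonnegative, and integrating over a length-$a_n$ interval gives $O(a_n)$.

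The negative side $(-1,-1+a_n]$ is handled identically, since the estimates of Section~\ref{s.asym.est} are stated for $|x|$. The reciprocal side follows by applying the same argument to $P_n^*$, using the $M_n^*$ form of domination together with the fact that $P_n$ and $P_n^*$ have the same real roots up to inversion.

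The main obstacle is making the bounds on $m'$ and on $r_{11}$ uniform all the way up to $x=1$, including the window $1-x\lesssim 1/n$, where the asymptotics of Lemma~\ref{l.DN} are unavailable. Here one falls back on the crude bounds $k^{(i)}(x^2)\ll n^{2\tau+1+i}$ and $|M_n^{(i)}|\ll n^{\tau+1+i}$ from Condition~\ref{c.A}, together with the lower bound $k(x^2)\gg n^{2\tau+1}$ from Lemma~\ref{l.estvar}.
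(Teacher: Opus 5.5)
Your proposal is correct and is essentially the paper's proof. You bound $\rho_1$ by $(\sqrt{r_{11}(x,x)}+|m'(x)|)e^{-m^2(x)/2}$, use \eqref{e.r11}, \eqref{e.1r11} and \eqref{e.djm} (or, in part (2), $|m|\ll 1$ and $|m'|\ll (1-x+1/n)^{-1}$), insert the lower bound $|m|\ge C|\log(1-x+1/n)|^{1/2}$ in part (1), and integrate.

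Two small remarks. Your concern about the window $1-x\lesssim 1/n$ is already covered by \eqref{e.djm} and \eqref{e.1r11}, which are stated uniformly up to $|x|=1$. Your final observation (nonnegative exponent, interval of length at most $a_n$) is exactly how the paper gets $O(a_n)$, there with $C>\sqrt 3$.
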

\begin{proof}
Recall from Lemma \ref{l.rho1} that 
    \[
     \rho_1(x) =\rho_{1,1}(x) +\rho_{1,2}(x),
    \]
    where $\rho_{1,1}$ and $\rho_{1,2}$ are defined in \eqref{e.rho11} and \eqref{e.rho12}, respectively.

    Assume first that $M_n$ dominates $R_n$ on $I$ with factor function $C|\log x|^{1/2}$. It follows from \eqref{e.rho11} and \eqref{e.1r11} that uniformly for $x\in I$, 
    \[
    \rho_{1,1}(x) \le \sqrt{r_{11}(x,x)}\exp\left(-\frac 12 m^2(x)\right) \ll (1-x+1/n)^{-1+C^2/2}.
    \]
    Next, using \eqref{e.rho12} and \eqref{e.djm}, we see that uniformly for $x\in I$, 
    \[
        \rho_{1,2}(x) \le  |m'(x)|\exp\left(-\frac 12 m^2(x)\right)\ll (1-x+1/n)^{(-3+C^2)/2}.
    \]
    Therefore, for $C> \sqrt 3$, we have 
    \[
    \mathbb E[N_{P_n}(I)]= \int_I \rho_1(x)dx \ll \int_{1-a_n}^1 (1-x+1/n)^{(-3+C^2)/2}dx \ll a_n.
    \]

    Assume now that $M_n$ is dominated by $R_n$ up to order 1 on $I$ with a constant factor $C$, so that
    \[
    |m(x)|\le C \quad \text{and} \quad |m'(x)| \le \frac{C}{1-x+1/n},\quad x\in I.
    \]
    Then, uniformly for $x\in I$, 
    \[
    \rho_1(x)=\rho_{1,1}(x)+ \rho_{1,2}(x) \le \sqrt{r_{11}(x,x)}+|m'(x)| \ll \frac{1}{1-x+1/n}, 
    \]
  which implies \eqref{e.rho.est}.

    Consequently, 
    \begin{align*}
     \mathbb E[N_{P_n}\left( [1-a_n/n,1]\cap I\right)]  &\ll  \int_{1-\frac{a_n}{n}}^1 \frac{dx}{1-x+1/n} = O(a_n),\\
      \mathbb E[N_{P_n} \left([1-B/n, 1-a_n/n)\cap I\right)] &\ll  \int_{1-\frac{B}{n}}^{1-\frac{a_n}{n}} \frac{dx}{1-x+1/n} = O(1),
    \end{align*}
    completing the proof.
\end{proof}

By Theorem \ref{t.outside-In}, we only need to focus on the real roots of $P_n$ inside $\mathcal I_n= U_n\cup U_n^*$, where $U_n=I_n\cup \left(-I_n\right)$ and $U_n^* = I_n^{-1}\cup \left(-I_n^{-1}\right)$.

\subsection{Fluctuations of real roots in the core region, part (1)} We now discuss the first part of Theorem \ref{t.varcomp}. We show that when $M_n$ dominates $R_n$, the polynomial $P_n$ has few real roots.
\begin{theorem} \label{t.Cvar} Let $I\subset \mathbb R$ be an interval. There exists a constant $C>0$ such that if
\begin{equation}
    \label{e.cm}
    |m(x)|\ge C|\log(1-|x|)|^{1/2},\quad x\in I\cap U_n,
\end{equation}
then 
\begin{equation}
    \label{e.Cvar}
    \Var[N_{P_n}(I\cap U_n)] =o(1).
\end{equation}
\end{theorem}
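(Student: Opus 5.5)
Write $N:=N_{P_n}(I\cap U_n)$. Since $N$ is a nonnegative integer,
\[
\Var[N]\le \mathbb E[N^2]=\mathbb E[N(N-1)]+\mathbb E[N].
\]
So it suffices to show that both $\mathbb E[N]$ and $\mathbb E[N(N-1)]=\iint_{(I\cap U_n)^2}\rho_2$ are $o(1)$. I will treat $I\cap I_n$ and $I\cap(-I_n)$ separately. The cross terms in $\mathbb E[N(N-1)]$ are controlled by Cauchy--Schwarz once each piece has $\mathbb E[N^2]=o(1)$. By symmetry, I work on $S_n=I_n$.

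\textbf{First moment.} This is essentially Lemma~\ref{l.mean.Gauss}(1). By \eqref{e.rho11}, \eqref{e.r11}, \eqref{e.rho12}, \eqref{e.djm} and \eqref{e.cm},
\[
\rho_1(x)\ll (1-|x|)^{-3/2}e^{-m^2(x)/2}\ll (1-|x|)^{(C^2-3)/2}.
\]
Integrating over $U_n$ gives $\mathbb E[N]\ll a_n^{(C^2-1)/2}=o(1)$ as soon as $C>\sqrt3$.

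\textbf{Second moment.} The main work is bounding $\rho_2$ in \eqref{e.rho2}. First I bound the bracket $\sum_i\rho_{2,i}$ crudely by $\sigma_1\sigma_2+|\mu_1\mu_2|$ times polynomial factors in $|\nu_j|$. This uses $|\delta|\le1$ and elementary bounds on the integrals, for instance $\int_0^1(1-t)\nu^2e^{-\nu^2t^2/(2-2\delta^2)}dt\ll|\nu|$. The term $\rho_{2,5}$ needs care because of the factor $(1-\delta^2)^{-1/2}$. I plan to return to the original representation
\[
\mathbb E|\mathcal R_1\mathcal R_2|\le \sqrt{\mathbb E\mathcal R_1^2\,\mathbb E\mathcal R_2^2}=\sqrt{(\sigma_1^2+\mu_1^2)(\sigma_2^2+\mu_2^2)},
\]
which avoids that issue entirely. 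Thus
\[
\rho_2(x,y)\le \frac{E(x,y)}{2\pi\sqrt{1-r^2}}\sqrt{(\sigma_1^2+\mu_1^2)(\sigma_2^2+\mu_2^2)}.
\]
By Lemmas~\ref{l.sig} and \ref{l.mu}, $\sigma_j,|\mu_j|\ll \varrho\,(1-|x|)^{-1}(1-|y|)^{-1/2}$ up to symmetric terms; the $-1/2$ powers account for the at most $(1-|\cdot|)^{-1/2}$ growth of $m$. Together with $\sqrt{1-r^2}\asymp\varrho$ from \eqref{e.1-rr}, the $\varrho$ factors cancel. Hence
\[
\rho_2(x,y)\ll E(x,y)\,\frac{\varrho(x,y)}{(1-|x|)(1-|y|)}\Big(\frac1{1-|x|}+\frac1{1-|y|}\Big).
\]

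\textbf{The exponential factor (main obstacle).} The key step is to show $E(x,y)\ll \exp(-c\,(m^2(x)+m^2(y)))$ for some absolute $c>0$, uniformly on $U_n^2$. The quadratic form $m^2(x)-2rm(x)m(y)+m^2(y)$ divided by $1-r^2$ is at least $\max(m^2(x),m^2(y))$. Indeed, it equals
\[
m^2(x)+\frac{(m(y)-r\,m(x))^2}{1-r^2}\ge m^2(x),
\]
and symmetrically in $x,y$. So
\[
E\le e^{-\max(m^2(x),m^2(y))/2}\le e^{-(m^2(x)+m^2(y))/4}.
\]
This is the point where the non-centered mean helps, and it is clean. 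Then \eqref{e.cm} gives
\[
E\ll \big((1-|x|)(1-|y|)\big)^{C^2/4}.
\]
With $\varrho\le1$, the right-hand side of the $\rho_2$ bound is $\ll (1-|x|)^{C^2/4-2}(1-|y|)^{C^2/4-2}$. This is integrable over $U_n^2$ with total $\ll a_n^{C^2/2-2}=o(1)$ once $C$ is large, say $C>3$.

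\textbf{Remaining checks.} The pairs $(x,y)\in(-S_n)\times S_n$ are covered by the same bounds, since Lemmas~\ref{l.sig}, \ref{l.mu} and \eqref{e.1-rr} hold on all of $U_n\times U_n$. Near the diagonal there is no singularity, because $\sqrt{1-r^2}$ cancels against the $\varrho$ factors in $\sigma_j$ and $\mu_j$. The only delicate point I expect is confirming the uniformity of those lemmas up to $1-b_n=1-B/n$. That is exactly why $b_n=B/n$ with $B$ large was fixed earlier, so Lemma~\ref{l.DN} applies.
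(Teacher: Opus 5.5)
Your proof is correct and follows essentially the paper's route. Both use Kac--Rice, the same bound $\rho_1\ll(1-|x|)^{(C^2-3)/2}$, the same exponential estimate $E(x,y)\le e^{-(m^2(x)+m^2(y))/4}$, and Lemmas~\ref{l.sig}, \ref{l.mu} with $\sqrt{1-r^2}\asymp\varrho$ to make $\iint\rho_2=o(1)$. You streamline three steps. You bound $\Var[N]\le\mathbb E[N^2]$ instead of using Lemma~\ref{l.var} exactly, and you get the exponential estimate by completing the square. Most substantively, you replace the paper's term-by-term estimates of $\rho_{2,1},\dots,\rho_{2,5}$ (including its quadrant-probability argument for $\rho_{2,5}$) with the single Cauchy--Schwarz bound $\mathbb E|\mathcal R_1\mathcal R_2|\le\sqrt{(\sigma_1^2+\mu_1^2)(\sigma_2^2+\mu_2^2)}$, which is valid and shorter.
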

\begin{proof}
By Lemma \ref{l.var}, we have
    \[
    \Var[N_{P_n}(I\cap U_n)]=\iint_{(I\cap U_n)^2}[\rho_2(x,y)-\rho_1(x)\rho_1(y)]dxdy+\int_{I\cap U_n}\rho_1(x)dx,
    \]
where $\rho_1$ and $\rho_2$ are the one-point and two-point correlation functions defined in \eqref{e.rho1} and \eqref{e.rho2}, respectively. 

We first show that 
    \[
    \int_{I\cap U_n}\rho_1(x)dx=o(1),
    \]
and hence,
    \[
    \iint_{(I\cap U_n)^2}\rho_1(x)\rho_1(y)dxdy = o(1).
    \]
Recall that $\rho_1(x)=\rho_{1,1}(x)+\rho_{1,2}(x)$, where $\rho_{1,1}$ and $\rho_{1,2}$ are given in \eqref{e.rho11} and \eqref{e.rho12}, respectively. Together with \eqref{e.djm}, \eqref{e.r11}, and \eqref{e.cm}, we see that 
\begin{align*}
    |\rho_{1,1}(x)| \le \frac{1}{\pi}\sqrt{r_{11}(x,x)}\exp\Big(-\frac 12 m^2(x)\Big)
    \ll  \frac{1}{1-|x|} (1-|x|)^{C^2/2},
\end{align*}
and 
\begin{align*}
    |\rho_{1,2}(x)| \ll  |m'(x)|\exp\Big(-\frac 12 m^2(x)\Big) \ll  \frac{1}{(1-|x|)^{3/2}}(1-|x|)^{C^2/2}.
\end{align*}
Thus, for $C>1$, we have 
\begin{align*}
    \int_{I\cap U_n}\rho_1(x)dx &=\int_{I\cap U_n}[\rho_{1,1}(x)+\rho_{1,2}(x)]dx
    \ll  \int_{I\cap U_n} (1-|x|)^{\frac{C^2-3}{2}}dx=o(1).
\end{align*}

The proof is completed by showing that 
    \[
    \iint_{(I\cap U_n)^2}\rho_2(x,y)dxdy =o(1).
    \]
It follows from Theorem \ref{t.rho2} that 
   \[
   \rho_2(x,y)=\frac{E(x,y)}{\pi^2\sqrt{1-r^2(x,y)}}\sum_{i=1}^5 \rho_{2,i}(x,y).
   \]

To estimate $E(x,y)$, we utilize the assumption \eqref{e.cm} and find that
\begin{align*}
    E(x,y)&\le \exp\left(-\frac{|r(x,y)|(m(x)-m(y))^2}{2(1-r^2(x,y))}-\frac{m^2(x)+m^2(y)}{2(1+|r(x,y)|)}\right)\\
    & \le \exp\left(-\frac{m^2(x)+m^2(y)}{4}\right)\\
    & \le (1-|x|)^{C^2/4}(1-|y|)^{C^2/4}.
\end{align*}

We now estimate $\rho_{2,i}$, for $1\le i\le 5$. First, by Lemma \ref{l.sig},
    \[
    |\rho_{2,1}|=\sigma_1\sigma_2(\sqrt{1-\delta^2}+\delta\arcsin\delta) \le \frac{\pi}{2}\sigma_1\sigma_2 \ll  \frac{\varrho^2(x,y)}{(1-|x|)(1-|y|)}.
    \]
According to Lemma \ref{l.mu}, we have
    \[
    |\rho_{2,2}|= |\mu_1\mu_2 \arcsin \delta| \le \frac{\pi}{2}|\mu_1\mu_2| \ll  \frac{\varrho^2(x,y)}{(1-|x|)(1-|y|)}\bigg(\frac{1}{1-|x|}+\frac{1}{1-|y|}\bigg).
    \]
For $j=1,2$, it holds that 
   \[
   0\le \int_0^{1}\frac{(1-t)}{\exp(\nu_j^2t^2/2(1-\delta^2))} dt \le \int_0^{1}dt  = 1,
   \]
and
\begin{align*}
    0\le \int_0^{1}\frac{(1-t)|\nu_jt|}{\exp(\nu_j^2t^2/2)}\erf\bigg(\frac{|\delta \nu_j t|}{\sqrt{2(1-\delta^2)}}\bigg)dt\le \int_0^{1}\frac{|\nu_j|}{\exp(\nu_j^2t^2/2)}dt \le \sqrt{\frac \pi 2}.
\end{align*}
Moreover, it follows from \eqref{e.sig1} and \eqref{e.mu1} that 
    \[
    \nu_1^2 =\frac{\mu_1^2}{\sigma_1^2} \ll  \frac{\frac{\varrho^2(x,y)}{(1-|x|)^2}\Big(\frac{1}{\sqrt{1-|x|}}+\frac{1}{\sqrt{1-|y|}}\Big)^2}{\frac{\varrho^2(x,y)}{(1-|x|)^2}} \ll  \frac{1}{1-|x|}+\frac{1}{1-|y|}.
    \]
Similarly, 
    \[
    \nu_2^2 \ll  \frac{1}{1-|x|}+\frac{1}{1-|y|}.
    \]
Therefore, using the fact that $|\delta|\le 1$, we have 
\begin{align*}
    |\rho_{2,3}|&\le \sigma_1\sigma_2\sqrt{1-\delta^2}\left(\nu_1^2+\nu_2^2\right)+\frac{\pi}{2}\sigma_1\sigma_2|\delta| \left(\nu_1^2+\nu_2^2\right)\\
    &\ll  \sigma_1\sigma_2[\nu_1^2+\nu_2^2] \\
    &\ll  \frac{\varrho^2(x,y)}{(1-|x|)(1-|y|)}\bigg(\frac{1}{1-|x|}+\frac{1}{1-|y|}\bigg).
  \end{align*}
Likewise, 
\begin{align*}
    |\rho_{2,4}|
    \le \frac{\pi}{2} |\mu_1\mu_2|
    \ll  \frac{\varrho^2(x,y)}{(1-|x|)(1-|y|)}\bigg(\frac{1}{1-|x|}+\frac{1}{1-|y|}\bigg).
\end{align*}
Note that 
\begin{align*}
    0&\le \frac{|\nu_1\nu_2|}{\sqrt{1-\delta^2}}\int_0^{1}\int_0^1(1-t)(1-s)\exp\left(-\frac{\nu_1^2t^2-2\delta \nu_1\nu_2 ts+\nu_2^2s^2}{2(1-\delta^2)}\right)dtds\\
    &\le \frac{|\nu_1\nu_2|}{\sqrt{1-\delta^2}}\int_0^{1}\int_0^1\exp\left(-\frac{\nu_1^2t^2-2\delta \nu_1\nu_2 ts+\nu_2^2s^2}{2(1-\delta^2)}\right)dtds\\
    &\le \frac{\pi}{2}+\arcsin\delta  \le \pi.
\end{align*}
Hence, by \eqref{e.cmu1} and \eqref{e.cmu2},  
\begin{align*}
    |\rho_{2,5}|\le \pi|\mu_1\mu_2| \ll  \frac{\varrho^2(x,y)}{(1-|x|)(1-|y|)}\bigg(\frac{1}{1-|x|}+\frac{1}{1-|y|}\bigg).
\end{align*}
Combining these, we obtain
\begin{align*}
    \rho_2(x,y)&=\frac{E(x,y)}{\pi^2\sqrt{1-r^2(x,y)}}\left(\sum_{i=1}^5 \rho_{2,i}(x,y)\right)\\
    &\ll  \frac{(1-|x|)^{C^2/4}(1-|y|)^{C^2/4}}{(1-|x|)(1-|y|)}\bigg(\frac{1}{1-|x|}+\frac{1}{1-|y|}\bigg)\\
    &=(1-|x|)^{\frac{C^2}{4}-2}(1-|y|)^{\frac{C^2}{4}-1}+(1-|x|)^{\frac{C^2}{4}-1}(1-|y|)^{\frac{C^2}{4}-2}.
\end{align*}
Thus, for $C\ge 2$, we have 
    \[
    \iint_{(I\cap U_n)^2}\rho_2(x,y)dxdy \ll  \int_{I\cap U_n} (1-|x|)^{\frac{C^2}{4}-2}dx\int_{I\cap U_n}(1-|y|)^{\frac{C^2}{4}-1}dy=o(1),
    \]
and the theorem follows.
\end{proof}

\subsection{Fluctuations of real roots in the core region, part (2)} We now turn to the second part of Theorem \ref{t.varcomp} as well as the central limit theorem stated in Theorem \ref{t.cltcomp}. We show that when $M_n$ is dominated by $R_n$ through an appropriate factor function $\phi$, the number of real roots of $P_n$ exhibits the same asymptotic behavior as that of its centered counterpart  $R_n$.

\begin{theorem} \label{t.comvar} Let $I$ be an interval and let $J$ be an enlargement of $I$. Fix constants $C,\theta>0$, and let $\phi(x)=Cx^{\theta}$ for $x\in [0,1]$. Assume that uniformly for $x\in J$,
\begin{equation}
    \label{e.cdm-in}
    |m^{(i)}(x)| \ll  \frac{\phi(1-|x|)}{(1-|x|)^i},\quad i=0,1,2.
\end{equation}
Then
\begin{equation}
    \label{e.comvar}
    \Var[N_{P_n}(I\cap U_n)] = \Var[N_{R_n}(I\cap U_n)]+o(1). 
\end{equation}
Moreover, if $\Var[N_{R_n}(I\cap U_n)]\ge \epsilon \log n$ for some constant $\epsilon >0$, then $N_{P_n}(I\cap U_n)$ satisfies the CLT; that is, 
\begin{equation} \label{e.CLT.Un}
    \frac{N_{P_n}(I\cap U_n)-\mathbb E[N_{P_n}(I\cap U_n)]}{\Var[N_{P_n}(I\cap U_n)]} \xrightarrow{d} \mathcal N(0,1).
\end{equation}
\end{theorem}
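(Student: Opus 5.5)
Both variances will be expressed through Lemma~\ref{l.var}, and the plan is to compare the one- and two-point correlation functions of $P_n$ and $R_n$ pointwise on $(I\cap U_n)^2$. By Theorem~\ref{t.rho2} and Lemma~\ref{l.rho1}, setting $m\equiv 0$ gives exactly the correlation functions $\rho_1^R,\rho_2^R$ of $R_n$: then $E\equiv 1$, $\mu_1=\mu_2=0$, and only $\rho_{2,1}$ survives. The covariance data $r,\sigma_1,\sigma_2,\delta$ are identical for $P_n$ and $R_n$, so the whole difference is carried by $m$ through $E(x,y)$, $\rho_{1,1}/\rho_{1,2}$, and the terms $\rho_{2,2},\dots,\rho_{2,5}$. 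We therefore aim to prove
\[
\int_{I\cap U_n}|\rho_1-\rho_1^R|\,dx=o(1),\qquad \iint_{(I\cap U_n)^2}\bigl|(\rho_2-\rho_1\otimes\rho_1)-(\rho_2^R-\rho_1^R\otimes\rho_1^R)\bigr|\,dx\,dy=o(1),
\]
which yields \eqref{e.comvar}.

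\textbf{One-point bound.} We have $|e^{-m^2/2}-1|\le m^2\ll\phi(1-|x|)^2$, and $|m'|^2/r_{11}\ll\phi^2$ by \eqref{e.cdm-in} and \eqref{e.r11}. Hence
\[
|\rho_{1,1}-\rho_1^R|\ll \frac{\phi(1-|x|)^2}{1-|x|},\qquad |\rho_{1,2}|\ll |m'|\,\bigl|\erf(m'/\sqrt{2r_{11}})\bigr|\ll \frac{|m'|^2}{\sqrt{r_{11}}}\ll \frac{\phi^2}{1-|x|}.
\]
Since $\phi(u)=Cu^\theta$, the function $u^{2\theta-1}$ is integrable near $0$. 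Together with $a_n\to0$ and $1-|x|\ge B/n$ on $U_n$, this gives an $O(a_n^{2\theta})=o(1)$ bound. The same estimate, combined with $\int\rho_1^R=O(\log n)$, controls the product terms once the truncation below is in place.

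\textbf{Two-point bound.} Write
\[
\rho_2-\rho_2^R=\frac{(E-1)\rho_{2,1}+E\sum_{i\ge2}\rho_{2,i}}{\pi^2\sqrt{1-r^2}}.
\]
For $E$, the exponent equals $\frac{(m(x)-m(y))^2}{2(1-r^2)}\cdot(\ldots)$ plus $\frac{m(x)m(y)}{1+r}$-type terms. Using the mean value theorem and Lemma~\ref{l.ez} when $\varrho\le\varepsilon$, we get $(m(x)-m(y))^2\ll \varrho^2[\phi(1-|x|)+\phi(1-|y|)]^2$, and dividing by $1-r^2\asymp\varrho^2$ from \eqref{e.1-rr} leaves $|E-1|\ll\Phi^2$ with $\Phi:=\phi(1-|x|)+\phi(1-|y|)$.

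For $i\ge2$, Lemma~\ref{l.cmu} and Lemma~\ref{l.sig} give $|\nu_j|\ll\Phi$. Then $|\rho_{2,2}|,|\rho_{2,4}|,|\rho_{2,5}|\ll|\mu_1\mu_2|$ and $|\rho_{2,3}|\ll\sigma_1\sigma_2(\nu_1^2+\nu_2^2)$ are all
\[
\ll\frac{\varrho^2\,\Phi^2}{(1-|x|)(1-|y|)}.
\]
Dividing by $\sqrt{1-r^2}\asymp\varrho$, with $\rho_{2,1}\ll\sigma_1\sigma_2$, yields
\[
|\rho_2-\rho_2^R|\ll \frac{\varrho(x,y)\,\Phi^2}{(1-|x|)(1-|y|)}.
\]
Similarly, $|\rho_1\otimes\rho_1-\rho_1^R\otimes\rho_1^R|\ll\Phi^2/((1-|x|)(1-|y|))$.

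\textbf{Main obstacle.} These two differences are each only of size $\Phi^2/((1-|x|)(1-|y|))$ far from the diagonal, and integrating this over $U_n^2$ gives $\asymp a_n^{\theta}\log n$-type quantities. That is not $o(1)$ unless we exploit cancellation between $\rho_2$ and $\rho_1\otimes\rho_1$ when $\varrho\to1$ or when $x,y$ have opposite signs. There $r\to0$, and I would expand in $r$ and the cross terms $r_{10},r_{01}$: the truncated covariance $\rho_2-\rho_1\otimes\rho_1$ for a non-centered process is $O(|r|+|r_{10}|(1-|x|)+\cdots)$ times the centered scale. Known decay of $r$ in $1-\varrho^2$ from Lemma~\ref{l.DN} (like $\alpha^{\tau+1/2}$) then gives integrability with the extra factor $\Phi^2$.

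I would split $U_n^2$ into $\varrho\le\varepsilon$, where the pointwise bound above integrates to $O(\int\Phi^2/(1-|x|))=o(1)$ because the $\varrho$-neighborhood of $x$ has measure $\asymp\varepsilon(1-|x|)$, and $\varrho>\varepsilon$, where one Taylor-expands $E$, $\mu_j$ and $\delta$ jointly in $(r,r_{10},r_{01},m)$. If needed, I would accept an $O(a_n^{c}\log n)$ error, which is still $o(1)$ because $a_n\ll_A\log^{-A}n$.

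\textbf{CLT.} The $k$-point analogue is too heavy to carry out here, so I would instead couple $P_n=M_n+R_n$ and show
\[
\mathbb E\,|N_{P_n}(I\cap U_n)-N_{R_n}(I\cap U_n)|^2=o(\log n).
\]
On dyadic blocks $J_j$, use Theorem~\ref{t.multi.roots} and the smallness of $M_n/\sqrt{\Var R_n}\ll\phi$ to show that a root of $R_n$ moves to a nearby root of $P_n$ except with probability $O(\phi(\delta_j)^{c})$, whose sum over blocks is $O(1)$. Then Slutsky transfers the CLT for $N_{R_n}$ (from \cite{NV22D}, using $\Var\ge\epsilon\log n$) to $N_{P_n}$.
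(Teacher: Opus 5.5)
\textbf{Variance part.} This half is the paper's argument (Lemmas \ref{l.cmean}, \ref{l.conE}, \ref{l.cvar}). The extra factor $\varrho(x,y)$ in your bound for $\rho_2-\widetilde\rho_2$ is a harmless sharpening. However, the ``main obstacle'' you identify is not an obstacle. The cure you propose for it is both unproved and unnecessary: a cancellation between $\rho_2$ and $\rho_1\otimes\rho_1$, obtained from a joint Taylor expansion in $(r,r_{10},r_{01},m)$ for $\varrho>\varepsilon$. Since $\Phi^2\le 2\phi^2(1-|x|)+2\phi^2(1-|y|)$, you have
\[
\iint_{U_n^2}\frac{\Phi^2\,dx\,dy}{(1-|x|)(1-|y|)}\ll\Bigl(\int_{B/n}^{a_n}u^{2\theta-1}\,du\Bigr)\Bigl(\int_{B/n}^{a_n}\frac{du}{u}\Bigr)\ll a_n^{2\theta}\log n .
\]
This is $o(1)$ precisely because $a_n\ll_A\log^{-A}n$ (take $A=1/\theta$). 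Likewise $(\int\rho_1)^2-(\int\widetilde\rho_1)^2=O(a_n^{2\theta})\cdot O(\log n)$, with no truncation needed. This is exactly how the paper closes. Your own remark that an $O(a_n^{c}\log n)$ error is acceptable already says this, so your claim ``that is not $o(1)$'' contradicts it.

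\textbf{CLT part: this has a genuine gap.} The step ``a root of $R_n$ moves to a nearby root of $P_n$ except with probability $O(\phi(\delta_j)^c)$'' is the entire content of the comparison, and the tool you cite cannot deliver it. Theorem \ref{t.multi.roots} is stated only for intervals inside $[1-B/n,1)$, i.e.\ outside $U_n$. Even there it only bounds $\mathbb P(N\ge k)$ by $(\beta\delta)^{2k/3}$ for a fixed logarithmic length $\delta$. That bound does not decay along the dyadic blocks, and it says nothing about how the zero set reacts to adding $M_n$.

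What is missing is a concrete mechanism that turns $|m|\ll\phi$ into a small probability that the two counts differ. The paper's mechanism (Lemmas \ref{l.root-sign}, \ref{l.sign-sign}) has three ingredients:
\begin{enumerate}
\item Under the natural coupling, the normalized values at a point are $Y$ and $Y+m$ with the same $Y\sim\mathcal N(0,1)$, so $\mathbb P(\sgn(Y+m)\ne\sgn Y)\ll|m|$.
\item On a piece $[a,b)$ containing at most one root, the count equals the sign-change indicator $\frac12-\frac12\sgn(P_n(a)P_n(b))$.
\item The event of two roots in a piece is controlled by Lemma \ref{l.nearR}, and its contribution is bounded by Theorem \ref{t.localcount}.
\end{enumerate}
Together these give $\mathbb E|N_{P_n}(I\cap U_n)-N_{R_n}(I\cap U_n)|\ll a_n^c$. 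Markov's inequality and Slutsky then finish the proof, with no second-moment bound needed.

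If you implement this, apply Lemma \ref{l.nearR} at its own scale. It controls two roots only within distance $\delta_j^{1+\varepsilon}$, whereas a whole block $[1-2\delta_j,1-\delta_j)$ has fixed hyperbolic length, so two roots there occur with probability bounded away from zero. Cut each block into $\asymp\delta_j^{-\varepsilon}$ pieces. The endpoint discrepancies then sum to $\ll\sum_j\delta_j^{-\varepsilon}\phi(\delta_j)\ll a_n^{\theta-\varepsilon}$, which is still small for $\varepsilon<\theta$.

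Alternatively, you can keep your perturbative picture. Along $s\mapsto R_n+sM_n$, the count on a block changes only if a zero crosses an endpoint (probability $\ll|m|$ at that endpoint), or if at some $x$ both $|G(x)|$ and $|G'(x)|/\sqrt{r_{11}(x,x)}$ are $\ll\phi(\delta_j)$. In that case you must actually prove a small-probability bound for the latter near-double-zero event.
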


To prove \eqref{e.comvar}, we compare the one-point and two-point correlation functions of the real roots of $P_n$ and  $R_n$. For this purpose, we first recall the corresponding correlation functions for $R_n$, as established in \cite{DN25}. 

Observe that 
    \[
    \Cov[R_n(x),R_n(y)]=\mathbb E[R_n(x)R_n(y)]=\frac{k(xy)}{\sqrt{k(x^2)k(y^2)}}=r(x,y).
    \]
Therefore, by letting 
    \begin{equation}\label{e.rho1c}
     \widetilde{\rho}_1(x)=\frac{1}{\pi}\sqrt{r_{11}(x,x)}
    \end{equation}
and 
    \begin{equation}\label{e.rho2c}
     \widetilde{\rho}_2(x,y) = \frac{\sigma_1\sigma_2}{\pi^2\sqrt{1-r^2(x,y)}}\left(\sqrt{1-\delta^2}+\delta\arcsin \delta\right),
    \end{equation}
we have
    \[
    \mathbb E[N_{R_n}(I\cap U_n)] = \int_{I\cap U_n}\widetilde{\rho}_1(x)dx
    \]
and 
    \[
    \Var[N_{R_n}(I\cap U_n)]=\iint_{(I\cap U_n)^2}[\widetilde{\rho}_2(x,y)-\widetilde{\rho}_1(x)\widetilde{\rho}_1(y)]dxdy +\int_{I\cap U_n}\widetilde{\rho}_1(x)dx.
    \]
    
Note that we can derive the formulas for $\widetilde{\rho}_1$ and $\widetilde{\rho}_2$ from Lemma \ref{l.rho1} and Theorem \ref{t.rho2}, respectively, by setting $m\equiv 0$.

To prove Theorem \ref{t.comvar}, we begin with three auxiliary lemmas.

\begin{lemma}\label{l.cmean} Under the assumptions of Theorem \ref{t.comvar}, it holds uniformly for $x\in J$ that
\begin{equation}
    \label{e.crho1}
    |\rho_1(x)-\widetilde{\rho}_1(x)| \ll  \frac{\phi^2(1-|x|)}{1-|x|}.
\end{equation}
Consequently, 
    \[
    \mathbb E[N_{P_n}(I\cap U_n)]=\mathbb E[N_{R_n}(I\cap U_n)]+o(1).
    \]
\end{lemma}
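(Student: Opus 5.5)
We compare the explicit one-point density of Lemma~\ref{l.rho1} with $\widetilde\rho_1(x)=\frac1\pi\sqrt{r_{11}(x,x)}$ term by term, writing
\[
\rho_1-\widetilde\rho_1=\big(\rho_{1,1}-\widetilde\rho_1\big)+\rho_{1,2}.
\]

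\emph{First term.} Since
\[
\rho_{1,1}(x)=\widetilde\rho_1(x)\exp\Big(-\tfrac12\big[m^2(x)+(m'(x))^2/r_{11}(x,x)\big]\Big),
\]
the elementary inequality $0\le 1-e^{-s}\le s$ for $s\ge0$ gives
\[
|\rho_{1,1}-\widetilde\rho_1|\le \tfrac12\,\widetilde\rho_1(x)\Big(m^2(x)+\frac{(m'(x))^2}{r_{11}(x,x)}\Big).
\]
By \eqref{e.r11}, $\widetilde\rho_1(x)\asymp (1-|x|)^{-1}$ and $r_{11}(x,x)^{-1}\asymp (1-|x|)^2$. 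Hypothesis \eqref{e.cdm-in} with $i=0,1$ gives $m^2\ll\phi^2(1-|x|)$ and $(m')^2/r_{11}\ll \phi^2(1-|x|)$. Hence this term is $O\big(\phi^2(1-|x|)/(1-|x|)\big)$.

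\emph{Second term.} Because $\erf$ is odd, $|\erf(s)|\le \frac{2}{\sqrt\pi}|s|$, so
\[
|\rho_{1,2}|\le \frac{1}{\sqrt{2\pi}}\,|m'(x)|\cdot\frac{2}{\sqrt\pi}\,\frac{|m'(x)|}{\sqrt{2r_{11}(x,x)}}\ll\frac{(m'(x))^2}{\sqrt{r_{11}(x,x)}}.
\]
This is $\ll \frac{\phi^2}{(1-|x|)^2}(1-|x|)=\phi^2(1-|x|)/(1-|x|)$, again by \eqref{e.cdm-in} and \eqref{e.r11}. Adding the two bounds gives \eqref{e.crho1}.

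The one point needing care is uniformity on $J$ rather than only on $U_n$. Estimate \eqref{e.r11} is stated on $U_n$, and both bounds above use it. For the expectation statement, however, we only need the bound on $I\cap U_n\subset U_n$, where \eqref{e.r11} holds, so restricting \eqref{e.crho1} to $I\cap U_n$ suffices. If the full uniformity on $J$ is wanted, $J$ stays at distance $\gg 1/n$ from $\pm1$ near $U_n$ by the enlargement definition. In that case we would extend \eqref{e.r11} via the same Cauchy--Schwarz/Lemma~\ref{l.DN} comparison used to prove it, noting that $1-|x|\asymp 1-|x|+1/n$ there.

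\emph{Consequence.} By Lemma~\ref{l.rho1} for $P_n$ and its analogue for $R_n$,
\[
\big|\mathbb E[N_{P_n}(I\cap U_n)]-\mathbb E[N_{R_n}(I\cap U_n)]\big|\ll\int_{I\cap U_n}\frac{\phi^2(1-|x|)}{1-|x|}\,dx\le 2C^2\int_{b_n}^{a_n}u^{2\theta-1}\,du\ll a_n^{2\theta}=o(1).
\]
This integral is the only place where the positive power $\theta>0$ in $\phi$ is essential: it makes $\phi^2(u)/u$ integrable at $0$, and the integral is then controlled by $a_n\to0$.
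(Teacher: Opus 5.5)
Your proof is correct and follows essentially the paper's argument: the same split $\rho_1-\widetilde\rho_1=(\rho_{1,1}-\widetilde\rho_1)+\rho_{1,2}$, the same bounds $1-e^{-s}\le s$ and $|\erf(s)|\le\frac{2}{\sqrt\pi}|s|$ combined with \eqref{e.r11} and \eqref{e.cdm-in}, and the same integration of $u^{2\theta-1}$ over $[b_n,a_n]$ to get the $o(1)$ consequence. One caveat on your side remark: the enlargement definition does not keep $J$ at distance $\gg 1/n$ from $\pm1$ (for example, $J=I=(-\infty,0)$ is allowed), but this is harmless, since, as you observe, the consequence only needs the bound on $I\cap U_n$, where \eqref{e.r11} applies, and that is also all the paper actually uses.
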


\begin{lemma} \label{l.cvar} Under the assumptions of Theorem \ref{t.comvar}, it holds uniformly for $(x,y)\in J\times J$ that
\begin{equation}
    \label{e.crho2}
    |\rho_2(x,y)-\widetilde{\rho}_2(x,y)| \ll  \frac{\phi^2(1-|x|)+\phi^2(1-|y|)}{(1-|x|)(1-|y|)}.  
\end{equation}
Consequently, 
\begin{equation}
    \label{e.cirho2}
    \mathbb \iint_{(I\cap U_n)^2}\rho_2(x,y)dxdy=\iint_{(I\cap U_n)^2}\widetilde \rho_2(x,y)dxdy+o(1).  
\end{equation}
\end{lemma}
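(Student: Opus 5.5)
Write $\rho_2=\frac{E}{\pi^2\sqrt{1-r^2}}\sum_{i=1}^5\rho_{2,i}$ as in Theorem~\ref{t.rho2}, and note that $\widetilde\rho_2=\frac{1}{\pi^2\sqrt{1-r^2}}\rho_{2,1}$ (same $\sigma_1,\sigma_2,\delta$, since these do not depend on $m$). We split
\[
\rho_2-\widetilde\rho_2=\frac{(E-1)\rho_{2,1}}{\pi^2\sqrt{1-r^2}}+\frac{E}{\pi^2\sqrt{1-r^2}}\sum_{i=2}^5\rho_{2,i},
\]
and bound each piece by the right-hand side of \eqref{e.crho2}. The common prefactor is controlled by Lemma~\ref{l.sig} together with \eqref{e.1-rr}:
\[
\frac{\sigma_1\sigma_2}{\sqrt{1-r^2}}\asymp\frac{\varrho(x,y)}{(1-|x|)(1-|y|)}\le\frac{1}{(1-|x|)(1-|y|)}.
\]
We work on $J\cap U_n$ (or its reflection), where these lemmas apply.

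\textbf{First piece.} Since $0\le E\le1$, we have $|1-E|\le \frac{m^2(x)-2rm(x)m(y)+m^2(y)}{2(1-r^2)}$. The numerator equals $(1-r)(m(x)^2+m(y)^2)+r(m(x)-m(y))^2$. Using $1-r\le 1-r^2$, the first part contributes $O(\phi^2(1-|x|)+\phi^2(1-|y|))$. For the second part we need $(m(x)-m(y))^2\ll\varrho^2(x,y)[\phi(1-|x|)+\phi(1-|y|)]^2$. This follows from \eqref{e.cdm-in} with $i=1$ and the mean value theorem plus Lemma~\ref{l.ez} when $\varrho\le\varepsilon$, and trivially when $\varrho\asymp1$. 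Dividing by $1-r^2\asymp\varrho^2$ gives $|1-E|\ll\phi^2(1-|x|)+\phi^2(1-|y|)$. Multiplying by $\rho_{2,1}/\sqrt{1-r^2}\ll \frac{1}{(1-|x|)(1-|y|)}$ yields the bound.

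\textbf{Second piece.} Lemma~\ref{l.cmu} applies thanks to \eqref{e.cdm-in}, and it gives
\[
|\mu_1\mu_2|\ll\frac{\varrho^2[\phi(1-|x|)+\phi(1-|y|)]^2}{(1-|x|)(1-|y|)},\qquad \nu_j^2\ll[\phi(1-|x|)+\phi(1-|y|)]^2
\]
(the latter after dividing by Lemma~\ref{l.sig}). Using the elementary bounds from the proof of Theorem~\ref{t.Cvar},
\[
|\rho_{2,2}|,\ |\rho_{2,4}|,\ |\rho_{2,5}|\ll|\mu_1\mu_2|,\qquad |\rho_{2,3}|\ll\sigma_1\sigma_2(\nu_1^2+\nu_2^2).
\]
Each is therefore $\ll\frac{\varrho^2(\phi^2(1-|x|)+\phi^2(1-|y|))}{(1-|x|)(1-|y|)}$. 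Dividing by $\sqrt{1-r^2}\asymp\varrho$ and using $E\le1$, $\varrho\le1$ gives \eqref{e.crho2}.

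\textbf{Near $\pm1$ and the consequence.} For points with $1-|x|$ comparable to $B/n$, the same bounds hold with $1-|x|$ replaced by $1-|x|+1/n$, via \eqref{e.1r11}; on $U_n$ this is automatic, since $b_n=B/n$. Then \eqref{e.cirho2} follows by integrating \eqref{e.crho2} over $(I\cap U_n)^2$. By symmetry it suffices to bound
\[
\int_{U_n}\frac{(1-|x|)^{2\theta}}{1-|x|}dx\cdot\int_{U_n}\frac{dy}{1-|y|}\ll a_n^{2\theta}\log n,
\]
which is $o(1)$ because $a_n\ll_A\log^{-A}n$. Pairs in opposite half-lines are handled the same way, since the bounds used are uniform on $U_n\times U_n$.

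\textbf{Main obstacle.} The step I expect to be delicate is the estimate $(m(x)-m(y))^2\ll\varrho^2[\ldots]^2$ uniformly, including pairs of opposite sign. There $\varrho\asymp1$, so the trivial bound suffices, but one must check that $1-r^2\gg1$ still holds; this is the second part of Lemma~\ref{l.DN}.
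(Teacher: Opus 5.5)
Your proof is correct and follows essentially the same route as the paper. The paper also splits off $(E-1)\widetilde\rho_2$, packaging your bound on $|E-1|$ as Lemma \ref{l.conE} via the equivalent identity $\frac{(m(x)-m(y))^2}{2(1-r^2)}+\frac{m(x)m(y)}{1+r}$, and then bounds $\rho_{2,2},\dots,\rho_{2,5}$ through Lemmas \ref{l.sig} and \ref{l.cmu} exactly as you do.

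One small fix is needed. The inequality $1-r\le 1-r^2$ holds only when $r\ge 0$, which is true for same-sign pairs. For opposite-sign pairs, use $1-r^2\asymp\varrho^2\asymp1$ from \eqref{e.1-rr}; Lemma \ref{l.DN} only covers the model weights $v_j=(1+j)^\tau$. This still gives $(1-r)/(1-r^2)=1/(1+r)\ll1$, so the argument goes through.
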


\begin{lemma} \label{l.cCLT} Under the assumptions of Theorem \ref{t.comvar}, as $n\to \infty$, we have
\begin{equation}
    \label{e.compareroots}
    \frac{N_{P_n}(I\cap U_n)-N_{R_n}(I\cap U_n)}{\sqrt{\log n}} \xrightarrow{d} 0.
\end{equation}
\end{lemma}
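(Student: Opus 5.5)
The statement to prove is Lemma~\ref{l.cCLT}: $(N_{P_n}-N_{R_n})/\sqrt{\log n}\to 0$ in distribution on $I\cap U_n$. Both polynomials are Gaussian, so the natural route is to couple them on one probability space, $P_n=M_n+R_n$ with the same $R_n$, and show that
\[
\mathbb E\big[|N_{P_n}(I\cap U_n)-N_{R_n}(I\cap U_n)|\big]=o(\sqrt{\log n}).
\]
Markov's inequality then gives convergence in probability to $0$, hence in distribution. If the first-moment route is awkward, an $L^2$ bound of size $o(\log n)$ suffices equally well.

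I would first localize dyadically. Split $I_n$ (and $-I_n$) into the intervals $J_j=[1-\delta_{j-1},1-\delta_j)$, $j=1,\dots,T$, with $T\asymp\log n$, as in Section~\ref{s.universality}. On $J_j$ the hypothesis \eqref{e.cdm-in} gives
\[
|m|\ll\phi(\delta_j)=C\delta_j^{\theta},\qquad |m'|\ll\phi(\delta_j)/\delta_j,\qquad |m''|\ll\phi(\delta_j)/\delta_j^2 .
\]
So on the natural scale $\delta_j$ the normalized deterministic part is a perturbation of size $\epsilon_j:=\delta_j^\theta$ in $C^2$. The goal is
\[
\mathbb E|N_{P_n}(J_j)-N_{R_n}(J_j)|\ll \delta_j^{c}\quad\text{for some } c>0.
\]
Since $\delta_j$ decays geometrically and the last interval has $\delta_T\asymp B/n$, summing over $j$ gives $O(1)$, which is far better than needed. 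In fact a bound of $o(1)$ uniformly in $j$ already suffices: it sums to $o(\log n)$ over $\asymp\log n$ intervals, and $o(\log n)$ is not enough for $o(\sqrt{\log n})$. So I need a genuine decay $\delta_j^c$, or at least $\sum_j \epsilon_j'=o(\sqrt{\log n})$.

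To obtain the local estimate, I would rescale $x=1-\delta_j s$ and consider $g=R_n/\sqrt{k(x^2)}$ and $g+m$. The counts can differ only in two ways: a root of $g$ is moved out of $J_j$ across an endpoint, or a pair of roots is created or annihilated. Both events force $g$ to be small somewhere, namely
\[
|g|\le \sup|m| \quad\text{and}\quad |g'|\le \delta_j^{-1}\sup|m'|\ \ \text{(up to a factor)},
\]
near an endpoint or near a near-double root. The boundary events have probability $\ll\epsilon_j$ by anticoncentration of the Gaussian $g$ at the endpoints. The creation event is controlled by a Kac--Rice-type bound: the expected number of points $x\in J_j$ with $|g(x)|\le\epsilon$ and $|g'(x)|\le\epsilon/\delta_j$ is at most $\epsilon^{2}$ times the joint density, integrated over a length $\delta_j$. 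This yields a probability $\ll\epsilon_j^{c}$, using the nondegeneracy of $(g,g')$ given by \eqref{e.r11} together with the bounded density of $(g,g',g'')$.

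On these bad events I would use Cauchy--Schwarz together with the moment bound of Theorem~\ref{t.localcount} applied to both $P_n$ and $R_n$, which gives $\mathbb E[N^2\mathbf 1_E]\ll \delta_j^M+|\log\delta_j|^{C}\mathbb P(E)$. This produces $\mathbb E|N_{P_n}(J_j)-N_{R_n}(J_j)|\ll |\log\delta_j|^{C}\delta_j^{c\theta}$, which is summable in $j$.

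The main obstacle is making the root-creation/annihilation step rigorous. The cleanest approach is a homotopy argument: the roots of $g+tm$, $t\in[0,1]$, move continuously, and a change in the count forces a common zero of $g+tm$ and $g'+tm'$ for some $t$. The probability of this event must be bounded uniformly in $t$, via Kac--Rice applied to the two-parameter field $(x,t)\mapsto(g+tm,\;g'+tm')$, which needs a bounded density of $(g,g',g'')$ and the bounds on $m''$. The $C^2$ control in \eqref{e.cdm-in} (domination up to order 2) is exactly what this requires.
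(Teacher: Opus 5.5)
Your proposal is correct, and it takes a genuinely different route from the paper. Both arguments use the coupling $P_n=M_n+R_n$ and the dyadic intervals. The paper then works in two steps. First it replaces each count by the sign-change indicator $N^\dagger_{ij}$ (Lemma~\ref{l.root-sign}). Then it compares $N^\dagger_{ij}$ with $R^\dagger_{ij}$ through $\mathbb P(\sgn(g+m)\ne\sgn g)\ll|m|$ at the two endpoints (Lemma~\ref{l.sign-sign}). Your endpoint-crossing event is exactly the paper's second step. What you change is the first step: instead of ``at most one root per interval'' you use the homotopy $g+tm$, so the only other way the count can change is an interior double zero.

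The two routes buy different things. The paper's route uses only endpoint values, but it needs each interval to carry at most one root with high probability. Lemma~\ref{l.nearR} provides this only on windows of radius $\delta_j^{1+\varepsilon}$. On a full dyadic interval, $\mathbb P(N_{ij}\ge 2)$ does not decay: in logarithmic coordinates the normalized process converges to a nondegenerate stationary Gaussian process, which has two zeros in a window of length $\log 2$ with positive probability. So that step has to be run on a partition of mesh $\delta_j^{1+\varepsilon}$; the extra $\delta_j^{-\varepsilon}$ endpoints per level are paid for by $|m|\ll\delta_j^{\theta}$ when $\varepsilon<\theta$. Your argument does not care how many roots an interval contains and works directly at scale $\delta_j$. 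The price is a joint small-ball estimate for $(g,g')$, which is where the order-$2$ domination enters.

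For that estimate you do not need a two-parameter Kac--Rice formula, with its Bulinskaya-type nondegeneracy hypotheses, nor any density bound for $(g,g',g'')$. A double zero of $g+tm$ at $x$ gives $|g(x)|\le|m(x)|\ll\delta_j^{\theta}$ and $|g'(x)|\le|m'(x)|\ll\delta_j^{\theta-1}$. Work on the event $\sup_{J_j}|g''|\le\lambda\delta_j^{-2}$, whose complement has probability at most $e^{-c\lambda^2}$ by Gaussian concentration, and use $|m''|\ll\delta_j^{\theta-2}$. Then the same bounds, up to a factor $\lambda$, hold at the nearest point of a grid of mesh $\delta_j^{1+\theta}$. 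Since $g(x)$ and $g'(x)$ are independent with $\Var[g'(x)]=r_{11}(x,x)\asymp\delta_j^{-2}$ by \eqref{e.r11}, each grid point costs $\ll\lambda^2\delta_j^{2\theta}$. A union bound over $\asymp\delta_j^{-\theta}$ points with $\lambda\asymp\sqrt{\log(1/\delta_j)}$ gives $\ll\delta_j^{\theta}\log(1/\delta_j)$.

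Two smaller points. Theorem~\ref{t.localcount} already bounds $\mathbb E[N\pmb 1_E]$ directly, so Cauchy--Schwarz is unnecessary. The sentence saying a uniform $o(1)$ bound ``already suffices'' contradicts the clause that follows it and should be deleted. As you then note, you need, and obtain, the decay $\delta_j^c$, which sums over $j$ to $\ll a_n^c|\log a_n|^C=o(1)$.
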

   
Granting these lemmas, we now proceed to the proof of Theorem \ref{t.comvar}.

\begin{proof}[Proof of Theorem \ref{t.comvar}] 
We first prove \eqref{e.comvar}. By Lemma~\ref{l.cmean} and the definition of $\phi$,
    \[
    \int_{I\cap U_n} \rho_1(x)dx=\int_{I\cap U_n}\widetilde{\rho}_1(x)dx+o(1)
    \]
and  
\begin{align*}
    \iint_{(I\cap U_n)^2}\rho_1(x)\rho_1(y)dxdy=\iint_{(I\cap U_n)^2}\widetilde{\rho}_1(x)\widetilde{\rho}_1(y)dxdy+o(1).
\end{align*} 
Combining with \eqref{e.cirho2}, we obtain
\begin{align*}
    \Var[N_{P_n}(I\cap U_n)]&= \iint_{(I\cap U_n)^2}[\rho_2(x,y)-\rho_1(x)\rho_1(y)]dxdy+\int_{I\cap U_n}\rho_1(x)dx \\
    &= \iint_{(I\cap U_n)^2}[\widetilde{\rho}_2(x,y)-\widetilde{\rho}_1(x)\widetilde{\rho}_1(y)]dxdy\\
    &\quad +\int_{I\cap U_n}\widetilde{\rho}_1(x)dx +o(1)\\
    &= \Var[N_{R_n}(I\cap U_n)]+o(1),
\end{align*}
proving the claim.

To establish \eqref{e.CLT.Un}, we recall that in \cite{NV22D}*{\S 6}, Nguyen and Vu proved that if $\Var[N_{R_n}(I\cap U_n)]\ge \epsilon \log n$ for some  $\epsilon >0$, then $N_{R_n}(I\cap U_n)$ satisfies the CLT; that is, 
    \[
    \frac{N_{R_n}(I\cap U_n)-\mathbb E[N_{R_n}(I\cap U_n)]}{\sqrt{\Var[N_{R_n}(I\cap U_n)]}}\xrightarrow{d}\mathcal N(0,1).
    \]
Assuming Lemmas \ref{l.cmean} and \ref{l.cvar}, we have
    \[
    \mathbb E[N_{P_n}(I\cap U_n)]=\mathbb E[N_{R_n}(I\cap U_n)]+o(1) \asymp \log n,
    \]
and 
    \[
    \Var[N_{P_n}(I\cap U_n)]=\Var[N_{R_n}(I\cap U_n)]+o(1) \gg  \log n.
    \]
Therefore, as $n\to \infty$, 
\begin{align*}
    &\frac{N_{R_n}(I\cap U_n)-\mathbb E[N_{P_n}(I\cap U_n)]}{\sqrt{\Var[N_{P_n}(I\cap U_n)]}}\\
    &=\frac{N_{R_n}(I\cap U_n)-\mathbb E[N_{R_n}(I\cap U_n)]}{\sqrt{\Var[N_{R_n}(I\cap U_n)]}}(1+o(1))+o(1) \xrightarrow{d} \mathcal N(0,1).
\end{align*}
Thus, \eqref{e.CLT.Un}  follows from Lemma \ref{l.cCLT} and Slutsky's theorem.
\end{proof}

Our next task is to prove Lemmas \ref{l.cmean}, \ref{l.cvar}, and \ref{l.cCLT}.

\begin{proof}[Proof of Lemma \ref{l.cmean}]
Note that
    \[
    \exp\left(-\frac 12 m^2(x)\right)\le 1 \quad \text{and}\quad |\erf\bigg(\frac{m'(x)}{\sqrt{2r_{11}(x,x)}} \bigg)|\le \frac{2}{\sqrt{\pi}}\frac{|m'(x)|}{\sqrt{2r_{11}(x,x)}}.
    \]
So, by \eqref{e.r11} and \eqref{e.cdm-in}, 
\begin{align*}
    |\rho_{1,2}(x)|&=\frac{1}{\sqrt{2\pi}} |m'(x)|\exp\bigg(-\frac 12 m^2(x)\bigg)|\erf\bigg(\frac{m'(x)}{\sqrt{2r_{11}(x,x)}} \bigg)|\\
    &\le \frac{1}{2\pi} |m'(x)| \frac{|m'(x)|}{\sqrt{r_{11}(x,x)}}\\
    &\ll  \frac{\phi^2(1-|x|)}{1-|x|}.
\end{align*}
Now, using $1-x\le e^{-x}\le 1$ for $x\ge 0$, \eqref{e.r11}, and \eqref{e.cdm-in}, we find that
\begin{align*}
    |\rho_{1,1}(x)-\widetilde{\rho}_1(x)|&=\frac{1}{\pi}\sqrt{r_{11}(x,x)}\bigg|1-\exp\bigg(-\frac 12\bigg[m^2(x)+\frac{\left(m'(x)\right)^2}{r_{11}(x,x)}\bigg]\bigg)\bigg|\\
    &\le \frac{1}{2\pi}\sqrt{r_{11}(x,x)} \bigg(m^2(x)+\frac{\left(m'(x)\right)^2}{r_{11}(x,x)}\bigg)\\
    &\ll  \frac{\phi^2(1-|x|)}{1-|x|}.
\end{align*}
Therefore, 
    \[
    |\rho_1(x)-\widetilde{\rho}_1(x)| \le |\rho_{1,1}(x)-\widetilde{\rho}_1(x)|+|\rho_{1,2}(x)| \ll  \frac{\phi^2(1-|x|)}{1-|x|},
    \]
which proves \eqref{e.crho1}. 
        
As a consequence,
\begin{align*}
    \left|\mathbb E[N_{P_n}(I\cap U_n)]-\mathbb E[N_{R_n}(I\cap U_n)]\right| &\le \int_{I\cap U_n}|\rho_1(x)-\widetilde{\rho}_1(x)|dx=o(1).
\end{align*}
proving the assertion.
\end{proof}
    
To prove Lemma \ref{l.cvar}, we first need to estimate $E(x,y)$.
    
\begin{lemma} \label{l.conE} We have
\begin{equation}
    \label{e.conE}
    |E(x,y)-1| \ll  \phi^2(1-|x|)+\phi^2(1-|y|),\quad (x,y)\in J\times J.
\end{equation}
\end{lemma}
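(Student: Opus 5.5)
Since $E(x,y)=\exp(-Q)$ with
\[
Q:=\frac{m^2(x)-2r(x,y)m(x)m(y)+m^2(y)}{2(1-r^2(x,y))}\ge 0,
\]
and $0\le 1-e^{-Q}\le Q$, the plan is to bound $Q$ directly by $\phi^2(1-|x|)+\phi^2(1-|y|)$. The difficulty is that $1-r^2$ can be as small as $\varrho^2(x,y)$, by \eqref{e.1-rr}. We therefore split into the far regime $\varrho(x,y)\ge\varepsilon$ and the near regime $(x,y)\in\mathbb U_\varepsilon$, exactly as in Lemmas~\ref{l.mu} and~\ref{l.cmu}. Here $x,y$ are taken in $J\cap U_n$, which is where \eqref{e.cdm-in} and the kernel estimates of Section~\ref{s.asym.est} are simultaneously available.

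\emph{Far regime.} If $\varrho(x,y)\ge\varepsilon$, then $1-r^2\gg 1$ by \eqref{e.1-rr}. Since $|r|\le 1$, the numerator is at most $2(m^2(x)+m^2(y))$. Hence
\[
Q\ll m^2(x)+m^2(y)\ll\phi^2(1-|x|)+\phi^2(1-|y|)
\]
by \eqref{e.cdm-in} with $i=0$.

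\emph{Near regime.} Here $x,y$ have the same sign and $\varrho\le\varepsilon$. Rewrite the numerator as
\[
(m(x)-m(y))^2+2(1-r)m(x)m(y)
\]
to get
\[
Q=\frac{(m(x)-m(y))^2}{2(1-r^2)}+\frac{m(x)m(y)}{1+r}.
\]
Near the diagonal $r>0$ (by continuity, $r\to1$ as $\varrho\to0$ via \eqref{e.1-rr}). So the second term is at most $|m(x)m(y)|\le\frac12(m^2(x)+m^2(y))\ll\phi^2(1-|x|)+\phi^2(1-|y|)$. For the first term we use the mean value theorem, $m(x)-m(y)=(x-y)m'(z)$ with $z$ between $x$ and $y$. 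By \eqref{e.cdm-in} with $i=1$,
\[
|m'(z)|\ll\phi(1-|z|)/(1-|z|).
\]
Lemma~\ref{l.ez} gives $1-|z|\asymp 1-|x|\asymp 1-|y|\asymp 1-xy$, and monotonicity of $\phi$ gives $\phi(1-|z|)\le\phi(1-|x|)+\phi(1-|y|)$. Therefore
\[
(m(x)-m(y))^2\ll\frac{(x-y)^2}{(1-xy)^2}\bigl[\phi(1-|x|)+\phi(1-|y|)\bigr]^2=\varrho^2(x,y)\bigl[\phi(1-|x|)+\phi(1-|y|)\bigr]^2.
\]
Dividing by $1-r^2\asymp\varrho^2$ cancels the $\varrho^2$ and gives the claim.

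The main obstacle is this near-diagonal cancellation. The naive bound on the numerator is of order $\phi^2$, while the denominator is of order $\varrho^2$, so one must exhibit the difference structure $(m(x)-m(y))^2$ and pair it with the sharp lower bound \eqref{e.1-rr}. A further point to check is that the mean-value point $z$ stays in the region where \eqref{e.cdm-in} holds; this is where the enlargement $J$ is used. If $x,y$ lie near the edges of $I\cap U_n$, we either note that the segment between them lies in $J$, or shrink $\varepsilon$ so that it does.

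Finally, on the reciprocal pieces $U_n^*$ the same argument applies verbatim to $m^*$, using the second half of the domination hypothesis.
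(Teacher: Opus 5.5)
Your proposal is correct and follows essentially the same route as the paper. Both bound $|E-1|$ by the exponent and split at a fixed pseudo-hyperbolic distance (the paper uses $\varrho(x,y)\le 1/3$). In the near regime both separate $\frac{(m(x)-m(y))^2}{2(1-r^2)}$ from $\frac{|m(x)m(y)|}{1+r}$, handling the former by the mean value theorem, Lemma~\ref{l.ez}, monotonicity of $\phi$ and $1-r^2\asymp\varrho^2$, while in the far regime the crude bound $\ll m^2(x)+m^2(y)$ suffices.

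One small simplification: positivity of $r$ near the diagonal needs no continuity argument, since $k(xy)>0$ whenever $x,y$ have the same sign.
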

\begin{proof} We first assume that $\varrho(x,y)\le 1/3$. Using $1\ge e^{-x}\ge 1-x$ for $x\ge 0$, we obtain 
\begin{align*}
    |E(x,y)-1|&=\bigg|\exp\left(-\frac{m^2(x)-2r(x,y)m(x)m(y)+m^2(y)}{2(1-r^2(x,y))}\right)-1\bigg|\\
    & \le \frac{m^2(x)-2r(x,y)m(x)m(y)+m^2(y)}{2(1-r^2(x,y))}\\
    &\le \frac{\left(m(y)-m(x)\right)^2}{2(1-r^2(x,y))} + \frac{|m(x)m(y)|}{1+r(x,y)}.
\end{align*}
By the mean value theorem, \eqref{e.cdm-in}, Lemma \ref{l.ez}, and the monotonicity of $\phi$, we see that, for some $z$ between $x$ and $y$,
\begin{align*}
    |m(y)-m(x)|& =|y-x||m'(z)| \\
    &\ll  |y-x| \frac{\phi(1-|z|)}{1-|z|}\\
    &\ll  \frac{|y-x|}{|1-xy|}\left[\phi(1-|x|)+\phi(1-|y|)\right]\\
    &= \varrho(x,y)\left[\phi(1-|x|)+\phi(1-|y|)\right].
\end{align*}
Combining with \eqref{e.1-rr}, we get
    \[
    \frac{\left(m(y)-m(x)\right)^2}{2(1-r^2(x,y))} \ll  \left[\phi(1-|x|)+\phi(1-|y|)\right]^2 \ll  \phi^2(1-|x|)+\phi^2(1-|y|).
    \]
Since $1+r(x,y)\asymp 1$, it follows from \eqref{e.cdm-in} that
    \[
    \frac{|m(x)m(y)|}{1+r(x,y)} \ll  \phi(1-|x|)\phi(1-|y|)\ll  \phi^2(1-|x|)+\phi^2(1-|y|),
    \]
and hence that
\begin{align*}
        |E(x,y)-1| \ll  \phi^2(1-|x|)+\phi^2(1-|y|).
\end{align*}
    
Let us now consider $\varrho(x,y)>1/3$. Then 
   \[
   \varrho(x,y)\asymp 1 \quad \text{and}\quad \frac{1}{1-r^2(x,y)} \ll  \frac{1}{\varrho^2(x,y)} \asymp 1.
   \]
Therefore, 
\begin{align*}
    |E(x,y)-1|&\le \frac{m^2(x)-2r(x,y)m(x)m(y)+m^2(y)}{2(1-r^2(x,y))}\\
    &\ll  m^2(x)+m^2(y)\\
    &\ll  \phi^2(1-|x|)+\phi^2(1-|y|),
\end{align*}
proving \eqref{e.conE}. 
\end{proof}
    
\begin{proof}[Proof of Lemma \ref{l.cvar}] We first prove \eqref{e.crho2}. For this purpose, we need to estimate $|\rho_2(x,y)-\widetilde{\rho}_2(x,y)|$, where
   \[
   \rho_2(x,y)=\frac{E(x,y)}{\pi^2\sqrt{1-r^2(x,y)}}\sum_{i=1}^5 \rho_{2,i}(x,y)
   \]
and 
   \[
    \widetilde{\rho}_2(x,y) = \frac{\sigma_1\sigma_2}{\pi^2\sqrt{1-r^2(x,y)}}\left(\sqrt{1-\delta^2}+\delta\arcsin \delta\right).
    \]
According to \eqref{e.1-rr} and Lemma \ref{l.sig}, we have 
   \[
   \widetilde{\rho}_2(x,y)\le \frac{\sigma_1\sigma_2}{2\pi \sqrt{1-r^2(x,y)}} \ll  \frac{\varrho(x,y)}{(1-|x|)(1-|y|)}.
   \]
Together with Lemma \ref{l.conE}, we obtain 
\begin{align*}
    \bigg|\frac{E(x,y)}{\pi^2\sqrt{1-r^2(x,y)}}\rho_{2,1}(x,y)-\widetilde{\rho}_2(x,y)\bigg| &\le  \widetilde{\rho}_2(x,y)|E(x,y)-1|\\
    & \ll  \frac{\phi^2(1-|x|)+\phi^2(1-|y|)}{(1-|x|)(1-|y|)}.
\end{align*}
To establish \eqref{e.crho2}, it remains to show that 
\begin{equation}
    \label{e.rho2i} 
    \frac{|E(x,y)|}{\pi^2\sqrt{1-r^2(x,y)}}\sum_{i=2}^5|\rho_{2,i}(x,y)| \ll  \frac{\phi^2(1-|x|)+\phi^2(1-|y|)}{(1-|x|)(1-|y|)}.
\end{equation}
It follows from the proof of Theorem \ref{t.Cvar} that 
    \[
    \sum_{i=2}^5|\rho_{2,i}| \ll  |\mu_1\mu_2| +\sigma_1\sigma_2\left(\nu_1^2+\nu_2^2\right).
    \]
On account of Lemma \ref{l.sig} and Lemma \ref{l.cmu}, we obtain 
   \[
   \nu_j^2 =\frac{\mu_j^2}{\sigma_j^2} \ll  \phi^2(1-|x|)+\phi^2(1-|y|),\quad j=1,2,
   \]
and hence 
   \[
   |\mu_1\mu_2| +\sigma_1\sigma_2\left(\nu_1^2+\nu_2^2\right) \ll  \frac{\varrho^2(x,y)}{(1-|x|)(1-|y|)}\left[\phi^2(1-|x|)+\phi^2(1-|y|)\right].
   \]
Because $|E(x,y)|\le 1$ and $1-r^2(x,y)\asymp \varrho^2(x,y)$,  we deduce that
\begin{align*}
    \frac{|E(x,y)|}{\pi^2\sqrt{1-r^2(x,y)}}\sum_{i=2}^5|\rho_{2,i}(x,y)|&\ll  \frac{\frac{\varrho^2(x,y)}{(1-|x|)(1-|y|)}\left[\phi^2(1-|x|)+\phi^2(1-|y|)\right]}{\varrho(x,y)}\\
    &\ll  \frac{\phi^2(1-|x|)+\phi^2(1-|y|)}{(1-|x|)(1-|y|)},
\end{align*}
and \eqref{e.rho2i} is verified.  

It follows from \eqref{e.crho2} that 
\begin{align*}
      &\iint_{(I\cap U_n)^2}|\rho_2(x,y)-\widetilde{\rho}_2(x,y)|dxdy\\
      &\ll  \iint_{(I\cap U_n)^2}\frac{\phi^2(1-|x|)+\phi^2(1-|y|)}{(1-|x|)(1-|y|)}dxdy\\
      &=o(1),
  \end{align*}
which proves \eqref{e.cirho2}.
\end{proof}

We now turn to the proof of Lemma \ref{l.cCLT}. 
\begin{proof}[Proof of Lemma \ref{l.cCLT}] We first outline the proof. The main strategy is to compare the statistics of $N_{P_n}(I\cap U_n)$ with those of $N_{R_n}(I\cap U_n)$, and then invoke Markov’s inequality. To this end, we partition $I\cap U_n$ into $O(\log n)$ subintervals and show that, on each subinterval, the number of real roots is well approximated by the number of sign changes of the polynomial. We further control the discrepancy between the sign changes of $P_n$ and $R_n$ in terms of the normalized mean $m$, which is small under the assumption that $M_n$ is dominated by $R_n$.

Before proceeding, we introduce the necessary notation. 

Let $\delta_j=a_n/2^j$, for $j=1, \dots, T$, where $T$ is the smallest positive integer such that $a_n/2^T \le B/n$. Then, $\delta_j\ge B/n$ for $j=1,..., T$, and $T\ll \log n$. 

For $j=1,...,T$, let $I_{1j}= [1-2\delta_j, 1-\delta_j)$ and $I_{2j}=-I_{1j}$. Let $N_{ij}$ and $R_{ij}$ represent the number of real roots of $P_n$ and $R_n$, respectively, in the intersection $I\cap I_{ij}$, where $i=1,2$ and $j=1, \dots, T$. Since the family $\{I_{ij}: i=1,2, j=1,...,T\}$ is pairwise disjoint and covers $U_n$, we have 
   \[
   N_{P_n}(I\cap U_n)=\sum_{j=1}^T\sum_{i=1}^2N_{ij} \quad\text{and}\quad N_{R_n}(I\cap U_n)=\sum_{j=1}^T\sum_{i=1}^2R_{ij}.
   \]
   
Let $a_{ij}$ and $b_{ij}$ denote the endpoints of the interval $I\cap I_{ij}$ whenever it is nonempty. If $P_n$ has at most one root in $I\cap I_{ij}$ and does not vanish at $a_{ij}$ and $b_{ij}$, then $N_{ij}=1$ provided that $P_n(a_{ij})$ and $P_n\left(b_{ij}\right)$ have different signs, and $N_{ij}=0$ otherwise. Thus, on $I\cap I_{ij}$, it is reasonable to approximate $N_{ij}$ by the number of sign changes $N_{ij}^\dagger$, defined as follows. Let
\begin{align*}
    N_{ij}^\dagger&=\begin{cases}\frac 12 -\frac 12\sgn\left(P_n(a_{ij})P_n\left(b_{ij}\right)\right)&\text{if } I\cap I_{ij}\ne \emptyset,\\
    0&\text{otherwise},
    \end{cases}\\
    R_{ij}^\dagger&=\begin{cases}\frac 12 -\frac 12\sgn\left(R_n(a_{ij})R_n\left(b_{ij}\right)\right)&\text{if } I\cap I_{ij}\ne \emptyset,\\
    0&\text{otherwise},
    \end{cases}
\end{align*}
    \[
    N^\dagger=\sum_{j=1}^T\sum_{i=1}^2 N_{ij}^\dagger, \quad \text{\and}\quad R^\dagger=\sum_{j=1}^T\sum_{i=1}^2 R_{ij}^\dagger,
    \]
where
    \[
    \sgn(x):=\begin{cases}
    1&\text{if}\quad x>0,\\
    0&\text{if}\quad x=0,\\
    -1&\text{if}\quad x<0.
    \end{cases}
    \]
    
We divide the proof into three steps.

\setcounter{step}{0}
\begin{step}[Approximating the number of real roots by the number of sign changes] The main result in this step is the following lemma.
\begin{lemma} \label{l.root-sign} If $|m''(x)|\ll  (1-|x|)^{-2}$ uniformly on $J$, then there exist positive constants $C_1$ and $c$ such that 
    \[
    \mathbb E [|N_{P_n}(I\cap U_n)-N^\dagger|]\le C_1 a_n^c.
    \]
Consequently, by letting $m\equiv 0$, we have 
    \[
    \mathbb E [|N_{R_n}(I\cap U_n)-R^\dagger|]\le C_1 a_n^c.
    \]
\end{lemma}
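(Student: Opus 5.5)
The plan is to bound the discrepancy interval by interval and sum over the $O(T)=O(\log n)$ pieces $I\cap I_{ij}$. On each piece, $N_{ij}$ and $N_{ij}^\dagger$ have the same parity, and $N_{ij}^\dagger\le 1$, provided $P_n$ does not vanish at the endpoints. Endpoint vanishing is a null event in the Gaussian setting. Consequently $N_{ij}=N_{ij}^\dagger$ whenever $N_{ij}\le 1$, and otherwise $|N_{ij}-N_{ij}^\dagger|\le N_{ij}$. We therefore obtain
\[
\mathbb E|N_{ij}-N_{ij}^\dagger|\le \mathbb E[N_{ij}\pmb 1_{\{N_{ij}\ge 2\}}]\le \big(\mathbb E[N_{ij}^2]\big)^{1/2}\,\mathbb P(N_{ij}\ge 2)^{1/2}.
\]
The whole task reduces to showing that $\mathbb P(N_{ij}\ge 2)\ll \delta_j^{c'}$ for some $c'>0$, with a matching moment bound, and then summing a geometric series in $\delta_j=a_n2^{-j}$.

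\textbf{Moment bound.} For the second moment we invoke Theorem~\ref{t.localcount}, covering $I_{1j}$ by $O(1)$ balls of radius $c'\delta_j$ centered in $I(\delta_j)+(-c\delta_j,c\delta_j)$. This gives $\mathbb E[N_{ij}^2]\ll |\log\delta_j|^{2C_2}$.

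\textbf{Probability of two roots.} This step is the main obstacle, since $I_{1j}$ has length $\delta_j$ and typically contains $\Theta(1)$ roots; we cannot expect $\mathbb P(N_{ij}\ge2)$ to be small at the scale $\delta_j$ itself. Instead, refine the partition. Cover $I_{1j}$ by $K\asymp \delta_j^{-\varepsilon}$ small intervals of length $\eta=\delta_j^{1+\varepsilon}$, each inside a ball $B(x,\eta)$. Because $|m''|\ll(1-|x|)^{-2}$ on $J$, the estimate in Lemma~\ref{l.estvar} (which gives $\Var[R_n'']\asymp (1-|x|)^{-4}\Var[R_n]$) shows $|M_n''|\ll\sqrt{\Var[R_n'']}$ uniformly near $I_{1j}$. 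Hence Lemma~\ref{l.nearR} gives $\mathbb P(N_{P_n}(B(x,\eta))\ge2)\ll\delta_j^{\alpha\varepsilon}$ for any $\alpha<2$. On the complementary event each small interval contains at most one root, so the count equals the number of sign changes along the refined grid. It remains to compare the sign changes on the refined grid with the single sign change between $a_{ij}$ and $b_{ij}$. This fails only if at least two small intervals each contain a root, i.e.\ $N_{ij}\ge2$ with roots in distinct small cells.

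A cleaner alternative avoids the issue altogether by redefining the partition into $T'\asymp \delta^{-\varepsilon}\log n$ pieces of length $\eta_j$. Here we would bound $\mathbb E|N-N^\dagger|\le\sum(\mathbb E N_{\text{cell}}^2)^{1/2}\mathbb P(N_{\text{cell}}\ge2)^{1/2}\ll\sum_j \delta_j^{-\varepsilon}|\log\delta_j|^{C}\delta_j^{\alpha\varepsilon/2}$, and choosing $\alpha$ near $2$ gives no gain. I would therefore instead work directly with $\mathbb P(N_{ij}\ge2)$ at scale $\delta_j$, using Theorem~\ref{t.multi.roots}-type Rolle arguments: split $I_{1j}$ into subintervals with $\log\frac{1-a}{1-b}=\delta$ small, and use $\mathbb P(N\ge k)\le C_1(\beta\delta)^{2k/3}$ there. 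If the definition of $N^\dagger$ must stay at scale $\delta_j$, the fallback is to accept the coarse intervals and show $\mathbb E|N_{ij}-N^\dagger_{ij}|\ll \delta_j^{c}$ by combining Lemma~\ref{l.nearR} on cells with a Kac--Rice two-point bound. Concretely, the bound $\rho_2(x,y)\ll \varrho(x,y)/((1-|x|)(1-|y|))$ from Lemma~\ref{l.cvar} shows that $\mathbb E[N_{ij}(N_{ij}-1)]$ restricted to pairs with $\varrho(x,y)\le\delta_j^{\varepsilon}$ is $O(\delta_j^{2\varepsilon})$. Pairs that are well separated within the same dyadic piece are then handled by the sign-change parity argument.

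\textbf{Summation.} Summing over $j$ gives $\sum_j\delta_j^{c}|\log\delta_j|^{C}\ll a_n^{c/2}$, which is the claim. The statement for $R_n$ follows by specializing to $m\equiv0$.
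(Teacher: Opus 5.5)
There is a genuine gap, at exactly the obstacle you identified. Everything reduces to $\mathbb E[N_{ij}\pmb 1_{\{N_{ij}\ge 2\}}]\ll\delta_j^{c}$ for the dyadic block $I\cap I_{ij}$ itself, and none of your routes delivers it.

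\emph{Why your routes fail.} Lemma~\ref{l.nearR}, and likewise Theorem~\ref{t.multi.roots} (which is moreover only stated for $I\subset[1-B/n,1)$), controls two roots inside one small cell. Neither controls one root in each of two distinct cells of the same block. Your closing claim that well-separated pairs are ``handled by the sign-change parity argument'' is false: parity only gives $N_{ij}\equiv N_{ij}^\dagger \pmod 2$, so two roots in a block give $N_{ij}=2$ but $N_{ij}^\dagger=0$.

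\emph{The estimate, and the statement, are false at scale $\delta_j$.} Take $m\equiv 0$ and $I\supset[0,1)$. By \eqref{e.1-rr}, Lemma~\ref{l.sig} and $\sqrt{1-\delta^2}+\delta\arcsin\delta\ge 1$, one has $\widetilde\rho_2(x,y)\asymp\varrho(x,y)/((1-|x|)(1-|y|))$ on $I_{1j}^2$. Since $I_{1j}$ has fixed length $\log 2$ in the variable $-\log(1-x)$, this gives $\mathbb E[R_{1j}(R_{1j}-1)]\asymp 1$. Almost surely $R_{1j}-R_{1j}^\dagger\ge (R_{1j}-1)_+$, so Cauchy--Schwarz and Theorem~\ref{t.localcount} give
\[
\mathbb E\big[R_{1j}-R_{1j}^\dagger\big]\ \ge\ \frac{\big(\mathbb E[R_{1j}(R_{1j}-1)]\big)^2}{\mathbb E[R_{1j}^3]}\ \gg\ |\log\delta_j|^{-3C_2}\ \gg\ (\log n)^{-3C_2}.
\]
Every block error $R_{ij}-R_{ij}^\dagger$ is nonnegative, so the errors cannot cancel. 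This single block already forces $\mathbb E|N_{R_n}(I\cap U_n)-R^\dagger|\gg(\log n)^{-3C_2}$, which is not $O(a_n^c)$.

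The paper's own proof has the same hole. It invokes Lemma~\ref{l.nearR}, which concerns balls of radius $\delta_j^{1+\varepsilon}$, to bound $\mathbb P(N_{P_n}(J\cap I_{ij})\ge 2)$ on an interval of length $\asymp\delta_j$.

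\emph{What can be proved instead.} The true statement, which is all that Lemma~\ref{l.cCLT} needs, is the version with $N^\dagger$ and $R^\dagger$ built on a refined grid. That is your abandoned ``cleaner alternative''. The loss you found there comes only from Cauchy--Schwarz. Use Theorem~\ref{t.localcount} in its event form together with Lemma~\ref{l.nearR}: a cell of length $\delta_j^{1+\varepsilon}$ in block $j$ then satisfies
\[
\mathbb E[N_{\mathrm{cell}}\pmb 1_{\{N_{\mathrm{cell}}\ge 2\}}]\ll\delta_j^M+|\log\delta_j|^{C_2}\delta_j^{\alpha\varepsilon}.
\]
The $\asymp\delta_j^{-\varepsilon}$ cells of block $j$ therefore contribute $\ll|\log\delta_j|^{C_2}\delta_j^{(\alpha-1)\varepsilon}$, which sums over $j$ to $O(a_n^c)$ once $\alpha\in(1,2)$.

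In the Gaussian setting you can alternatively use $N-N^\dagger\le N(N-1)$ together with $\rho_2\ll\varrho/((1-|x|)(1-|y|))$. That bound holds under the hypotheses of Theorem~\ref{t.comvar} and gives $O(\delta_j^{3\varepsilon})$ per cell.

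The cost moves to Lemma~\ref{l.sign-sign}, which must be redone on the new grid. There are now $\asymp\delta_j^{-\varepsilon}$ endpoints per block, each costing $\ll\phi(\delta_j)\asymp\delta_j^{\theta}$, for a total $\ll a_n^{\theta-\varepsilon}$ when $\varepsilon<\theta$. So $\mathbb E|N_{P_n}(I\cap U_n)-N_{R_n}(I\cap U_n)|\ll a_n^c$ still holds, and Lemma~\ref{l.cCLT} follows.
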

\begin{proof} We first show that there exists a positive constant $\varepsilon$ such that for $i=1,2$ and $j=1, \dots, T$, 
\begin{equation}
    \label{e.sign-approx}
    \mathbb E[|N_{ij}-N_{ij}^\dagger|] \ll  \delta_j^{\varepsilon}.
\end{equation}
If $I\cap I_{ij}=\emptyset$, then $N_{ij}=N_{ij}^\dagger=0$, hence \eqref{e.sign-approx} is trivial. Otherwise, since $N_{ij}^\dagger\in \{0,1/2,1\}$, it follows that 
\begin{align*}
   \mathbb E[|N_{ij}-N_{ij}^\dagger|] \le  \mathbb E[|N_{ij}-N_{ij}^\dagger|\pmb 1_{E_{ij}^c}] +\mathbb E[N_{ij}\pmb 1_{E_{ij}}],
\end{align*}
where $E_{ij}:=\{N_{ij}\ge 2\}$ and $E_{ij}^c=\{N_{ij}\le 1\}$. Since $P_n(x)$ is a Gaussian random variable for any $x$, we have 
    \[\mathbb P\left(P_n(a_{ij})P_n\left(b_{ij}\right)=0\right)=0.\]
If $P_n(a_{ij})P_n\left(b_{ij}\right)\ne 0$ and $N_{ij}\le 1$, then $N_{ij}=N_{ij}^\dagger$. Thus, 
    \[
    \mathbb E[|N_{ij}-N_{ij}^\dagger|\pmb 1_{E_{ij}^c}]=0.
    \]
Since $E_{ij}\subset \{N_{P_n}(J\cap I_{ij})\ge 2\}$, Lemma~\ref{l.nearR} yields the estimate 
    \[
    \mathbb P(E_{ij})\le \mathbb P(N_{P_n}(J\cap I_{ij})\ge 2) \le C_1' \delta_j^{1.5\varepsilon}
    \]
for some constants $C_1'>0$ and $\varepsilon>0$. Combining this with Theorem \ref{t.localcount}, we conclude that
    \[
    \mathbb E[N_{ij}\pmb 1_{E_{ij}}] \le C_1\left(\delta_j^M+|\log \delta_j|^{C_2}\delta_j^{1.5\varepsilon}\right)
    \]
for any $M>0$ and some $C_1, C_2>0$. By choosing $M>\varepsilon$, we deduce that 
    \[
    \mathbb E[N_{ij}\pmb 1_{E_{ij}}] \ll  \delta_j^{\varepsilon}.
    \]
Bringing these estimates together yields \eqref{e.sign-approx}.

Finally, using the triangle inequality and \eqref{e.sign-approx}, we find that 
\begin{align*}
    \mathbb E [|N_{P_n}(I\cap U_n)-N^\dagger|] \le \sum_{j=1}^{T}\sum_{i=1}^2\mathbb E[|N_{ij}-N_{ij}^\dagger|]
    \ll  T a_n^{\varepsilon}
    \ll  a_n^{c},
\end{align*}
for some $c\in \left(0, \varepsilon\right)$. This completes the proof of Lemma \ref{l.root-sign}.
    \end{proof}
\end{step}

\begin{step}[Approximating the sign changes of $P_n$ with those of $R_n$] This step aims to prove the following lemma.
\begin{lemma} \label{l.sign-sign} Under the assumptions of Theorem \ref{t.comvar}, there exist positive constants $C_1$ and $c$ such that 
   \[
   \mathbb E[|N^\dagger-R^\dagger|] \le C_1a_n^c.
   \]
\end{lemma}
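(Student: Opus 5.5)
We will bound each summand separately and then sum over the dyadic blocks:
\[
\mathbb E[|N^\dagger-R^\dagger|]\le \sum_{j=1}^T\sum_{i=1}^2 \mathbb E[|N_{ij}^\dagger-R_{ij}^\dagger|].
\]
Every summand takes values in $\{0,\tfrac12,1\}$. Moreover, it vanishes whenever $P_n$ and $R_n$ have the same sign at both endpoints $a_{ij}$ and $b_{ij}$. Hence
\[
\mathbb E[|N_{ij}^\dagger-R_{ij}^\dagger|]\le \mathbb P\big(\sgn P_n(a_{ij})\ne \sgn R_n(a_{ij})\big)+\mathbb P\big(\sgn P_n(b_{ij})\ne \sgn R_n(b_{ij})\big).
\]
So the task reduces to a one-point estimate. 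For a single point $x\in J$ (here $x=a_{ij}$ or $b_{ij}$), we need to bound the probability that the sign of $P_n(x)=M_n(x)+R_n(x)$ differs from the sign of $R_n(x)$.

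Normalize by $\sqrt{\Var[R_n(x)]}$. In the Gaussian setting, $g:=R_n(x)/\sqrt{\Var[R_n(x)]}$ is standard normal, and $P_n(x)/\sqrt{\Var[R_n(x)]}=g+m(x)$. The two signs can differ only if $|g|\le |m(x)|$. Therefore
\[
\mathbb P\big(\sgn P_n(x)\ne \sgn R_n(x)\big)\le \mathbb P(|g|\le |m(x)|)\le \sqrt{2/\pi}\,|m(x)|\ll \phi(1-|x|).
\]
The last inequality is hypothesis \eqref{e.cdm-in} with $i=0$. This hypothesis is available because the endpoints $a_{ij},b_{ij}$ lie in $I\cap U_n\subset J$. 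The endpoints $a_{ij},b_{ij}$ lie in $I_{ij}$, so $1-|x|\asymp\delta_j$ at them. This gives
\[
\mathbb E[|N_{ij}^\dagger-R_{ij}^\dagger|]\ll \phi(\delta_j)\ll \delta_j^{\theta}.
\]

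Summing over $j$, the terms form a geometric series, because $\delta_j=a_n2^{-j}$:
\[
\sum_{j=1}^T\delta_j^\theta\ll a_n^\theta.
\]
This yields the lemma with $c=\theta$. We will also record that the set of $x$ with $R_n(x)=0$ or $P_n(x)=0$ has probability zero, so the $\sgn(0)$ convention plays no role.

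The only point needing care is the possibility that the statement is meant for general (non-Gaussian) coefficients. Since this section works under the standing Gaussian assumption of Section \ref{s.Gaussian.case}, the anti-concentration bound for a standard normal suffices. In a non-Gaussian setting, we would instead use a small-ball estimate for $R_n(x)/\sqrt{\Var[R_n(x)]}$ at scale $\phi(\delta_j)$, via Berry--Esseen under \ref{A1}. That estimate would give $\phi(\delta_j)+\delta_j^{c'}$, which still sums to $O(a_n^c)$. This small-ball estimate is the step I expect to require the most attention, but in the Gaussian case it is immediate.
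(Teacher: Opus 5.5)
Your proposal is correct and follows the paper's argument. You reduce to sign disagreements between $P_n$ and $R_n$ at the endpoints $a_{ij},b_{ij}$, bound each disagreement probability by Gaussian anti-concentration as $\ll |m(\cdot)|\ll \phi(1-|\cdot|)$, and sum over the dyadic blocks. The only cosmetic difference is the final summation: you sum $\sum_j \delta_j^{\theta}\ll a_n^{\theta}$ directly as a geometric series, whereas the paper compares the same sum with $\int_{U_n}\phi(1-|x|)(1-|x|)^{-1}\,dx$.
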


\begin{proof} Without loss of generality, we may assume that $I\cap I_{ij}\ne \emptyset$ for all $j=1,...,T$ and $i=1,2$. We begin by proving that for $j=1,...,T$ and $i=1,2$,
\begin{equation}\label{e.sign-Ij}
    \mathbb E[|N_{ij}^\dagger-R_{ij}^\dagger|]\ll  |m(a_{ij})|+|m(b_{ij})|.
\end{equation}
Let 
\begin{align*}
    X_1&=\frac{P_n(a_{ij})}{\sqrt{\Var[P_n(a_{ij})]}}, & X_2&=\frac{P_n\left(b_{ij}\right)}{\sqrt{\Var[P_n\left(b_{ij}\right)]}},\\
    Y_1&=\frac{R_n(a_{ij})}{\sqrt{\Var[R_n(a_{ij})]}}, & Y_2&=\frac{R_n\left(b_{ij}\right)}{\sqrt{\Var[R_n\left(b_{ij}\right)]}}.
\end{align*}
Since $|\sgn(x)|\le 1$ for all $x$ and $\sgn(x)$ is multiplicative, it follows that  
\begin{align*}
    |N_{ij}^\dagger-R_{ij}^\dagger|&=\frac{1}{2}|\sgn\left(X_1X_2\right)-\sgn\left(Y_1Y_2\right)|\\
     &\le \frac{1}{2}|\sgn\left(X_1\right)-\sgn\left(Y_1\right)|+\frac{1}{2}|\sgn\left(X_2\right)-\sgn\left(Y_2\right)|.
\end{align*}

Note that $X_1=Y_1+m(a_{ij})$ and that $|\sgn\left(X_1\right)-\sgn\left(Y_1\right)| \in \{0, 1, 2\}$. If $m(a_{ij})=0$, then $X_1=Y_1$. Hence, we may assume  $m(a_{ij})\ne 0$. In this case, $|\sgn\left(X_1\right)-\sgn\left(Y_1\right)|>0$ only if $X_1Y_1\le 0$, and thus this occurs with probability $\mathbb P(X_1Y_1\le 0)$. Since $Y_1\sim \mathcal N(0,1)$, it follows that 
\[
\mathbb P(X_1Y_1\le 0) \le \mathbb P(-m(a_{ij})\le Y_1 \le 0)+\mathbb P(0\le Y_1 \le -m(a_{ij}))\ll |m(a_{ij})|.
\]
Consequently, 
    \[
    \mathbb E[|\sgn\left(X_1\right)-\sgn\left(Y_1\right)|]\ll  |m(a_{ij})|.
    \]
Similarly, 
    \[
    \mathbb E\left[|\sgn\left(X_2\right)-\sgn\left(Y_2\right)|\right] \ll  |m(b_{ij})|.
    \]
Therefore, 
\begin{align*}
    \mathbb E[|N_{ij}^\dagger-R_{ij}^\dagger|] &\le \frac{1}{2}\mathbb E[|\sgn\left(X_1\right)-\sgn\left(Y_1\right)|]+\frac{1}{2}\mathbb E[|\sgn\left(X_2\right)-\sgn\left(Y_2\right)|]\\
    & \ll |m(a_{ij})|+|m(b_{ij})|,
\end{align*}
proving \eqref{e.sign-Ij}.

Finally, by the triangle inequality together with \eqref{e.sign-Ij}, we obtain
\begin{align*}
    \mathbb E[|N^\dagger-R^\dagger|]& \le \sum_{j=1}^{T}\sum_{i=1}^2\mathbb E[|N_{ij}^\dagger-R_{ij}^\dagger|] \ll \sum_{j=1}^{T}\sum_{i=1}^2 (|m(a_{ij})|+|m(b_{ij})|).
\end{align*}
It follows from the assumptions that 
\[
|m(a_{ij})|\le \phi(1-|a_{ij}|),\quad \int_{U_n}\frac{\phi(1-|x|)}{1-|x|}dx \ll a_n^\theta,
\]
and that $\frac{\phi(1-|x|)}{1-|x|}$ is monotone on $U_n$. Writing the sum as a Riemann sum with mesh sizes $\delta_j$ and using monotonicity to compare with the corresponding integral, we obtain
\begin{align*}
    \sum_{j=1}^{T}\sum_{i=1}^2|m(a_{ij})|&\ll  \sum_{j=1}^{T}\sum_{i=1}^2\frac{|\phi(1-|a_{ij}|)|}{1-|a_{ij}|}\delta_j\\
    &\ll  \int_{U_n}\frac{\phi(1-|x|)}{1-|x|}dx+O(a_n)\\
    &\ll a_n^\theta+O(a_n).
\end{align*}
Similarly, 
    \[
    \sum_{j=1}^{T}\sum_{i=1}^2|m\left(b_{ij}\right)|\ll a_n^\theta+O(a_n),
    \]
and the lemma follows.
\end{proof}
\end{step}

\begin{step}[Proving \eqref{e.compareroots}]  By the triangle inequality, Lemma \ref{l.root-sign}, and Lemma \ref{l.sign-sign},
\begin{align*}
    \mathbb E[|N_{P_n}(I\cap U_n)-N_{R_n}(I\cap U_n)|] &\ll  \mathbb E[|N_{P_n}(I\cap U_n)-N^\dagger|]+\mathbb E[|N^\dagger-R^\dagger|]\\
    &\quad +\mathbb E[|R^\dagger-N_{R_n}(I\cap U_n)|]\\
    &\ll a_n^c.
\end{align*}
Using Markov's inequality, we see that for any $\varepsilon>0$,
    \[
    \mathbb P\left(\frac{|N_{P_n}(I\cap U_n)-N_{R_n}(I\cap U_n)|}{\sqrt{\log n}}\ge \varepsilon \right) \ll \frac{a_n^c}{\varepsilon \sqrt{\log n}} \to 0,
    \]
which implies \eqref{e.compareroots}.
\end{step}
This completes the proof of Lemma \ref{l.cCLT}.
\end{proof}
\subsection{Fluctuations of real roots via reciprocal polynomials} This subsection is devoted to the proofs of Theorems \ref{t.varcomp} and \ref{t.cltcomp} for $I\backslash [-1,1]$. In particular, we study the real roots of $P_n$ in the core regions outside the interval $[-1,1]$. 

To study the real roots of $P_n$ in $U_n^*$, we will work with the reciprocal polynomial 
    \[P_n^*(x):=x^nP_n(1/x),\] 
which transforms roots of $P_n$ in $\left(-\infty,-1\right)\cup \left(1,\infty\right)$ into roots of $P_n^*$ in $\left(-1,1\right)$. 

We begin with asymptotic estimates for the mean and the variance of $P_n^*$. Note that 
    \[
    P_n^*(x)=\sum_{j=0}^n \left(m_{n-j}+v_{n-j}\xi_{n-j}\right)x^j=M_n^*(x)+R_n^*(x).
    \]
Let $k^*(x)$ denote the corresponding variance function of $R_n^*$, 
    \[
    k^*(x)=\sum_{j=0}^n v_{n-j}^2x^j=x^nk\left(1/x\right). 
    \]
By conditions \ref{A2} and \ref{A3},
    \[
    \frac{v_{n-j}^2}{n^{2\tau}}\asymp 1+O(j/n),\quad j\ge 0.
    \]
Thus, it holds uniformly for $x\in U_n$,
    \begin{align*}
        k^*(x)=n^{2\tau}\sum_{j=0}^n \frac{v_{n-j}^2}{n^{2\tau}}x^j
        &\asymp n^{2\tau}\sum_{j=0}^n \big(1+O(j/n)\big)x^j\\
        &\asymp \frac{n^{2\tau}}{1-x}\bigg(1+O\bigg(\frac{1}{n(1-x)}\bigg)\bigg).
    \end{align*}
Since $n(1-x)\ge B$, it follows that 
\begin{equation}\label{e.k*}
    k^*(x) \asymp \frac{n^{2\tau}}{1-x},\quad x\in U_n.  
\end{equation}
Similarly, for $1\le i\le 4$, it holds that 
\begin{equation}\label{e.djk*}
    {k^*}^{(i)}(x) \asymp \frac{n^{2\tau}}{(1-x)^{i+1}},\quad x\in U_n.
\end{equation}
By \ref{A2}, we also have 
    \[
    {M_n^*}^{(i)}(x) \ll  \frac{n^{\tau}}{(1-x)^{i+1}},\quad x\in U_n.
    \]
Then
    \[
    m^*(x)=\frac{\mathbb E[P_n^*(x)]}{\sqrt{\Var[P_n^*(x)]}}=\frac{M_n^*(x)}{\sqrt{k^*(x^2)}}, 
    \]
and hence, for $0\le i\le 2$,
\begin{equation*}
    {m^*}^{(i)}(x) \ll  \frac{1}{(1-|x|)^{i+\frac{1}{2}}},\quad x\in U_n.
\end{equation*}
We will denote by $r^*$,  $\sigma_1^*$, $\sigma_2^*$, $\mu_1^*$, $\mu_2^*$,  the analogous quantities. We have 
    \[
    r^*(x,y) =\frac{k^*(xy)}{\sqrt{k^*(x^2)k^*(y^2)}}\asymp \sqrt{\alpha},\quad (x,y)\in S_n\times S_n.
    \]
An argument similar to the previous treatment for $P_n$ (see Section~\ref{s.8g}), specialized to the case $\tau=0$, shows that for $(x,y)\in U_n\times U_n$,
\begin{align*}
    1-\left(r^*(x,y)\right)^2 &\asymp \varrho^2(x,y), &r_{11}^*(x,x) &\asymp \frac{1}{(1-|x|)^2},\\
    \sigma_1^* &\asymp \frac{\varrho(x,y)}{1-|x|}, &\sigma_2^* &\asymp \frac{\varrho(x,y)}{1-|y|}.
\end{align*}
The same conclusion can be drawn for $\mu_1^*$ and $\mu_2^*$. That is, it holds uniformly for $(x,y)\in U_n\times U_n$ that 
\begin{equation*}
    \mu_1^* \ll  \frac{\varrho(x,y)}{1-|x|}\bigg(\frac{1}{\sqrt{1-|x|}}+\frac{1}{\sqrt{1-|y|}}\bigg)
\end{equation*}
and 
\begin{equation*}
    \mu_2^* \ll  \frac{\varrho(x,y)}{1-|y|}\bigg(\frac{1}{\sqrt{1-|x|}}+\frac{1}{\sqrt{1-|y|}}\bigg).
\end{equation*}
Assume, in addition, that uniformly for $x\in U_n$,
\begin{equation*}
    |{m^*}^{\left(j\right)}(x)| \ll  \frac{\phi(1-|x|)}{(1-|x|)^j},\quad i=0,1,2.
\end{equation*}
Then, uniformly for $(x,y)\in U_n\times U_n$,
\begin{equation*}
    \mu_1^* \ll  \frac{\varrho(x,y)}{1-|x|}[\phi(1-|x|)+\phi(1-|y|)],
\end{equation*}
and 
\begin{equation*}
    \mu_2^* \ll  \frac{\varrho(x,y)}{1-|y|}[\phi(1-|x|)+\phi(1-|y|)].
\end{equation*}

Observing that $N_{P_n^*}(U_n)=N_{P_n}(U_n^*)$ and $N_{R_n^*}(U_n)=N_{R_n}(U_n^*)$, we obtain the following theorem. Its proof follows verbatim the arguments used in the proofs of Theorem \ref{t.Cvar} and Theorem \ref{t.comvar}, and is therefore omitted.  

\begin{theorem}\label{t.Cvar*} Let $I$ be an interval and let $J$ be an enlargement of $I$. 
\begin{enumerate}
    \item There exists a constant $C>0$ such that if
\begin{equation}
    \label{e.cm*}
    |m^*\left(1/y\right)|\ge C|\log(1-|1/y|)|^{1/2},\quad y\in I\cap U_n^*,
\end{equation}
then 
\begin{equation}
    \label{e.Cvar*}
    \Var[N_{P_n}(I\cap U_n^*)] =o(1).
\end{equation}
\item Fix constants $C,\theta>0$, and let $\phi(x)=Cx^{\theta}$ for $x\in [0,1]$. Assume that uniformly for $y\in J$, 
\begin{equation}
    \label{e.djm-out}
    {m^*}^{(i)}\left(1/y\right) \ll  \frac{\phi(1-|1/y|)}{(1-|1/y|)^i},\quad i=0,1,2.
\end{equation}
Then
\begin{equation}
    \label{e.comvar*}
    \Var[N_{P_n}(I\cap U_n^*)] = \Var[N_{R_n}(I\cap U_n^*)]+o(1). 
\end{equation}
Furthermore, if $\Var[N_{R_n}(I\cap U_n^*)]\ge \epsilon \log n$ for some constant $\epsilon >0$, then $N_{P_n}(I\cap U_n^*)$ satisfies the CLT.
\end{enumerate}
\end{theorem}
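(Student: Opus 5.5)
The plan is to transfer the statement to the interval $(-1,1)$ using the reciprocal polynomial. Then I would repeat the proofs of Theorem \ref{t.Cvar} and Theorem \ref{t.comvar} verbatim, with every starred quantity replacing its unstarred counterpart.

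\textbf{Reduction.} The map $y\mapsto 1/y$ is a bijection from $U_n^*$ onto $U_n$. It sends $I\cap U_n^*$ to $I^{-1}\cap U_n$, and it sends roots of $P_n$ in $U_n^*$ to roots of $P_n^*$ in $U_n$ (no root is at $0$). Hence
\[
N_{P_n}(I\cap U_n^*)=N_{P_n^*}(I^{-1}\cap U_n), \qquad N_{R_n}(I\cap U_n^*)=N_{R_n^*}(I^{-1}\cap U_n).
\]
Writing $x=1/y$, hypothesis \eqref{e.cm*} becomes $|m^*(x)|\ge C|\log(1-|x|)|^{1/2}$ on $I^{-1}\cap U_n$, which is exactly \eqref{e.cm} for $m^*$. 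Likewise \eqref{e.djm-out} becomes \eqref{e.cdm-in} for $m^*$ on $J^{-1}$. Here I need that $J^{-1}$ is still an enlargement of $I^{-1}$ near $\pm1$. This holds because $y\mapsto 1/y$ is bi-Lipschitz near $\pm1$ and $|1-|1/y||\asymp|1-|y||$. The normalized process $P_n^*/\sqrt{k^*(x^2)}$ is Gaussian and satisfies Condition \ref{c.GP}, so Theorem \ref{t.rho2}, Lemma \ref{l.rho1} and Lemma \ref{l.var} apply to it.

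\textbf{Input estimates.} The proofs of Theorems \ref{t.Cvar} and \ref{t.comvar} use only the following facts. First, $1-r^2\asymp\varrho^2$ and $r_{11}(x,x)\asymp(1-|x|)^{-2}$. Second, $\sigma_1\asymp\varrho/(1-|x|)$, $\sigma_2\asymp\varrho/(1-|y|)$, and $|r_{10}|\ll\varrho/(1-x^2)$ (Lemma \ref{l.dr}). Third, the second-order Taylor bound \eqref{e.1r10}. Fourth, the derivative bounds on $m$ in \eqref{e.djm}. All four were recorded above for the starred quantities, via \eqref{e.k*}, \eqref{e.djk*} and the $\tau=0$ analogue of Section \ref{s.8g}. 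Feeding them into Lemma \ref{l.mu} and Lemma \ref{l.cmu} gives the stated bounds on $\mu_1^*,\mu_2^*$. The remaining ingredient is Lemma \ref{l.ez}, which is purely geometric and needs no change.

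\textbf{Part (1).} I would follow the proof of Theorem \ref{t.Cvar}. The bound $E^*\le (1-|x|)^{C^2/4}(1-|y|)^{C^2/4}$ together with the bounds on $\rho^*_{2,i}$ gives $\iint\rho_2^*=o(1)$ once $C\ge2$. The one-point integral $\int\rho_1^*$ is $o(1)$ once $C>\sqrt3$. Together these give \eqref{e.Cvar*}.

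\textbf{Part (2).} I would follow the proofs of Lemmas \ref{l.cmean}, \ref{l.conE} and \ref{l.cvar} to compare $\rho_1^*,\rho_2^*$ with their centered versions $\widetilde\rho_1^*,\widetilde\rho_2^*$. The errors are $\phi^2(1-|x|)/(1-|x|)$ and $[\phi^2(1-|x|)+\phi^2(1-|y|)]/((1-|x|)(1-|y|))$ respectively. Both are integrable over $U_n$ with total mass $O(a_n^{2\theta})=o(1)$, which gives \eqref{e.comvar*}.

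For the CLT I would rerun Lemma \ref{l.cCLT}, using the sign-change approximation for $P_n^*$ and $R_n^*$. Lemma \ref{l.nearR}(2) and the $P_n^*$ case of Theorem \ref{t.localcount} supply the multiple-root and moment bounds. I would then combine the result with the Nguyen--Vu CLT for $N_{R_n^*}$, which is the centered statement for $N_{R_n}(I\cap U_n^*)$, and apply Slutsky's theorem.

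\textbf{Main obstacle.} The step needing real care is the claim that the starred covariance asymptotics hold uniformly on all of $U_n$, not only on $S_n\times S_n$. The kernel $k^*$ has its coefficients concentrated near index $n$, and it lacks the exact structure behind Lemma \ref{l.DN}. I would handle this as in Section \ref{s.8g}. Write $A^*$, $a^*$ and $A^*a^*-(B^*)^2$ as sums of nonnegative squares via \eqref{e.L}. Then replace $v_{n-j}^2$ by $n^{2\tau}$ up to constants, using \ref{A2} and \ref{A3}; the indices $j>n-N_0$ are negligible since $n(1-x)\ge B$. This reduces everything to the Kac case $\tau=0$, where Lemma \ref{l.DN} applies.
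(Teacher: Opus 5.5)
Your proposal is correct and is the paper's own argument: pass to the reciprocal polynomial via $N_{P_n}(I\cap U_n^*)=N_{P_n^*}(I^{-1}\cap U_n)$, then rerun the proofs of Theorems~\ref{t.Cvar} and~\ref{t.comvar} with the starred estimates recorded just before the statement (the paper omits these details as verbatim). Two minor slips do not affect the argument:
\begin{itemize}
\item The integrated two-point error is $O(a_n^{2\theta}\log n)$, not $O(a_n^{2\theta})$, because $\int_{U_n}(1-|y|)^{-1}dy\ll\log n$; this is still $o(1)$ since $a_n\ll_A\log^{-A}n$.
\item For $\tau\ne 0$, \ref{A2}--\ref{A3} give $v_{n-j}^2\asymp n^{2\tau}$ only when $n-j\gg n$. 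So the tail you discard using $n(1-|x|)\ge B$ must be all $j>n/2$ (say), not just $j>n-N_0$.
\end{itemize}
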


\subsection{Completion of the proofs of Theorems \ref{t.varcomp} and \ref{t.cltcomp}} We conclude the proofs of Theorems \ref{t.varcomp} and \ref{t.cltcomp} by studying the contribution of real roots of $P_n$ lying in the region $\mathcal I_n=U_n\cup U_n^*$, as formalized in the following theorem.
\begin{theorem}\label{t.comvarclt} Let $I$ be an interval and let $J$ be an enlargement of $I$.
\begin{enumerate}
    \item There exists a constant $C>0$ such that, if conditions \eqref{e.cm} and \eqref{e.cm*} hold, then 
\begin{equation} \label{e.varo1}
    \Var[N_{P_n}(I\cap \mathcal I_n)]=o(1). 
\end{equation}
\item Fix constants $C,\theta>0$, and let $\phi(x)=Cx^{\theta}$ for $x\in [0,1]$. If \eqref{e.cdm-in} holds uniformly for $x\in J\cap [-1,1]$ and \eqref{e.djm-out} holds uniformly for $y\in J\backslash [-1,1]$, then 
\begin{equation} \label{e.varPR}
    \Var[N_{P_n}\left(I\cap \mathcal I_n\right)] = \Var[N_{R_n}\left(I\cap \mathcal I_n\right)]+o(1). 
\end{equation}
If, in addition, there exists a constant $\epsilon > 0$ such that 
\[
\Var[N_{R_n}\left(I\cap \mathcal I_n\right)]\ge \epsilon \log n,
\]
then $N_{P_n}\left(I\cap \mathcal I_n\right)$ satisfies the CLT.
\end{enumerate}
\end{theorem}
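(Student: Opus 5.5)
We would obtain Theorem~\ref{t.comvarclt} by assembling the four pieces $I\cap I_n$, $I\cap(-I_n)$, $I\cap I_n^{-1}$, $I\cap(-I_n^{-1})$. Theorems~\ref{t.Cvar}, \ref{t.comvar} and \ref{t.Cvar*} already treat $I\cap U_n$ and $I\cap U_n^*$ separately. What remains is to control the cross terms between these regions.

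For part (1), write $N_{P_n}(I\cap\mathcal I_n)=N_{P_n}(I\cap U_n)+N_{P_n}(I\cap U_n^*)$. Theorem~\ref{t.Cvar} gives $\Var[N_{P_n}(I\cap U_n)]=o(1)$ and Theorem~\ref{t.Cvar*}(1) gives $\Var[N_{P_n}(I\cap U_n^*)]=o(1)$. The bound $\Var[X+Y]\le 2\Var[X]+2\Var[Y]$ then yields \eqref{e.varo1}. Nothing subtle arises here.

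For part (2), we expand
\[
\Var[N_{P_n}(I\cap\mathcal I_n)]=\Var[N_{P_n}(I\cap U_n)]+\Var[N_{P_n}(I\cap U_n^*)]+2\Cov\bigl[N_{P_n}(I\cap U_n),N_{P_n}(I\cap U_n^*)\bigr],
\]
and the same identity for $R_n$. The diagonal terms match up to $o(1)$ by Theorem~\ref{t.comvar} and Theorem~\ref{t.Cvar*}(2). For the covariance, we use the Kac--Rice representation $\Cov=\iint_{(I\cap U_n)\times(I\cap U_n^*)}[\rho_2-\rho_1\rho_1]$. We substitute $y\mapsto 1/y$ in the second variable, so that the relevant Gaussian process is the pair $(P_n(x),P_n^*(y))$ with $x,y\in U_n$.

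\textbf{The main obstacle is the cross term.} Our plan is to show directly that the correlation of $P_n(x)$ and $P_n(1/y)$ is negligible: the normalized covariance $\sum v_j^2 x^j y^{-j}/\sqrt{k(x^2)k(y^{-2})}$ is small when $1-|x|$ and $1-|1/y|$ are both $\gg 1/n$. Concretely, $x^jy^{n-j}$ is concentrated at opposite ends of the index range, giving roughly $\sqrt{\alpha}$-type smallness in $n(1-x)(1-y)$-dependent form. We would then redo the comparison of Lemmas~\ref{l.cmean}--\ref{l.cvar} in this mixed setting and bound $|\rho_2-\widetilde\rho_2|$ and $|\rho_1\rho_1-\widetilde\rho_1\widetilde\rho_1|$ by $(\phi^2(1-|x|)+\phi^2(1-|y|))/((1-|x|)(1-|y|))$, which integrates to $o(1)$. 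The mixed $r,\sigma_i,\mu_i$ estimates are the same in form, since $1-r^2\asymp1$ there.

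If that computation proves unwieldy, the fallback is to avoid it: by Lemma~\ref{l.cCLT} and its reciprocal analogue, $\mathbb E|N_{P_n}(I\cap U_n)-N_{R_n}(I\cap U_n)|\ll a_n^c$, and likewise on $U_n^*$. Combined with uniform moment bounds (Theorem~\ref{t.localcount}, which gives fourth moments $O(\log^{C}n)$) and Cauchy--Schwarz with Hölder interpolation, the $L^2$ difference is $a_n^{c'}\mathrm{polylog}(n)=o(1)$ because $a_n$ decays faster than any power of $\log n$. This yields \eqref{e.varPR}, including the cross terms, since $\Var$ is $L^2$-Lipschitz: $|\sqrt{\Var X}-\sqrt{\Var Y}|\le\|X-Y\|_2$, and $\Var\ll\log n$ up to polylog factors gives an $o(1)$ difference after multiplying.

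For the CLT, we apply the same $L^1$ comparison to obtain $(N_{P_n}(I\cap\mathcal I_n)-N_{R_n}(I\cap\mathcal I_n))/\sqrt{\log n}\to0$ in probability. Together with the CLT for $N_{R_n}(I\cap\mathcal I_n)$ under the variance lower bound (\cite{NV22D}), the variance comparison, and Slutsky's theorem, this gives the claim, exactly as in the proof of Theorem~\ref{t.comvar}.
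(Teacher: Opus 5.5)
Your main line is the paper's proof. Part (1) matches it; the paper uses Cauchy--Schwarz on the cross-covariance. In part (2) the paper treats the cross term exactly as you plan, through Lemmas~\ref{l.crho1-mix} and~\ref{l.cvar-mix}, both resting on Lemma~\ref{l.cor-in-out}. It then runs the same $L^1$-comparison plus Slutsky argument for the CLT.

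Your sketch glosses over two points in that lemma:
\begin{enumerate}
\item On $U_n\times U_n^*$ the decorrelation is only $|r(x,y)|\le e^{-B/2}$. It is small because $b_n=B/n$ with $B$ large, not because it is $o(1)$.
\item You also need $|r_{10}(x,y)|\le e^{-B/3}\sqrt{r_{11}(x,x)}$. This gives the \emph{lower} bound $\sigma_1\asymp\sqrt{r_{11}(x,x)}$ that controls $\nu_j=\mu_j/\sigma_j$, and ``the same in form'' hides it.
\end{enumerate}

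Your fallback is a genuinely different and shorter route to \eqref{e.varPR}. Let $X,Y$ be the coupled counts. Then $\mathbb E|X-Y|\ll a_n^c$ and polylogarithmic moments give $\|X-Y\|_2^2\le(\mathbb E|X-Y|)^{1/2}(\mathbb E|X-Y|^3)^{1/2}=O(a_n^{c'})$. Hence $|\Var X-\Var Y|\le\|X-Y\|_2(\sqrt{\Var X}+\sqrt{\Var Y})=o(1)$. No mixed Kac--Rice analysis is needed, and the same argument would even reprove \eqref{e.comvar} without Lemmas~\ref{l.cmean}--\ref{l.cvar}.

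The price is that the variance statement then rests entirely on the $L^1$ coupling bound of Lemma~\ref{l.cCLT}, plus a reciprocal analogue that the paper only sketches. The paper's Kac--Rice variance argument does not depend on that bound, and the bound deserves scrutiny.

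Lemma~\ref{l.root-sign} works on dyadic intervals $[1-2\delta_j,1-\delta_j)$, which have fixed hyperbolic length. So $\mathbb P(N_{ij}\ge 2)$ does not tend to $0$. For the centered Kac polynomial one has $N_{ij}-N_{ij}^\dagger=2\lfloor N_{ij}/2\rfloor\ge 0$. Moreover $\mathbb E[N_{ij}-N_{ij}^\dagger]\to\frac{1}{2\pi}\log 2-\frac{1}{\pi}\arccos\sqrt{8/9}>0$ when $\delta_j\to0$ and $n\delta_j\to\infty$. These nonnegative errors add up to order $\log n$, not $a_n^c$. Lemma~\ref{l.nearR} only controls two roots in a ball of radius $\delta_j^{1+\varepsilon}$, so it does not rescue this step.

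The fix is to run the sign-change comparison on intervals of hyperbolic length $\eta$:
\begin{enumerate}
\item On each such interval, $\mathbb E[N\mathbf{1}_{\{N\ge2\}}]\le\mathbb E[N(N-1)]=\iint\rho_2\ll\eta^3$. This uses $\rho_2\ll\varrho(x,y)/((1-|x|)(1-|y|))$, which follows from the estimates in the proof of Lemma~\ref{l.cvar}.
\item There are $O(\eta^{-1}\log n)$ intervals, and the endpoint sign-comparison costs $\ll\eta^{-1}a_n^{\theta}$.
\item The total error is therefore $\ll\eta^2\log n+\eta^{-1}a_n^{\theta}$, which is $\ll a_n^{c}$ for $\eta=a_n^{\theta/3}$.
\end{enumerate}
This repair is needed anyway for the CLT step, in your argument and in the paper's. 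With it, your fallback is a complete and more economical proof of \eqref{e.varPR}.
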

Combining Theorems \ref{t.Cvar}, \ref{t.comvar}, and \ref{t.Cvar*}, we see that to prove Theorem \ref{t.comvarclt}, it suffices to estimate the correlations between real roots lying inside and outside $[-1,1]$. More precisely, we have the following lemmas.
\begin{lemma}\label{l.crho1-mix} 
If \eqref{e.djm-out} holds, then 
\begin{equation}
    \label{e.crho1-mix}
    |\rho_1(y)-\widetilde{\rho}_1(y)| \ll  \frac{\phi^2(1-|1/y|)}{1-|1/y|},\quad y\in I\cap U_n^*,
\end{equation}
where $\rho_1$ and $\widetilde{\rho_1}$ are given in \eqref{e.rho1} and \eqref{e.rho1c}, respectively.
\end{lemma}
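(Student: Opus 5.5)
The plan is to reduce Lemma~\ref{l.crho1-mix} to Lemma~\ref{l.cmean} applied to the reciprocal polynomial $P_n^*$, and then account for the change of variables $x=1/y$. For $y\in U_n^*$, put $x=1/y\in U_n$. Since $P_n(y)=y^nP_n^*(1/y)$ and $y^n\neq0$, the real roots of $P_n$ at $y$ correspond exactly to the real roots of $P_n^*$ at $x$. Consequently
\[
\rho_1^{P_n}(y)=\rho_1^{P_n^*}(1/y)\,y^{-2},
\]
and the same identity holds for $R_n$ and $R_n^*$, giving $\widetilde\rho_1(y)=\widetilde\rho_1^{\,*}(1/y)\,y^{-2}$. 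This Jacobian identity is what lets us move the whole comparison to the unit disk. Alternatively, the identity can be checked directly from Lemma~\ref{l.rho1}: the normalized processes $P_n(y)/\sqrt{\Var P_n(y)}$ and $P_n^*(x)/\sqrt{\Var P_n^*(x)}$ agree up to sign, so their normalized means satisfy $m(y)=\pm m^*(1/y)$, and the chain rule handles $m'$ and $r_{11}$.

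Next, apply the argument of Lemma~\ref{l.cmean} verbatim to $P_n^*$ at the point $x=1/y\in U_n$. The required inputs are the following.
\begin{enumerate}
\item The bound $r_{11}^*(x,x)\asymp(1-|x|)^{-2}$, established above for the reciprocal polynomial.
\item The hypothesis \eqref{e.djm-out} for $i=0,1$, which reads $|m^*(x)|\ll\phi(1-|x|)$ and $|{m^*}'(x)|\ll \phi(1-|x|)/(1-|x|)$.
\end{enumerate}
With these, the estimates from Lemma~\ref{l.cmean} carry over. First, the term $\rho^*_{1,2}$ is bounded by $|{m^*}'|^2/\sqrt{r^*_{11}}\ll \phi^2(1-|x|)/(1-|x|)$, using $|\erf(s)|\le 2|s|/\sqrt\pi$. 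Second, the difference $|\rho^*_{1,1}-\widetilde\rho_1^{\,*}|$ is at most $\sqrt{r^*_{11}}\,\bigl(m^{*2}+{m^*}'^2/r^*_{11}\bigr)$, which satisfies the same bound. Together these give
\[
|\rho_1^{P_n^*}(x)-\widetilde\rho_1^{\,*}(x)|\ll \frac{\phi^2(1-|x|)}{1-|x|}.
\]

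Finally, multiply this bound by the Jacobian $y^{-2}=x^2$. Since $|x|\le1$ on $U_n$, we have $x^2\le 1$, and we obtain \eqref{e.crho1-mix} with $1-|x|=1-|1/y|$.

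The only delicate point I expect is justifying that $r^*_{11}$ and $m^*$ are controlled on all of $U_n$, including near $1-B/n$, and that the sign and orientation conventions (for instance the case $y<0$, where $x^n$ may change sign) do not affect these estimates. Both issues are already covered by \eqref{e.k*}, \eqref{e.djk*} and the fact that the root density is invariant under multiplying the process by a nonvanishing deterministic function.
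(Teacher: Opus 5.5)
Your proposal is correct and is essentially the paper's argument. You use the same two bounds: $|\erf(s)|\le 2|s|/\sqrt{\pi}$ for $\rho_{1,2}$, and $1-e^{-t}\le t$ for $\rho_{1,1}-\widetilde\rho_1$, each combined with the size of the second-derivative covariance and hypothesis \eqref{e.djm-out}.

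The only difference is that you first pull back to $P_n^*$ via $\rho_1(y)=y^{-2}\rho_1^*(1/y)$. The paper instead estimates directly in the $y$-variable, using \eqref{e.r11-out} together with the implicit identities $m(y)=\pm m^*(1/y)$ and $|m'(y)|=y^{-2}|{m^*}'(1/y)|$. Your route makes these conversions explicit. It also relies on $r_{11}^*(x,x)\asymp(1-|x|)^{-2}$ rather than on \eqref{e.r11-out}, whose right-hand side as printed should be $(1-|1/y|)^{-2}$.
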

\begin{lemma} \label{l.cvar-mix}
Assume that \eqref{e.cdm-in} and \eqref{e.djm-out} hold. Then it holds uniformly for $(x,y)\in (I\cap U_n)\times (I\cap U_n^*)$ that
\begin{equation}
    \label{e.crho2-mix}
    |\rho_2(x,y)-\widetilde{\rho}_2(x,y)| \ll  \frac{\phi^2(1-|x|)+\phi^2(1-|1/y|)}{(1-|x|)(1-|1/y|)},
\end{equation}
where $\rho_2$ and $\widetilde{\rho_2}$ are given in \eqref{e.rho2} and \eqref{e.rho2c}, respectively.
Consequently, 
\begin{equation}
    \label{e.cirho2-mix}
    \int_{I\cap U_n}dx\int_{I\cap U_n^*}\rho_2(x,y)dy=\int_{I\cap U_n}dx\int_{I\cap U_n^*}\widetilde{\rho}_2(x,y)dy+o(1).  
\end{equation}
\end{lemma}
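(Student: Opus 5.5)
The plan is to follow the proof of Lemma~\ref{l.cvar}, with one change: the mixed pair $(x,y)\in (I\cap U_n)\times(I\cap U_n^*)$ is far apart, so the correlation is tiny and the estimates get simpler. Set $u:=1/y\in U_n$. We work with the normalized process $P_n(t)/\sqrt{\Var[P_n(t)]}$ on $U_n\cup U_n^*$. It satisfies Condition~\ref{c.GP}, so Theorem~\ref{t.rho2} gives $\rho_2(x,y)$. To evaluate the quantities at $y$, change variables $t=1/s$. Near $y$ the process is, up to the positive normalization, $P_n^*(s)$ evaluated at $s=u$. The normalized mean at $y$ is therefore $\pm m^*(u)$, and a derivative in $y$ equals $-u^2$ times a derivative in $u$. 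Under this change of variables $\rho_2(x,y)\,dy$ and $\widetilde\rho_2(x,y)\,dy$ transform consistently. It therefore suffices to prove \eqref{e.crho2-mix} with $\mu_2,\sigma_2,r_{01}$ computed in the $u$-variable, where $|y|\asymp 1$ and the Jacobian is bounded.

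\textbf{Step 1: the mixed covariance is tiny.} The key input is a bound on the covariance between $x$ and $y$. For $x\in U_n$ and $|y|>1$,
\[
r(x,y)=\frac{k(xy)}{\sqrt{k(x^2)k(y^2)}}, \qquad k(y^2)=y^{2n}k^*(u^2).
\]
Combining \eqref{e.varRn}, \eqref{e.k*} and Cauchy--Schwarz on the sum $\sum_j v_j^2 x^jy^j$, I expect
\[
|r(x,y)|\ll \frac{\sqrt{(1-|x|)(1-|u|)}}{n(1-|x|)(1-|u|)}\cdot(\ldots),
\]
and in any case $r=o(1)$, indeed polynomially small because $n(1-|x|)\ge B$ and $n(1-|u|)\ge B$. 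The same argument applied to the differentiated sums gives the analogous bounds
\[
|r_{10}|\ll \frac{|r|\,+\,\text{small}}{1-|x|}, \qquad |r_{01}|\ll \frac{|r|\,+\,\text{small}}{1-|u|}.
\]
The substitution $u=1/y$ is what makes the second bound come out with the factor $1/(1-|u|)$. From these,
\[
1-r^2\asymp 1, \qquad \sigma_1\asymp \sqrt{r_{11}(x,x)}\asymp \frac{1}{1-|x|}, \qquad \sigma_2\asymp \frac{1}{1-|u|},
\]
using \eqref{e.r11} and its starred analogue. This plays the role of Lemma~\ref{l.sig} with $\varrho\asymp 1$, matching the last assertion of Lemma~\ref{l.DN}.

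\textbf{Step 2: the mean terms.} From \eqref{e.cdm-in} and \eqref{e.djm-out} we have $|m(x)|\ll\phi(1-|x|)$ and $|m'(x)|\ll \phi(1-|x|)/(1-|x|)$, and the same in the $u$-variable. Plugging into \eqref{e.mu1-s3} with $1/(1-r^2)\ll 1$ and the bounds on $r_{10},r_{01}$ from Step~1 gives
\[
|\mu_1|\ll \frac{\phi(1-|x|)+\phi(1-|u|)}{1-|x|}, \qquad |\mu_2|\ll \frac{\phi(1-|x|)+\phi(1-|u|)}{1-|u|}.
\]
Hence $\nu_j^2\ll \phi^2(1-|x|)+\phi^2(1-|u|)$. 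The argument of Lemma~\ref{l.conE}, with $1-r^2\asymp1$, gives $|E-1|\ll \phi^2(1-|x|)+\phi^2(1-|u|)$. Then, exactly as in the proof of Lemma~\ref{l.cvar}, $\rho_{2,1}$ differs from $\widetilde\rho_2$ only by the factor $E$, and
\[
\sum_{i=2}^5|\rho_{2,i}|\ll |\mu_1\mu_2|+\sigma_1\sigma_2(\nu_1^2+\nu_2^2)\ll \frac{\phi^2(1-|x|)+\phi^2(1-|u|)}{(1-|x|)(1-|u|)}.
\]
Multiplying by $E/(\pi^2\sqrt{1-r^2})\ll 1$ yields \eqref{e.crho2-mix}.

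\textbf{Step 3: integration.} Integrate \eqref{e.crho2-mix} over $x\in I\cap U_n$ and $y\in I\cap U_n^*$, substituting $u=1/y$ with bounded Jacobian. The integrand splits into a product of two one-variable integrals. The factor $\int_{U_n}\phi^2(1-|t|)/(1-|t|)\,dt\ll a_n^{2\theta}$ is tiny, while the other factor $\int_{U_n}dt/(1-|t|)\ll\log n$. Since $a_n^{2\theta}\log n=o(1)$ for our choice $a_n\ll_A\log^{-A}n$, this gives \eqref{e.cirho2-mix}.

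\textbf{Main obstacle.} The delicate point is Step~1, specifically the bound on $r_{10}$ and $r_{01}$ in the mixed regime. Lemma~\ref{l.dr} only gives $|r_{10}|\le \sqrt{r_{11}}$, i.e. $|r_{10}|\ll 1/(1-|x|)$. That is sufficient for $\mu_1$, so Step~2 goes through even without smallness of $r$. Thus the only genuinely new estimate is the analogue of this Cauchy--Schwarz bound for $r_{01}$ in the reciprocal variable: $|r_{01}|^2\le(1-r^2)\,r^*_{11}(u,u)\ll (1-|u|)^{-2}$. I would check that bound carefully after the change of variables.
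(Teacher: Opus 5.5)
Your architecture matches the paper's, and your Steps~2 and~3 are fine once the covariance bounds are in hand. The paper also first proves mixed-regime covariance bounds (Lemma~\ref{l.cor-in-out}) and then reruns the proof of Lemma~\ref{l.cvar} with $\varrho$ replaced by $1$. The gap is Step~1: it is the only genuinely new content of the mixed case, and you assert it rather than prove it. Your claim that $r=o(1)$, ``indeed polynomially small'', is false. Take $v_j\equiv 1$, $x=1-2B/n\in U_n$ and $y=1/x\in U_n^*$; then $r(x,y)\to 2B/\sinh(2B)$ as $n\to\infty$. What is true is $|r|\le e^{-B/2}$, which is small only because the constant $B$ is large. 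Cauchy--Schwarz on $\sum_j v_j^2x^jy^j$ gives nothing beyond $|r|\le 1$. You have to use that the weights $v_j^2x^{2j}$ sit on $j\lesssim (1-|x|)^{-1}\le n/B$, while the weights $v_j^2y^{2j}$ sit on $n-j\lesssim(1-|1/y|)^{-1}\le n/B$.

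More seriously, your ``main obstacle'' paragraph misidentifies what is needed. The bounds $r_{10}^2\le(1-r^2)\,r_{11}(x,x)$ and $r_{01}^2\le(1-r^2)\,r_{11}(y,y)$ say nothing more than $\sigma_1^2,\sigma_2^2\ge 0$. Together with $1-r^2\asymp 1$ they give the upper bounds on $\mu_1,\mu_2$. You still need $1-r^2\asymp 1$, so Step~2 does not go through ``without smallness of $r$''. These bounds also give no lower bound on $\sigma_1,\sigma_2$.

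Step~2 divides by $\sigma_j$, so the lower bounds are essential. Both $\nu_j^2=\mu_j^2/\sigma_j^2\ll\phi^2(1-|x|)+\phi^2(1-|1/y|)$ and the bound on $\sigma_1\sigma_2(\nu_1^2+\nu_2^2)=\sigma_2\mu_1^2/\sigma_1+\sigma_1\mu_2^2/\sigma_2$ require $\sigma_1\gg(1-|x|)^{-1}$ and $\sigma_2\gg(1-|1/y|)^{-1}$. Equivalently, you need $r_{10}^2/(1-r^2)\le(1-c)\,r_{11}(x,x)$, and likewise for $r_{01}$. This is exactly \eqref{e.dr-mix}: $|r_{10}|\le e^{-B/3}\sqrt{r_{11}(x,x)}$ and $|r_{01}|\le e^{-B/3}\sqrt{r_{11}(y,y)}$. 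The paper proves it in Appendix~\ref{a.l8.24} by a case analysis, $xy\le 1$ versus $xy>1$. Your ``$|r_{10}|\ll(|r|+\text{small})/(1-|x|)$'' states it without justification.

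To close the gap, either invoke Lemma~\ref{l.cor-in-out} or prove the three bounds directly. One way is to write $r$, $r_{10}$, $r_{01}$ as inner products of the normalized coefficient vectors of $G(x)$, $G'(x)$, $G(y)$, $G'(y)$ and split the index range at $j=n/2$. Since $n(1-|x|)\ge B$ and $n(1-|1/y|)\ge B$, the relevant tails are $O_\tau(B^{C}e^{-cB})$ relative to the full norms. All three ratios are then as small as desired once $B$ is large.
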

Assuming Lemmas \ref{l.crho1-mix} and \ref{l.cvar-mix} hold, we now proceed to prove Theorem \ref{t.comvarclt}.
\begin{proof}[Proof of Theorem \ref{t.comvarclt}] Since 
    \[
    \Cov[N_{P_n}(I\cap U_n),N_{P_n}(I\cap U_n^*)] \le \sqrt{\Var[N_{P_n}(I\cap U_n)]\Var[N_{P_n}(I\cap U_n^*)]},
    \]
the asymptotic estimate \eqref{e.varo1} follows immediately from \eqref{e.Cvar} and \eqref{e.Cvar*}.

By \eqref{e.comvar} and \eqref{e.comvar*}, the estimate \eqref{e.varPR} will be proved once we prove that 
\begin{align*}
    \Cov[N_{P_n}(I\cap U_n),N_{P_n}(I\cap U_n^*)]
    &=\Cov[N_{R_n}(I\cap U_n),N_{R_n}(I\cap U_n^*)]+o(1).
\end{align*}
Indeed, it follows from \eqref{e.crho1} and \eqref{e.crho1-mix} that 
    \[
    \int_{I\cap U_n}dx\int_{I\cap U_n^*}\rho_1(x)\rho_1(y)dy = \int_{I\cap U_n}dx\int_{I\cap U_n^*}\widetilde{\rho}_1(x)\widetilde{\rho}_1(y)dy+o(1).
    \]
Together with \eqref{e.cirho2-mix}, we deduce that 
\begin{align*}
    &\Cov[N_{P_n}(I\cap U_n),N_{P_n}(I\cap U_n^*)]\\
    &= \int_{I\cap U_n}dx\int_{I\cap U_n^*}[\rho_2(x,y)-\rho_1(x)\rho_1(y)]dy\\
    &= \int_{I\cap U_n}dx\int_{I\cap U_n^*}[\widetilde{\rho}_2(x,y)-\widetilde{\rho}_1(x)\widetilde{\rho}_1(y)]dy +o(1)\\
    &=\Cov[N_{R_n}(I\cap U_n),N_{R_n}(I\cap U_n^*)]+o(1),
\end{align*}
which is the desired conclusion.

The CLT for $N_{P_n}(I\cap \mathcal I_n)$ follows by the same argument as in the proof of the CLT for $N_{P_n}(I\cap U_n)$ in Theorem~\ref{t.comvar}, with $I\cap \mathcal I_n$ in place of $I\cap U_n$. Namely, we partition $I\cap \mathcal I_n$ into $O(\log n)$ subintervals and show that, on each subinterval, the number of real zeros is well approximated by the number of sign changes of the polynomial. The only difference arises for subintervals outside $[-1,1]$, where we pass to the reciprocal polynomial $P_n^*$. In this case, we control the discrepancy between the sign changes of $P_n^*$ and $R_n^*$ via $m^*$, which remains negligible under the assumption that $M_n$ is dominated by $R_n$. As the modifications are straightforward, we omit the details.
\end{proof}

It remains to prove Lemmas \ref{l.crho1-mix} and \ref{l.cvar-mix}. For this, we need estimates for $r$, its partial derivatives, $\mu_1$, and $\mu_2$.
\begin{lemma} \label{l.cor-in-out}
For any sufficiently large constant $B$, the following inequalities hold for all sufficiently large $n$ and all $(x,y)\in U_n\times U_n^*$, with $b_n=B/n$:
\begin{equation}
    \label{e.r-mix}
    |r(x,y)| \le e^{-B/2},
\end{equation}
and 
\begin{equation}
    \label{e.dr-mix}
    |r_{10}(x,y)| \le e^{-B/3}\sqrt{r_{11}(x,x)},\quad  |r_{01}(x,y)|\le e^{-B/3}\sqrt{r_{11}(y,y)}.
\end{equation}
Consequently, uniformly for $(x,y)\in U_n\times U_n^*$, 
\begin{equation}\label{e.sigmix}
    \sigma_1 \asymp \sqrt{r_{11}(x,x)} \quad \text{and}\quad \sigma_2 \asymp \sqrt{r_{11}(y,y)},
\end{equation}
where the asymptotic behavior of $r_{11}(x,x)$ is given in \eqref{e.r11}, and
\begin{equation}
    \label{e.r11-out}
    r_{11}(y,y) \asymp \frac{1}{1-|1/y|},\quad y\in U_n^*.
\end{equation}
Assume further that for some function $\phi$, uniformly for $x\in U_n$,
\begin{equation}
    \label{e.mphi}
    m^{(i)}(x) \ll \frac{\phi(1-|x|)}{(1-|x|)^i},\quad i=0,1,
\end{equation}
and uniformly for $y\in U_n^*$,
\begin{equation}
    \label{e.m*phi}
    {m^*}^{(i)}(1/y) \ll \frac{\phi(1-|1/y|)}{(1-|1/i|)^i},\quad i=0,1.
\end{equation}
Then it holds uniformly for $(x,y)\in U_n\times U_n^*$ that
\begin{equation}
    \label{e.mu1mix}
    \mu_1 \ll  \frac{1}{1-|x|}[\phi(1-|x|)+\phi(1-|1/y|)]
\end{equation}
and 
\begin{equation}
    \label{e.mu2mix}
    \mu_2 \ll \frac{1}{1-|1/y|}[\phi(1-|x|)+\phi(1-|1/y|)].
\end{equation}
\end{lemma}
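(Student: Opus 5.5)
Since $y\in U_n^*$ means $|y|>1$, I will work with $w:=1/y\in U_n$, so $1-|w|\ge B/n$ and $|xw|<1$. All estimates then reduce to bounds on $k$ and its derivatives at the three points $x^2$, $y^2$, $xy$.

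\textbf{Step 1: the covariance bound \eqref{e.r-mix}.} Since $v_j^2\ge 0$, we have $|k(xy)|\le k(|xy|)$, so it suffices to bound
\[
\frac{k(|xy|)}{\sqrt{k(x^2)\,k(y^2)}}.
\]
I write $k(y^2)=y^{2n}k^*(w^2)$ with $k^*(w^2)\asymp n^{2\tau}/(1-|w|)$ by \eqref{e.k*}, and use $k(x^2)\asymp(1-|x|)^{-2\tau-1}$. For the numerator $k(|xy|)=\sum_j v_j^2|x|^j|y|^j$, I compare term by term with $|y|^n$. Factoring $|y|^{n}$ out of the top indices gives
\[
k(|xy|)\ll \sum_{j}(1+j)^{2\tau}|x|^j|y|^j .
\]
Split this at $j=n/2$. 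For $j\le n/2$, since $|y|^j\le |y|^{n/2}$, the sum is at most $|y|^{n/2}(1-|x|)^{-2\tau-1}$. For $j>n/2$, bound $|x|^j\le |x|^{n/2}\le e^{-B/2}$ and sum the rest, getting at most $|x|^{n/2}n^{2\tau}|y|^n/(1-|w|)$.

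Dividing by $\sqrt{k(x^2)k(y^2)}\asymp |y|^n n^{\tau}(1-|x|)^{-\tau-1/2}(1-|w|)^{-1/2}$, both pieces should carry a factor $|w|^{n/2}$ or $|x|^{n/2}$. Each of these is at most $(1-B/n)^{n/2}\approx e^{-B/2}$, times polynomial ratios such as $(n(1-|x|))^{\pm(\tau+1/2)}$. These ratios are only powers of $n(1-|x|)\ge B$, which the exponential decay can absorb. The delicate point, and the main obstacle, is making the decay strong enough: a crude split loses a factor $e^{-B/4}$ against polynomial factors in $n(1-|x|)$, which can be as large as $na_n$. I would handle this by using the exponential $|x|^{j}$ within the sum itself rather than the crude split, i.e. bounding $\sum_j (1+j)^{2\tau}|x|^j|y|^{j-n}$ directly as a Laplace-type sum peaked where both decay. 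This gives a product $e^{-c\,n(1-|x|)}e^{-c\,n(1-|w|)}$ against only polynomial factors. Taking $B$ large then yields $\le e^{-B/2}$, possibly after adjusting constants, which is harmless since $B$ is ours to choose.

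\textbf{Step 2: the derivative bounds \eqref{e.dr-mix}.} I would run the same computation on
\[
r_{10}=\frac{y\,k'(xy)}{\sqrt{k(x^2)k(y^2)}}-\frac{x\,k'(x^2)\,k(xy)}{k(x^2)^{3/2}k(y^2)^{1/2}}.
\]
The extra factor $j$ from differentiation costs at most $(1-|x|)^{-1}\asymp\sqrt{r_{11}(x,x)}$, using \eqref{e.r11}. It also costs $n$ in the high range, which converts $|x|^{n/2}$ into $n|x|^{n/2}\ll(1-|x|)^{-1}e^{-cB}$. Losing a constant in the exponent gives $e^{-B/3}$. The bound for $r_{01}$ is symmetric via the reciprocal variable, and \eqref{e.r11-out} follows as in the $P_n^*$ computation.

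\textbf{Step 3: $\sigma_1,\sigma_2$ and $\mu_1,\mu_2$.} For \eqref{e.sigmix}, the definitions give $\sigma_1^2=r_{11}(x,x)-r_{10}^2/(1-r^2)$, and Steps 1–2 make the subtracted term at most $e^{-2B/3}r_{11}(x,x)/(1-e^{-B})$; $\sigma_2$ is identical. For \eqref{e.mu1mix}, I expand
\[
\mu_1=m'(x)+\frac{r\,r_{10}}{1-r^2}\,m(x)-\frac{r_{10}}{1-r^2}\,m(y).
\]
Here $1-r^2\asymp1$, $|r_{10}|\ll(1-|x|)^{-1}$ and $m'(x)\ll\phi(1-|x|)/(1-|x|)$ by \eqref{e.mphi}. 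For the last term I use $|m(y)|=|m^*(w)|\ll\phi(1-|w|)$, which follows from \eqref{e.m*phi} since $P_n(y)/\sqrt{\Var P_n(y)}=\pm P_n^*(w)/\sqrt{\Var P_n^*(w)}$. For $\mu_2$, the term $m'(y)$ equals $-w^2\frac{d}{dw}$ of the normalized profile; the derivative of $m^*$ gives $\phi(1-|w|)/(1-|w|)$, and the factor $\sqrt{r_{11}(y,y)}$ scaling matches \eqref{e.r11-out} in the same way.
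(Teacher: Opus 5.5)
Your argument is correct in outline and differs from the paper mainly in how the numerator $k(|xy|)$ is bounded. However, the part of Step~1 that you present as the key difficulty is mistaken.

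\textbf{Comparison with the paper.} The common skeleton is this: bound $k(|xy|)$ against $k(x^2)\gg(1-|x|)^{-2\tau-1}$ and $k(y^2)=y^{2n}k^*(1/y^2)\gg n^{2\tau}y^{2n}/(1-|1/y|)$, use the two-term formula for $r_{10}$, then read off $\sigma_j,\mu_j$ from their definitions. Your Step~3 is exactly the paper's.

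The paper bounds the numerator using the closed-form size of $k(t)$ on each side of $t=1$, namely $k(t)\ll(1-t+1/n)^{-2\tau-1}$ for $t\le 1$ and $k(t)\ll n^{2\tau}t^n/(1-1/t+1/n)$ for $t>1$. It then runs a case analysis in $x$ and $z=1/y$. This captures the true decay $\mathrm{poly}(B)\,e^{-B}$ and so yields \eqref{e.r-mix} with the stated exponent.

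You instead split $\sum_j(1+j)^{2\tau}|x|^j|y|^j$ at $j=n/2$. This is uniform in whether $xy\le 1$ or $xy>1$. It also handles $k'(|xy|)$ term by term: the extra $j$ costs $(1-|x|)^{-1}$ below $n/2$ and at most $n$ above. So you never need the size of $k'(xy)/k(xy)$, which is of order $n$ when $xy>1$.

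The price is that a split at $n/2$ never sees more than $e^{-B/2}$. You get $|r|\le C_\tau B^{|\tau|}e^{-B/2}$, which is $\le e^{-B/3}$ for large $B$ but not literally \eqref{e.r-mix}. You also get $|r_{10}|\le C_\tau B^{|\tau|+1}e^{-B/2}(1-|x|)^{-1}$, which does give \eqref{e.dr-mix}. Only the smallness of $r$ and of $r_{10}/\sqrt{r_{11}(x,x)}$ is used afterwards, in \eqref{e.sigmix}, \eqref{e.mu1mix} and \eqref{e.mu2mix}, so the weaker exponent is harmless.

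\textbf{The ``main obstacle'' is not real; delete that discussion.} Your crude split already works. After dividing by $\sqrt{k(x^2)k(y^2)}$, the two pieces are
\[
\ll |w|^{n/2}\,[n(1-|x|)]^{-\tau-1/2}\,[n(1-|w|)]^{1/2}
\quad\text{and}\quad
\ll |x|^{n/2}\,[n(1-|x|)]^{\tau+1/2}\,[n(1-|w|)]^{-1/2}.
\]
Every positive power of $n(1-|x|)$ or $n(1-|w|)$ is paired with its own factor $|x|^{n/2}\le e^{-n(1-|x|)/2}$ or $|w|^{n/2}\le e^{-n(1-|w|)/2}$. The negative powers are at most powers of $B$, and $na_n$ never enters.

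The ``Laplace-type'' replacement you propose is actually false. The sum $\sum_j(1+j)^{2\tau}|x|^j|w|^{n-j}$ is governed by $\max(|x|,|w|)^n$, not by a product of the two decays. For instance, at $|x|=1-2B/n$ and $|w|=1-a_n$ one has $|r|\asymp e^{-2B}B^{\tau+1/2}(na_n)^{-1/2}$, which decays only polynomially in $na_n$.

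\textbf{Handling $r_{01}$.} Treating $r_{01}$ in the variable $w=1/y$ is essential, not cosmetic. In $y$, both terms of $\partial_y r$ have size about $n|r|$, because $k'(y^2)/k(y^2)\asymp n$. In the example above this exceeds $(1-|w|)^{-1}$ by a factor of order $(na_n)^{1/2}$, so the two terms must cancel and a term-by-term bound fails. In $w$, the factor $n-j$ replaces $j$ and your split goes through unchanged.

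\textbf{A typo in the statement.} The $P_n^*$ computation you invoke gives $r_{11}(y,y)=y^{-4}r^*_{11}(1/y,1/y)\asymp(1-|1/y|)^{-2}$. So \eqref{e.r11-out} as displayed contains a typo, as does the denominator in \eqref{e.m*phi}. Either version suffices for your bound on $\mu_2$.
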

The proof of Lemma \ref{l.cor-in-out} is deferred to Appendix \ref{a.l8.24}.

We now proceed to prove Lemmas \ref{l.crho1-mix} and \ref{l.cvar-mix}.
\begin{proof}[Proof of Lemma \ref{l.crho1-mix}]
Using \eqref{e.r11-out} and \eqref{e.djm-out}, we see that
\begin{align*}
    |\rho_{1,2}(y)| &\le \frac{1}{2\pi} |m'(y)| \frac{|m'(y)|}{\sqrt{r_{11}(y,y)}}
    \ll  \frac{\phi^2(1-|1/y|)}{1-|1/y|},
\end{align*}
and 
\begin{align*}
    |\rho_{1,1}(y)-\widetilde{\rho}_1(y)| &\le \frac{1}{2\pi}\sqrt{r_{11}(y,y)} \bigg(m^2(y)+\frac{\left(m'(y)\right)^2}{r_{11}(y,y)}\bigg)
    \ll  \frac{\phi^2(1-|1/y|)}{1-|1/y|}.
\end{align*}
Therefore, 
    \[
    |\rho_1(y)-\widetilde{\rho}_1(y)| \le |\rho_{1,1}(y)-\widetilde{\rho}_1(y)|+|\rho_{1,2}(y)| \ll  \frac{\phi^2(1-|1/y|)}{1-|1/y|},
    \]
which proves \eqref{e.crho1-mix}. 
\end{proof}

\begin{proof}[Proof of Lemma \ref{l.cvar-mix}] Recall that 
    \[
    \rho_2(x,y)=\frac{E(x,y)}{\pi^2\sqrt{1-r^2(x,y)}}\sum_{i=1}^5 \rho_{2,i}(x,y)
    \]
and 
    \[
    \widetilde{\rho}_2(x,y) = \frac{\sigma_1\sigma_2}{\pi^2\sqrt{1-r^2(x,y)}}\left(\sqrt{1-\delta^2}+\delta\arcsin \delta\right).
    \]
We have
\begin{align*}
    |E(x,y)-1|&\le \frac{m^2(x)-2r(x,y)m(x)m(y)+m^2(y)}{2(1-r^2(x,y))}\\
    &\ll  m^2(x)+m^2(y)\\
    &\ll  \phi^2(1-|x|)+\phi^2(1-|1/y|).
\end{align*}
According to Lemma \ref{l.cor-in-out}, we have 
   \[
   \widetilde{\rho}_2(x,y)\ll  \sigma_1\sigma_2 \ll  \frac{1}{(1-|x|)(1-|1/y|)},
   \]
whence
\begin{align*}
    \bigg|\frac{E(x,y)}{\pi^2\sqrt{1-r^2(x,y)}}\rho_{2,1}(x,y)-\widetilde{\rho}_2(x,y)\bigg| &\le  \widetilde{\rho}_2(x,y)|E(x,y)-1|\\
    & \ll  \frac{\phi^2(1-|x|)+\phi^2(1-|1/y|)}{(1-|x|)(1-|1/y|)}.
\end{align*}
It follows from the proof of Theorem \ref{t.Cvar} that 
    \[
    \sum_{i=2}^5|\rho_{2,i}| \ll  |\mu_1\mu_2| +\sigma_1\sigma_2\left(\nu_1^2+\nu_2^2\right).
    \]
On account of Lemma \ref{l.cor-in-out}, we obtain 
   \[
   |\mu_1\mu_2| \ll  \frac{\phi^2(1-|x|)+\phi^2(1-|1/y|)}{(1-|x|)(1-|1/y|)}
   \]
and 
   \[
   \sigma_1\sigma_2\left(\nu_1^2+\nu_2^2\right) \ll  \frac{\phi^2(1-|x|)+\phi^2(1-|1/y|)}{(1-|x|)(1-|1/y|)}.
   \]
Therefore,
    \[
    \frac{|E(x,y)|}{\pi^2\sqrt{1-r^2(x,y)}}\sum_{i=2}^5|\rho_{2,i}(x,y)|\ll  \frac{\phi^2(1-|x|)+\phi^2(1-|1/y|)}{(1-|x|)(1-|1/y|)}.
    \]
Now, 
\begin{align*}
    |\rho_2(x,y)-\widetilde{\rho}_2(x,y)|&\le \bigg|\frac{E(x,y)}{\pi^2\sqrt{1-r^2(x,y)}}\rho_{2,1}(x,y)-\widetilde{\rho}_2(x,y)\bigg|\\
    &\quad + \frac{|E(x,y)|}{\pi^2\sqrt{1-r^2(x,y)}}\sum_{i=2}^5|\rho_{2,i}(x,y)| \\
    & \ll  \frac{\phi^2(1-|x|)+\phi^2(1-|1/y|)}{(1-|x|)(1-|1/y|)},
\end{align*}
and \eqref{e.crho2-mix} is proved.
    
It follows from \eqref{e.crho2-mix} that 
\begin{align*}
    &\int_{I\cap U_n}dx\int_{I\cap U_n^*}|\rho_2(x,y)-\widetilde{\rho}_2(x,y)|dy\\
    &\ll  \int_{I\cap U_n}dx\int_{I\cap U_n^*} \frac{\phi^2(1-|x|)+\phi^2(1-|1/y|)}{(1-|x|)(1-|1/y|)} dy\\
    &=o(1),
\end{align*}
which proves \eqref{e.cirho2-mix}.
\end{proof}

\section{Proof of the main theorems using the comparison principles} \label{s.proof.main}
In this section, we employ Theorems \ref{t.varcomp}, \ref{t.cltcomp}, and Lemma \ref{t.outside-In} to establish Theorems \ref{t.kac}, \ref{t.kac-der}, and \ref{t.hyperkac}. 

To begin, we recall a result from \cite{DN25} concerning the variance of the number of real roots of centered random polynomials whose coefficients have polynomial asymptotics.
\begin{proposition}[\cite{DN25}] \label{p.DN}
Let $R_n(x)=\sum_{j=0}^n \xi_j v_j x^j$, where  $(\xi_j)_{j=0}^n$ satisfy \ref{A1} and  $|v_j|=C_0j^{\tau}\left(1+o_j\left(1\right)\right)$ for some positive constant $C_0$. Let $\kappa_\tau$ be defined by \eqref{kappa}. Then, for $I=(0,\infty)$, as $n\to \infty$, 
\begin{align*}
    \Var[N_{R_n}(\pm I)]=\left[\kappa_\tau+\frac 1\pi \left(1-\frac 2\pi\right)+o(1)\right]\log n,
\end{align*}
and 
    \[
    \Var[N_{R_n}]=\left[2\kappa_\tau+\frac 2\pi \left(1-\frac 2\pi\right)+o(1)\right]\log n.
    \]
\end{proposition}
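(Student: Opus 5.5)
This proposition is quoted from \cite{DN25}, so the plan is to reconstruct their argument in three steps: reduce to the Gaussian case, compute the variance via Kac--Rice, then treat each half-line separately.

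\emph{Gaussian reduction.} By \cite{NV22D}*{Corollary 2.2}, the universality of $\Var[N_{R_n}(\cdot)]$ up to $O(\log^d n)$ allows us to assume the $\xi_j$ are standard Gaussian. Equivalently, we can run Theorem~\ref{t.outside-In} and Corollary~\ref{c.kth-moment-u} with $m\equiv 0$.

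\emph{Localization.} Split $(0,\infty)$ into the core pieces $I_n$ and $I_n^{-1}$ and a remainder. By Theorem~\ref{t.outside-In}, the remainder contributes only $O(\log^{d}n)=o(\log n)$ to the variance, so only the core pieces matter.

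\emph{Variance near $+1$.} On $I_n=[1-a_n,1-b_n)$ the one-point density is $\widetilde\rho_1=\frac1\pi\sqrt{r_{11}(x,x)}\sim \frac{\sqrt{2\tau+1}}{\pi(1-x^2)}$ by Lemma~\ref{l.DN0}. The hypothesis $|v_j|=C_0j^\tau(1+o(1))$ is exactly what replaces $v_j$ by $(1+j)^\tau$ asymptotically, so the lemma applies. We then express the two-point density $\widetilde\rho_2$ of \eqref{e.rho2c} through $\alpha=1-\varrho^2(x,y)$, using the asymptotics for $r$, $\sigma_1$, $\sigma_2$ and $\delta$. Next change variables to hyperbolic coordinates $x=\tanh s$, $y=\tanh t$. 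Then $\alpha=\sech^2(s-t)$, and $\frac{dx\,dy}{(1-x^2)(1-y^2)}=ds\,dt$. In these coordinates the integrand of $\iint(\widetilde\rho_2-\widetilde\rho_1\otimes\widetilde\rho_1)$ becomes $\frac{2\tau+1}{\pi^2}f_\tau(\sech^2(s-t))$, a function of $s-t$ alone. The $s$-range has length $\frac12\log n\,(1+o(1))$. Since $f_\tau(\sech^2 v)$ decays exponentially in $|v|$, the double integral equals $\frac12\log n\cdot 2\int_0^\infty \frac{2\tau+1}{\pi^2}f_\tau(\sech^2v)\,dv$. Adding the diagonal term $\int\widetilde\rho_1=\frac{\sqrt{2\tau+1}}{2\pi}\log n$, the total is $\kappa_\tau\log n$.

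\emph{Variance near $+\infty$.} On $I_n^{-1}$ we pass to $R_n^*$. Its coefficients $v_{n-j}\approx n^\tau$ are asymptotically flat, so this is the $\tau=0$ case, and the same computation gives $\kappa_0\log n=\frac1\pi(1-\frac2\pi)\log n$.

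\emph{Cross terms and the two formulas.} Covariances between different core pieces are $o(\log n)$: for $x\in U_n$, $y\in U_n^*$ one has $r=O(e^{-B/2})$ by Lemma~\ref{l.cor-in-out}, and for $x$ and $-y$ of opposite sign $r=o(1)$ by Lemma~\ref{l.DN0}. Both the correlation and the normalized density difference are small there. I expect showing these decorrelated contributions are truly $o(\log n)$, rather than $O(\log n)$, to be the main obstacle. My approach is to bound $|\widetilde\rho_2-\widetilde\rho_1\widetilde\rho_1|\ll r^2\,\widetilde\rho_1(x)\widetilde\rho_1(y)$ and integrate, using the rapid decay of $r$ across scales. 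Taking $+I$ gives the first formula; $-I$ is identical by the symmetry $x\mapsto -x$. For the full count $N_{R_n}$, add the four pieces ($\pm$ inside and outside the unit interval), each contributing either $\kappa_\tau$ or $\kappa_0$, which gives the second formula.
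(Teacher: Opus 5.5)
This proposition is imported from \cite{DN25} and is not proved in the paper, so I compare your proposal with the standard argument. Your route is the right one: Gaussian reduction, localization via Theorem~\ref{t.outside-In}, Kac--Rice in the coordinates $x=\tanh s$, and the reciprocal polynomial for $|x|>1$. Your limiting integrand $\frac{2\tau+1}{\pi^2}f_\tau(\sech^2(s-t))$ and the resulting constants are also correct.

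\emph{The gap is in the step where you integrate that limiting integrand.} Lemma~\ref{l.DN0} gives only uniform multiplicative $(1+o(1))$ asymptotics. Far from the diagonal, $\widetilde\rho_2$ and $\widetilde\rho_1(x)\widetilde\rho_1(y)$ are both $\asymp[(1-x^2)(1-y^2)]^{-1}$ and their main terms cancel, so their separate relative errors survive in the difference. Substituting the asymptotics therefore only gives the integrand $\frac{2\tau+1}{\pi^2}\bigl[f_\tau(\sech^2(s-t))+o(1)\bigr]$ with respect to $ds\,dt$. The $o(1)$ is uniform but does not decay in $|s-t|$. Over a square of side $L\sim\frac12\log n$ this leaves an error $o(\log^2 n)$, which swamps the $\Theta(\log n)$ main term. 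Naive substitution would need relative accuracy $o(1/\log n)$, and under $|v_j|=C_0j^\tau(1+o_j(1))$ no rate is available at all.

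What is missing is an off-diagonal decorrelation estimate:
\begin{enumerate}
\item Write $\widetilde\rho_2-\widetilde\rho_1\widetilde\rho_1=\widetilde\rho_1(x)\widetilde\rho_1(y)\bigl[\widetilde\rho_2/(\widetilde\rho_1\widetilde\rho_1)-1\bigr]$.
\item Show the bracket is $O(\eta^2)$ when $\eta$ is small, where $\eta$ is the largest of the normalized cross-covariances $|r|$, $|r_{10}|/\sqrt{r_{11}(x,x)}$, $|r_{01}|/\sqrt{r_{11}(y,y)}$, $|r_{11}(x,y)|/\sqrt{r_{11}(x,x)r_{11}(y,y)}$.
\item Show $\eta\ll\alpha^{c}=\sech^{2c}(s-t)$, e.g.\ via the termwise comparison with pure power sums used in Section~\ref{s.8g}.
\item Split the square into $|s-t|\le M$, which costs $o(ML)$, and $|s-t|>M$, which costs $O(e^{-cM}L)$, and let $M\to\infty$ slowly.
\end{enumerate}

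You do propose a bound of this type, but only for the cross blocks, and there it needs the same sharpening.
\begin{itemize}
\item $r^2$ alone does not control the bracket: $\delta$ involves $r_{11}(x,y)$, and $\sigma_1,\sigma_2$ involve $r_{10},r_{01}$.
\item Uniform smallness is not enough. For $x\in U_n$, $y\in U_n^*$, the bound $|r|\le e^{-B/2}$ of Lemma~\ref{l.cor-in-out} integrates to $e^{-B}\log^2 n$. You need the bounds decaying in $n(1-|x|)$ and $n(1-|1/y|)$ that sit inside its proof.
\item For opposite signs, $r=o(1)$ alone gives only $o(\log^2 n)$. Decay alone gives only $O(\log n)$; it comes from $|k(-t)|\le k(t)$, hence $|r(x,y)|\le r(|x|,|y|)\ll\sech^{2\tau+1}(s-t)$. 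Only the combination $\eta\le\min\{o(1),Ce^{-c|s-t|}\}$ gives $o(\log n)$, since $\int_{\mathbb R}\min\{\epsilon,Ce^{-c|v|}\}\,dv=O(\epsilon\log(1/\epsilon))$.
\end{itemize}

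A minor point: Lemma~\ref{l.DN0} is stated with $b_n=1/(na_n)$, whereas Theorem~\ref{t.outside-In} uses $b_n=B/n$. The strip $[1-\frac{1}{na_n},1-\frac{B}{n})$ therefore needs its own (easy) moment bound.
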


Since the Kac polynomials belong to the class of hyperbolic polynomials, Theorems \ref{t.kac} and \ref{t.kac-der} follow directly from Theorem~\ref{t.hyperkac}.

\begin{proof}[Proof of Theorem \ref{t.hyperkac}]
It suffices to consider the case $\ell=0$, as the remaining cases are analogous.

Let $P_{n,L}(x)$ be given by \eqref{e.hyperbolic}, and decompose
\[
M_{n,L}(x):=\mathbb E[P_{n,L}(x)] \quad\text{and}\quad R_{n,L}(x):=P_{n,L}(x)-M_{n,L}(x).
\]
For each $i=0,1,2$, we have 
    \[
   M_{n,L}^{(i)}(x) \sim \mu \sum_{j=1}^n \sqrt{\frac{L\left(L+1\right)\cdots\left(L+j-1\right)}{j!}} j^i x^j,
    \]
and 
    \[
    \Var[R_{n,L}^{(i)}(x)] \sim \sum_{j=1}^n \frac{L\left(L+1\right)\cdots\left(L+j-1\right)}{j!} j^{2i} x^{2j}.
    \]
Since 
\[
\frac{L\left(L+1\right)\cdots\left(L+j-1\right)}{j!} \asymp j^{L-1},
\]
it follows that, for each $i=0,1,2$,
     \[
    M_{n,L}^{(i)}(x) \asymp \frac{1}{(1-x+1/n)^{i+\left(L+1\right)/2}},\quad x\in [-1,1],
    \]
and 
\[
\Var[R_{n,L}^{(i)}(x)] \asymp \frac{1}{(1-x^2+1/n)^{L+2i}},\quad x\in [-1,1].
\]
To treat the region $\mathbb R\backslash[-1,1]$, consider the normalized reciprocal polynomial
\[P_{n,L}^*(x):=\frac{x^n}{v_{n,L}}P_{n, L}\left(1/x\right),\] 
where
\[v_{n,L}=\sqrt{\frac{L\left(L+1\right)\cdots\left(L+n-1\right)}{n!}}.\]
We also define
\[
M_{n,L}^*(x):=\mathbb E[P_{n,L}^*(x)] \quad\text{and}\quad R_{n,L}^*(x):=P_{n,L}^*(x)-M_{n,L}^*(x).
\]
Proceeding as above (noting that the reciprocal ensemble behaves like the case $L=1$), we obtain for $i=0,1,2$,
     \[
    {M_{n,L}^*}^{(i)}(x) \asymp \frac{1}{(1-x+1/n)^{i+1}},\quad x\in [-1,1],
    \]
and 
\[
\Var[{R_{n,L}^*}^{(i)}(x)] \asymp \frac{1}{(1-x^2+1/n)^{2i+1}},\quad x\in [-1,1].
\]
Let $I=[0, \infty)$. Then for $i=0,1,2$,
\[
 M_{n,L}^{(i)}(x) \asymp \frac{1}{(1-x+1/n)^{1/2}} \sqrt{\Var[R_{n,L}(x)]},\quad x\in I\cap [0,1],
\]
and 
\[
 {M_{n,L}^*}^{(i)}(1/y) \asymp \frac{1}{(1-1/y+1/n)^{1/2}} \sqrt{\Var[{R_{n,L}^*}(1/y)]},\quad y\in I\backslash [0,1].
\]
Moreover, 
    \[
   \frac{1}{(1-x+1/n)^{1/2}} \ge C\sqrt{|\log(1-x+1/n)|},\quad x\in [0,1],
    \]
for any fixed constant $C>0$ and all sufficiently large $n$. Hence, $M_{n,L}$ dominates $R_{n,L}$ on $I$ with factor function $C|\log x|^{1/2}$. Applying Theorem~\ref{t.varcomp} with $d=1/2$, we obtain
    \[
    \Var[N_{n,0}(I)]=O(\sqrt{\log n})=o(\log n).
    \]

Next, for each $i=0,1,2$, 
    \[
    M_{n,L}^{(i)}(x)  \ll (1-|x|+1/n)^{L/2} \sqrt{\Var[R_{n,L}^{(i)}(x)]},\quad x\in (-I)\cap [-1,0],
    \]
and 
     \[
    {M_{n,L}^*}^{(i)}(1/y)  \ll (1-|1/y|+1/n)^{1/2} \sqrt{\Var[{R_{n,L}^*}^{(i)}(1/y)]},\quad y\in (-I)\backslash [-1,0].
    \]
Thus, $M_{n,L}$ is dominated by $R_{n,L}$ on $-I$ up to order $2$ with factor function $|x|^\theta$, where $\theta=\min\{L/2, 1/2\}>0$ and $C>0$ is a constant depending on $L$. Therefore, Theorem~\ref{t.varcomp} together with Proposition~\ref{p.DN} yields
    \[
    \Var[N_{n,0}(-I)]=\left[\kappa_\tau+\frac 1\pi \left(1-\frac 2\pi\right)+o(1)\right]\log n,
    \]
    where $\kappa_\tau$ is defined in \eqref{kappa} and $\tau=(L-1)/2$.
Consequently, 
    \begin{align*}
        \Var[N_{n,0}]&=\Var[N_{n,0}(-I)]+\Var[N_{n,0}(I)]+2\Cov[N_{n,0}(-I), N_{n,0}(I)]\\
        &=\left[\kappa_\tau+\frac 1\pi \left(1-\frac 2\pi\right)+o(1)\right]\log n+o(\log n)+o(\log n)\\
        &=\left[\kappa_\tau+\frac 1\pi \left(1-\frac 2\pi\right)+o(1)\right]\log n.
    \end{align*}
    
To prove the CLT for $N_{n,0}$, we write 
    \[
    \frac{N_{n,0}-\mathbb E[N_{n,0}]}{\sqrt{\Var[N_{n,0}]}} = \frac{N_{n,0}(-I)-\mathbb E[N_{n,0}(-I)]}{\sqrt{\Var[N_{n,0}]}} +\frac{N_{n,0}(I)-\mathbb E[N_{n,0}(I)]}{\sqrt{\Var[N_{n,0}]}}.
    \]
By Theorem~\ref{t.cltcomp}, $N_{n,0}(-I)$ satisfies the CLT, and hence
    \[
    \frac{N_{n,0}(-I)-\mathbb E[N_{n,0}(-I)]}{\sqrt{\Var[N_{n,0}]}}=\frac{N_{n,0}(-I)-\mathbb E[N_{n,0}(-I)]}{\sqrt{\Var[N_{n,0}(-I)]}}(1+o(1)) \xrightarrow{d} \mathcal N(0,1).
    \]
On the other hand, by Markov’s inequality, for any $\varepsilon>0$,
\[
\mathbb P\bigg(\bigg|\frac{N_{n,0}(I)-\mathbb E[N_{n,0}(I)]}{\sqrt{\Var[N_{n,0}]}}\bigg|\ge \varepsilon \bigg) \le \frac{\Var[N_{n,0}(I)]}{\varepsilon^2\Var[N_{n,0}]}=o(1),
\]
which implies that 
\[
\frac{N_{n,0}(I)-\mathbb E[N_{n,0}(I)]}{\sqrt{\Var[N_{n,0}]}}\xrightarrow{\mathbb P} 0.
\]
The conclusion follows from Slutsky’s theorem.
\end{proof}

\begin{appendix}
\section{Proof of Lemma \ref{l.estimate-Nj}} \label{a.l5.2}

Note that $T\ll \log n$, and $a_n$ decays faster than any power of $\log n$.  Thus, by the mean value theorem and the triangle inequality, it suffices to show that
\begin{equation} 
\label{e.E[Nj]}
    \mathbb E[|N_{j}-\varphi_{j}|] \ll  a_n^{\varepsilon/4}+n^{-c},
\end{equation}
uniformly over $1\le j\le T$. 

For an interval $K$ and a number $t>0$, let $K(t)=K+[-t,t]$ denote the $t$-neighborhood of $K$ on the real line. Let $J_j^-$ and $J_j^+$ be the left and right connected components of the set difference $J_j(\delta_j^{1+\varepsilon})\backslash J_j$.

Let $Z_{j}$ denote the number of complex roots of $P_n$ with their real parts in $J_j(\delta_j^{1+\varepsilon})$ and imaginary parts in $(-\delta_j^{1+\varepsilon}, \delta_j^{1+\varepsilon})\backslash\{0\}$. Then
\begin{align*}
    \mathbb E[|N_{j}-\varphi_{j}|] \ll  \mathbb E[N_{P_n}(J_j^{-})]+ \mathbb E[N_{P_n}(J_j^{+})] +\mathbb E[Z_{j}].
\end{align*}

Using the universality estimates in Theorem~\ref{t.corr} for a suitable bump function supported on $J_j^-(\delta_{j}^{1+\varepsilon})$  (that equals $1$ on $J_j^-$), we obtain, for some $\alpha_1>0$,
\begin{align*}
    \mathbb E[N_{P_n}(J_j^{-})] &\le  \mathbb E[N_{P_{n,G}}(J_j^-(\delta_{j}^{1+\varepsilon}))]+O(\delta_j^{-3\varepsilon}\delta_j^{\alpha_1}), 
\end{align*}
where $P_{n,G}$ denotes the Gaussian analog of $P_n$. By the Kac-Rice formula  (see Lemmas \ref{l.rho1} and \ref{l.mean.Gauss}), we derive
\begin{align*}
    \mathbb E[N_{P_n}(J_j^{-})]  
    \ll   \int_{J_j^-(\delta_{j}^{1+\varepsilon})} \frac{dx}{1-x+\frac1 n} + O(\delta_j^{\alpha_1-3\varepsilon})
    \ll \delta_j^{\varepsilon},
\end{align*}
provided that $\varepsilon$ is sufficiently small. 

Similarly, we also have $\mathbb E[N_{P_n}(J_j^{+})] \ll \delta_j^{\varepsilon}$.

Note that the roots of the real polynomials $P_n$ counted in $Z_j$ will form conjugate pairs, therefore using the union bound and Lemma \ref{l.nearR}, we obtain 
\[
\mathbb P\left(Z_{j} \ge 1\right) \ll \delta_j^{-\varepsilon}\delta_j^{3\varepsilon/2}=\delta_j^{\varepsilon/2}.
\]
Combining with Theorem \ref{t.localcount}, we deduce that
    \[
    \mathbb E[Z_{j}]=\mathbb E[Z_{j} \pmb 1_{\{Z_{j}\ge 1\}}]\ll  \delta_j^{\varepsilon/4}.
    \]
Collecting estimates,  we obtain
\[
    \mathbb E[|N_{j}-\varphi_{j}|] \ll  \delta_{j}^{\varepsilon/4} \ll a_n^{\varepsilon/4}+n^{-\varepsilon/4},
\]
proving \eqref{e.E[Nj]}.

\section{Proof of Lemma \ref{l.estimate-varphi}} \label{a.l5.3}
The proof will be divided into two steps.

\setcounter{step}{0}
\begin{step}[Discrete sampling] Using Green’s  identity, we have
\begin{align*}
    \varphi_{j}=\sum_{\ell=1}^n\varphi_j\left(\zeta_\ell\right)
   =\frac{1}{2\pi}\int_{\mathbb C}\log|P_n(w)| \Delta \varphi_j(w)dw.
\end{align*}
Note that by \eqref{e.varphi-j} $\Delta \varphi_j$ is supported inside the (complex) ball $B_j$ centered at the midpoint of $J_j$, with radius $\frac 1 2|J_j|+2\delta_j^{1+\varepsilon}$, and $\|\Delta \varphi_j\|_{\sup}=O(\delta_j^{-2(1+\varepsilon)})$. 

Let $q_j\ge 1$ be an integer to be chosen later. By discrete  sampling, we may approximate $\varphi_j$, with high probability,  by a discrete sum of the form
\begin{align*}
     L_{j}&:=\frac{1}{q_j}\sum_{\ell=1}^{q_j} a_{j,\ell}\log|P_n\left(w_{j,\ell}\right)|,\;\; \text{where } 
     a_{j,\ell} :=\frac{1}{2\pi}|B_j|\Delta \varphi_{j}\left(w_{j,\ell}\right) =   O(\delta_j^{-2\varepsilon}).
\end{align*}

For simplicity, let $\widetilde L_j$ denote the Gaussian analogue of $L_j$ (with the same coefficients $a_{j,\ell}$). Let $\{w_{j,\ell}\}_{1\le \ell \le q_j}$ be independent samples drawn uniformly from $B_j$, also independent of the coefficients of $P_n$ and $P_{n,G}$. By standard Monte Carlo sampling (see, e.g., \cite{TV15}*{Lemma 6.1}), 
\begin{equation}
\label{e.varphi-Pn}
   \mathbb P_{(w_{j,\ell})}\left(\left|\varphi_{j}-L_j\right|>\lambda\right) \ll \frac{|B_j|}{q_j\lambda^2}\int_{B_j}  |\log  |P_n(w)||^2 |\Delta \varphi_j(w)|^2 dw.
\end{equation}
By Theorem \ref{t.locallogint}, it holds with probability $1-O( \delta_j^{\varepsilon})$ that
\begin{align*}
    \frac{1}{|B_j|}\int_{B_j}  |\log  |P_n(w)||^2 dw \ll  |\log \delta_j|^4.
\end{align*}
Conditioning on such an event and choosing $q_j\asymp \delta_j^{-8\varepsilon}$ and $\lambda=\delta_j^{\varepsilon}$,  it follows that the right-hand side of \eqref{e.varphi-Pn}, denoted by RHS, satisfies
    \[
    \text{RHS}\ll q_j^{-1}\lambda^{-2}\delta_j^{-4\varepsilon}|\log \delta_j|^4 \ll \delta_j^{\varepsilon}.
    \]
 We conclude that with probability $1-O(\delta_j^{\varepsilon})$,
    \[
    \left|\varphi_{j}-L_j  \right| \ll  \delta_j^{\varepsilon}.
    \]

Given that $F$ and its partial derivatives are bounded, we may apply the triangle inequality and the mean value theorem to obtain
\begin{align*}
    \mathbb E[F\left(\varphi_{1}, \dots, \varphi_{T}\right)]
    &=\mathbb E[F\left(L_{1}, \dots, L_{T}\right)]+O\Big(\sum_{j=1}^T \delta_j^\varepsilon\Big)\\
    &=\mathbb E[F(L_{1}, \dots, L_{T})]+O(a_n^{\varepsilon}+n^{-\varepsilon}).
\end{align*}
Thus, it remains to show that
\begin{equation}\label{e.FL}
    \mathbb E[F(L_{1}, \dots, L_{T})] - \mathbb E[F(\widetilde L_{1}, \dots, \widetilde L_{T})] \ll  a_n^{\varepsilon}.
\end{equation}
Here, we stress that the expectation is over the joint distribution of the sampling points and the polynomial coefficients.

Let $D_j=\{\left(j,\ell\right):  1\le \ell \le q_j\}$, $D:=\cup_{j=1}^T D_j$, and $q=|D|=\sum_{j=1}^T q_j$. Define $\hat F: \mathbb R^{q}\to \mathbb R$ as
    \[
    \hat F\left(y_{s}\right)_{s\in D}:=F\bigg(\frac{1}{q_1}\sum_{\ell=1}^{q_1}a_{1,\ell}y_{1,\ell}, \dots, \frac{1}{q_T}\sum_{\ell=1}^{q_T}a_{T,\ell}y_{T,\ell}\bigg).
    \]

To prove \eqref{e.FL}, it suffices to show that
\begin{align}\label{e.u-log}
    \mathbb E[\hat F\left(\log|P_n\left(w_s\right)|\right)_{s\in D}]-\mathbb E[\hat F\left(\log|P_{n, G}\left(w_s\right)|\right)_{s\in D}] \ll  a_n^{\varepsilon}.
\end{align}
    
 \end{step}
\begin{step}[Lindeberg swapping and proof of \eqref{e.u-log}]

Let $\widetilde F: \mathbb R^{q}\to \mathbb R$ be defined by
    \[
    \widetilde F\left(y_{s}\right)_{s\in D}=\hat F\left(y_{s}+\log \sigma\left(w_{s}\right)\right)_{s\in D}.
    \]
For $\Delta_j:=\log(\delta_j^{-10\varepsilon})$, let
    \[
    Y_1=\left\{\left(y_s\right)_{s\in D}\in \mathbb R^{q}: y_s\le -\Delta_j \text{ for some } j \text{ and } s\in D_j\right\},
    \]
    \[
    Y_2=\left\{\left(y_s\right)_{s\in D}\in \mathbb R^{q}: y_s\ge -\Delta_j-1 \text{ for all } j \text{ and } s\in D_j\right\}.
    \]
We write
\[
    \widetilde F=F_1+F_2 \equiv \left(1-f\right)\widetilde f + f\widetilde F,
\]
where $f:\mathbb R^q \to [0, 1]$ smooth and supported in $Y_2$, $f=1$ on the complement $Y_1^c$, and $\|f^{\left(\beta\right)}\|_{\infty}=o(1)$ for all $0\le |\beta|\le 3$. More precisely,
    \[
    f\left(y_s\right)_{s\in D}=\prod_{j=1}^T\prod_{s\in D_j}f_j\left(y_{s}\right),
    \]
where $f_j:\mathbb R \to \mathbb R$ are  smooth  such that
    \[
    \pmb 1_{[-\Delta_j, \infty)}\le f_j \le \pmb 1_{[-\Delta_j-1, \infty)},
    \]
and for $0\le \beta \le 3$,
    \[
    \|f^{\left(\beta\right)}_j\|_{\infty} \ll  1.
    \]
Clearly, $\supp F_1 \subset Y_1$ and $\supp F_2\subset Y_2$.

We now recall a version of Tao-Vu's Lindeberg swapping estimate (\cite{TV15}).

\begin{lemma} [\cite{Do21}] \label{l.lindeberg}
Assume that $(\xi_j)_{j=0}^n$ are independent real-valued with zero mean and unit variance and satisfy  \ref{A1}. Let $(G_j)_{j=0}^n$ be independent Gaussian and independent from $\xi_j$'s. Consider $H: \mathbb{C}^{n+1} \to \mathbb{C}$ such that $H$ is in $C^3$ when regarded as a function on $\mathbb{R}^{2n+2}$. There exists some finite positive constant $C=C\left(\varepsilon_0, C_0\right)$ such that 
    \[|\mathbb E[H\left(\xi_0, \dots, \xi_n\right)]  - \mathbb E[H\left(G_0,\dots,  G_n\right)]|   \le C \sum_{k=0}^nH_{2,k}^{1-\varepsilon_0} H_{3,k}^{\varepsilon_0},\]
    where 
    \[H_{l,k}:= \sum_{\beta=0}^{l}\left\|\frac{\partial^lH\left(z_0,\dots, z_n\right)}{\partial \re\left(z_k \right)^{\beta}\partial \im\left(z_k\right)^{l-\beta}}\right\|_{\infty},\quad l= 2, 3.\]
\end{lemma}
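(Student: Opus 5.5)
The plan is the standard Lindeberg replacement: swap the coefficients one at a time, and at each swap Taylor expand to second order. We may assume the $G_j$ are standard Gaussian (zero mean, unit variance), so that $G_j$ and $\xi_j$ have matching first and second moments. We may also assume $\varepsilon_0\le 1$. When $\varepsilon_0>1$, the inequality $H_{2,k}^{1-\varepsilon_0}H_{3,k}^{\varepsilon_0}\ge\cdots$ no longer follows from the interpolation below, so in that case I would either restrict to $\varepsilon_0\le 1$ as is implicit in the use of the lemma, or pass to the weaker moment $\mathbb E|\xi_j|^{3}\le C$ via Lyapunov and treat the endpoint exponent directly.

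\textbf{Telescoping.} For $0\le k\le n+1$ let
\[
Z^{(k)}=(G_0,\dots,G_{k-1},\xi_k,\dots,\xi_n),
\]
so that
\[
\mathbb E H(\xi)-\mathbb E H(G)=\sum_{k=0}^n\bigl(\mathbb E H(Z^{(k)})-\mathbb E H(Z^{(k+1)})\bigr).
\]
Fix $k$ and condition on all coordinates other than the $k$th. Set $h(t)=H(\dots,t,\dots)$, with $t$ real placed in the $k$th slot. Since the inputs are real, only the derivatives in $\re z_k$ enter, and these are dominated by $H_{2,k}$ and $H_{3,k}$.

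\textbf{Taylor expansion.} Write
\[
h(t)=h(0)+t h'(0)+\tfrac{t^2}{2}h''(0)+\mathcal R(t).
\]
The quantities $h(0)$, $h'(0)$ and $h''(0)$ are independent of the $k$th variable. Since $\mathbb E\xi_k=\mathbb E G_k=0$ and $\mathbb E\xi_k^2=\mathbb E G_k^2=1$, the polynomial part cancels in expectation, and the $k$th difference is bounded by $\mathbb E|\mathcal R(\xi_k)|+\mathbb E|\mathcal R(G_k)|$.

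\textbf{Remainder bound.} There are two forms of the remainder. From $\mathcal R(t)=\tfrac{t^2}{2}\bigl(h''(\theta t)-h''(0)\bigr)$ we get $|\mathcal R(t)|\le H_{2,k}t^2$. From the cubic Lagrange form we get $|\mathcal R(t)|\le \tfrac16 H_{3,k}|t|^3$. Using $\min(a,b)\le a^{1-\varepsilon_0}b^{\varepsilon_0}$ for $\varepsilon_0\in[0,1]$,
\[
|\mathcal R(t)|\le H_{2,k}^{1-\varepsilon_0}H_{3,k}^{\varepsilon_0}\,|t|^{2+\varepsilon_0}.
\]
Taking expectations and using (A1) for $\xi_k$, together with the finite Gaussian moment for $G_k$, gives a $k$th term of at most $C(\varepsilon_0,C_0)\,H_{2,k}^{1-\varepsilon_0}H_{3,k}^{\varepsilon_0}$. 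Summing over $k$ yields the claim.

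\textbf{Main obstacle.} The only delicate point is making sure the bounds $H_{l,k}$ are taken as sup norms over all of $\mathbb C^{n+1}$. This is what allows the conditioning on the mixed vector $Z^{(k)}$ without any control on the other coordinates. The complex-variable formulation is then harmless, because real inputs only probe the $\re z_k$ directions, and those are already included in $H_{l,k}$.
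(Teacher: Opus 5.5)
Your proposal is correct and is the standard Lindeberg replacement argument (swap one coordinate at a time, expand to second order so that the polynomial part cancels by matching the first two moments, and interpolate between the quadratic and cubic remainder bounds); the paper does not reprove this lemma but quotes it from \cite{Do21}, where it rests on exactly this argument. Two minor refinements: since $h$ is complex-valued, use the integral form $\mathcal R(t)=\int_0^t (t-s)\bigl(h''(s)-h''(0)\bigr)\,ds$ rather than a single mean-value point $\theta t$; and the reduction to $\varepsilon_0\le 1$ is not merely convenient but forced, because for $\varepsilon_0>1$ the stated bound is false (take $H(z)=f(\lambda \re z_0)$ with $f'''(0)\ne 0$, a bounded $\xi_0$ with $\mathbb E\xi_0^3\ne 0$, and let $\lambda\to 0$: the left side is of order $\lambda^3$ while the right side is of order $\lambda^{2+\varepsilon_0}$), whereas lowering $\varepsilon_0$ costs nothing since \ref{A1} persists for smaller exponents.
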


Using the chain rules, we obtain the following lemma.

\begin{lemma} \label{l.u-log-smooth}
Let $K: \mathbb R^{q}\to \mathbb R$ be a $C^3$ function that satisfies 
\begin{align}\label{e.K-bounds}
    \|K\|_{\sup}&=O(1), &\|\partial_{s}K\|_{\sup}&=O(\delta_j^{-10\varepsilon}),\\
    \notag    \|\partial_{s}\partial _{t}K\|_{\sup}&=O(\delta_j^{-10\varepsilon}\delta_{j'}^{-10\varepsilon}),&\|\partial_{s}\partial_{t}\partial_{u}K\|_{\sup}&=O(\delta_j^{-10\varepsilon}\delta_{j'}^{-10\varepsilon}\delta_{j''}^{-10\varepsilon}),
\end{align}
for all $s\in D_j$, $t\in D_{j'}$, $u\in D_{j''}$, and $1\le j, j', j'' \le T$. Then, for every $w_{ij\ell}$ in the ball $B\left(x_{ij},2\delta_j/3\right)$, we have 
    \[
    \mathbb E[K\bigg(\frac{P_n\left(w_s\right)}{\sqrt{\Var[P_n\left(w_s\right)]}}\bigg)_{s\in D}]-\mathbb E[K\bigg(\frac{P_{n, G}\left(w_s\right)}{\sqrt{\Var[P_{n,G}\left(w_s\right)]}}\bigg)_{s\in D}] \ll  a_n^{\varepsilon}.
    \]
\end{lemma}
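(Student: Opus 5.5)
We apply Lemma~\ref{l.lindeberg} to the function
\[
H(\xi_0,\dots,\xi_n):=K\Big(\tfrac{P_n(w_s)}{\sqrt{\Var[P_n(w_s)]}}\Big)_{s\in D},
\]
regarded as a function of the coefficients, with the sampling points held fixed. Here $P_n(w)=M_n(w)+\sum_k v_k\xi_k w^k$. The deterministic part $M_n(w_s)/\sqrt{\Var[P_n(w_s)]}$ is the same for $P_n$ and $P_{n,G}$, so it is only a fixed shift inside $K$. The argument therefore proceeds exactly as in the centered case (the shift does not affect any derivative bound) and reduces to estimating the partial derivatives of $H$ in each $\xi_k$.

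\textbf{Chain rule.} Write $Z_s:=P_n(w_s)/\sqrt{\Var[P_n(w_s)]}$. Then
\[
\partial Z_s/\partial\xi_k=c_{s,k}:=\frac{v_kw_s^k}{\sqrt{k(|w_s|^2)}}.
\]
This is linear in $\xi_k$, so all higher derivatives of $Z_s$ vanish, and $H$ depends on $(\re Z_s,\im Z_s)$. The chain rule gives
\[
H_{l,k}\ll\sum_{s_1,\dots,s_l}\|\partial_{s_1}\cdots\partial_{s_l}K\|_\infty\prod_{i=1}^l|c_{s_i,k}|,\qquad l=2,3.
\]
By \eqref{e.K-bounds}, this is bounded by $\big(\sum_j\delta_j^{-10\varepsilon}\sum_{s\in D_j}|c_{s,k}|\big)^l$.

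\textbf{Bound for $c_{s,k}$.} For $w_s\in B(x_j,2\delta_j/3)$ with $|w_s|\le 1-\delta_j/3$ roughly, Conditions~\ref{A2}--\ref{A3} and Lemma~\ref{l.estvar} give
\[
|c_{s,k}|\ll (1+k)^\tau|w_s|^k\,\delta_j^{\tau+1/2}.
\]
Hence $\sum_k|c_{s,k}|^2\ll 1$. More importantly, $\sum_k|c_{s,k}|^{3}\ll \delta_j^{1/2}$: only the indices $k\lesssim 1/\delta_j$ carry mass, and each such $|c_{s,k}|\ll\delta_j^{1/2}$. The same holds with $2+\varepsilon_0$ replacing $3$, giving $\sum_k|c_{s,k}|^{2+\varepsilon_0}\ll\delta_j^{\varepsilon_0/2}$.

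\textbf{Combining.} Combine using $H_{2,k}^{1-\varepsilon_0}H_{3,k}^{\varepsilon_0}\ll(\sum_s\delta_{j(s)}^{-10\varepsilon}|c_{s,k}|)^{2+\varepsilon_0}$. Hölder's inequality over $s$ (the number of sample points is $q=\sum_j q_j\ll\sum_j\delta_j^{-8\varepsilon}$) then bounds the Lindeberg error by
\[
\ll q^{1+\varepsilon_0}\,\max_j\delta_j^{-(2+\varepsilon_0)10\varepsilon}\sum_s\sum_k|c_{s,k}|^{2+\varepsilon_0}.
\]
This is essentially $\sum_j\delta_j^{\varepsilon_0/2-C\varepsilon}$, which is $\ll a_n^{\varepsilon}+n^{-c}$ once $\varepsilon$ is small relative to $\varepsilon_0$. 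For this we use the geometric structure $\delta_j=2^{-j}a_n$ and $T\ll\log n$.

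\textbf{Main obstacle.} The hard part is bookkeeping the mixed scales. Derivative weights $\delta_j^{-10\varepsilon}$ at one scale multiply coefficients $c_{s,k}$ that are large at a different scale, and the crude Hölder step across all $q$ points loses powers of $\delta_{\min}\approx 1/n$ that are too big. If that happens, we will instead group $\sum_k$ dyadically in $k$: for $k\sim 2^i$, the points with $\delta_j\ll 2^{-i}$ contribute through the factor $|w_s|^k$, which is exponentially small. This makes each $k$ interact essentially with only $O(1)$ scales $j$, so the powers of $q_j$ and $\delta_j^{-\varepsilon}$ only involve those matched scales and are absorbed by $\delta_j^{\varepsilon_0/2}$.
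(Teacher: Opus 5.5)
Your setup matches the paper's: Lemma~\ref{l.lindeberg} applied to $H$, the mean entering only as a deterministic shift, and $H_{l,k}\ll A_k^l$ with $A_k=\sum_j\delta_j^{-10\varepsilon}\sum_{s\in D_j}|c_{s,k}|$. The gap is in the \textbf{Combining} step. A single H\"older inequality over all $q$ sample points produces two global factors, $q^{1+\varepsilon_0}\ge q_T^{1+\varepsilon_0}\asymp n^{8\varepsilon(1+\varepsilon_0)}$ and $\max_j\delta_j^{-10\varepsilon(2+\varepsilon_0)}\asymp n^{10\varepsilon(2+\varepsilon_0)}$. Both are set by the finest scale $\delta_T\asymp 1/n$. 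The gain from $\sum_k|c_{s,k}|^{2+\varepsilon_0}$ at the coarse scales $\delta_j\approx a_n$, however, is only a power of $a_n=\exp(-\log^{d/4}n)$. Since $d/4<1$, $n^{C\varepsilon}a_n^{C'}\to\infty$ for every fixed $C,C',\varepsilon>0$. So your bound is not ``essentially $\sum_j\delta_j^{\varepsilon_0/2-C\varepsilon}$'' and proves nothing, however small $\varepsilon$ is.

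Your fallback is also argued the wrong way round. For $k\sim 2^i$, it is the \emph{coarse} points ($\delta_j\gg 2^{-i}$) that are exponentially suppressed, by $|w_s|^k=e^{-\Theta(k\delta_j)}$. The fine points ($\delta_j\ll 2^{-i}$) have $|w_s|^k\approx 1$ and are only polynomially small, $|c_{s,k}|\approx k^{-1/2}(k\delta_j)^{\tau+1/2}$.

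No decoupling in $k$ is needed. The paper applies H\"older in two stages, so that each loss is matched to the scale of its gain.
\begin{itemize}
\item First, over the $T$ scales: $A_k^{2+\varepsilon_0}\le T^{1+\varepsilon_0}\sum_j X_{j,k}^{2+\varepsilon_0}$ with $X_{j,k}=\delta_j^{-10\varepsilon}\sum_{s\in D_j}|c_{s,k}|$. This costs only a power of $\log n$.
\item Then, within each $D_j$: $X_{j,k}^{2+\varepsilon_0}\le \delta_j^{-10\varepsilon(2+\varepsilon_0)}q_j^{1+\varepsilon_0}\sum_{s\in D_j}|c_{s,k}|^{2+\varepsilon_0}$.
\end{itemize}
Use $q_j\asymp\delta_j^{-8\varepsilon}$ and $|c_{s,k}|^{\varepsilon_0}\ll\delta_j^{c\varepsilon_0/2}$, and take $\varepsilon$ small relative to $c\varepsilon_0$. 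This gives $H_{2,k}^{1-\varepsilon_0}H_{3,k}^{\varepsilon_0}\ll T^{1+\varepsilon_0}\sum_j\delta_j^{10\varepsilon}\sum_{s\in D_j}|c_{s,k}|^2$. Summing over $k$ with $\sum_k|c_{s,k}|^2=1$ yields $T^{1+\varepsilon_0}\sum_j q_j\delta_j^{10\varepsilon}\asymp T^{1+\varepsilon_0}\sum_j\delta_j^{2\varepsilon}\ll a_n^{\varepsilon}$, because $a_n$ decays faster than any power of $\log n$.

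A smaller correction: the exponent here is $c=\min(2\tau+1,1)$, not $1$. For $-1/2<\tau<0$, the terms with $k=O(1)$ give $|c_{s,k}|\asymp\delta_j^{\tau+1/2}\gg\delta_j^{1/2}$. So your claim $\sup_k|c_{s,k}|\ll\delta_j^{1/2}$ holds only for $\tau\ge 0$. This is harmless, since $2\tau+1>0$ still gives a positive power.
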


\begin{proof}[Proof of Lemma \ref{l.u-log-smooth}]
Let $\sigma(z) = \sqrt{\Var[P_n(z)]}$. We apply Lemma \ref{l.lindeberg} to 
\[
H\left(\xi_0, \dots, \xi_n\right) := K\left(P_n\left(w_s\right)/\sigma\left(w_s\right)\right)_{s \in D}.
\]
For each $k = 0, \dots, n$, via explicit computations utilizing \eqref{e.K-bounds}, we derive for each $m\le 3$,
\begin{align*}
    H_{m,k} \ll  \bigg(\sum_{j=1}^T\delta_j^{-10\varepsilon}\sum_{s\in D_j} \frac {|v_{k} w_{s}^{k}|}{\sigma\left(w_{s}\right)}\bigg)^m.
\end{align*}
For $s\in D_j$, we have $w_s\in B_j$, so $1-|w_s|\asymp \delta_j$. Using conditions \ref{A2} and \ref{A3}, we see that
    \[
    \sigma^2(w_s) 
    \gg  \frac{1}{(1-|w_s|^2)^{2\tau+1}} \gg  \delta_j^{-2\tau-1}.
    \]
Therefore, by analyzing   $(1+x)^{2\tau} (1-\delta_j)^{2k}$ as a function of $k \in [0,\infty)$,
    \[
    \frac{|v_k w_{s}^k|^2}{\sigma^2(w_{s})} \ll \frac{(1+ k)^{2\tau} (1-\delta_j)^{2k}}{\sigma^2(w_{s})} \ll  \delta_j^{c},\quad s\in D_j,
    \]
 where $c:=\min(2\tau+1, 1)>0$. Using repeated applications of H\"older's inequality, for $\varepsilon>0$ sufficiently small, we obtain
    \[
    H_{2,k}^{1-\varepsilon_0}H_{3,k}^{\varepsilon_0} \ll  T^{1+\varepsilon_0}\sum_{j=1}^T\delta_j^{10\varepsilon}\sum_{s\in D_j}\frac {|v_{k} w_{s}^{k}|^2}{\sigma^2(w_{s})}.
    \]
Applying Lemma \ref{l.lindeberg}, we obtain
\begin{align*}
    \mathbb E[K\left(\frac{P_n\left(w_s\right)}{\sigma\left(w_s\right)}\right)_{s\in D}]-\mathbb E[K\left(\frac{P_{n, G}\left(w_s\right)}{\sigma\left(w_s\right)}\right)_{s\in D}]
    \ll  T^{1+\varepsilon_0}\sum_{j=1}^T \delta_j^{2\varepsilon}
    \ll a_n^{\varepsilon},
\end{align*}
recalling that $T\ll  \log n$ and $a_n=o((\log n)^{-A})$ for any $A>0$.
\end{proof}

We now prove \eqref{e.u-log}. For the contribution of $F_1$,  we set $\widetilde F_1=\|\widetilde F\|_{\infty}\left(1-f\right)$ and 
    \[
    K_1\left(y_{s}\right)_{s\in D}=\widetilde F_1\left(\log|y_s|\right)_{s\in D}.
    \]
We can verify that $K_1$ satisfies \eqref{e.K-bounds} and $|F_1(\log|y_s|)_{s\in D}|\le K_1(y_{s})_{s\in D}$. Applying Lemma \ref{l.u-log-smooth} to $K_1$, we obtain 
\begin{align*}
    &\mathbb E[F_1\bigg(\log\frac{|P_n(w_s)|}{\sigma(w_s)}\bigg)_{s\in D}]\\
    &\ll \mathbb E[K_1\bigg(\frac{P_n(w_s)}{\sigma(w_s)}\bigg)_{s\in D}]\\
    &=\mathbb E[K_1\left(\frac{P_{n, G}\left(w_s\right)}{\sigma_G\left(w_s\right)}\right)_{s\in D}]+O(a_n^{\varepsilon})\\
    &\ll  \|K_1\|_{\sup} \sum_{j=1}^T \mathbb P\bigg(\exists s\in D_j: \frac{|P_{n, G}\left(w_s\right)|}{\sigma_G\left(w_s\right)}\le e^{-\Delta_j}\bigg)+O(a_n^{\varepsilon})\\
    &\ll  \sum_{j=1}^T\sum_{s\in D_j}\delta_j^{10\varepsilon} +O(a_n^{\varepsilon}) \\
   \qquad &\ll  a_n^{\varepsilon}.
\end{align*}

For the contribution of $F_2$, let $K_2: \mathbb R^{q}\to \mathbb R$ be defined by
    \[
    K_2(y_{s})_{s\in D}=F_2(\log|y_s|)_{s\in D}.
    \]
We can verify that $K_2$ satisfies \eqref{e.K-bounds}. Applying Lemma \ref{l.u-log-smooth} yields
\begin{align*}
    &\mathbb E[F_2\bigg(\log\frac{|P_n(w_s)|}{\sigma(w_s)}\bigg)_{s\in D}]-\mathbb E[F_2\bigg(\log\frac{|P_{n, G}(w_s)|}{\sigma_G(w_s)}\bigg)_{s\in D}]\\
    &= \mathbb E[K_2\bigg(\frac{P_n(w_s)}{\sigma(w_s)}\bigg)_{s\in D}]-\mathbb E[K_2\bigg(\frac{P_{n, G}(w_s)}{\sigma_G(w_s)}\bigg)_{s\in D}]\\
    &\ll  a_n^{\varepsilon},
\end{align*}
which completes the proof.
\end{step}
\section{Proof of Lemma \ref{l.cor-in-out}} \label{a.l8.24}
By the reduction in Section~\ref{s.8g}, we may assume $v_j=(1+j)^\tau$ for all $j\ge 0$.  Recall that 
    \[
    r(x,y)=\frac{k(xy)}{\sqrt{k(x^2)k(y^2)}} \quad \text{and}\quad k(x)=\sum_{j=0}^n v_j^2x^j.
    \] 

We begin with the bound \eqref{e.r-mix}.
From \eqref{e.dik}, for $B$ sufficiently large,
\[
k(x^2)\ge \frac{\Gamma(2\tau+1)}{2(1-x^2)^{2\tau+1}},\quad x\in U_n,
\]
while, arguing as in \eqref{e.k*},
    \[
    k(y^2)= y^{2n}k^*(1/y^2) \ge \frac{n^{2\tau} y^{2n}}{2(1-1/y^2)}, \quad y\in U_n^*.
    \]
Since $|k(xy)|\le k(|xy|)$, it suffices to assume $x, y>0$. We distinguish two cases.

\textit{Case 1:} Suppose that $xy\le 1$. As in Lemma \ref{l.estvar}, 
\[
k(xy) \le  \frac{2\Gamma(2\tau+1)}{(1-xy+1/n)^{2\tau+1}}.
\]
Hence, for some constant $C_\tau>0$,
\begin{equation}
    \label{e.|r|}
    |r(x,y)| \le C_\tau \frac{(1-x)^{\tau+\frac 12}(1-1/y)^{\frac 12}}{(1-xy+1/n)^{2\tau+1}n^\tau y^n}.
\end{equation}
Let $z=1/y$, so that $x\le z \in U_n$. Then 
\begin{align*}
    |r(x,y)| \le C_\tau \frac{(1-x)^{\tau+\frac 12}(1-z)^{\frac 12}z^n}{(1-x/z+1/n)^{2\tau+1}n^\tau}. 
\end{align*}
We split according to the size of the denominator.

If $(1-x/z+1/n)^{-1} \le z/(2(1-x))$, then 
\[
|r(x,y)| \le \frac{C_\tau}{2^{2\tau+1}} \frac{[n(1-z)]^{1/2}z^{n+1}}{[n(1-x)]^{\tau+\frac 12}}\le \frac{C_\tau}{2^{2\tau+1}B^{\tau+\frac 12}} [n(1-z)]^{1/2}z^{n},
\]
since $n(1-x)\ge B$. Otherwise, the function 
\[
x\mapsto \frac{(1-x)^{\tau+\frac 12}}{(1-x/z+1/n)^{2\tau+1}}
\]
is increasing on $[1-a_n, z]$, hence maximized at $x=z$, yielding 
\[
|r(x,y)| \le C_\tau [n(1-z)]^{\tau+1}z^n.
\]
Since $n(1-z)\ge B$, both functions $z\mapsto (1-z)^{1/2}z^n$ and $z\mapsto (1-z)^{\tau+1}z^n$ attain their maximum on $U_n$ at $z=1-B/n$. Evaluating at this point gives 
\[
|r(x,y)| \le C_\tau\bigg(\frac{1}{2^{2\tau+1} B^{\tau}}+B^{\tau+1}\bigg) \left(1-\frac{B}{n}\right)^n \le e^{-B/2}
\]
for all sufficiently large $n$ and $B$. This proves the claim in Case 1.

\textit{Case 2:} Suppose that $xy>1$. As in \eqref{e.k*},
\[
0\le k(xy)=(xy)^nk^*\bigg(\frac{1}{xy}\bigg) \le  \frac{2n^{2\tau}(xy)^n}{1-\frac{1}{xy}+1/n}.
\]
Hence, for some constant $C_\tau'>0$, 
\[
|r(x,y)| \le C_\tau' \frac{n^{\tau}x^n(1-x)^{\tau+\frac 12}(1-z)^{\frac 12}}{1-z/x+1/n}, 
\]
where $z:=1/y\in U_n$ and $z\le x$. If 
\[\frac{1}{1-z/x+1/n}\le \frac{x}{2(1-z)},\]
then 
\[
|r(x,y)| \le \frac{C_\tau'}{2}\frac{x^{n+1}[n(1-x)]^{\tau+\frac 12}}{[n(1-z)]^{\frac 12}}\le \frac{C'_\tau}{2}B^{\tau} \left(1-\frac{B}{n}\right)^n \le e^{-B/2}.
\]
Otherwise, monotonicity in $z$ yields
\[
|r(x,y)| \le C_\tau' x^n[n(1-x)]^{\tau+1}\le C_\tau' B^{\tau+1} \left(1-\frac{B}{n}\right)^n \le e^{-B/2}.
\]
Combining the two cases establishes \eqref{e.r-mix}.

We now prove \eqref{e.dr-mix}. Differentiating $r$, we have 
\begin{equation}
    \label{e.|r01|}
    |r_{10}(x,y)|\le |r(x,y)|\bigg(\frac{|k'(xy)|}{|k(xy)|}+\frac{k'(x^2)}{k(x^2)}\bigg).
\end{equation}
Again, we may assume that $x, y>0$. Recall that for $x\in U_n$,
\[
\frac{k'(x^2)}{k(x^2)} \sim \frac{2\tau+1}{1-x^2} \quad \text{and}\quad r_{11}(x,x)\sim \frac{2\tau+1}{(1-x^2)^2}.
\]
We distinguish two regimes. 

If $xy\le 1$, then 
\[
\frac{|k'(xy)|}{|k(xy)|}=\frac{k'(xy)}{k(xy)}\sim \frac{2\tau+1}{1-xy+1/n}.
\]
For $1-xy+\frac{1}{n}>1-x^2$, there exists some constant $D_\tau>0$ such that 
\[
\frac{|k'(xy)|}{|k(xy)|}+\frac{k'(x^2)}{k(x^2)} \le D_\tau \sqrt{r_{11}(x,x)}, 
\]
which implies \eqref{e.dr-mix} when combined with \eqref{e.r-mix} and \eqref{e.|r01|}.  If $1-xy+\frac{1}{n} \le 1-x^2$, then for some constant $D_\tau>0$,
\[
\frac{|k'(xy)|}{|k(xy)|}+\frac{k'(x^2)}{k(x^2)} \le D_\tau\frac{(1-x)}{(1-xy+1/n)}\sqrt{r_{11}(x,x)}.
\]
Combining with \eqref{e.|r|} and \eqref{e.|r01|}, we get 
\begin{align*}
   |r_{10}(x,y)| \le C_\tau D_\tau \frac{(1-x)^{\tau+\frac 32}(1-1/y)^{\frac 12}}{(1-xy+1/n)^{2\tau+2}n^\tau y^n}\sqrt{r_{11}(x,x)}.
\end{align*}
Proceeding as in the proof of \eqref{e.r-mix}, we deduce that, for $B$ sufficiently large, 
\[
|r_{10}(x,y)|\le e^{-B/3}\sqrt{r_{11}(x,x)}.
\]

If $xy\ge 1$, we have 
\[
\frac{k'(xy)}{k(xy)}\sim \frac{1}{1-\frac{1}{xy}+1/n}.
\]
Arguing as above, we also deduce that 
\[
|r_{10}(x,y)|\le e^{-B/3}\sqrt{r_{11}(x,x)}.
\]
The estimate for $|r_{01}(x,y)|$ follows symmetrically. This proves \eqref{e.dr-mix}.

From \eqref{e.sigma1},
    \[
    \sigma_1^2 =r_{11}(x,x)-\frac{r_{10}^2(x,y)}{1-r^2(x,y)}.
    \]
From \eqref{e.r-mix} and \eqref{e.dr-mix}, the correction term is bounded by $e^{-B/2}r_{11}(x,x)$, yielding 
\[
\sigma_1 \asymp \sqrt{r_{11}(x,x)}.
\]
The same argument applies to $\sigma_2$, and \eqref{e.sigmix} is verified.

The estimate \eqref{e.r11-out} follows from \eqref{e.k*} and \eqref{e.djk*} by the same argument used to prove \eqref{e.r11}.

To prove \eqref{e.mu1mix}, observe that 
    \[
    |m(y)|=\frac{|M_n(y)|}{\sqrt{k(y^2)}}=\frac{|y^nM_n^*\left(1/y\right)|}{\sqrt{y^{2n}k^*\left(1/y^2\right)}} =|m^*\left(1/y\right)|.
    \]
Together with \eqref{e.m*phi}, we get
    \[
    |m(y)| \ll  \phi(1-|1/y|),\quad y\in U_n^*.
    \]
Thus, by using \eqref{e.mphi}, \eqref{e.m*phi}, and Lemma \ref{l.cor-in-out}, we conclude that
\begin{align*}
    \mu_1 & \ll  |m'(x)|+|\frac{r(x,y)r_{10}(x,y)}{1-r^2(x,y)}m(x)|+|\frac{r_{10}(x,y)}{1-r^2(x,y)}m(y)|\\
    &\ll  \frac{\phi(1-|x|)}{1-|x|}+\frac{\phi(1-|x|)}{1-|x|}+\frac{\phi(1-|1/y|)}{1-|x|},
\end{align*} 
which gives \eqref{e.mu1mix}.

The proof of \eqref{e.mu2mix} is similar.
\end{appendix}

\end{document}